\numberwithin{equation}{section}
\newtheorem{theorem}[equation]{Theorem}
\newtheorem{proposition}[equation]{Proposition}
\newtheorem{lemma}[equation]{Lemma}
\newtheorem{corollary}[equation]{Corollary}
\newtheorem{maintheorem}{Theorem}
\theoremstyle{definition}
\newtheorem{remark}[equation]{Remark}
\newtheorem{example}[equation]{Example}
\newtheorem{definition}[equation]{Definition}
\newcommand{\cA}{\mathcal{A}}
\newcommand{\sA}{\mathscr{A}}
\newcommand{\cB}{\mathcal{B}}
\newcommand{\fB}{\mathfrak{B}}
\newcommand{\cC}{\mathcal{C}}
\newcommand{\sC}{\mathscr{C}}
\newcommand{\bD}{\mathbf{D}}
\newcommand{\cD}{\mathcal{D}}
\newcommand{\fD}{\mathfrak{D}}
\newcommand{\cE}{\mathcal{E}}
\newcommand{\bF}{\mathbf{F}}
\newcommand{\cF}{\mathcal{F}}
\newcommand{\rH}{\mathrm{H}}
\newcommand{\bI}{\mathbf{I}}
\newcommand{\bK}{\mathbf{K}}
\newcommand{\rK}{\mathrm{K}}
\newcommand{\rL}{\mathrm{L}}
\newcommand{\bN}{\mathbf{N}}
\newcommand{\bP}{\mathbf{P}}
\newcommand{\bQ}{\mathbf{Q}}
\newcommand{\bR}{\mathbf{R}}
\newcommand{\rR}{\mathrm{R}}
\newcommand{\bS}{\mathbf{S}}
\newcommand{\fS}{\mathfrak{S}}
\newcommand{\sS}{\mathscr{S}}
\newcommand{\bT}{\mathbf{T}}
\newcommand{\cT}{\mathcal{T}}
\newcommand{\cX}{\mathcal{X}}
\newcommand{\bZ}{\mathbf{Z}}
\newcommand{\bbG}{\mathbb{G}}
\newcommand{\bbI}{\mathbb{I}}
\newcommand{\bbP}{\mathbb{P}}
\newcommand{\bbQ}{\mathbb{Q}}
\newcommand{\bbS}{\mathbb{S}}
\newcommand{\bbT}{\mathbb{T}}
\newcommand{\fa}{\mathfrak{a}}
\newcommand{\fn}{\mathfrak{n}}
\newcommand{\arxiv}[1]{\href{http://arxiv.org/abs/#1}{{\tiny\tt arXiv:#1}}}
\newcommand{\DOI}[1]{\href{http://doi.org/#1}{\color{purple}{\tiny\tt DOI:#1}}}
\newcommand{\defn}[1]{\emph{#1}}
\newcommand{\myuline}[1]{%
  \uline{\phantom{#1}}%
  \llap{\contour{white}{#1}}%
}
\DeclareMathOperator{\uRep}{\text{\myuline{\rm Rep}}}
\DeclareMathOperator{\uPerm}{\ul{Perm}}
\newcommand{\bbDelta@scaling}{0.775}\newcommand{\bbDelta@kern}{0.3}
\newcommand{\bbDelta}{\mathord{\mathpalette\bbDelta@\relax\Delta}}
\newcommand{\bbDelta@}[2]{%
  \rlap{\scalebox{\bbDelta@scaling}{$\m@th#1\mkern \bbDelta@kern mu\Delta$}}%
}
\newcommand{\vflip}[1]{\mathord{\mathpalette\vflip@aux{#1}}}
\newcommand{\vflip@aux}[2]{
  \sbox0{$#1#2$}
  \dimen0=\ht0 \advance\dimen0 by -\dp0
  \raisebox{\dimen0}{\scalebox{1}[-1]{\usebox{0}}}
}
\let\stan\bbDelta
\let\cost\bbNabla
\let\ul\underline
\let\ol\overline
\renewcommand{\phi}{\varphi}
\DeclareMathOperator{\tr}{tr}
\DeclareMathOperator{\im}{im}
\DeclareMathOperator{\Rep}{Rep}
\DeclareMathOperator{\End}{End}
\DeclareMathOperator{\Aut}{Aut}
\DeclareMathOperator{\Del}{Del}
\DeclareMathOperator{\Tilt}{Tilt}
\DeclareMathOperator{\Inj}{Inj}
\DeclareMathOperator{\Hom}{Hom}
\DeclareMathOperator{\Ind}{Ind}
\newcommand{\id}{\mathrm{id}}
\renewcommand{\Vec}{\mathrm{Vec}}
\newcommand{\GL}{\mathbf{GL}}
\newcommand{\bbone}{\mathds{1}}
\newcommand{\bone}{\mathbf{1}}
\newcommand{\bb}{{\bullet}}
\newcommand{\ww}{{\circ}}
\newcommand{\Ext}{\mathrm{Ext}}
\newcommand{\Rad}{\mathrm{Rad}}
\DeclareMathOperator{\PSh}{PSh}
\DeclareMathOperator{\Spec}{Spec}
\DeclareMathOperator{\Proj}{Proj}
\DeclareMathOperator{\Tor}{Tor}
\newcommand{\alt}{\mathrm{alt}}
\renewcommand{\inf}{\mathrm{inf}}
\newcommand{\pf}{\mathrm{pf}}
\newcommand{\bDelta}{\boldsymbol{\Delta}}
\newcommand{\bnabla}{\boldsymbol{\nabla}}
\newcommand{\Tens}{\mathrm{Tens}}
\newcommand{\op}{\mathrm{op}}
\newcommand{\he}{\heartsuit}
\newcommand{\RHom}{\rR \hspace{-.6mm}\Hom}
\newcommand{\heart}{\bK(\cA')^\heartsuit}
\title{The second Delannoy category}
\date{January 21, 2026}
\author{Kevin Coulembier}
\address{School of Mathematics and Statistics, University of Sydney, NSW 2006, Australia}
\email{\href{mailto:kevin.coulembier@sydney.edu.au}{kevin.coulembier@sydney.edu.au}}
\urladdr{\url{https://www.maths.usyd.edu.au/u/kevinc/}}
\thanks{KC was supported by ARC grant FT220100125}
\author{Andrew Snowden}
\address{Department of Mathematics, University of Michigan, Ann Arbor, MI}
\email{\href{mailto:asnowden@umich.edu}{asnowden@umich.edu}}
\urladdr{\url{http://www-personal.umich.edu/~asnowden/}}
\thanks{AS was supported by NSF DMS-2301871}
\begin{document}

\begin{abstract}
In recent work, Harman and Snowden constructed a symmetric tensor category associated to an oligomorphic group equipped with a measure. The oligomorphic group $\bbG$ of order preserving automorphisms of the real line admits exactly four measures. The category $\cC$ associated to the first measure is called the (first) Delannoy category; it is semi-simple and pre-Tannakian, with numerous special properties.

In this paper, we study the (non-abelian) category $\cA$ associated to the second measure, which we call the second Delannoy category. We construct a new pre-Tannakian category $\cD$ together with a fully faithful tensor functor $\Psi \colon \cA \to \cD$. The category $\cD$ is the correct ``abelian version'' of the second Delannoy category. Like $\cC$, it has remarkable properties: for instance, it is non-semi-simple, but behaves uniformly in the coefficient field (e.g., it has the same Grothendieck ring and $\Ext^1$ quiver over any field). 

Additionally, we completely solve the problem of understanding how $\cA$ relates to general pre-Tannakian categories. We show that $\cA$ admits exactly two local abelian envelopes: the functor $\Psi$, and a previously constructed functor $\Phi \colon \cA\to\cC$. This is the first case where the local envelopes of a category have been completely determined, outside of cases where there is at most one envelope. This work opens the door to constructing abelian versions of other oligomorphic tensor categories that do not admit a unique envelope.
\end{abstract}

\maketitle
\tableofcontents

\section{Introduction}

Pre-Tannakian categories are an important class of tensor categories generalizing representation categories of algebraic groups (see \S \ref{ss:tencat} for our conventions regarding tensor categories). This class of categories is still mysterious, in large part due to the  difficultly in constructing examples.  In this paper, we construct an interesting new pre-Tannakian $\cD$ called the abelian second Delannoy category. It is a close cousin to the first Delannoy category $\cC$ studied in \cite{line}, which has numerous fascinating properties. The category $\cD$ is constructed from a preliminary additive tensor category $\cA$ introduced by Harman and the second author in \cite{repst}. A novel feature of the present situation is that $\cD$ is not the (unique) abelian envelope of $\cA$, but one of two local abelian envelopes in the sense of the first author \cite{HomKer}.

In this brief introduction, we give a high level summary of the results of this paper, with an emphasis on context and motivation. In the subsequent section, we describe the specifics of our constructions and theorems in more detail.

\subsection{Oligomorphic tensor categories}

In recent work \cite{repst}, Harman and the second author constructed an additive tensor category $\uPerm(G, \mu)$ associated to an oligomorphic group $G$ and a measure $\mu$ valued in a field $k$. When the measure $\mu$ is quasi-regular, and nilpotent endomorphisms have vanishing trace, this category has a (unique) abelian envelope $\uRep(G, \mu)$ in a precise sense discussed below (see \S \ref{ss:intro-env}); this result is \cite[Theorem~13.2]{repst}.

This approach recovers many previously known pre-Tannakian categories, such as the interpolation categories $\uRep(\fS_t)$ and $\uRep(\GL_t(\bF_q))$ studied by Deligne \cite{Deligne} and Knop \cite{Knop}, but has also led to fundamentally new examples, such as the (first) Delannoy category \cite{line}, the circular Delannoy category \cite{circle}, and the arboreal tensor categories \cite{arboreal}. In all of these cases, the relevant measure is quasi-regular. 

When the measure $\mu$ is not quasi-regular, it is not known if there is a suitable abelian version of $\uPerm(G, \mu)$. This is a major outstanding problem in the theory. This paper contains the first piece of progress in this direction.

\subsection{The Delannoy categories}

Let $\bbG=\Aut(\bR, <)$ be the oligomorphic group of order-preserving bijections of the real line. As shown in \cite[\S 16]{repst}, $\bbG$ admits exactly four $k$-valued measures $\mu_1$, $\mu_2$, $\mu_3$, and $\mu_4$. We call $\uPerm(\bbG, \mu_i)^{\rm kar}$ the \defn{$i$th Delannoy category}. We write $\cC$ for the first Delannoy category and $\cA$ for the second. The other two will not play a prominent role in this paper, but the third is equivalent to the second.

The meausure $\mu_1$ is regular, which is a stronger condition than quasi-regular. This, combined with \cite[Theorem~13.2]{repst} and some additional work, implies that $\cC$ is a semi-simple pre-Tannakian category; see \cite[\S 16]{repst}. It is therefore its own abelian envelope. As mentioned above, this category has numerous remarkable properties: for example, it was the first known semi-simple pre-Tannakian category of super-exponential growth in positive characteristic. It was studied in depth in \cite{line}, and has since received additional attention \cite{delmap, delalg, Kriz, KhovanovSnyder, delchar}.

The measure $\mu_2$ is not quasi-regular. Prior to this paper, it was not known if $\cA$ leads to a new pre-Tannakian category. One of our main contributions is a positive resolution of this problem: we construct a new pre-Tannakian category $\cD$ and a fully faithful tensor functor
\begin{displaymath}
\Psi \colon \cA \to \cD.
\end{displaymath}
The category $\cD$ appears to be quite interesting. Of course, it is closely related to $\cC$, which is known to be very special. But it also has some novel features: for example, to the best of our knowledge, it is the first known non-semi-simple pre-Tannakian category that is ``characteristic independent'' (see Remark~\ref{rem:Z}). We note that $\cD$ is a lower finite highest weight category and $\Psi$ identifies $\cA$ with the category of tilting objects.

This example suggests that other non-quasi-regular measures might lead to new pre-Tannakian categories, and sheds some light on how to produce them. There are many known examples of such measures, such as those in \cite{colored, homoperm}, so there are potentially many categories on the cusp of discovery.

\subsection{Local abelian envelopes} \label{ss:intro-env}

Pre-Tannakian categories are often constructed in two steps: first one constructs an additive tensor category $\cE$, and then one constructs a pre-Tannakian category $\cT$ from $\cE$. The theory of local abelian envelopes, due to the second author \cite{HomKer}, organizes the possible $\cT$'s arising from an $\cE$.

Fix a rigid $k$-linear tensor category $\cE$ with $\End(\bbone)=k$. In \cite{HomKer} it is shown that there is a family of faithful tensor functors $\{ \Gamma_i \colon \cE \to \cT_i \}_{i \in I}$, where each $\cT_i$ is pre-Tannakian, such that any faithful functor from $\cE$ to a pre-Tannakian category factors uniquely through a unique $\Gamma_i$. (See \S \ref{ss:local} for a more complete statement.) These $\Gamma_i$'s are called the \defn{local (abelian) envelopes} of $\cE$. If there is a unique local envelope, we call it the \emph{abelian envelope}.

Most recent constructions of pre-Tannakian categories are obtained as abelian envelopes. This includes the examples coming from quasi-regular measures discussed above, as well as some non-oligomorphic examples \cite{BEO, CO, AbEnv, EHS}. If an abelian envelope exists, one might hope it is not too difficult to construct, since it satisfies a simple universal property. Indeed, there is a uniform description in terms of sheaves with respect to a Grothendieck topology that works for most abelian envelopes, see \cite[\S 3.2]{CEOP}. Thus it is not surprising that most known examples are abelian envelopes.

In \cite{delmap}, we showed that the second Delannoy category $\cA$ has at least two local envelopes, by exhibiting two tensor functors $\cA \to \cC$ that must belong to different local envelopes. In particular, $\cA$ does not have an abelian envelope. This perhaps explains why $\cD$ was harder to find than other recent examples of pre-Tannakian categories.

A second main result of this paper is that $\cA$ has exactly two local envelopes, namely, the functor $\Psi \colon \cA \to \cD$ above and one of the functors $\Phi \colon \cA \to \cC$ constructed in \cite{delmap}. This is the first time all local envelopes of a category have been determined, outside of cases where there is at most one local envelope. It is also the first known example where there are multiple, but finitely many, local envelopes. The proof of this result relies on an interesting semi-orthogonal decomposition of the derived category of the pre-sheaf category of $\cA$. This is a new method in the study of local envelopes.

\subsection*{Acknowledgments}

The second author thanks Nate Harman, Catharina Stroppel, and Milen Yakimov for helpful conversations.

\section{Overview of results} \label{s:over}

In the introduction, we gave a high level summary of the paper, focusing on the context and significance of our results. In this section, we get into the details, and provide precise formulations of our main constructions and theorems.

\subsection{Set-up}

Recall that $\cC$ is the first Delannoy category and $\cA$ is the second. The basic objects of these categories are the Schwartz spaces $\sC(\bR^{(n)})$ and $\sA(\bR^{(n)})$ for $n \ge 0$; see \S \ref{sec:permmod}. As mentioned above, in \cite{delmap} we constructed a faithful tensor functor
\begin{displaymath}
\Phi \colon \cA \to \cC, \qquad \sA(\bR) \mapsto \sC(\bR) \oplus \bbone.
\end{displaymath}
This functor will play a crucial role in this paper.

A \defn{weight} is a word in the two letter alphabet $\{\bb, \ww\}$. We let $\Lambda$ denote the set of weights. Weights are the central combinatorial objects in this paper, and will index many different families of modules. In \cite{line}, we showed that the simple objects of $\cC$ are naturally indexed by weights. We write $L_{\lambda}$ for the simple object of $\cC$ indexed by $\lambda \in \Lambda$.

\subsection{The structure of $\cA$}

The main goal of this paper is to construct and classify the local abelian envelopes of $\cA$. Before we can make any substantial progress in this direction, we must first understand the internal structure of $\cA$. We explain our results in this direction.

If $\lambda \in \Lambda$ has length $n$, which we denote by $\ell(\lambda)=n$, then $L_{\lambda}$ occurs with multiplicity one in the Schwartz space $\sC(\bR^{(n)})$; moreover, in \cite{line} we gave an explicit idempotent $E_{\lambda}$ such that $L_{\lambda} = E_{\lambda} \sC(\bR^{(n)})$. Now, the endomorphism algebras of $\sA(\bR^{(n)})$ and $\sC(\bR^{(n)})$ are the same vector space, but with different products. We observe that $E_{\lambda}$ is still idempotent as an endomorphism of $\sA(\bR^{(n)})$, and we define $M_{\lambda} = E_{\lambda} \sA(\bR^{(n)})$. The following theorem tells us the structure of $\cA$ as an additive category.

\begin{maintheorem} \label{mainthm1}
The $M_{\lambda}$'s are the indecomposable objects of $\cA$. There are non-zero maps
\begin{displaymath}
\id \colon M_{\lambda} \to M_{\lambda}, \qquad
d_\lambda \colon M_{\lambda \ww} \to M_{\lambda}, \qquad
u_\lambda \colon M_{\lambda} \to M_{\lambda \bb}, \qquad
u_\lambda\circ d_\lambda \colon M_{\lambda \ww} \to M_{\lambda \bb}.
\end{displaymath}
These are the only non-zero maps (up to scalar multiples) between indecomposable objects.
\end{maintheorem}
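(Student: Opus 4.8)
The plan is to leverage the known structure of $\cC$ from \cite{line} together with the faithful tensor functor $\Phi\colon\cA\to\cC$ to pin down morphism spaces in $\cA$. Recall that the endomorphism algebra $\End_{\cA}(\sA(\bR^{(n)}))$ and $\End_{\cC}(\sC(\bR^{(n)}))$ are literally the same vector space with different multiplications, so the idempotents $E_\lambda$ with $\ell(\lambda)=n$ make sense in $\cA$ and give a decomposition of $\sA(\bR^{(n)})$ into the summands $M_\lambda$. First I would check that each $M_\lambda$ is indecomposable: since $\Phi(M_\lambda)$ contains $L_\lambda=E_\lambda\sC(\bR^{(n)})$ as a summand (one should track exactly what $\Phi$ does to the idempotent $E_\lambda$), and $\Phi$ is faithful, a nontrivial idempotent splitting of $M_\lambda$ would have to be detected downstairs; combined with a dimension/length count for $\Phi(M_\lambda)$ in $\cC$, this forces $\End_{\cA}(M_\lambda)$ to be local. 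This simultaneously shows the $M_\lambda$ exhaust the indecomposables, because every object of $\cA$ is a summand of a sum of Schwartz spaces $\sA(\bR^{(n)})$ (that is how $\cA=\uPerm(\bbG,\mu_2)^{\mathrm{kar}}$ is built), and Krull--Schmidt holds in $\cA$.

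Next I would compute $\Hom_{\cA}(M_\lambda,M_\kappa)$ for arbitrary weights $\lambda,\kappa$. The key point is that $\Hom_{\cA}(\sA(\bR^{(m)}),\sA(\bR^{(n)}))$ has a combinatorial basis indexed by certain relations between $\bR^{(m)}$ and $\bR^{(n)}$ (Delannoy-type configurations), identical \emph{as a set} to the basis for $\cC$; what changes from $\cC$ to $\cA$ is the measure-dependent coefficients appearing when one composes. So I would write $\Hom_{\cA}(M_\lambda,M_\kappa)=E_\kappa\cdot\Hom_{\cA}(\sA(\bR^{(m)}),\sA(\bR^{(n)}))\cdot E_\lambda$ and evaluate it by pushing through $\Phi$: faithfulness of $\Phi$ gives an injection $\Hom_{\cA}(M_\lambda,M_\kappa)\hookrightarrow\Hom_{\cC}(\Phi M_\lambda,\Phi M_\kappa)$, and the right-hand side is controlled by the (known) morphisms between the objects $L_\nu$ and their extensions in $\cC$. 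The morphisms $d_\lambda, u_\lambda$ are the images (under the splitting idempotents) of the structural ``restriction'' and ``induction'' maps between $\sA(\bR^{(n)})$ and $\sA(\bR^{(n\pm1)})$ coming from the inclusion of Schwartz spaces; that they are nonzero is checked by verifying $\Phi(d_\lambda)$ and $\Phi(u_\lambda)$ are nonzero in $\cC$, which again reduces to the combinatorics of \cite{line}. The composite $u_\lambda\circ d_\lambda$ is then automatically a map $M_{\lambda\ww}\to M_{\lambda\bb}$, and its nonvanishing needs a separate small check downstairs (it could a priori die even though each factor survives).

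The main obstacle is the \emph{exhaustiveness} claim: showing there are \emph{no other} nonzero maps up to scalars. This is where the measure $\mu_2$ being non-quasi-regular really enters, and where $\cA$ genuinely differs from $\cC$. I would argue as follows: first reduce to $\Hom_{\cA}(M_\lambda,M_\kappa)$ with $\ell(\kappa)\le\ell(\lambda)$ (the other direction by an analogous or dual argument, or by rigidity), then show this space is at most one-dimensional and vanishes unless $\kappa$ is obtained from $\lambda$ by deleting some $\ww$'s from the end and/or changing a final segment in the prescribed way, i.e. unless $\kappa\in\{\lambda,\ \lambda'\bb\text{ with }\lambda=\lambda'\ww\}$ etc. The mechanism: the basis elements of $\Hom_{\cA}(\sA(\bR^{(m)}),\sA(\bR^{(n)}))$ not accounted for by $\{d,u,ud,\id\}$ should, after multiplying by $E_\kappa$ on the left and $E_\lambda$ on the right, either vanish outright because of the \emph{specific} shape of the Delannoy idempotents $E_\lambda$ from \cite{line}, or become proportional to one of the listed maps. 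Concretely this is a computation inside the (cellular-type) algebra $\bigoplus_{m,n}\Hom_{\cA}(\sA(\bR^{(m)}),\sA(\bR^{(n)}))$ using the explicit form of $E_\lambda$; the non-quasi-regularity shows up as certain coefficients being zero that would be nonzero for $\mu_1$, which is precisely why $\cA$ has so few maps and is not semisimple. I would organize this as a lemma computing $E_\kappa f E_\lambda$ for $f$ a basis element, done by induction on $\ell(\lambda)+\ell(\kappa)$, peeling off one letter at a time and using the recursive description of $E_\lambda$ in terms of $E_{\lambda'}$ together with $d$ and $u$.
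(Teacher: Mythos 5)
Your overall strategy for the morphism spaces (push everything through the faithful functor $\Phi$, use $\Phi(M_\lambda)=L_\lambda\oplus L_{\lambda^\flat}$ to cut the possibilities down to a handful of cases, then decide each case) is essentially the paper's, and that part of your outline is sound. However, there are two genuine gaps elsewhere.

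First, indecomposability of $M_\lambda$ does not follow from faithfulness of $\Phi$ plus a ``dimension/length count.'' Faithfulness gives an algebra injection $\End_{\cA}(M_\lambda)\hookrightarrow\End_{\cC}(L_\lambda\oplus L_{\lambda^\flat})\cong k\times k$, but a unital subalgebra of $k\times k$ can be all of $k\times k$, in which case $M_\lambda$ would split. You need a reason the inclusion is \emph{proper}. The paper's reason is a trace argument: every endomorphism of $\sA(\bR^{(n)})$ has trace zero because $\mu_2(\bR^{(n)})=0$, whereas the trace on $\End(\Phi(M_\lambda))$ is not identically zero (e.g.\ $\dim L_\lambda=(-1)^n$); since $\Phi$ preserves traces, $\End_\cA(M_\lambda)$ must be one-dimensional. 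Some such input is indispensable and is missing from your sketch.

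Second, and more seriously, your claim that indecomposability of the $M_\lambda$ ``simultaneously shows'' they exhaust the indecomposables is not an argument. Krull--Schmidt plus the fact that every object is a summand of some $\sA(\bR^{(n)})$ only reduces the problem to showing that \emph{every} indecomposable summand of $\sA(\bR^{(n)})$ is isomorphic to some $M_\lambda$ --- and the idempotents $E_\lambda$ with $\ell(\lambda)=n$ do not sum to the identity, so a priori there could be further, unidentified indecomposable summands. This is the technical heart of the paper's proof: one introduces the two-sided ideal $\fa\subset R=\End(\sA(\bR^{(n)}))$ of morphisms factoring through $\sA(\bR^{(n-1)})$, proves via a delicate induction on Delannoy paths that $\dim(R/\fa)\le 2^n$, shows that a primitive idempotent lying in $\fa$ produces a summand already seen at a lower level, and then counts: the images of the primitive idempotents for the level-$n$ summands are linearly independent in $R/\fa$, so there are at most $2^n$ of them, and the $2^n$ modules $M_\lambda$ with $\ell(\lambda)=n$ account for all of them. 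Nothing in your proposal plays the role of this upper bound; your concluding ``cellular computation'' of $E_\kappa f E_\lambda$ addresses only the Hom spaces between the $M_\lambda$ you have already constructed, not the existence of other indecomposables. Without an argument of this kind the exhaustiveness claim is unproven.
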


A key step in the proof is establishing the decomposition
\begin{displaymath}
\Phi(M_{\lambda}) = L_{\lambda} \oplus L_{\lambda^{\flat}},
\end{displaymath}
where $\lambda^{\flat}$ is $\lambda$ with its final letter deleted; if $\lambda=\varnothing$ then $L_{\lambda^{\flat}}=0$ by convention. The fact that $\Phi(M_{\lambda})$ is so small (length at most two) makes the analysis of $\cA$ easier than it otherwise might have been.

We also give an explicit combinatorial rule for the decomposition of $M_{\lambda} \otimes M_{\mu}$ into indecomposable objects. This is quite similar to the fusion rule in the Delannoy category found in \cite[Corollary~7.3]{line}, though not exactly the same.

\subsection{The pre-sheaf category $\cB$} \label{ss:overview-B}

We are interested in studying maps from $\cA$ to abelian categories. It therefore makes sense to consider the pre-sheaf category $\PSh(\cA)$, i.e., the category of $k$-linear functors from $\cA^{\op}$ to vector spaces. Indeed, this is an abelian category, $\cA$ maps to it via the Yoneda embedding, and this functor has a universal property for maps to abelian categories.

Thanks to Theorem~\ref{mainthm1}, we can give a more concrete description of this category. Define a $k$-linear combinatorial category $\fB$, as follows. The set of objects is the set $\Lambda$ of weights. The morphism spaces
\begin{displaymath}
\Hom_{\fB}(\lambda, \lambda), \quad
\Hom_{\fB}(\lambda, \lambda \ww), \quad
\Hom_{\fB}(\lambda \bb, \lambda), \quad
\Hom_{\fB}(\lambda \bb, \lambda \ww)
\end{displaymath}
are one-dimensional, with distinguished basis vectors, and all other morphisms spaces vanish. There are essentially three compositions of distinguished morphisms to consider:
\begin{displaymath}
\lambda\bb \to \lambda \to \lambda\ww, \qquad
\lambda \to \lambda \ww \to \lambda \ww \ww, \qquad
\lambda\bb\bb \to \lambda\bb \to \lambda.
\end{displaymath}
The first of these is the distinguished morphism, while the other two vanish. The category $\fB$ is anti-equivalent to the category of indecomposable objects in $\cA$ by Theorem~\ref{mainthm1}, and so $\PSh(\cA)$ is equivalent to the category of $\fB$-modules, i.e., $k$-linear functors from $\fB$ to vector spaces. We let $\cB$ denote the category of finite length $\fB$-modules. We tend to refer to this as the ``pre-sheaf category.''

The simple $\fB$-modules are indexed by weights; we let $\bbS_{\lambda}$ denote the simple at $\lambda \in \Lambda$. We let $\bbP_{\lambda}$ denote the projective cover of $\bbS_{\lambda}$. Using the structure of $\fB$, it is easy to see that $\bbP_{\lambda}$ has length two or four; see \S \ref{ss:B-basic}(d). We have an equivalence
\begin{displaymath}
\cA \to \Proj(\cB), \qquad M_{\lambda} \mapsto \bbP_{\lambda},
\end{displaymath}
where $\Proj(-)$ denotes the category of projective objects. From the pre-sheaf perspective, this is simply the Yoneda embedding. Via the above equivalence, we can transfer the tensor product on $\cA$ to $\Proj(\cB)$, and then extend it by cocontinuity to all of $\cB$. This gives $\cB$ the structure of a tensor category, though the tensor product is only right exact and not all objects are rigid. Nonetheless, we at least now have a fully faithful embedding of $\cA$ into an abelian tensor category.

Morphisms in $\fB$ have an obvious ``downwards--upwards'' factorization, which endows $\fB$ with a triangular structure in the sense of \cite{brauercat}. It follows that $\cB$ is a semi-infinite highest weight category in the sense of \cite{BS}. In particular, for each weight $\lambda$ there is a standard module $\stan_{\lambda}$ and a costandard module $\cost_{\lambda}$, both of which have length two (see \S \ref{ss:B-basic}). There is also an indecomposable tilting module $\bbT_{\lambda}$. This has infinite length, though it is multiplicity free and admits a simple explicit description (\S \ref{ss:B-tilt}). We have verified some instances of compatibility between the tensor product and highest weight structure, but it remains unclear whether the ``typical'' compatibility (as in \cite[Proposition~3.10]{FlakeGruber}) holds.

\begin{remark}
We are only able to see the triangular structure on $\cA$ due to Theorem~\ref{mainthm1}. We do not know when the $\uPerm(G, \mu)$ categories should have such a structure, for a general oligomorphic group $G$. This is an interesting problem (and potentially quite important).
\end{remark}

\begin{remark}
One can picture $\fB$ as a quiver with relations, as follows. Start with the Cayley graph for the free monoid on two generators labeled $\ww$ and $\bb$, drawn in the first quadrant of the plane in the usual manner, with $\ww$ edges being horizontal and $\bb$ edges vertical. Direct the edges so that horizontal edges point right and vertical edges point down. This directed graph is the quiver in question. We impose relations so that each row is a complex (i.e., applying two adjacent horizontal maps gives zero), as is each column. We thus see that a $\fB$-module can be viewed as a collection of interrelated complexes on this Cayley graph. It would be interesting if this geometric picture has a deeper connection to our work.
\end{remark}

\subsection{The pre-Tannakian category $\cD$} \label{ss:overview-D}

Since $\cB$ is a highest weight category, it has a Ringel dual. By definition, the Ringel dual is the pre-sheaf category for the category $\Tilt(\cB)$ of tilting modules in $\cB$ (with some finiteness conditions imposed). However, as with the pre-sheaf category of $\cA$, we can give a much more concrete description of this category.

Define a $k$-linear combinatorial category $\fD$ as follows. The set of objects is again the set of weights $\Lambda$. The morphism space $\Hom_{\fD}(\lambda, \mu)$ is one-dimensional (with a distinguished basis vector) if $\lambda=\mu$, or in the following two cases:
\begin{itemize}
\item $\lambda=\mu \nu$, where $\nu$ is an alternating word ending in $\ww$.
\item $\mu=\lambda \nu$, where $\nu$ is an alternating word ending in $\bb$.
\end{itemize}
By an alternating word, we mean one that does not contain $\ww\ww$ or $\bb\bb$ as a substring. In all other cases, the $\Hom$ space vanishes. The composition of two distinguished morphisms is the distinguished morphism, if it exists, and zero otherwise. We show that $\fD$ is anti-equivalent to the category of indecomposable tilting modules in $\cB$.

We define $\cD$ to be the category of finite length $\fD$-modules. From the above discussion, we see that $\cD$ is (equivalent to) the Ringel dual of $\cB$ and, as such, has a highest weight structure. We let $\bT_{\lambda}$ be the indecomposable tilting module in $\cD$ indexed by the weight $\lambda$. We show that there is an equivalence of categories
\begin{displaymath}
\Psi \colon \cA \to \Tilt(\cD), \qquad M_{\lambda} \mapsto \bT_{\lambda}.
\end{displaymath}
Via this equivalence, we can transfer the tensor product on $\cA$ to $\Tilt(\cD)$. The following important theorem establishes the tensor structure on $\cD$.

\begin{maintheorem} \label{mainthm2}
The tensor product on $\Tilt(\cD)$ extends uniquely to an exact tensor product on $\cD$, which makes $\cD$ into a pre-Tannakian category.
\end{maintheorem}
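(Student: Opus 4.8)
The plan is to build the tensor product on $\cD$ by the standard Day-convolution-type extension from $\Tilt(\cD)$, and then prove exactness by a careful analysis of resolutions by tilting modules. First I would recall that $\cD$ is the Ringel dual of $\cB$, hence a lower finite highest weight category in which tilting modules have finite resolutions by standard objects and by costandard objects, and conversely every object of $\cD$ has a finite resolution on each side by tilting modules (the characteristic-independence of $\cD$, established earlier, guarantees these resolutions are uniform). Since $\Tilt(\cD) \simeq \cA$ is rigid and carries the tensor product transferred from $\cA$, and since every object of $\cD$ is a finite colimit (indeed a finite iterated cokernel) of tilting modules, there is at most one right-exact bifunctor on $\cD$ extending the product on $\Tilt(\cD)$; existence follows by writing $X \otimes Y := \operatorname{coker}$-type construction from chosen tilting presentations and checking independence of the presentation via the usual comparison-of-resolutions argument. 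At this stage we have a right-exact tensor product, unital, with $\bbone$ the monoidal unit coming from $\cA$.

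The substantive point is \emph{exactness} (equivalently, exactness in each variable separately, by symmetry). Here the key input is that tilting objects are simultaneously $\stan$-filtered and $\cost$-filtered, together with the following two facts to be verified: (i) $\bT_\lambda \otimes \bT_\mu$ is again tilting — this is precisely the statement that $\Tilt(\cD)\simeq \cA$ is closed under $\otimes$, which holds by construction since $\cA$ is a tensor category; and (ii) $\operatorname{Tor}$-type vanishing: for a tilting object $T$, the functor $T \otimes -$ sends standard objects to $\stan$-filtered objects and is exact on short exact sequences of $\stan$-filtered objects. Granting (i) and (ii), exactness of $T\otimes -$ on all of $\cD$ follows by resolving an arbitrary object by tiltings and chasing the resulting double complex; then exactness of $X \otimes -$ for arbitrary $X$ follows by resolving $X$ by tiltings and using that a finite complex of exact functors with exact-functor cohomology detects exactness. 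The cleanest route to (ii) is to transport the question through Ringel duality to $\cB$: tilting modules in $\cD$ correspond to projective (equivalently injective, since $\cB$ is the pre-sheaf category of $\cA$ and self-dual enough) objects, standard modules in $\cD$ correspond to costandard modules in $\cB$, and the tensor product on $\cD$ corresponds to a derived-tensor-type operation on $\cB$; one then reduces to the explicit combinatorics of $\fB$ and $\fD$ — the fact that $\stan_\lambda$, $\cost_\lambda$ have length two in $\cB$, and the explicit multiplicity-free description of $\bbT_\lambda$ — to compute the relevant $\operatorname{Tor}$ groups directly and see they vanish.

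Finally, to upgrade "$\cD$ with an exact symmetric tensor product and $\operatorname{End}(\bbone)=k$" to "pre-Tannakian", I would check the remaining axioms: $\cD$ is a finite-length abelian $k$-linear category with finite-dimensional Hom-spaces (immediate from the definition via finite-length $\fD$-modules and the locally-finite combinatorics of $\fD$), it is rigid, and it has a symmetric braiding. Rigidity is the one point needing an argument: $\Tilt(\cD)$ is rigid (being $\simeq \cA$), and one must propagate duals to all of $\cD$. For this I would use that in a highest weight category with an exact tensor product for which tiltings are rigid, every object has a dual — concretely, dualize a two-step tilting presentation $T_1 \to T_0 \to X \to 0$ to get $0 \to X^\vee \to T_0^\vee \to T_1^\vee$, and verify the evaluation/coevaluation morphisms using exactness of $\otimes$ in each variable (already proved) to check the zig-zag identities. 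The braiding and its symmetry are inherited from $\cA$ on $\Tilt(\cD)$ and extend uniquely by the same cocontinuity/presentation argument used to extend $\otimes$ itself.

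\medskip

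\emph{Main obstacle.} The crux is step (ii): proving the $\operatorname{Tor}$-vanishing that forces $T \otimes -$ to be exact. Everything else is either formal (uniqueness and construction of the extended product, extension of the braiding, the elementary finiteness axioms) or a bootstrap from the tilting case (rigidity). The exactness, by contrast, genuinely uses the specific structure of $\fB$ and $\fD$ — the short standard/costandard modules, the explicit multiplicity-free tilting modules, and the compatibility (partial, as flagged in the text) between the tensor and highest weight structures — and I expect the proof to require either an explicit computation of $\bT_\lambda \otimes \bT_\mu$ matching the fusion rule for $M_\lambda \otimes M_\mu$, or a clever homological argument in $\cB$ showing the derived tensor product is concentrated in degree zero on the relevant subcategories.
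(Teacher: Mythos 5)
Your foundational step fails: you propose to define $X \otimes Y$ by choosing tilting presentations and taking cokernels, asserting that ``every object of $\cD$ is a finite colimit (indeed a finite iterated cokernel) of tilting modules.'' This is false, and the failure is not incidental --- it is equivalent to one of the main points of the paper. Not every finite $\fD$-module is a quotient of a tilting module (the indecomposable length-two module with top $\bS_{\varnothing}$ and socle $\bS_{\bb\ww}$ is an explicit counterexample noted after Corollary~\ref{cor:D-tilt-quot}); if every object did admit a two-step tilting presentation, $\cD$ would be the abelian envelope of $\cA$ by \cite[Theorem~2.2.1]{CEOP}, contradicting Theorem~\ref{mainthm3}. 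What is true is only the derived-level statement: every object of $\cD$ is quasi-isomorphic to a bounded complex of tilting modules (Proposition~\ref{prop:equiv:triangle}), with the complex running to the left for some simples and to the right for others (Corollary~\ref{Cor:TiltRes}). So neither your construction of the bifunctor nor your comparison-of-presentations argument for well-definedness can get off the ground, and the subsequent exactness and rigidity arguments inherit the problem.

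The paper's route avoids this entirely by never constructing a right-exact product first. One equips $\bK(\Tilt(\cD))$ with the tensor triangulated structure transferred from $\cA$, identifies it with $\bD(\cD)$ via Proposition~\ref{prop:equiv:triangle}, and then proves that the heart of the resulting t-structure is closed under $\otimes$ (Theorem~\ref{Thm:Tensor}). Closure of the heart is established by induction on weight length, the key technical input being Lemma~\ref{Lem:Tech}: tensoring preserves monomorphisms in the heart under suitable hypotheses, which is verified using adjunction and the fact that every simple \emph{is} a quotient of a tilting module (Corollary~\ref{cor:D-tilt-quot}), together with the two-term complexes of Corollary~\ref{Cor:SES}. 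Once the heart is closed under $\otimes$, exactness in each variable is automatic (short exact sequences in the heart are distinguished triangles, and tensoring preserves triangles), and rigidity follows because the heart is a full monoidal subcategory of the rigid category $\bK(\Tilt(\cD))$ closed under duality (Lemma~\ref{Lem:HeartDual}). Your instinct that exactness is the crux and that it must exploit the standard/costandard filtrations of tiltings is sound, but the Tor-computation in $\cB$ via Ringel duality that you sketch is what the paper does later, in \S\ref{s:B2}, for the classification of local envelopes --- not for constructing the tensor structure on $\cD$.
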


Thus $\Psi \colon \cA \to \cD$ is a fully faithful tensor functor from $\cA$ to a pre-Tannakian category.

\begin{remark}
It took substantial effort to reach the category $\cD$: we first needed the oligomorphic category $\cA$, then we needed to prove Theorem~\ref{mainthm1} to obtain a highest weight structure on the pre-sheaf category $\cB$, and finally we needed to analyze tilting modules in $\cB$ to obtain the combinatorial category $\fD$. A posteriori, one can define $\fD$ directly, as we did above, and obtain the abelian category $\cD$ with very little effort. However, we only know how to construct the tensor structure on $\cD$ by leveraging the equivalence $\cA=\Tilt(\cD)$. Thus much of this work is still needed to obtain $\cD$ as a tensor category. In the body of the paper, we actually treat $\cD$ before $\cB$, but the work on $\cA$ is still prerequisite.
\end{remark}

\begin{remark}\label{rem:RingelEnv}
In many of the currently understood abelian envelopes $\cE\to\cT$, the pre-Tannakian category $\cT$ has a highest weight structure for which $\cE$ is the category of tilting modules; see, e.g., \cite{CO, EHS} or \cite[Propositions~7.3.1 and~7.4.1]{CEOP}. It then follows that $\cT$ can be obtained as the Ringel dual of (an appropriate finite version of) $\PSh(\cE)$; see \cite{BS}.

A systematic treatment of this recurring principle is provided in \cite{FlakeGruber}, which appeared in the finishing stages of the current paper. Our proof that $\cD$ has a rigid monoidal structure does not quite fit the general approach of \cite[Theorem~D]{FlakeGruber}, since we work more directly with $\cD$ rather than its Ringel dual. We only bring the latter into the picture after $\cD$ is established, in order to deal with the local abelian envelope question.

Nonetheless, this general theme is the reason that we were led to consider the Ringel dual $\cD$ of $\cB$. Crucially, the present case differs from previous ones since $\cD$ is only one of multiple (two) local abelian envelopes. This accounts for some of the difficulty we encounter.
\end{remark}

\subsection{Local abelian envelopes}

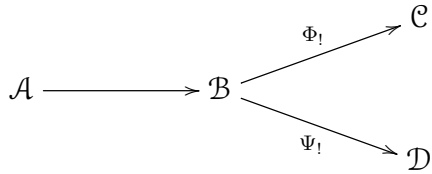
\begin{figure}
\begin{displaymath}
\xymatrix@R=1em@C=5em{
&& \cC \\
\cA \ar[r] & \cB \ar[ru]^-{\Phi_!} \ar[rd]_-{\Psi_!} \\
&& \cD }
\end{displaymath}
\caption{The four main categories and their relationships: $\cA$ is the second Delannoy category, $\cB$ is the pre-sheaf category of $\cA$, $\cC$ is the first Delannoy category, and $\cD$ is the new tensor category constructed in this paper, which we call the abelian second Delannoy category. The categories $\cC$ and $\cD$ are pre-Tannakian, while $\cA$ and $\cB$ are not.}
\end{figure}

We now have two faithful tensor functors
\begin{displaymath}
\Phi \colon \cA \to \cC, \qquad
\Psi \colon \cA \to \cD
\end{displaymath}
from $\cA$ to pre-Tannakian categories. We prove:

\begin{maintheorem} \label{mainthm3}
The category $\cA$ has exactly two local envelopes, namely $\Phi$ and $\Psi$.
\end{maintheorem}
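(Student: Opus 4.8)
The plan is to use the theory of local abelian envelopes from \cite{HomKer} together with the semi-infinite highest weight structure on the pre-sheaf category $\cB$. Recall from \S\ref{ss:local} that a local envelope of $\cA$ is essentially a localizing Serre tensor-ideal of the pre-sheaf category $\cB = \PSh(\cA)$, or rather of an appropriate derived/unbounded version thereof: every faithful tensor functor $\cA\to\cT$ with $\cT$ pre-Tannakian is obtained by quotienting $\cB$ by the kernel of $\cB\to\cT$, and the local envelopes correspond to the \emph{minimal} such kernels. So the task is to classify the relevant Serre tensor-subcategories $\cK\subseteq\cB$ that are kernels of faithful tensor functors out of $\cA$, and to show there are exactly two minimal ones, realized by $\Phi_!$ and $\Psi_!$ respectively. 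Concretely, one must show: (i) $\ker(\Phi_!)$ and $\ker(\Psi_!)$ are distinct minimal kernels (hence $\Phi$ and $\Psi$ are genuinely different local envelopes), and (ii) every kernel $\cK$ contains one of these two.

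For step (i), distinctness is immediate since $\cC$ is semi-simple while $\cD$ is not, so the two functors cannot factor through one another; that $\Phi$ belongs to a local envelope was essentially shown in \cite{delmap}, and that $\Psi$ does is part of the content leading to Theorem~\ref{mainthm2} (it realizes $\cA$ as the tilting subcategory of $\cD$, and $\cD$ is the Ringel dual of $\cB$, which makes $\Psi_!\colon\cB\to\cD$ the canonical Ringel-duality quotient — its kernel is the "co-tilting-orthogonal" part). The heart of the argument is step (ii). Here I would exploit the triangular/highest weight structure: the simple $\fB$-modules $\bbS_\lambda$ are indexed by weights $\Lambda$, and any tensor-stable Serre subcategory $\cK$ is determined by the set $S\subseteq\Lambda$ of $\lambda$ with $\bbS_\lambda\in\cK$. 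The tensor product on $\cA$ (and its extension to $\cB$) acts on weights via a combinatorial fusion rule — the rule for $M_\lambda\otimes M_\mu$ mentioned after Theorem~\ref{mainthm1}, which is close to the Delannoy fusion rule of \cite[Corollary~7.3]{line}. One then checks that this fusion rule forces $S$ to be one of a very short list of tensor-ideals, and that among the \emph{proper} ones (faithfulness on $\cA$ means $\bbP_\lambda\notin\cK$, i.e. $\cK$ meets $\Proj(\cB)$ trivially), the minimal ones are exactly $\ker\Phi_!$ and $\ker\Psi_!$.

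The key new technical input — and the step I expect to be the main obstacle — is controlling kernels that are \emph{not} Serre subcategories generated by simples, i.e. handling the genuinely derived/homological content. This is where the "semi-orthogonal decomposition of the derived category of $\PSh(\cA)$" advertised in the introduction enters: one shows $\rD(\cB)$ decomposes along the two highest-weight-type structures (the one with standards $\stan_\lambda$ and costandards $\cost_\lambda$ from the triangular structure, and the Ringel-dual one controlling $\cD$), and that any quotient of $\cB$ onto a pre-Tannakian category must respect one of the two pieces, pinning the kernel down to contain $\ker\Phi_!$ or $\ker\Psi_!$. Making this precise requires: identifying the tilting modules $\bbT_\lambda$ in $\cB$ explicitly (\S\ref{ss:B-tilt}) and the combinatorial category $\fD$ as $\Tilt(\cB)^{\op}$; verifying that $\Ext$-vanishing between the relevant families gives the semi-orthogonality; and an argument (presumably via rigidity of the image in a pre-Tannakian category, so that short exact sequences involving standards must split unless they lie in the kernel) that a tensor-functor kernel cannot "split the decomposition". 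I would organize the write-up as: (1) recollections on local envelopes as minimal kernels; (2) reduction of the classification to combinatorics of tensor-stable weight-sets via the fusion rule and Theorem~\ref{mainthm1}; (3) the semi-orthogonal decomposition of $\rD(\cB)$; (4) the conclusion that the only two minimal kernels are $\ker\Phi_!$ and $\ker\Psi_!$. The fusion-rule bookkeeping in (2) is routine given \cite{line}; the real work is in (3)–(4), and in particular in showing no "exotic" local envelope sits strictly between $\cB$ and the two known ones.
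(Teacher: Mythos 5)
Your high-level architecture is close to the paper's --- the semi-orthogonal decomposition of $\bD(\cB)$ is indeed the engine --- but there are two genuine gaps that would derail the write-up as planned.

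First, the decomposition has \emph{three} pieces, not two. Besides the $\bD(\cC)$-piece (generated by the $\bbQ_\lambda$) and the $\bD(\cD)$-piece (generated by the $\bbI_\lambda$), there is a copy of $\bD(\Vec)$ generated by $\bbS_{\varnothing}$, corresponding to the semi-simplification functor $\Theta\colon\cA\to\Vec$. This third piece is not a technicality: the mechanism that makes the whole argument work is that the tensor unit $\bbP_{\varnothing}$ of $\bD(\cB)$ has exactly three graded pieces $U_1=\bbS_\varnothing$, $U_2=\bbQ_\varnothing$, $U_3=\bbI_\varnothing[-1]$ for the filtration, these are mutually orthogonal idempotent algebras for $\otimes^{\rL}$ (each is the unit of its piece, and $U_i\otimes^{\rL}U_j=0$ for $i\ne j$), and a pre-Tannakian target has no tensor zero-divisors. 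Hence $\rL F_!$ must kill exactly two of the $U_i$ and send the third to $\bbone$, and the three resulting cases are precisely type $\Phi$, type $\Psi$, type $\Theta$ (the last being non-faithful). With only two pieces this trichotomy does not close up, and your plan never identifies this unit-decomposition argument, which is the actual key lemma.

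Second, your step (2) --- classifying tensor-stable Serre subcategories of $\cB$ by sets of weights via the fusion rule --- is not needed and would not suffice: the kernel of the underived functor $F_!$ does not control the factorization, and the relevant kernels (e.g.\ $\ker\rL\Psi_!=\langle\cB_1,\cB_2\rangle$) are derived-level objects not generated by simples in any useful sense. You partly acknowledge this, but you leave the replacement vague. What is actually required, after the ``kills two of three'' step, is a t-exactness argument showing the induced functor on the surviving piece is the derived functor of an exact tensor functor: for type $\Phi$ this uses the short exact sequence $0\to\bbP_\lambda\to\bbQ_\lambda\oplus\bbQ_{\lambda^\flat}\to\bbI_\lambda\to 0$ to see $\rL F_!(\bbQ_\lambda)$ is concentrated in degree $0$, and for type $\Psi$ it uses the (SF)/(CF) exactness criteria characterizing exact extensions $\Tilt(\cD)\to\cT$. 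Neither step appears in your outline.
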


Theorem~\ref{mainthm3} is a consequence of structural results we establish for the bounded derived category $\bD(\cB)$, which we now explain. Let $\Theta \colon \cA \to \Vec$ be the semi-simplification functor (\S \ref{ss:ss}); it is quite degenerate, e.g., each $\sA(\bR^{(n)})$ with $n \ge 1$ is sent to zero. Identifying $\cA$ with $\Proj(\cB)$, the functors $\Phi$, $\Psi$, and $\Theta$ admit unique right-exact extensions
\begin{displaymath}
\Phi_! \colon \cB \to \cC, \qquad \Psi_! \colon \cB \to \cD, \qquad \Theta_! \colon \cB \to \Vec.
\end{displaymath}
Each of these functors admits a left-derived functor. A priori, the derived functors exist on the bounded above derived categories, but we show they preserve the bounded derived categories. The following is the most basic structural result for $\bD(\cB)$.

\begin{maintheorem} \label{mainthm4}
The category $\bD(\cB)$ admits a semi-orthogonal decomposition $\langle \cB_1, \cB_2, \cB_3 \rangle$. The pieces $\cB_1$, $\cB_2$, and $\cB_3$ are equivalent to $\bD(\Vec)$, $\bD(\cC)$, and $\bD(\cD)$ via $\rL \Theta_!$, $\rL \Phi_!$, and $\rL \Psi_!$.
\end{maintheorem}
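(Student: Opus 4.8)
The plan is to build the semi-orthogonal decomposition one step at a time, peeling off the pieces in the order $\cB_1 = \langle \rL\Theta_! \rangle$ first, then $\cB_2 = \langle \rL\Phi_! \rangle$, then identifying the remaining orthogonal complement with $\bD(\cD)$ via $\rL\Psi_!$. The key structural input is that $\cB$ has enough projectives (namely $\Proj(\cB) = \cA$), so $\bD(\cB) = \bD^{-}(\cB) \cap (\text{bounded cohomology})$ is generated as a triangulated category by the $\bbP_\lambda$'s, and the functors $\rL\Phi_!, \rL\Psi_!, \rL\Theta_!$ are computed by applying $\Phi, \Psi, \Theta$ to projective resolutions. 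First I would make the three admissible subcategories precise: set $\cB_1$ to be the triangulated subcategory generated by $\rL\Theta_!$-acyclic-free objects (concretely, the essential image of a section of $\rL\Theta_!$, which should be generated by $\bbP_\varnothing$ since $\Theta$ kills every $\sA(\bR^{(n)})$ with $n\geq 1$ but not $\bbone$); similarly $\cB_2$ generated by the projectives $\bbP_\lambda$ whose image under $\Phi$ involves the "new" simple $L_\lambda$ with $\ell(\lambda) \geq 1$; and $\cB_3$ the rest. Then I would verify the semi-orthogonality relations $\Hom_{\bD(\cB)}(\cB_i, \cB_j) = 0$ for $i > j$ and that $\cB_1, \cB_2, \cB_3$ generate, using the combinatorial description of $\fB$ and the known decompositions $\Phi(M_\lambda) = L_\lambda \oplus L_{\lambda^\flat}$.

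The heart of the argument is the three equivalences. For $\rL\Theta_!$: since $\Theta$ is the semi-simplification, $\rL\Theta_!$ should collapse almost everything, and the claim is that $\cB_1 \simeq \bD(\Vec)$ is generated by a single object — I would check this by computing $\RHom_{\cB}(\bbP_\varnothing, \bbP_\varnothing)$ and its image, showing the relevant endomorphism dg-algebra is formal and equal to $k$. For $\rL\Phi_!$: the functor $\Phi_!\colon \cB \to \cC$ is right exact with $\Phi_!(\bbP_\lambda) = \Phi(M_\lambda) = L_\lambda \oplus L_{\lambda^\flat}$, and $\cC$ is semisimple, so $\rL\Phi_!$ lands in $\bD(\cC) = \bigoplus \bD(\Vec)$; the content is that $\rL\Phi_!$ restricted to $\cB_2$ is fully faithful and essentially surjective, which reduces to showing that the adjunction unit/counit are isomorphisms on generators, i.e., an $\Ext$ computation in $\cB$ matching $\Ext$ in $\cC$ (which vanishes in positive degree). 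For $\rL\Psi_!$: here $\cD$ is a highest weight category with $\cA = \Psi^{-1}(\Tilt(\cD))$, so $\rL\Psi_!$ should be the derived functor sending $\bbP_\lambda$ to the tilting module $\bT_\lambda$; fully faithfulness of $\rL\Psi_!$ on $\cB_3$ amounts to the statement that $\Ext^\bullet_{\cD}(\bT_\lambda, \bT_\mu)$ is concentrated in degree zero and agrees with $\Hom_{\cB}(\bbP_\lambda, \bbP_\mu)$ modulo the contributions already accounted for by $\cB_1$ and $\cB_2$ — this is exactly the Ringel duality statement, since $\bD(\Tilt(\cD)) \simeq \bD(\cD)$ for a highest weight category with enough tilting modules.

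I expect the main obstacle to be the bookkeeping around \emph{which} projective generators land in $\cB_2$ versus $\cB_3$, and showing the decomposition is genuinely a direct sum decomposition of the derived category rather than just a filtration — i.e., proving the mutual orthogonality (not just semi-orthogonality in one direction) needs care because $\Phi$ and $\Psi$ both see the "old" simples $L_{\lambda^\flat}$. Concretely, the subtle point is that $\Phi(M_\lambda)$ and $\Psi$-images overlap in the $L_{\lambda^\flat}$ summand, so the three functors are not "disjoint" on objects; the resolution is that the semi-orthogonal decomposition is of \emph{derived} categories, and the overlap is resolved homologically — the cone of $\bbP_{\lambda\bb} \to \bbP_\lambda$ (coming from $u_\lambda$) and similar mapping cones redistribute the simple constituents across the pieces. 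I would handle this by an explicit induction on $\ell(\lambda)$: assuming the decomposition works below length $n$, analyze the four-term or two-term structure of $\bbP_\lambda$ with $\ell(\lambda) = n$ and its projective resolution, peel off the $\cB_1$ and $\cB_2$ parts using the mapping cones of $d_\lambda$ and $u_\lambda$ from Theorem~\ref{mainthm1}, and identify the residue in $\cB_3$ with (a resolution of) $\bT_\lambda$. The semisimplicity of $\cC$ and the highest-weight structure of $\cD$ are what make all the relevant $\Ext$-groups collapse, so that each $\cB_i$ is equivalent to the derived category of a "nice" abelian category rather than something with higher structure.
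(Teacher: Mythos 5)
Your high-level architecture (three pieces indexed by the three functors, with the $\Psi$-piece ultimately identified via tilting modules and $\bK(\Tilt\cD)\simeq\bD(\cD)$) matches the paper's, but there is a genuine gap at the crucial step: you never pin down correct generators for the three subcategories, and the candidates you do propose are wrong. You suggest $\cB_1$ is generated by $\bbP_\varnothing$; in fact $\bbP_\varnothing$ is the monoidal unit of $\bD(\cB)$ and its graded pieces for the decomposition are $\bbS_\varnothing\in\cB_1$, $\bbQ_\varnothing\in\cB_2$, and $\bbI_\varnothing[-1]\in\cB_3$ (Proposition~\ref{prop:3graded}), so $\bbP_\varnothing$ does not lie in any single piece. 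The correct generator of $\cB_1$ is the simple $\bbS_\varnothing$, and the relevant computation is $\Ext^{>0}(\bbS_\varnothing,\bbS_\varnothing)=0$ (not $\RHom(\bbP_\varnothing,\bbP_\varnothing)$, which is trivially $k$ by projectivity and tells you nothing). Similarly, your plan to split the projectives $\bbP_\lambda$ between $\cB_2$ and $\cB_3$ according to which simples of $\cC$ appear in $\Phi(M_\lambda)$ cannot work: every $\bbP_\lambda$ with $\lambda$ non-empty has nontrivial components in more than one piece. The paper's generators are the non-projective modules $\bbQ_\lambda=\bbP_{\lambda\bb}/\bbS_{\lambda\bb\ww}$ for $\cB_2$ and the injectives $\bbI_\lambda$ for $\cB_3$; these are chosen precisely so that $\rL\Phi_!(\bbQ_\lambda)=L_\lambda$ with $\Ext^i(\bbQ_\lambda,\bbQ_\mu)=\delta_{i,0}\delta_{\lambda,\mu}$, and $\rL\Psi_!(\bbI_\lambda)=\bT_\lambda[1]$ with $\cB_3\simeq\bK(\Inj\cB)$. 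Finding these objects is the real content of the theorem (the paper remarks that discovering the statements was harder than proving them), and your proposal does not supply a mechanism for producing them.

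A secondary issue: you worry that the argument requires ``mutual orthogonality (not just semi-orthogonality in one direction).'' It does not. A semi-orthogonal decomposition only requires $\Hom(\cB_i,\cB_j)=0$ for $i<j$ in the chosen order, and indeed full two-sided orthogonality fails here: the paper establishes both orderings $\langle\cB_1,\cB_2,\cB_3\rangle$ and $\langle\cB_2,\cB_1,\cB_3\rangle$, so $\cB_1$ and $\cB_2$ happen to be mutually orthogonal, but $\cB_3$ is only orthogonal to the others in one direction. The ``overlap'' of $L_{\lambda^\flat}$ that concerns you is handled automatically once the correct generators are in place, via the short exact sequences $0\to\bbS_{\lambda\ww}\to\bbQ_\lambda\to\cost_\lambda\to 0$ and $0\to\bbS_\lambda\to\cost_\lambda\to\bbS_{\lambda\bb}\to 0$ used to prove generation; no induction on mapping cones of $d_\lambda$ and $u_\lambda$ is needed.
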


The proof of Theorem~\ref{mainthm4} is fairly straightforward: we compute the above derived functors and $\Ext$ groups for certain modules using explicit resolutions, and construct some explicit triangles relating simple objects of $\cB$ to certain objects in the $\cB_i$'s. Discovering the correct statements was more difficult than finding their proofs.

There is much more to say about this situation, which we spell out in \S \ref{s:B2}. For now, we mention one other important aspect. Recall that $\cB$ carries a right-exact tensor product. The derived tensor product endows the bounded above derived category $\bD^+(\cB)$ with a tensor structure. We show that $\bD(\cB)$, as well as each $\cB_i$, is closed under tensor product, and that $\cB_i$ and $\cB_j$ (for $i \ne j$) are orthogonal under the tensor product. Moreover, the functors $\Phi$, $\Psi$, and $\Theta$, as well as their derived functors, are tensor functors, and the equivalences in the above theorem hold as tensor categories.

We explain the basic idea in the proof of Theorem~\ref{mainthm3}. Let $F \colon \cA \to \cT$ be a tensor functor, with $\cT$ pre-Tannakian. Then $F$ admits a unique right-exact extension $F_! \colon \cB \to \cT$, and we can consider its left-derived functor $\rL F_! \colon \bD(\cB) \to \bD^+(\cT)$. Since non-zero objects in a pre-Tannakian category have non-zero tensor product, it follows that $\rL F_!$ must kill two of the $\cB_i$'s, and so it factors through one of $\rL \Phi_!$, $\rL \Psi_!$, or $\rL \Theta_!$. More work is still needed to complete the proof of Theorem~\ref{mainthm3}, but this is the key idea.

\begin{remark}
If, in the definition of local envelope, one dropped the faithful assumption then $\cA$ would have three local envelopes, namely, $\Phi$, $\Psi$, and $\Theta$. It is very interesting that $\bD(\cB)$ is, in a sense, composed of the three local envelopes. It is not clear to us if this should be a more general phenomenon.
\end{remark}

\subsection{Problems}

We mention a few problems related to our work.

\begin{itemize}
\item An interesting feature of the abelian envelope $\uRep(G, \mu)$ constructed in \cite{repst} in the quasi-regular case is that it is concrete: the objects are certain modules over a ring naturally associated to $G$ (the completed group algebra). Is there a similar concrete interpretation of our category $\cD$?
\item A natural problem is to extend our work to the fourth Delannoy category. (The third Delannoy category is equivalent to the second one.) We expect our general approach should apply, but we also expect the fourth category to be more complicated than the second. In particular, we expect four local envelopes: one equivalent to $\cC$, two to $\cD$, and one that is new.
\item There has been much recent progress on the Delannoy category $\cC$. In \cite{delalg}, its simple commutative algebras are classified. In \cite{delchar}, it is characterized by its Grothendieck group and by the triviality of Adams operations. And in \cite{KhovanovSnyder}, a diagrammatic description is given. It would be interesting to extend results like these to $\cD$.
\item Kriz \cite{Kriz} defined a $q$-variant of the Delannoy category $\cC$. Is there a $q$-variant of our pre-Tannakian category $\cD$?
\item The field of tensor triangulated categories has been very active in recent years. It would be interesting to study $\bD(\cB)$ in the context of this general theory. For instance, what is its Balmer spectrum? 
\item Suppose $G$ is an oligomorphic group with a measure $\mu$. Under what conditions does $\uPerm(G, \mu)$ admit a triangular structure? For instance, does this hold for the measures found in \cite{colored, homoperm}?
\end{itemize}

\subsection{Tensor category terminology} \label{ss:tencat}

We fix a field $k$ for the duration of the paper. A \defn{tensor category} is an additive $k$-linear category equipped with a $k$-bilinear symmetric monoidal structure. A \defn{tensor functor} is a $k$-linear symmetric monoidal functor. We write $\bbone$ for the unit object of a tensor category. A \defn{pre-Tannakian category} is an abelian tensor category such that all objects have finite length, all $\Hom$ spaces are finite dimensional, every object is rigid (i.e., admits a dual), and $\End(\bbone)=k$.

\subsection{Notation} \label{ss:not}

We list some of the important notation here. Basics:
\begin{description}[align=right,labelwidth=2.25cm,leftmargin=!]
\item[ $k$ ] the coefficient field
\item[$\bN$ ] the natural numbers, including $0$
\item[ $\bbone$ ] unit object of a tensor category
\item[ $\Vec$ ] the category of finite dimensional $k$-vector spaces
\end{description}
The main categories:
\begin{description}[align=right,labelwidth=2.25cm,leftmargin=!]
\item[ $\cA$ ] the (rigid non-abelian) second Delannoy category (\S \ref{ss:delcat})
\item[ $\cB$ ] the (abelian non-rigid) category of $\fB$-modules, or pre-sheaves on $\cA$ (\S \ref{ss:B-mod})
\item[ $\cC$ ] the (semi-simple pre-Tannakian) first Delannoy category (\S \ref{ss:delcat})
\item[ $\cD$ ] the (pre-Tannakian) category of $\fD$-modules, or the Ringel dual of $\cB$ (\S \ref{def:Dmod})
\end{description}
Functors:
\begin{description}[align=right,labelwidth=2.25cm,leftmargin=!]
\item[ $\Phi$ ] the functor $\cA \to \cC$ (\S \ref{ss:functor})
\item[ $\Psi$ ] the functor $\cA \to \cD$ (\S \ref{ss:Dtilt})
\item[ $\Theta$ ] the semi-simplification functor $\cA \to \Vec$ (\S \ref{s:classindec})
\item[ $\Phi_!$ ] the right-exact extension $\cB \to \cC$ of $\Phi$ (\S \ref{ss:Phi-shriek})
\item[ $\Psi_!$ ] the right-exact extension $\cB \to \cD$ of $\Psi$ (\S \ref{ss:Psi-shriek})
\item[ $\Theta_!$ ] the right-exact extension $\cB \to \Vec$ of $\Theta$ (\S \ref{ss:Theta-shriek})
\end{description}
Oligomorphic theory:
\begin{description}[align=right,labelwidth=2.25cm,leftmargin=!]
\item[ $\bbG$ ] the oligomorphic group $\Aut(\bR, <)$ (\S \ref{ss:delgp})
\item[ $\bR^{(n)}$ ] the set of increasing tuples $(x_1, \ldots, x_n)$ in $\bR^n$
\item[ $p_{n,i}$ ] the projection $\bR^{(n)} \to \bR^{(n-1)}$ that omits the $i$th coordinate
\item[ $\sC(\bR^{(n)})$ ] the Schwartz space of $\bR^{(n)}$ in $\cC$ (\S \ref{ss:delcat})
\item[ $\sA(\bR^{(n)})$ ] the Schwartz space of $\bR^{(n)}$ in $\cA$ (\S \ref{ss:delcat})
\end{description}
Weights:
\begin{description}[align=right,labelwidth=2.25cm,leftmargin=!]
\item[ $\Lambda$ ] the set of weights (words in $\bb$ and $\ww$)
\item[ $\Lambda_{\ww}$ ] the set of non-empty weights with final letter $\ww$
\item[ $\Lambda_{\bb}$ ] the set of non-empty weights with final letter $\bb$
\item[ $\Lambda^{\alt}$ ] the set of alternating weights (no $\ww\ww$ or $\bb\bb$ substring)
\item[ $\lambda$ ] a weight
\item[ $\lambda^{\flat}$ ] the weight obtained from $\lambda$ by deleting the final letter
\item[ $\lambda^{\vee}$ ] the dual weight to $\lambda$ (change $\ww$ to $\bb$ and vice versa)
\end{description}
Modules parametrized by weights:
\begin{description}[align=right,labelwidth=2.25cm,leftmargin=!]
\item[ $L_{\lambda}$ ] the simple of $\cC$ indexed by $\lambda$ (\S \ref{ss:delcat})
\item[ $M_{\lambda}$ ] the indecomposable of $\cA$ indexed by $\lambda$ (\S \ref{ss:indecomp-constr})
\item[ $\bS_{\lambda}$ ] the simple of $\cD$ indexed by $\lambda$ (\S \ref{def:Dmod})
\item[ $\bDelta_{\lambda}$ ] the standard of $\cD$ indexed by $\lambda$ (\S \ref{ss:Dstd})
\item[ $\bnabla_{\lambda}$ ] the costandard of $\cD$ indexed by $\lambda$ (\S \ref{ss:Dstd})
\item[ $\bT_{\lambda}$ ] the tilting module of $\cD$ indexed by $\lambda$ (\S \ref{ss:Dtilt})
\item[ $\bbS_{\lambda}$ ] the simple of $\cB$ indexed by $\lambda$ (\S \ref{ss:B-basic})
\item[ $\bbP_{\lambda}$ ] the projective of $\cB$ indexed by $\lambda$ (\S \ref{ss:B-basic})
\item[ $\bbI_{\lambda}$ ] the injective of $\cB$ indexed by $\lambda$ (\S \ref{ss:B-basic})
\item[ $\stan_{\lambda}$ ] the standard of $\cB$ indexed by $\lambda$ (\S \ref{ss:B-basic})
\item[ $\cost_{\lambda}$ ] the costandard of $\cB$ indexed by $\lambda$ (\S \ref{ss:B-basic})
\item[ $\bbQ_{\lambda}$ ] the module $\bbP_{\lambda\bb}/\bbS_{\lambda \bb\ww}$ (\S \ref{ss:B-basic})
\item[ $\bbT_{\lambda}$ ] the tilting $\fB$-module indexed by $\lambda$ (\S \ref{ss:B-tilt})
\end{description}
By convention, if $\lambda$ is the empty weight then $L_{\lambda^{\flat}}$ is~0; similarly for other objects parametrized by weights. We note that we have tensor equivalences
\begin{displaymath}
\Proj(\cB) \cong \cA \cong \Tilt(\cD), \qquad
\bbP_{\lambda} \leftrightarrow M_{\lambda} \leftrightarrow \bT_{\lambda}.
\end{displaymath}
Some additional notation:
\begin{description}[align=right,labelwidth=2.25cm,leftmargin=!]
\item[ $\bD(-)$ ] the bounded derived category
\item[ $\bD^+(-)$ ] the bounded above derived category
\item[ $\bK(-)$ ] the bounded homotopy category
\item[ $\rK(-)$ ] the Grothendieck group
\item[ $(-)^*$ ] the dual in a monoidal category
\item[ $(-)^{\vee}$ ] the pointwise duality on $\cD$ (\S \ref{def:Dmod}) and $\cB$ (\S \ref{ss:B-mod})
\end{description}

\section{Oligomorphic tensor categories} \label{s:oligo}

In this section, we briefly review the general theory of oligomorphic tensor categories from \cite{repst}. A more detailed overview can be found in \cite[\S 2]{line}.

\subsection{Oligomorphic groups}

An \defn{oligomorphic group} is a permutation group $(G, \Omega)$ such that $G$ has finitely many orbits on $\Omega^n$ for all $n \ge 0$. Fix such a group. For a finite subset $A$ of $\Omega$, let $G(A)$ be the subgroup of $G$ fixing each element of $A$. These subgroups form a neighborhood basis for a topology on $G$. This topology has the following properties \cite[\S 2.2]{repst}: it is Hausdorff; it is non-archimedean, i.e., open subgroups form a neighborhood basis of the identity; and it is Roelcke precompact, i.e., if $U$ and $V$ are open subgroups then $U \backslash G/V$ is a finite set. A topological group with these three properties is called \defn{pro-oligomorphic}. While most pro-oligomorphic groups of interest are in fact oligomorphic, working in the pro-oligomorphic setting can be clearer since many concepts depend only on the topology and not $\Omega$.

Fix a pro-oligomorphic group $G$. An action of $G$ on a set $X$ is \defn{smooth} if every point in $X$ has open stabilizer in $G$, and \defn{finitary} if $G$ has finitely many orbits on $X$. We use the term ``$G$-set'' to mean ``set equipped with a finitary smooth $G$-action.'' Let $\bS(G)$ be the category of $G$-sets. This category is closed under finite products \cite[\S 2.3]{repst}.

Let $X$ be a $G$-set. A \defn{$\hat{G}$-subset} of $X$ is a subset that is stable under some open subgroup of $G$. More generally, one can define a $\hat{G}$-set without an ambient $G$-set \cite[\S 2.5]{repst}. The symbol $\hat{G}$ has no meaning on its own, but intuitively one can think of it as an infinitesimal neighborhood of the identity in $G$.

\subsection{Measures}

Fix a pro-oligomorphic group $G$ and a field $k$. We require the notion of measure introduced in \cite{repst}. There are two equivalent definitions, which are both useful:

\begin{definition}
A $k$-valued \defn{measure} for $G$ is a rule assigning to each finitary $\hat{G}$-set $X$ a quantity $\mu(X)$ in $k$ such that:
\begin{enumerate}
\item If $X$ and $Y$ are isomorphic then $\mu(X)=\mu(Y)$.
\item If $X$ is a singleton set then $\mu(X)=1$.
\item If $X$ is a $\hat{G}$-set and $g \in U$ then $\mu(X^g)=\mu(X)$; here $X^g$ is the set $X$ with the action of $\hat{G}$ twisted by conjugation by $g$.
\item We have $\mu(X \amalg Y)=\mu(X)+\mu(Y)$.
\item Suppose $Y \to X$ is a map of transitive $U$-sets, for some open subgroup $U$, and let $F$ be the fiber above some point. Then $\mu(Y)=\mu(F) \cdot \mu(X)$.
\end{enumerate}
\end{definition}

\begin{definition}
A $k$-valued \defn{measure} for $G$ is a rule assigning to each morphism $f \colon Y \to X$ of transitive $G$-sets a quantity $\mu(f)$ in $k$ such that:
\begin{enumerate}
\item If $f$ is an isomorphism then $\mu(f)=1$.
\item We have $\mu(g \circ f)=\mu(g) \circ \mu(f)$ when defined.
\item Let $f$ be as above, let $X' \to X$ be another morphism of transitive $G$-sets, and let $f' \colon Y' \to X'$ be the base change of $f$. Let $Y'=\bigsqcup_{i=1}^n Y'_i$ be the orbit decomposition of $Y'$, and let $f'_i$ be the restriction of $f'$ to $Y'_i$. Then $\mu(f)=\sum_{i=1}^n \mu(f'_i)$.
\end{enumerate}
\end{definition}

We briefly explain the equivalence between the two definitions. Given a measure $\mu$ in the first sense, we obtain one in the second send by defining $\mu(f)$ to be $\mu(f^{-1}(x))$ for any point $x \in X$. Now suppose we have a measure $\mu$ in the second sense. Given a transitive $U$-set $X$, we define $\mu(X)=\mu(f)$, where $f \colon G \times_U X \to G/U$ is the natural map (which is a map of transitive $G$-sets). We then extend this definition additively to general $\hat{G}$-sets. We refer to \cite[\S 3]{repst} for more details on measures.

\subsection{Integration}

Maintain the above setting, and fix a measure $\mu$. Let $X$ be a $G$-set. We say that a function $\phi \colon X \to k$ is a \defn{Schwartz function} if it is smooth, i.e., invariant under translation by some open subgroup. We let $\sS(X)$ be the space of Schwartz functions, which is a $k$-vector space.

Let $\phi \in \sS(X)$ be given. Then $\phi$ assumes finitely many non-zero values, say $a_1, \ldots, a_n$. Let $A_i=\phi^{-1}(a_i)$, which is a finitary $\hat{G}$-set. We define the integral of $\phi$ by
\begin{displaymath}
\int_X \phi(x) dx = \sum_{i=1}^n a_i \mu(A_i).
\end{displaymath}
This is easily seen to be well-defined. Integration defines a $k$-linear map $\sS(X) \to k$. See \cite[\S 4.3]{repst} for details.

More generally, if $f \colon Y \to X$ is a map of $G$-sets then we define $f_* \colon \sS(Y) \to \sS(X)$ by integrating over the fibers. This has all the expected properties of a push-forward operation; see \cite[\S 4.4]{repst} for details. There is also a pull-back $f^\ast \colon \sS(X)\to\sS(Y)$, given by $\varphi\mapsto \varphi\circ f$. Contrary to $f_\ast$, the pull-back does not depend on the measure. 

\subsection{Matrices}\label{SSMatrices}

Let $X$ and $Y$ be finitary $G$-sets. A \defn{$Y \times X$ matrix} is a Schwartz function $A \colon Y \times X \to k$. If $B$ is a $Z \times Y$ matrix then we define the product $BA$ to be the $Z \times X$ matrix given by
\begin{displaymath}
(BA)(z,x) = \int_Y B(z,y) A(y,x) dy.
\end{displaymath}
This has the expected properties of matrix multiplication, e.g., it is associative and the $X \times X$ identity matrix (defined by $I(x,x')=\delta_{x,x'}$) is the identity. See \cite[\S 7]{repst} for details.

Let $A$ be a $Y \times X$ matrix. A Schwartz function $\phi \in \sS(X)$ can be regarded as an $X \times \bone$ matrix (with $\bone$ a singleton), or a ``column vector'' of height $X$. Thus the product $A \phi \in \sS(Y)$ is defined. We thus see that multiplication by $A$ defines a linear map
\begin{displaymath}
A \colon \sS(X) \to \sS(Y).
\end{displaymath}
The matrix $A$ is determined by this map \cite[Proposition~7.5]{repst}. If $B$ is a $Z \times Y$ matrix then the composition of the maps associated to $B$ and $A$ is the map associated to the product matrix $BA$.

Let $f \colon Y \to X$ be a map of finitary $G$-sets.
As observed in \cite[\S 7.7]{repst}, there are matrices such that multiplication realizes the maps $f^\ast$ and $f_\ast$. By slight abuse of notation, we denote these matrices again by $f^\ast$ and $f_\ast$. The former is the $Y\times X$-matrix that sends $(y,x)$ to $\delta_{x,f(y)}$ and the latter is the $X\times Y$-matrix that sends $(x,y)$ to $\delta_{x,f(y)}$. We can then write
\begin{displaymath}
f^\ast v=f^\ast(v)\quad\mbox{and}\quad f_\ast w=f_\ast(w),
\end{displaymath}
for $v\in \sS(X)$ and $w\in\sS(Y)$, where the left-hand side of the equations denotes matrix multiplication.

\subsection{Permutation modules}\label{sec:permmod}
We now define a category $\uPerm(G, \mu)$. The objects are labeled by the finitary $G$-sets $X$, and it is convenient to give them the same notation $\sS(X)$ as their Schwartz space. The morphisms $\sS(X) \to \sS(Y)$ are the $G$-invariant $Y \times X$ matrices (or the linear transformations they define). Composition is given by matrix multiplication (or composition of functions). This category has a direct sum $\oplus$ and a tensor product $\otimes$ defined on objects by
\begin{displaymath}
\sS(X) \oplus \sS(Y) = \sS(X \amalg Y), \qquad
\sS(X) \otimes \sS(Y) = \sS(X \times Y).
\end{displaymath}
On morphisms these operations are given by natural analogs of the usual constructions on matrices (block matrices and Kronecker products).

The category $\uPerm(G, \mu)$ is a tensor category with the above structure. Every object of $\uPerm(G, \mu)$ is self-dual \cite[Proposition~8.6]{repst}, and so $\uPerm(G, \mu)$ is a rigid tensor category. The (categorical) dimension of the object $\sS(X)$ is $\mu(X)$. We note that a $G$-invariant $Y \times X$ matrix is essentially a function on the finite set $G \backslash (Y \times X)$, and so the $\Hom$ spaces in $\uPerm(G, \mu)$ are finite dimensional.

When we denote a specific instance of $\uPerm(G,\mu)$ by $\cA$ or $\cC$, we will replace the symbol $\sS(X)$ accordingly with $\sA(X)$ or $\sC(X)$ (see \S \ref{ss:delcat}).

\section{Delannoy categories} \label{s:del}

In this section we introduce the Delannoy categories and recall some known results about them. Essentially everything in this section can be found in \cite{repst}, \cite{line}, or \cite{delmap}.

\subsection{The group} \label{ss:delgp}

Let $\bbG=\Aut(\bR, <)$ be the group of all order preserving self-bijections of the real line $\bR$. This group is oligomorphic via its action on $\bR$.

We now describe the structure of the category $\bS(\bbG)$. For $n\in \bN$, let $\bR^{(n)}$ be the subset of $\bR^n$ consisting of increasing tuples $(x_1, \ldots, x_n)$, i.e., $x_1<x_2<\cdots<x_n$. By convention $\bR^{(0)}$ is a singleton. The group $\bbG$ acts on $\bR^{(n)}$, and one sees that the action is smooth and transitive. In fact, every transitive $\bbG$-set is isomorphic to $\bR^{(n)}$ for some $n$ \cite[Corollary~16.2]{repst}. If $ [m] \to [n]$ is an order-preserving injection of finite sets then the induced projection $ \bR^{(n)} \to \bR^{(m)}$ is a map of $\bbG$-sets, and one easily sees that all maps are of this form. Thus the category of transitive objects in $\bS(\bbG)$ is anti-equivalent to the category of finite totally ordered sets.

A $\hat{\bbG}$-subset of $\bR^{(n)}$ is one that can be defined by a first-order formula in the language of total orders using some number of constants. For example, the $\hat{\bbG}$-subsets of $\bR$ are finite unions of points and open intervals.

\subsection{Measures} \label{ss:four-meas}

The group $\bbG$ has exactly four $k$-valued measures $\mu_1, \mu_2, \mu_3, \mu_4$, as shown in \cite[\S 16]{repst}. We first describe $\mu_1$. For a map $f \colon \bR^{(n)} \to \bR^{(m)}$ of transitive $\bbG$-sets, we have $\mu_1(f)=(-1)^{n+m}$. For a $\hat{\bbG}$-subset $X$ of $\bR^{(n)}$, the quantity $\mu_1(X)$ is equal to $\chi_c(X)$, the Euler characteristic of compactly supported cohomology, where $X$ is endowed with the subspace topology. For instance, if $X \subset \bR$ is an open interval then $\mu_1(X)=-1$, if $X$ is a half-open interval then $\mu_1(X)=0$, and if $X$ is a closed interval then $\mu_1(X)=1$.

We now describe the measure $\mu_2$. Consider a map $f \colon \bR^{(n)} \to \bR^{(m)}$ of $\bbG$-sets. As discussed, $f$ is a projection onto some subset of coordinates, corresponding to an order preserving injection $i \colon [m] \to [n]$. If $f$ remembers the final coordinate (i.e., $n$ is in the image of $i$) then $\mu(f)=(-1)^{n+m}$; if $f$ discards the final coordinate then $\mu(f)=0$. Now suppose that $X$ is a $\hat{\bbG}$-subset of $\bR^{(n)}$. Let $\ol{\bR}=\bR \cup \{\infty\}$, equipped with the topology homeomorphic to $(0,1]$. Then $\mu_2(X)$ is equal to $\chi_c(\ol{X})$, where $\ol{X}$ is the closure of $X$ in $\ol{\bR}{}^{(n)}$ (defined in the obvious manner). For instance, if $X \subset \bR$ is a bounded open interval then $\mu_2(X)=-1$, while if $X$ is an open interval that is unbounded above then $\mu_2(X)=0$.

The measure $\mu_3$ is similar to $\mu_2$, but adds $-\infty$ instead of $+\infty$. The measure $\mu_4$ adds both $-\infty$ and $+\infty$. So, for example, we have $\mu_4(\bR)=1$.

We can characterize all the measures succinctly as follows. Let $p_{2,i} \colon \bR^{(2)} \to \bR$ be the projection omitting the $i$th coordinate for $i=1,2$. The values of the measures on these morphisms are indicated in the following table:
\begin{center}
\begin{tabular}{l|cccc}
& $\mu_1$ & $\mu_2$ & $\mu_3$ & $\mu_4$ \\
\hline
$p_{2,1}$ & $-1$ & $-1$ &  $0$ & 0 \\
$p_{2,2}$ & $-1$ &  $0$ & $-1$ & 0
\end{tabular}
\end{center}
The measures are also completely determined by their values on sets $I^{(n)}$, where the $I$ are open intervals in $\bR$. For example, we have
\begin{displaymath}
\mu_1(I^{(n)})=(-1)^n\quad\mbox{and}\quad \mu_2(I^{(n)})=\begin{cases}(-1)^n&\mbox{if $I$ is bounded from above}\\
0&\mbox{otherwise}.
\end{cases}
\end{displaymath}

\subsection{The Delannoy categories} \label{ss:delcat}

We refer to the categories $\uPerm(\bbG, \mu_i)^{\rm kar}$ for $1 \le i \le 4$ as the \defn{Delannoy categories}. We are interested in the first two, and introduce names for them:
\begin{displaymath}
\cC = \uPerm(\bbG, \mu_1)^{\rm kar}, \qquad
\cA = \uPerm(\bbG, \mu_2)^{\rm kar}.
\end{displaymath}
This notation will be in effect for the remainder of the paper. For a $\bbG$-set $X$, we write $\sC(X)$ for the Schwartz space in $\cC$, and $\sA(X)$ for the Schwartz space in $\cA$.

The first Delannoy category $\cC$ is a semi-simple pre-Tannakian category, for any choice of coefficient field $k$ \cite[Theorem~16.10]{repst}. We recall the classification of simple objects. A \defn{weight} is a word in the two letter alphabet $\{\bb,\ww\}$. We write $\Lambda$ for the set of all weights, and $\ell(\lambda)$ for the length of a weight $\lambda$. Given a weight $\lambda$ of length $n=\ell(\lambda)$, there is a direct summand $L_{\lambda}$ of $\sC(\bR^{(n)})$, as defined in \cite[\S 4.1]{repst}. These objects are simple \cite[Theorem~4.3]{line} and account for all simple objects \cite[Corollary~4.12]{line}. The dimension of $L_{\lambda}$ is $(-1)^n$ \cite[Corollary~5.7]{line}. The dual $L_{\lambda}^*$ of $L_{\lambda}$ is the simple $L_{\lambda^{\vee}}$, where $\lambda^\vee$ is obtained from $\lambda$ by interchanging $\bb$ and $\ww$. The simple decomposition of Schwartz space is given by
\begin{displaymath}
\sC(\bR^{(n)}) = \bigoplus_{\ell(\lambda) \le n} L_{\lambda}^{\oplus m(\lambda)}, \qquad m(\lambda)=\binom{n}{\ell(\lambda)},
\end{displaymath}
see \cite[Theorem~4.7]{line}. In particular, $\sC(\bR^{(n)})$ has length $3^n$.

\subsection{Examples of matrix multiplication} \label{ss:matrix-ex}

The two vector spaces
\begin{displaymath}
\Hom_{\cC}(\sC(\bR^{(m)}), \sC(\bR^{(n)})), \qquad
\Hom_{\cA}(\sA(\bR^{(m)}), \sA(\bR^{(n)}))
\end{displaymath}
are equal: they are both the space of $\bbG$-invariant $k$-valued functions on $\bR^{(n)} \times \bR^{(m)}$. The only difference between the categories $\uPerm(\bbG, \mu_1)$ and $\uPerm(\bbG, \mu_2)$ is that products of matrices are computed differently. It will be useful to spell out some simple examples in detail.

Consider the case $n=m=1$. We define two $\bR \times \bR$ matrices:
\begin{displaymath}
A(y,x) = \begin{cases}
1 & \text{if $y \le x$} \\ 
0 & \text{otherwise} \end{cases}
\qquad
B(y,x) = \begin{cases}
1 & \text{if $x \le y$} \\ 
0 & \text{otherwise.} \end{cases}
\end{displaymath}
We compute products with respect to the measure $\mu_i$, for $1 \le i \le 4$. To begin, we have
\begin{displaymath}
(A^2)(z,x) = \int_{\bR} A(z,y) A(y,x) dy = \mu_i \{ y \in \bR \mid \text{$z \le y$ and $y \le x$} \}.
\end{displaymath}
The set we are taking the measure of is bounded, and thus behaves the same with respect to all the measures. If $z \le x$ then the measure is~1 and otherwise it is~0. We thus find $A^2=A$ for all $\mu_i$. Similarly, $B^2=B$ for all $\mu_i$.

Next we consider the product $AB$. We have
\begin{displaymath}
(AB)(z,x) = \int_{\bR} A(z,y) B(y,x) dy =  \mu_i \{ y \in \bR \mid \text{$z \le y$ and $x \le y$} \}.
\end{displaymath}
Here we are taking the measure of a set of the form $[a, \infty)$. For $\mu_1$ and $\mu_3$ such intervals have measure~0, and so $AB=0$. For $\mu_2$ and $\mu_4$ such intervals have measure~1, and so $AB$ is the constant function~1; this can be expressed as $A+B-1$, where 1 is the $\bR \times \bR$ identity matrix. The product $BA$ is similar, but with the roles of $\mu_2$ and $\mu_3$ switched.

The above computations show that products in the endomorphism algebras of $\sC(\bR)$ and $\sA(\bR)$ are not the same. In fact, $\End(\sC(\bR))$ is isomorphic to $k^3$ as an algebra, while $\End(\sA(\bR))$ is non-commutative and has non-zero radical, it is the path algebra of the one arrow quiver.

\subsection{Delannoy paths} \label{ss:paths}

An \defn{$(m,n)$ Delannoy path} is a path in the plane from $(0,0)$ to $(m,n)$ composed of steps of the form $(1,0)$, $(0,1)$, or $(1,1)$. The number of $(m,n)$ Delannoy paths is the Delannoy number $D(m,n)$, which is an interesting combinatorial quantity \cite{Banderier}.

Delannoy paths are useful for us since they parametrize orbits on products. Consider a point $(y,x)$ in $\bR^{(n)} \times \bR^{(m)}$. We associate to it an $(m,n)$ Delannoy path $\alpha$. Mark the points $y_1, \ldots, y_n$ and $x_1, \ldots, x_m$ on the real line. Now walk the real line from $-\infty$ to $\infty$. Each time a $y$ point is encounted (that is not also an $x$ point), we take a step up; similarly, when an $x$ point is encountered (that is not also a $y$ point), we take a step to the right; when we come to a point that is both an $x$ and a $y$, we step diagonally. If $g \in \bbG$ then $(y,x)$ and $(gy,gx)$ have the same path. Moreover, the function
\begin{displaymath}
\bbG \backslash (\bR^{(n)} \times \bR^{(m)}) \to \{ \text{$(m,n)$ Delannoy paths} \}
\end{displaymath}
is a bijection. See \cite[\S 3.4]{line} for a proof of this assertion, and an illustrated example. Following \cite{line}, we say that an $(n,n)$-Delannoy path $p$ is \emph{quasi-diagonal} if it passes through every vertex along the main diagonal. See Figure~\ref{f:delpath} for examples.

For an $(m,n)$ Delannoy path $\alpha$, we let $O_{\alpha}$ be the corresponding orbit on $\bR^{(n)} \times \bR^{(m)}$. We let $C_{\alpha}$ be the indicator function of $O_{\alpha}$, which we regard as an $\bR^{(n)} \times \bR^{(m)}$ matrix. The $C_{\alpha}$'s, as $\alpha$ varies, forms a basis for the space
\begin{displaymath}
\Hom_{\cC}(\sC(\bR^{(m)}), \sC(\bR^{(n)})),
\end{displaymath}
and similarly for $\cA$.

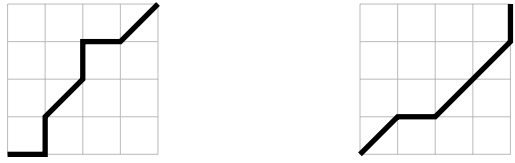
\begin{figure}
\begin{center}
\begin{tikzpicture}[scale=0.5]
\draw[step=1, color=gray!50] (0, 0) grid (4,4);
\draw[line width=2pt] (0,0)--(1,0)--(1,1)--(2,2)--(2,3)--(3,3)--(4,4);
\end{tikzpicture}
\qquad\qquad\qquad
\begin{tikzpicture}[scale=0.5]
\draw[step=1, color=gray!50] (0, 0) grid (4,4);
\draw[line width=2pt] (0,0)--(1,1)--(2,1)--(3,2)--(4,3)--(4,4);
\end{tikzpicture}
\end{center}
\caption{Two $(4,4)$-Delannoy paths. The left one is quasi-diagonal, while the right one is not.}
\label{f:delpath}
\end{figure}

\subsection{Pure maps} \label{ss:pure}

In \S \ref{ss:matrix-ex} we computed some matrix products in $\cA$ and $\cC$, and observed that they were different in general. However, we also saw that some products were the same. In fact, there is a large class of products that are the same, as we now show.

Let $M$ be a $\bR^{(n)} \times \bR^{(m)}$ matrix. We say that $M$ is \defn{pure of type $\bb$} if its support is contained in the set of pairs $(y,x)$ with $y_n \le x_m$. A product of pure matrices of type $\bb$ is also pure of type $\bb$ in either $\cC$ or $\cA$; moreover, the products in these two categories agree. Indeed, for an $\bR^{(n)} \times \bR^{(m)}$ matrix $A$ and an $\bR^{(m)} \times \bR^{(l)}$ matrix $B$, we have
\begin{displaymath}
AB(z,x)\;=\;\int_{\bR^{(m)}} A(z,y)B(y, x) dy.
\end{displaymath}
Now, if $B$ is pure of type $\bb$, then only non-zero values occur on $\hat{\bbG}$-sets with coordinates bounded from above. These sets have the same measure under $\mu_1$ and $\mu_2$, and the product coincides for both measures. If $A$ is also pure of type $\bb$, then the value on the left will be zero unless $z_n<x_l$, as the product inside the integral is only non-zero when $z_n<y_m<x_l$. Hence $AB$ is again of pure type $\bullet$.  We define \defn{pure of type $\ww$} similarly, but using $y_n \ge x_m$; these matrices have similar properties.

\subsection{The functor \texorpdfstring{$\Phi$}{Phi}} \label{ss:functor} 

Let $\ol{\bR}=\bR \cup \{\infty\}$, as in \S \ref{ss:four-meas}. By \cite[\S 1.4]{delmap}, there is an essentially unique faithful tensor functor
\begin{displaymath}
\Phi \colon \cA \to \cC, \qquad \Phi(\sA(\bR)) = \sC(\ol{\bR}) =\sC(\bR)\oplus\bbone.
\end{displaymath}
In more detail, the totally ordered $\bbG$-set $\ol{\bR}$ leads to an ordered \'etale algebra $\sC(\ol{\bR})$ in $\cC$ in the sense of \cite[\S 7.2]{delmap}, which is Delannic of type~2 in the sense of \cite[\S 7.3]{delmap}; this follows either from direct verification or the general results of \cite[\S 7.5]{delmap}. By \cite[Theorem~7.13]{delmap}, we therefore obtain the tensor functor $\Phi$ above. We have an algebra isomorphism \begin{displaymath}
\Phi(\sA(\bR^{(n)})) = \sC(\ol{\bR}{}^{(n)}) \cong \sC(\bR^{(n)}) \oplus \sC(\bR^{(n-1)}).
\end{displaymath}
The first isomorphism follows from \cite[Proposition~6.5]{delmap}, or direct verification, while the second is induced from the isomorphism of $\bbG$-sets $\ol{\bR}{}^{(n)} \cong \bR^{(n)} \sqcup \bR^{(n-1)}$.

Suppose $A$ is an endomorphism of $\sA(\bR^{(n)})$, that is, an $\bR^{(n)} \times \bR^{(n)}$ matrix. Then $\Phi(A)$ is an endomorphism of $\sC(\bR^{(n)}) \oplus \sC(\bR^{(n-1)})$, which we can think of as a $2 \times 2$ block matrix. For $x=\{x_1<\cdots<x_{n-1}\}$ in $\bR^{(n-1)}$, let $x^+=\{x_1<\cdots<x_{n-1}<\infty\}\in\overline{\bR}{}^{(n)}$.  We have
\begin{displaymath}
\Phi(A) = \begin{pmatrix} A_1 & A_2 \\ A_3 & A_4 \end{pmatrix},
\end{displaymath}
where, for $x,y\in \bR^{(n)}$ and $u,v\in\bR^{(n-1)}$,
\begin{align*}
A_1(y,x) &= A(y,x) & A_3(v,x) &= A(v^+,x) \\
A_2(y,u) &= A(y,u^+) & A_4(v,u) &= A(v^+,u^+).
\end{align*}
Note that it does make sense to plug-in $\infty$ into $A$, as $A$ is given by some formula in the language of total orders; in fact, one can evaluate $A$ at elements of any totally ordered set. If $A$ is pure then $\Phi(A)$ is either upper or lower triangular.

\section{The additive tensor category $\cA$} \label{s:A}

In this section, we investigate the structure of the category $\cA$. In particular, we prove Theorem~\ref{mainthm1}, which describes the indecomposable objects of $\cA$ and the maps between them. We establish some other results too, such as the tensor product rule for indecomposables.

\subsection{Krull--Schmidt categories} \label{ss:krull-schmidt}

We begin by recalling some general background material; we refer to \cite{krause} for additional details. A \defn{Krull--Schmidt category} is an additive category in which every object decomposes into a finite direct sum of objects having local endomorphism rings. Equivalently, a category is Krull--Schmidt if and only if it is Karoubian and the endomorphism ring of any object is semi-perfect. A general property of Krull--Schmidt categories is that an object decomposes into a direct sum of indecomposable objects in a unique way (up to permuting factors).

Since a finite dimensional $k$-algebra is semi-perfect, it follows that a $k$-linear Karoubian category with finite dimensional $\Hom$ spaces is Krull--Schmidt. Thus $\uPerm(G, \mu)^{\rm kar}$ is a Krull--Schmidt category, for any pro-oligomorphic group $G$ and measure $\mu$. In particular, the category $\cA$ is Krull--Schmidt. We will use this property when we classify the indecomposable objects in $\cA$ below.

\subsection{Construction of indecomposables} \label{ss:indecomp-constr}

For a weight $\lambda$ of length $n$, we define\footnote{This set was denoted $\Psi_{\lambda}$ in \cite[\S 5]{line}.} $\cE_{\lambda} \subset \bR^{(n)} \times \bR^{(n)}$ to be the set of pairs $(y,x)$ satisfying the following:
\begin{itemize}
\item $y_i \le x_i$ whenever $\lambda_i=\bb$
\item $x_i \le y_i$ whenever $\lambda_i=\ww$
\item $y_i<x_{i+1}$ and $x_i<y_{i+1}$.
\end{itemize}
We let $E_{\lambda}$ be the indicator function of $\cE_{\lambda}$, thought of as a matrix. In the Delannoy category $\cC$, it was shown in \cite[\S 5.3]{line} that $E_{\lambda}$ is the idempotent that projects onto the unique copy of $L_{\lambda}$ in $\sC(\bR^{(n)})$. Since $E_{\lambda}$ is a pure map of type $\lambda_n$, it follows that $E_{\lambda}$ is also idempotent in the second Delannoy category $\cA$ (see \S \ref{ss:pure}). We now come to an important definition:

\begin{definition}
For a weight $\lambda$ of length $n$, we let $M_{\lambda}=E_{\lambda} \sA(\bR^{(n)})$.
\end{definition}

We now study these objects. We note that if $\lambda=\varnothing$ is the empty word then $M_{\lambda}=\bbone$ is the unit object. Assuming $\lambda$ has non-zero length, we let $\lambda^{\flat}$ denote the weight obtained by deleting the final letter. We study $\cA$ via the faithful tensor functor $\Phi\colon \cA\to\cC$.

\begin{proposition} \label{prop:Phi-E}
Suppose $\lambda$ has length $n \ge 1$. Then
\begin{displaymath}
\Phi(E_{\lambda}) = \begin{pmatrix} E_{\lambda} & \ast \\ 0 & E_{\lambda^{\flat}} \end{pmatrix} \quad \text{or} \quad
\Phi(E_{\lambda}) = \begin{pmatrix} E_{\lambda} & 0 \\ \ast & E_{\lambda^{\flat}} \end{pmatrix}
\end{displaymath}
where the first case happens if $\lambda_n=\bb$, and the second if $\lambda_n=\ww$. Hence
\begin{displaymath}
\Phi(M_{\lambda})=L_{\lambda} \oplus L_{\lambda^{\flat}}.
\end{displaymath}
In particular, $M_{\lambda} \cong M_{\mu}$ if and only if $\lambda=\mu$, and $M_\lambda^\ast\cong M_{\lambda^\vee}$, where $\lambda^\vee$ is as in \S \ref{ss:delcat}.
\end{proposition}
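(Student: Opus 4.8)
The statement has two halves, both of which should follow quickly from the block-triangular description of $\Phi(E_\lambda)$, so the real work is proving that description. The plan is to compute $\Phi(E_\lambda)$ directly using the explicit formula for $\Phi$ of a matrix given at the end of \S\ref{ss:functor}: for an $\bR^{(n)}\times\bR^{(n)}$ matrix $A$, the entries of the $2\times 2$ block matrix $\Phi(A)$ are $A_1(y,x)=A(y,x)$, $A_2(y,u)=A(y,u^+)$, $A_3(v,x)=A(v^+,x)$, $A_4(v,u)=A(v^+,u^+)$, where $u^+=\{u_1<\cdots<u_{n-1}<\infty\}$. Since $E_\lambda$ is the indicator function of the set $\cE_\lambda$ defined by the explicit inequalities $y_i\le x_i$ (if $\lambda_i=\bb$), $x_i\le y_i$ (if $\lambda_i=\ww$), and $y_i<x_{i+1}$, $x_i<y_{i+1}$, I just need to figure out what these inequalities become when $x$ or $y$ has $\infty$ inserted as its last coordinate.

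First I would compute the diagonal blocks. The block $A_1$ is literally $E_\lambda$ by definition, with no work. For $A_4$, I plug $x=u^+$ and $y=v^+$ into the inequalities defining $\cE_\lambda$: the first $n-1$ coordinates of both tuples are finite, so for $1\le i\le n-1$ the conditions are exactly those defining $\cE_{\lambda^\flat}$ (noting $\lambda^\flat$ has length $n-1$), and the conditions involving the $n$th coordinate, namely $y_n\le x_n$ (or $x_n\le y_n$) and $y_{n-1}<x_n$, $x_{n-1}<y_n$, all become automatically true since $y_n=x_n=\infty$ and $\infty$ is the maximum of $\ol\bR$. (If $n=1$ then $\lambda^\flat=\varnothing$, $E_{\lambda^\flat}$ is the $1\times 1$ identity on $\sC(\bR^{(0)})$, and the computation degenerates correctly.) So $A_4=E_{\lambda^\flat}$. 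Next, the key case distinction: consider the off-diagonal block $A_3(v,x)=E_\lambda(v^+,x)$ with $v^+=\{v_1<\cdots<v_{n-1}<\infty\}$ and $x\in\bR^{(n)}$. This requires among other things the $n$th-coordinate condition: if $\lambda_n=\bb$ it demands $(v^+)_n\le x_n$, i.e. $\infty\le x_n$, which is impossible since $x_n\in\bR$; hence $A_3=0$ when $\lambda_n=\bb$. Symmetrically, $A_2(y,u)=E_\lambda(y,u^+)$ requires (when $\lambda_n=\ww$) $(u^+)_n\le y_n$, i.e. $\infty\le y_n$, impossible, so $A_2=0$ when $\lambda_n=\ww$. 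This gives exactly the two claimed shapes, with the unspecified $\ast$ block being whichever of $A_2$, $A_3$ was not forced to vanish — and I should note we do not need to identify $\ast$ explicitly.

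With the block-triangular form in hand, the decomposition $\Phi(M_\lambda)=L_\lambda\oplus L_{\lambda^\flat}$ follows: $\Phi$ is a functor, so $\Phi(M_\lambda)=\Phi(E_\lambda\sA(\bR^{(n)}))=\Phi(E_\lambda)\cdot\Phi(\sA(\bR^{(n)}))$, and $\Phi(\sA(\bR^{(n)}))\cong\sC(\bR^{(n)})\oplus\sC(\bR^{(n-1)})$ by the isomorphism recalled in \S\ref{ss:functor}; since $\Phi(E_\lambda)$ is a block-triangular idempotent with diagonal blocks $E_\lambda$ and $E_{\lambda^\flat}$, which are the idempotents cutting out the copies of $L_\lambda\subset\sC(\bR^{(n)})$ and $L_{\lambda^\flat}\subset\sC(\bR^{(n-1)})$ respectively (by \cite[\S 5.3]{line}), the image $\Phi(E_\lambda)\Phi(\sA(\bR^{(n)}))$ is an extension of $L_{\lambda^\flat}$ by $L_\lambda$; as $\cC$ is semi-simple, this extension splits, giving $\Phi(M_\lambda)=L_\lambda\oplus L_{\lambda^\flat}$. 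For the final assertions: if $M_\lambda\cong M_\mu$ then applying $\Phi$ gives $L_\lambda\oplus L_{\lambda^\flat}\cong L_\mu\oplus L_{\mu^\flat}$ in the semi-simple category $\cC$; comparing the unique summand of maximal length (using that $\ell(L_\lambda)$-the-weight has length $\ell(\lambda)>\ell(\lambda^\flat)$) forces $\lambda=\mu$, and conversely $\lambda=\mu$ trivially gives $M_\lambda=M_\mu$. For the duality, recall from \S\ref{sec:permmod} that every object of $\uPerm(\bbG,\mu_2)$ is self-dual, so $\sA(\bR^{(n)})^\ast\cong\sA(\bR^{(n)})$; dualizing the idempotent $E_\lambda$ transposes the matrix, i.e. swaps the roles of $y$ and $x$, which converts the defining inequalities of $\cE_\lambda$ into those of $\cE_{\lambda^\vee}$ (every $y_i\le x_i$ becomes $x_i\le y_i$ and vice versa, while the mixed conditions $y_i<x_{i+1}$, $x_i<y_{i+1}$ are symmetric); hence $E_\lambda^\ast$ is conjugate to $E_{\lambda^\vee}$ and $M_\lambda^\ast=(E_\lambda\sA(\bR^{(n)}))^\ast\cong E_{\lambda^\vee}\sA(\bR^{(n)})=M_{\lambda^\vee}$.

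**Main obstacle.** The only genuinely delicate point is the evaluation-at-$\infty$ bookkeeping — making sure that inserting $\infty$ as a last coordinate into the tuples and into the first-order formula defining $E_\lambda$ interacts correctly with all the inequalities, and handling the boundary case $n=1$ cleanly. Everything downstream (functoriality, semisimplicity of $\cC$, self-duality in $\uPerm$) is standard and already available in the excerpt. One should also double-check that $E_\lambda$ being "pure of type $\lambda_n$" is being used consistently — it is what guarantees $E_\lambda$ is still idempotent in $\cA$, which we need in order to even speak of $M_\lambda$, but this is already granted before the proposition.
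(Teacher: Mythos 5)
Your proof is correct, and for the bulk of the statement (the block-triangular form of $\Phi(E_\lambda)$, the decomposition $\Phi(M_\lambda)=L_\lambda\oplus L_{\lambda^\flat}$, and the non-isomorphism of the $M_\lambda$'s) it follows exactly the route the paper intends: the paper simply cites the general description of $\Phi$ from \S\ref{ss:functor}, and your evaluation-at-$\infty$ bookkeeping is precisely the computation being elided there. The one place you genuinely diverge is the final claim $M_\lambda^*\cong M_{\lambda^\vee}$. The paper argues indirectly: $L_{\lambda^\vee}$ occurs with multiplicity one in $\Phi(\sA(\bR^{(n)}))$ by \cite[Theorem~4.7]{line}, so by Krull--Schmidt there is a unique indecomposable summand of $\sA(\bR^{(n)})$ whose image under $\Phi$ contains $L_{\lambda^\vee}$, and both $M_\lambda^*$ and $M_{\lambda^\vee}$ qualify. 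You instead compute directly that the monoidal dual of the matrix $E_\lambda$ is its transpose, whose defining inequalities are those of $\cE_{\lambda^\vee}$ (indeed $E_\lambda^{\mathrm{T}}=E_{\lambda^\vee}$ on the nose, not merely up to conjugation). Your argument is more elementary and self-contained, at the cost of invoking the explicit form of the self-duality on $\uPerm(\bbG,\mu_2)$ (that duals of matrices are transposes), which is standard from \cite[\S 8]{repst} and used implicitly elsewhere in the paper; the paper's argument avoids any computation with the duality data but leans on multiplicity-one and Krull--Schmidt. Both are sound.
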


\begin{proof}
The formula for $\Phi(E_{\lambda})$ follows from the general description of $\Phi$ given in \S \ref{ss:functor}. The formula for $\Phi(M_{\lambda})$ follows directly from this. Finally, since $L_{\lambda^{\vee}}$ occurs with multiplicity one in $\Phi(\sA(\bR^{(n)}))$ \cite[Theorem~4.7]{line}, it follows from Krull--Schmidt that there is a unique (up to isomorphism) indecomposable summand $M$ of $\sA(\bR^{(n)})$ such that $\Phi(M)$ contains $L_{\lambda^{\vee}}$. Since both $M_{\lambda}^*$ and $M_{\lambda^{\vee}}$ have this property, they must be isomorphic.
\end{proof}

\begin{proposition} \label{prop:Xind}
We have $\End(M_{\lambda})=k$, and so $M_{\lambda}$ is indecomposable. The dimension of $M_{\lambda}$ is~1 if $\lambda$ is empty, and~0 otherwise.
\end{proposition}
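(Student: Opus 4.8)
The plan is to read off both assertions from the faithful tensor functor $\Phi\colon\cA\to\cC$ together with the purity of the idempotent $E_\lambda$. For the indecomposability statement I would prove the sharper fact $\End_\cA(M_\lambda)=k$; since an object whose endomorphism ring is $k$ has no idempotents other than $0$ and $1$, this forces $M_\lambda$ to be indecomposable. The empty weight is immediate, as $M_\varnothing=\bbone$ and $\End_\cA(\bbone)=k$, so assume $\ell(\lambda)=n\ge 1$. Write $V=\End_\cC(\sC(\bR^{(n)}))=\End_\cA(\sA(\bR^{(n)}))$ for the common space of $\bbG$-invariant functions on $\bR^{(n)}\times\bR^{(n)}$, and let $\cdot$ and $\star$ denote composition in $\cC$ and in $\cA$, respectively. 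Recall that $E_\lambda$ is idempotent both in $\cC$, where its image is $L_\lambda$, and in $\cA$ (\S\ref{ss:indecomp-constr}); thus $M_\lambda=E_\lambda\sA(\bR^{(n)})$ is the image of $E_\lambda$ in $\cA$, and $\End_\cA(M_\lambda)=E_\lambda\star V\star E_\lambda$, a subalgebra of $V$ with unit $E_\lambda$.

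The crucial input is that $E_\lambda$ is pure of type $\lambda_n$ (\S\ref{ss:indecomp-constr}). I would first reduce to the case $\lambda_n=\bb$: by Proposition~\ref{prop:Phi-E} we have $M_\lambda^\ast\cong M_{\lambda^\vee}$, hence $\End_\cA(M_\lambda)^{\op}\cong\End_\cA(M_{\lambda^\vee})$, and $\lambda$ has final letter $\ww$ exactly when $\lambda^\vee$ has final letter $\bb$. So assume $\lambda_n=\bb$. By the discussion of pure maps in \S\ref{ss:pure}, multiplication on the right by $E_\lambda$ gives the same result in $\cC$ and in $\cA$, i.e. $A\star E_\lambda=A\cdot E_\lambda$ for every $A\in V$; consequently
\[
\End_\cA(M_\lambda)=(E_\lambda\star V)\cdot E_\lambda\subseteq V\cdot E_\lambda.
\]
The right-hand side lives entirely in $\cC$: it consists of the endomorphisms of $\sC(\bR^{(n)})$ that vanish on $\ker(E_\lambda)$, hence is isomorphic to $\Hom_\cC(L_\lambda,\sC(\bR^{(n)}))$, which is one-dimensional because $L_\lambda$ occurs in $\sC(\bR^{(n)})$ with multiplicity one \cite[Theorem~4.7]{line}. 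Since $\End_\cA(M_\lambda)$ contains the nonzero element $\id_{M_\lambda}=E_\lambda$, we conclude $\End_\cA(M_\lambda)=kE_\lambda\cong k$.

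For the dimension, recall that a tensor functor between rigid tensor categories preserves categorical dimension. Hence, for $n\ge 1$, Proposition~\ref{prop:Phi-E} gives
\[
\dim M_\lambda=\dim\Phi(M_\lambda)=\dim L_\lambda+\dim L_{\lambda^\flat}=(-1)^n+(-1)^{n-1}=0,
\]
using $\dim L_\mu=(-1)^{\ell(\mu)}$, while for $\lambda=\varnothing$ one has $M_\lambda=\bbone$, of dimension $1$. Alternatively, one can compute directly: $\dim M_\lambda$ equals the categorical trace of the idempotent $E_\lambda$ on $\sA(\bR^{(n)})$, which is $\int_{\bR^{(n)}}E_\lambda(x,x)\,dx=\mu_2(\bR^{(n)})$ since every diagonal pair $(x,x)$ lies in $\cE_\lambda$; and $\mu_2(\bR^{(n)})$ equals $1$ for $n=0$ and $0$ otherwise by \S\ref{ss:four-meas}.

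The step that needs the most care is the bookkeeping around purity. The subtlety peculiar to $\mu_2$ is that purity of type $\bb$ yields agreement between the products in $\cC$ and in $\cA$ only for multiplication on the right (the fibers one integrates over are then bounded above, where $\mu_1$ and $\mu_2$ agree), whereas purity of type $\ww$ is useful only on the left; this is precisely why I would dispose of the case $\lambda_n=\ww$ by duality rather than by a naively symmetric argument. Once $\End_\cA(M_\lambda)$ has been trapped inside the one-dimensional space $V\cdot E_\lambda$, everything else is formal.
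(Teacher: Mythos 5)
Your proof is correct, and the computation of $\End(M_\lambda)$ takes a genuinely different route from the paper's. The paper first shows that every endomorphism of $\sA(\bR^{(n)})$ has vanishing categorical trace (because $C(x,x)$ is constant and $\mu_2(\bR^{(n)})=0$), which gives the dimension statement immediately and then forces the algebra inclusion $\End(M_\lambda)\subset\End(\Phi(M_\lambda))\cong k\times k$ to be proper, since $\Phi$ is compatible with traces and the trace on the target is not identically zero; a proper unital subalgebra of a two-dimensional algebra is one-dimensional. You instead exploit purity head-on: since $E_\lambda$ is pure of type $\lambda_n$, right-composition with $E_\lambda$ is computed identically under $\mu_1$ and $\mu_2$, so $\End_\cA(M_\lambda)=E_\lambda\star V\star E_\lambda\subseteq V\cdot E_\lambda\cong\Hom_\cC(L_\lambda,\sC(\bR^{(n)}))\cong k$, with the multiplicity-one statement from \cite[Theorem~4.7]{line} doing the work; the case $\lambda_n=\ww$ is correctly disposed of by duality (or could be done directly with left-composition, which is where type-$\ww$ purity is usable — your remark on this asymmetry is accurate). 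Your approach is more concrete, identifying $\End(M_\lambda)$ as $kE_\lambda$ inside the matrix algebra and making no use of traces for this half; the paper's approach is shorter and has the advantage that the single trace computation simultaneously yields the dimension formula, whereas you obtain the dimension separately (via preservation of categorical dimension under $\Phi$, or via the same trace integral the paper uses). Both are valid; no gaps.
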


\begin{proof}
Let $n=\ell(\lambda)$. If $n=0$ the result is clear, so suppose $n>0$. We first claim that any endomorphism $C$ of $\sA(\bR^{(n)})$ has vanishing trace. Indeed, we have
\begin{displaymath}
\tr(C) = \int_{\bR^{(n)}} C(x,x) dx.
\end{displaymath}
Since $C$ is $G$-invariant and $\bR^{(n)}$ is a transitive set, $C(x,x)$ is a constant function. Since the $\mu_2$-volume of $\bR^{(n)}$ is~0 (see \S \ref{ss:four-meas}), the claim follows.

Since $M_{\lambda}$ is a summand of $\sA(\bR^{(n)})$, it follows that any endomorphism of $M_{\lambda}$ also has vanishing trace. Thus its dimension is zero. Now, since $\Phi$ is faithful, we have an inclusion of algebras
\begin{displaymath}
\End(M_{\lambda}) \subset \End(\Phi(M_{\lambda})).
\end{displaymath}
Since $\Phi$ is a tensor functor, it is compatible with traces. On the target, the trace is not identically zero; for instance, $L_{\lambda}$ has non-zero dimension. Thus this inclusion is proper. Since the target is two-dimensional, it follows that the source is one dimensional, which completes the proof.
\end{proof}

\subsection{Degenerate morphisms} \label{ss:degen}

Fix $n\in\bZ_{>0}$ throughout \S \ref{ss:degen}, and let $R$ be the endomorphism ring of $\sA(\bR^{(n)})$. We say an element of $R$ is \defn{degenerate} if it is a sum of morphisms that factor through $\sA(\bR^{(n-1)})$. Let $\fa \subset R$ be the space of degenerate morphisms; one easily sees that it is a two-sided ideal of $R$. We will require the following result, which estimates the size of $\fa$. Recall that $C_{\alpha}$ is the matrix associated to the Delannoy path $\alpha$ (\S \ref{ss:paths}). In what follows, we regard $C_{\alpha}$ as a morphism in $\cA$.

\begin{proposition} \label{prop:degen}
The quotient $R/\fa$ is spanned by the $C_{\alpha}$ where $\alpha$ is a quasi-diagonal path that has no diagonal segments; in particular, $R/\fa$ has dimension $\le 2^n$.
\end{proposition}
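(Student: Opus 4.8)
The plan is to show that the quasi-diagonal paths with no diagonal segment already span $R/\fa$, by showing that (i) every Delannoy path $\alpha$ that is \emph{not} quasi-diagonal gives a degenerate morphism $C_\alpha \in \fa$, and (ii) every quasi-diagonal path with at least one diagonal step also gives a degenerate morphism. Since the $C_\alpha$ form a basis of $R$ (\S \ref{ss:paths}), the quasi-diagonal paths with no diagonal segment are counted by the choices of, at each of the $n$ diagonal vertices, whether to pass through it via an ``up-then-right'' or a ``right-then-up'' corner — hence there are at most $2^n$ of them (in fact exactly $2^n$, but only the upper bound is claimed), giving the dimension estimate.

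First I would treat (ii), which is the easy half. If $\alpha$ is an $(n,n)$ Delannoy path with a diagonal step, say from $(i-1,i-1)$ to $(i,i)$, then the corresponding orbit $O_\alpha \subset \bR^{(n)}\times\bR^{(n)}$ consists of pairs $(y,x)$ with $y_i = x_i$. Such a matrix factors through $\sA(\bR^{(n-1)})$: concretely, $C_\alpha$ can be written as a composite $\sA(\bR^{(n)}) \to \sA(\bR^{(n-1)}) \to \sA(\bR^{(n)})$ where the first map is (a constituent of) the push-forward $p_{n,i*}$ along the projection $p_{n,i}\colon \bR^{(n)}\to\bR^{(n-1)}$ omitting the $i$th coordinate, composed with appropriate ``support-restricting'' matrices, and the second is the pull-back $p_{n,i}^*$ composed with similar matrices. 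More cleanly: the indicator of the diagonal-in-coordinate-$i$ locus $\{y_i = x_i\}$ factors as $p_{n,i}^* p_{n,i*}$ up to the pure correction matrices that encode the remaining Delannoy-path data; and since all the auxiliary matrices live in $\cA$, the whole composite factors through $\sA(\bR^{(n-1)})$, hence lies in $\fa$. I would spell this factorization out using the matrix formalism of \S \ref{SSMatrices}, being slightly careful that the fiber of $p_{n,i}$ over a point of $\bR^{(n-1)}$ is an open interval, which has $\mu_2$-measure $0$ or $-1$ depending on boundedness — but for the factorization statement only the \emph{existence} of the intermediate object matters, not the measure, so this causes no trouble.

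Next, the harder half (i): if $\alpha$ is not quasi-diagonal, then $\alpha$ misses some diagonal vertex $(j,j)$, meaning the path crosses the main diagonal strictly between lattice points. I would argue that such $C_\alpha$ is a sum of morphisms factoring through $\bR^{(n-1)}$ by exhibiting, at the crossing, a coordinate $y_i$ or $x_i$ that can be ``integrated out.'' The point is that if the path goes, say, $\ldots \to (j-1,j) \to (j,j+1)\to\ldots$ (crossing above the diagonal without touching $(j,j)$), then in the orbit description there is an index $i$ with the $i$th $y$-coordinate strictly between two consecutive $x$-coordinates (and unconstrained on one side relative to the neighbouring $y$'s in a way that lets us project it away); peeling off this coordinate realizes $C_\alpha$ through $\sA(\bR^{(n-1)})$, modulo boundary terms which are themselves lower and can be absorbed recursively. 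A cleaner route may be available by using the functor $\Phi$ and Proposition~\ref{prop:Phi-E}: since $\Phi$ is faithful, $C_\alpha \in \fa$ can be detected after applying $\Phi$, and in $\cC$ the analogous ``degenerate ideal'' is well understood because $\cC$ is semisimple with the simple decomposition recalled in \S \ref{ss:delcat}; a morphism in $\cC$ factors through $\sC(\bR^{(n-1)})$ iff its matrix of multiplicities is supported on weights of length $< n$, and one checks that $C_\alpha$ for non-quasi-diagonal $\alpha$ has no $L_\lambda$-component with $\ell(\lambda)=n$. This reduces (i) to a bookkeeping statement about which $C_\alpha$ contribute to the ``top'' simples, which should follow from the explicit idempotents $E_\lambda$ of \S \ref{ss:indecomp-constr} together with \cite[Theorem~4.7]{line}.

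The main obstacle I anticipate is part (i): making precise the claim that a non-quasi-diagonal path yields a degenerate morphism, and in particular controlling the ``boundary terms'' that appear when one integrates out a coordinate (these come from the half-open versus open versus closed interval distinctions that are exactly what makes $\mu_2$ subtle). The cleanest fix is probably to avoid integrating by hand and instead argue entirely in $\cC$ via $\Phi$, where semisimplicity removes all such delicacies — but one then has to be careful that $\Phi$ does not lose information, i.e. that a morphism in $R$ lies in $\fa$ \emph{as soon as} its image under $\Phi$ factors through $\Phi(\sA(\bR^{(n-1)})) = \sC(\bR^{(n-1)})\oplus\sC(\bR^{(n-2)})$; this is plausible because $\Phi$ is faithful and $\sA(\bR^{(n-1)})$ is a direct summand whose image under $\Phi$ is exactly that, but it needs a short argument (essentially that $\Phi$ reflects the property of factoring through a fixed object when that object is a summand of something in the image). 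Once this reduction is in place, the remaining counting — that exactly the quasi-diagonal no-diagonal-segment paths survive, and there are $\le 2^n$ of them — is routine.
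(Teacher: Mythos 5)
Your step (ii) contains a genuine error. You claim that for a quasi-diagonal path $\alpha$ with a diagonal segment, $C_\alpha$ itself lies in $\fa$. This is false: for $n=1$ the path consisting of a single diagonal step has $C_\alpha$ equal to the identity matrix of $\sA(\bR)$, and the identity does not factor through $\sA(\bR^{(0)})=\bbone$ (otherwise $\sA(\bR)$ would be a summand of $\bbone^{\oplus r}$). The flaw in your factorization is that $p_{n,i}^*\,p_{n,i*}$ is the indicator function of the \emph{entire} preimage $q^{-1}(O_{\ol{\alpha}})$, not just of the diagonal locus $\{y_i=x_i\}$; the off-diagonal orbits in that preimage cannot be discarded. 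What is actually true (and what the paper proves) is the weaker relation $C_\alpha + C_\beta + C_\gamma \in \fa$, where $\beta,\gamma$ replace the diagonal step by ``up then right'' and ``right then up''; this still gives the spanning statement by induction on the number of diagonal segments, but the relation is a three-term congruence, not membership of $C_\alpha$ in $\fa$.

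Step (i) is also not established. Your first route (``peeling off a coordinate ... modulo boundary terms which are themselves lower and can be absorbed recursively'') is the right idea and is what the paper does, but the actual argument is delicate: it requires an induction on the largest $i$ with $\alpha$ passing through $(j,j)$ for all $j\le i$, a case analysis on the two steps leaving $(i,i)$, preliminary lemmas ruling out paths with three consecutive horizontal/vertical-or-diagonal steps, and a careful orbit decomposition of $q^{-1}(O_\beta)$ in which each unwanted orbit is shown to lie in $\fa$ by the earlier lemmas or the inductive hypothesis. Your second route via $\Phi$ does not work as stated: $\Phi$ is faithful but not full, so even if $\Phi(C_\alpha)$ factors through $\Phi(\sA(\bR^{(n-1)}))$ in $\cC$, the factoring morphisms need not be in the image of $\Phi$, and hence one cannot conclude $C_\alpha\in\fa$. (Moreover, the classification of summands of $\sA(\bR^{(n-1)})$ that would make such a reduction precise is proved later in the paper using this very proposition, so the argument would be circular.) The counting at the end is fine, but both halves of the proof need to be repaired along the lines above.
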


In fact, we will see below that the dimension is exactly $2^n$. We require a number of lemmas before proving the proposition.

\begin{lemma} \label{lem:degen-1}
Let $\alpha$ be an $(n,n)$ Delannoy path that contains three consecutive segments where the first is either right or diagonal, the second is right, and the third is either right or diagonal. Then $C_{\alpha}$ belongs to $\fa$. The same statement holds with ``right'' changed to ``up.''
\end{lemma}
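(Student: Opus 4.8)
The statement asserts that if an $(n,n)$ Delannoy path $\alpha$ has three consecutive steps of the form (right-or-diagonal, right, right-or-diagonal), then the matrix $C_\alpha$ is degenerate, i.e., factors through $\sA(\bR^{(n-1)})$ (up to sums of such factorizations); and likewise with ``right'' replaced by ``up''. The plan is to produce such a factorization explicitly by ``removing'' the coordinate associated to the middle (right) step and passing through the appropriate projection $p_{n,i}\colon \bR^{(n)}\to\bR^{(n-1)}$. The key point is that a lone right step, flanked by steps that are right or diagonal, forces a configuration in which the $x$-coordinate indexing that step is ``sandwiched'' between two $y$-coordinates (or is at the boundary), so that the local relative order of that $x$-coordinate with respect to the nearby $y$-coordinates is completely determined --- which is exactly what allows it to be integrated out against a pullback map.

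\textbf{Key steps.} First I would set up notation: write $C_\alpha = C_\alpha \cdot f^\ast f_\ast$-type identities are not literally available, so instead I would argue directly. Given the hypothesis, locate the middle right step in $\alpha$; say it corresponds to an $x$-coordinate $x_j$ that, in any $(y,x)\in O_\alpha$, sits strictly between $y_{i}$ and $y_{i+1}$ for the appropriate index $i$ (the flanking right/diagonal steps guarantee there is no $y$-coordinate equal to $x_j$, and that the $y$-coordinates immediately surrounding $x_j$ in the walk are $y_i$ and $y_{i+1}$, or one of these is absent at an end, which only makes the argument easier). Second, I would let $p = p_{m,j}\colon \bR^{(m)}\to\bR^{(m-1)}$ be the projection from the source $\bR^{(m)}$ omitting the $j$th coordinate, and observe that $C_\alpha$ can be written as a composite $\sA(\bR^{(m)}) \xrightarrow{p_\ast} \sA(\bR^{(m-1)}) \xrightarrow{C_{\alpha'}} \sA(\bR^{(n)})$ --- wait, but the hypothesis is about an $(n,n)$ path, so here $m=n$; the factorization goes through $\sA(\bR^{(n-1)})$ via $p_{n,j}$. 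Concretely: since the position of $x_j$ relative to all other coordinates is determined by the positions of the remaining coordinates together with the single datum ``$y_i < x_j < y_{i+1}$'' which is forced, the indicator function $C_\alpha(y,x)$ factors as $C_{\alpha'}(y, x') \cdot [\text{condition automatically satisfied}]$ where $x'$ is $x$ with $x_j$ deleted and $\alpha'$ is the $(n-1,n)$ path obtained by deleting the middle right step. More precisely, $C_\alpha = C_{\alpha'}\circ (\text{pullback along the map that reinserts }x_j)$; but the clean way to phrase it is that integrating $C_\alpha(y,y') \cdot p^\ast_{n,j}$-type kernels recovers $C_\alpha$ because the $x_j$-fiber is a bounded interval $(y_i, y_{i+1})$, and a bounded interval has the same measure under $\mu_2$ as under $\mu_1$ (see \S\ref{ss:pure}), so this is exactly the kind of ``pure''/boundary-insensitive manipulation that is legitimate in $\cA$. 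Third, having factored through $\bR^{(n-1)}$, conclude $C_\alpha \in \fa$; the ``up'' version follows by the same argument applied on the target side (or by the self-duality/transpose symmetry that exchanges rows and columns, i.e., exchanges $\bb$ and $\ww$ and swaps source and target).

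\textbf{Main obstacle.} The delicate point is the bookkeeping at the \emph{ends} of the three-segment window and the verification that the middle $x$-coordinate's position really is forced --- one must check that a diagonal step adjacent to the middle right step does not secretly pin an equality $x_j = y_?$ that would obstruct the factorization, and handle the edge case where the flanking step is at the very start or end of the path (so that one of $y_i$, $y_{i+1}$ does not exist and the fiber is an unbounded interval like $(-\infty, y_1)$ or $(y_n, \infty)$). In the bounded case the $\mu_1$-versus-$\mu_2$ agreement is immediate; in the unbounded case one must instead observe that such a configuration cannot actually occur under the stated hypothesis (a lone right step flanked by right-or-diagonal steps always has a $y$-coordinate on \emph{both} sides within the window, since the window has three steps and the outer two each contribute, or are adjacent to, a $y$ or diagonal move), or else absorb it into the ``sum of morphisms factoring through $\bR^{(n-1)}$'' slack that the definition of $\fa$ permits. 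Making this case analysis airtight, rather than the factorization itself, is where the real work lies; I expect it to be a short but fiddly combinatorial lemma about local configurations of Delannoy paths.
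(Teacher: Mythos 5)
Your approach is the same as the paper's: the coordinate attached to the middle right step is unconstrained beyond lying between its neighbours in the walk, so $C_\alpha(y,x)$ depends only on the tuple obtained by deleting that coordinate, and $C_\alpha$ therefore factors through $\sA(\bR^{(n-1)})$ via the projection omitting it (the paper deletes on the $y$-side and writes $C_\alpha=p^*\circ C_\beta$; deleting on the $x$-side as you do gives $C_\alpha=C_\beta\circ p_*$, which is the composite you write down before second-guessing yourself). Two clarifications on the part you flag as the "main obstacle." First, the factorization through the delta-function kernel $p_*$ (or $p^*$) only ever uses that a singleton has measure $1$, so no comparison of $\mu_1$ with $\mu_2$ on intervals and no boundedness of fibers enters; that entire worry can be dropped. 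Second, your claim that the middle coordinate is sandwiched between two $y$-coordinates is not right in general — if all three steps are right steps its neighbours in the walk are $x$-coordinates — but this is harmless: all that matters is that the first and third segments of the window each contribute a coordinate on either side of the middle one, so its relative position is forced by the remaining data, and the factorization goes through in every case.
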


\begin{proof}
Suppose there are three consecutive right segments. This means that points in $O_{\alpha}$ satisfy inequalities
\begin{displaymath}
x_j,y_{i-2}<y_{i-1}<y_i<y_{i+1}<x_{j+1},y_{i+2}
\end{displaymath}
for some $j$ (by convention $x_0=-\infty$ and $x_{n+1}=+\infty$ and similar for $y$). Clearly, the value $C_{\alpha}(y,x)$ does not depend on the precise value of $y_i$. More precisely, with $p=p_{n,i}$, we have
\begin{displaymath}
C_\alpha(y,x)\;=\; C_\beta(p(y),x)\;=\; (p^\ast C_\beta) (y,x),
\end{displaymath}
where $O_\beta \subset \bR^{(n-1)}\times\bR^{(n)}$ is the image of $O_{\alpha}$ under $p \times \id$. It follows that $C_{\alpha}=p^* \circ C_{\beta}$, which shows that $C_{\alpha}$ belongs to $\fa$. The other cases are similar.
\end{proof}

\begin{lemma} \label{lem:degen-2}
Let $\alpha$ be an $(n,n)$ Delannoy path such that the first segment of $\alpha$ is right, and the second segment of $\alpha$ is either right or diagonal. Then $C_{\alpha}$ belongs to $\fa$. The same statement holds with ``right'' changed to ``up.''
\end{lemma}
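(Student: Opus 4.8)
\emph{Proof plan.} The plan is to mimic the proof of Lemma~\ref{lem:degen-1}, but factoring out the first (smallest) $x$-coordinate instead of an interior $y$-coordinate. The first step is to translate the hypothesis on $\alpha$ into inequalities on the points $(y,x)\in O_\alpha$. Walking $\bR$ from $-\infty$ to $+\infty$, the first segment of $\alpha$ is right exactly when the smallest coordinate among $y_1,\dots,y_n,x_1,\dots,x_n$ is $x_1$ and it is not also some $y_j$, i.e.\ $x_1<y_1$. The second segment being right or diagonal means the next coordinate encountered is $x_2$ (possibly $x_2=y_1$), i.e.\ $x_2\le y_1$. Hence every point of $O_\alpha$ satisfies $x_1<x_2\le y_1$; in particular $x_1$ is strictly below every other coordinate, so the value $C_\alpha(y,x)$ should not depend on the precise value of $x_1$.

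To make this precise, let $\beta$ be the $(n-1,n)$ Delannoy path obtained from $\alpha$ by deleting its first (right) segment; note the first segment of $\beta$ is the second segment of $\alpha$, hence right or diagonal. The key claim is the set equality
\begin{displaymath}
(y,x)\in O_\alpha \iff (y,p_{n,1}(x))\in O_\beta \qquad \text{for all } (y,x)\in\bR^{(n)}\times\bR^{(n)}.
\end{displaymath}
Granting this, $C_\alpha(y,x)=C_\beta(y,p_{n,1}(x))$ identically; and since the matrix of $p_{n,1\,\ast}\colon\sA(\bR^{(n)})\to\sA(\bR^{(n-1)})$ sends $(u,x)$ to $\delta_{u,p_{n,1}(x)}$ (see \S\ref{SSMatrices}) and the singleton $\{p_{n,1}(x)\}$ has $\mu_2$-measure $1$, this reads $C_\alpha=C_\beta\circ p_{n,1\,\ast}$ with $C_\beta\colon\sA(\bR^{(n-1)})\to\sA(\bR^{(n)})$. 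So $C_\alpha$ factors through $\sA(\bR^{(n-1)})$ and lies in $\fa$.

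The step I expect to require the most care is the set equality, specifically the implication ``$\Leftarrow$'', which is exactly where the hypothesis on the second segment is used. For ``$\Rightarrow$'': if $(y,x)\in O_\alpha$ then by the above $x_1$ is the strictly smallest coordinate, giving the initial right step, so erasing $x_1$ from the marked points yields the configuration $(y,p_{n,1}(x))$ with path $\beta$. For ``$\Leftarrow$'': if $(y,p_{n,1}(x))\in O_\beta$, then the first segment of $\beta$ being right or diagonal forces $x_2\le y_1$; together with $x_1<x_2$ (as $x\in\bR^{(n)}$) this gives $x_1<x_2\le y_1\le y_j$ for all $j$, so $x_1$ is again the strictly smallest coordinate of $(y,x)$; inserting it prepends a right step to the path of $(y,p_{n,1}(x))$, giving $\beta$ with a right step prepended, namely $\alpha$. (If the second segment of $\alpha$ were instead up, the first segment of $\beta$ would be up, one could have $y_1<x_1<x_2$, and $(y,x)$ would lie on a different path; this is precisely why the hypothesis is needed, and why $C_\alpha$ genuinely depends on $x_1$ in that case.) Finally, the ``up'' version of the statement is entirely analogous: then $y_1$ is the strictly smallest coordinate, $C_\alpha$ is independent of $y_1$, and one factors $C_\alpha=p_{n,1}^{\ast}\circ C_\beta$ exactly as in Lemma~\ref{lem:degen-1}.
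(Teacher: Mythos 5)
Your proof is correct and follows essentially the same route as the paper: one checks that the hypothesis forces the first coordinate ($x_1$ in your convention) to be strictly smaller than all the others, so that $C_\alpha$ is independent of it and factors through the projection omitting it, giving $C_\alpha=C_\beta\circ p_*$ (resp.\ $p^*\circ C_\beta$ in the ``up'' case). The only difference is cosmetic — the paper's own proof labels right segments by $y$-coordinates and so spells out the $p^*C_\beta$ case instead — and your verification of the orbit identity $(y,x)\in O_\alpha\iff(y,p(x))\in O_\beta$ is more detailed than the paper's one-line appeal to the argument of Lemma~\ref{lem:degen-1}.
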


\begin{proof}
This is similar to the proof of Lemma~\ref{lem:degen-1}. For instance, if $\alpha$ begins with two right segments then we have $y_1<y_2<x_1,y_3$ for points in $O_{\alpha}$, and so $(C_{\alpha} \phi)(y)$ is independent of $y_1$, and we in fact get a relation $C_{\alpha}=p^* C_{\beta}$ as before.
\end{proof} 

Now we will use the two previous lemmas, which are concerned with local properties of Delannoy paths, to conclude something about the global property of being quasi-diagonal. As quasi-diagonal is not a local property, the proof becomes more involved.

\begin{lemma} \label{lem:degen-3}
Let $\alpha$ be an $(n,n)$-Delannoy path that is not quasi-diagonal. Then $C_{\alpha} \in \fa$.
\end{lemma}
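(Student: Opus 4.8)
We must show that $C_\alpha\in\fa$ whenever $\alpha$ is an $(n,n)$-Delannoy path that is not quasi-diagonal. The plan has two parts: first, assemble the full supply of ``manifestly degenerate'' paths; then reduce an arbitrary non-quasi-diagonal path to those. For the first part, observe that the argument proving Lemma~\ref{lem:degen-2}, applied at the \emph{right-hand} end of $\alpha$, shows that if the last segment of $\alpha$ is $R$ and the one before it is $R$ or $D$ (or the same with $R$ replaced by $U$), then the last relevant coordinate ranges freely over an unbounded interval, so $C_\alpha=p^\ast C_\beta$ for a coordinate projection $p$, and hence $C_\alpha\in\fa$. Call a pure $R$ (resp.\ pure $U$) segment of $\alpha$ \emph{removable} if each of its neighbours — ignoring a missing one at an endpoint of $\alpha$ — is an $R$ or $D$ (resp.\ $U$ or $D$) segment. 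Then Lemma~\ref{lem:degen-1}, Lemma~\ref{lem:degen-2}, and the above endpoint variant together give: \emph{if $\alpha$ has a removable segment, then $C_\alpha\in\fa$.} It therefore suffices to treat non-quasi-diagonal paths with no removable segment.

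Let $(j,j)$ be the first vertex on the main diagonal that $\alpha$ misses, so $1\le j\le n-1$. By minimality $\alpha$ visits $(0,0),\dots,(j-1,j-1)$, and since between consecutive diagonal visits the path has net displacement $(1,1)$ and (right up to reaching the next diagonal vertex) minimal coordinate equal to the smaller index, the part of $\alpha$ ending at $(j-1,j-1)$ is a concatenation $B_1\cdots B_{j-1}$ of blocks, each equal to $D$, $RU$, or $UR$. Let $\rho$ be the rest of $\alpha$, a path from $(j-1,j-1)$ to $(n,n)$; it cannot start with a block, so its first segment is $R$ or $U$, and after replacing $\alpha$ by its transpose if necessary (which preserves $\fa$ and carries quasi-diagonal paths to quasi-diagonal paths) we may assume $\rho$ starts with $R$ followed by $R$ or $D$. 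If $j=1$ then $\alpha=\rho$ and $C_\alpha\in\fa$ by Lemma~\ref{lem:degen-2}. If $j\ge2$ and $B_{j-1}\in\{D,UR\}$, then the last segment of $B_{j-1}$ and the first two of $\rho$ form the removable configuration ``$(R\text{ or }D),\,R,\,(R\text{ or }D)$,'' so $C_\alpha\in\fa$ by Lemma~\ref{lem:degen-1}. The only remaining case is $B_{j-1}=RU$, i.e.\ $\alpha$ reads $\cdots R\,U\,R\,(R\text{ or }D)\cdots$ around $(j-1,j-1)$; this has no removable segment, and — as the non-quasi-diagonal path $RURRUURU$ (which misses $(2,2)$ and has no removable segment whatsoever) shows — being non-quasi-diagonal does not by itself force a removable segment, so the proof cannot be completed by Lemmas~\ref{lem:degen-1}--\ref{lem:degen-2} alone.

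For the remaining case the plan is to straighten the excursion, using the quadratic relations of \S\ref{ss:matrix-ex} — in $\End(\sA(\bR))$ one has $A^2=A$, $B^2=B$, $BA=0$, and $AB=A+B-1$, and the product $AB$ is the constant $\bR\times\bR$ matrix, which factors through a point and so is degenerate — localized to a suitable pair of adjacent coordinates (the two $x$-coordinates trapped between the $y$-coordinates flanking the pattern $R\,U\,R$, and the symmetric configuration with $R$ and $U$ exchanged). Such a localized relation should express $C_\alpha$, modulo $\fa$, as a signed sum of $C_{\alpha'}$ with every $\alpha'$ still not quasi-diagonal but strictly simpler for a suitable complexity measure — I would induct on $n$, and for fixed $n$ on the position $j$ (or on the total area between $\alpha$ and the diagonal) — the excursion being made shallower, or its first or last vertex slid toward an endpoint of the path, until one reaches a path with a removable segment, handled in the first paragraph. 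The main obstacle is precisely this straightening step: identifying a relation all of whose terms are strictly simpler and none of which is a quasi-diagonal path (for which $C_\alpha$ need not lie in $\fa$), and then checking that the chosen complexity measure strictly decreases. Everything else is routine bookkeeping with the orbit indicators $C_\alpha$ of \S\ref{ss:paths} and the projections $p_{n,i}$.
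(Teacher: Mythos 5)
Your first two paragraphs are sound: the ``removable segment'' bookkeeping is a correct packaging of Lemmas~\ref{lem:degen-1} and~\ref{lem:degen-2} (the right-hand-end variant of Lemma~\ref{lem:degen-2} does hold, by the same argument with $p_*$ in place of $p^*$), and your block decomposition correctly isolates the one irreducible configuration, namely a path reading $\cdots RUR(R\text{ or }D)\cdots$ around the last diagonal vertex $(i,i)$ visited before the first miss. This is exactly the configuration the paper's proof is forced to confront. But your third paragraph is not a proof of that case --- you say so yourself (``the main obstacle is precisely this straightening step''). Since every non-quasi-diagonal path with no removable segment falls into this case (your example $RURRUURU$ shows the case is non-empty), the lemma is not proved. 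Moreover, the specific mechanism you propose --- localizing the rank-one relations $AB=A+B-1$, $BA=0$ from \S\ref{ss:matrix-ex} to two adjacent coordinates --- is not obviously compatible with the surrounding inequalities defining $O_\alpha$, and the complexity measures you float (depth of the excursion, area under the path) are not shown to decrease under any concrete relation.

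For comparison, the paper resolves this case not by straightening the excursion but by an orbit-pullback identity together with an induction on $i$, the length of the initial diagonal run $(0,0),\dots,(i,i)$. One deletes the vertical segment entering $(i,i)$ to obtain an $(n,n-1)$-path $\beta$, and composes $C_{\beta}$ with the pushforward along $p_{n,i-1}$. This composite is degenerate by construction, and as a matrix it is the indicator function of $q^{-1}(O_{\beta})$ for $q=p_{n,i-1}\times\id$, hence equals $C_{\alpha}+\sum_{j>1}C_{\alpha_j}$ where the $\alpha_j$ enumerate the other orbits in the preimage. The point is that each extra term $\alpha_j$ is disposed of \emph{without} needing it to be ``simpler'' in your sense: depending on where the freed coordinate $x_{i-1}$ lands, $\alpha_j$ either exhibits the local pattern of Lemma~\ref{lem:degen-1}, or it fails to pass through $(i,i)$ and so has a strictly shorter initial diagonal run, whence $C_{\alpha_j}\in\fa$ by the inductive hypothesis. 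If you want to salvage your write-up, replace the ``localized quadratic relation'' plan with this pullback identity and change your induction variable from the shape of the excursion to the position of the first missed diagonal vertex.
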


\begin{proof}
Let $i$ be maximal such that $\alpha$ passes through $(j,j)$ for all $0 \le j \le i$. We prove the statement by induction on $i$. The base case, where $i=0$, follows from Lemma~\ref{lem:degen-2}. We now carry out the induction step.

There are four possibilities for what the path $\alpha$ does starting from $(i,i)$: (a) right twice; (b) right then diagonal; (c) up then diagonal; (d) up twice. The analysis in each case is similar, so we just handle case (a). If the segment coming into $(i,i)$ is either horizontal or diagonal then the result follows from Lemma~\ref{lem:degen-1}; similarly if the segment coming out of $(i+2,i)$ is either horizontal or diagonal. We thus have the following picture:
\begin{center}
\begin{tikzpicture}[scale=0.5]
\draw[step=1, color=gray!50] (0, 0) grid (3,3);
\draw[line width=2pt] (0,0)--(1,0)--(1,1)--(2,1)--(3,1)--(3,2);
\end{tikzpicture}
\end{center}
where the point $(1,1)$ in the diagram is really $(i,i)$. We thus see that $O_{\alpha}$ has defining inequalities including
\begin{displaymath}
y_{i-1}<x_{i-1}<y_i<y_{i+1}<x_i.
\end{displaymath}
Note that the segment coming into $(i,i-1)$ must be horizontal by the maximality of $i$.

Let $p=p_{n,i-1} \colon \bR^{(n)} \to \bR^{(n-1)}$ be the map omitting the $i-1$ coordinate, and let
\begin{displaymath}
q \colon \bR^{(n)} \times \bR^{(n)} \to \bR^{(n)} \times \bR^{(n-1)}
\end{displaymath}
be the map $p \times \id$. Let $\beta$ be the $(n,n-1)$ Delannoy path where we delete the vertical segment from $\alpha$ coming into $(i,i)$, and slide the remainder of the diagram down one unit. Then $q(O_{\alpha})=O_{\beta}$. Now consider $q^{-1}(O_{\beta})$. This is a union of orbits $O_{\alpha_1}, \ldots, O_{\alpha_r}$. These orbits consist of the points $(y,x)$ satisfying the inequalities defining $O_{\alpha}$, except those involving $x_i$. One of these orbits, say $O_{\alpha_1}$, is $O_{\alpha}$.

We claim that $C_{\alpha_j}$ belongs to $\fa$ for $j \ne 1$. To see this, consider the possible position for $x_{i-1}$. If $x_{i-1} < y_{i-1}$ then $\alpha_j$ has three consecutive right segments, and so $C_{\alpha_j} \in \fa$ by Lemma~\ref{lem:degen-1}. Similarly, if $x_{i-1}=y_{i-1}$ then $\alpha_j$ has a diagonal segment followed by two right segments, and so $C_{\alpha_j} \in \fa$ again. We can thus suppose $y_{i-1}<x_{i-1}$. We cannot have $x_{i-1}<y_i$, as this would mean we have $\alpha_1$. Thus $y_i \le x_{i-1}$. However, this means that $\alpha_j$ does not pass through $(i,i)$, and so $C_{\alpha_j} \in \fa$ by the inductive hypothesis.

By matrix multiplication, we have
\begin{displaymath}
C_{\beta} \circ  p_* = \sum_{j=1}^r C_{\alpha_j}.
\end{displaymath}
Since the left side belongs to $\fa$ and all terms on the right with $j>1$ belong to $\fa$, it follows that $C_{\alpha_1}=C_{\alpha}$ belongs to $\fa$, which completes the proof.
\end{proof}

\begin{lemma}\label{lem:degen-4}
Let $\alpha$ be a quasi-diagonal $(n,n)$ Delannoy path, and suppose $\alpha$ has a diagonal segment from $(i,i)$ to $(i+1,i+1)$. Let $\beta$ and $\gamma$ be the paths obtained by replacing this segment with ``up then right'' and ``right then up'' respectively. Then
\begin{displaymath}
C_{\alpha}+C_{\beta}+C_{\gamma} \in \fa.
\end{displaymath}
\end{lemma}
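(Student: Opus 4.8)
The plan is to imitate the matrix-multiplication trick from the previous lemmas, but now starting from the diagonal segment at $(i,i)\to(i+1,i+1)$ rather than from a horizontal or vertical segment. The key point is that the three paths $\alpha,\beta,\gamma$ differ only in how they traverse the square from $(i,i)$ to $(i+1,i+1)$: $\alpha$ goes diagonally, $\beta$ goes up-then-right, $\gamma$ goes right-then-up. In terms of orbits, $O_\alpha$ is cut out by the relations $x_i=y_i$ (among the three relations $x_i\le y_i$, $y_i\le x_i$, together with the interlacing inequalities with neighbors), while $O_\beta$ imposes $x_i < y_i$ with $y_i$ not too large, and $O_\gamma$ imposes $y_i < x_i$ similarly. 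So $O_\alpha\amalg O_\beta\amalg O_\gamma$ should be exactly a fiber of a projection that forgets one of the two coordinates $x_i$ or $y_i$, up to contributions of other orbits that we will need to argue lie in $\fa$.

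First I would set up the projection. Let $p=p_{n,i}\colon\bR^{(n)}\to\bR^{(n-1)}$ be the map omitting the $i$th coordinate, and let $q=p\times\id\colon\bR^{(n)}\times\bR^{(n)}\to\bR^{(n-1)}\times\bR^{(n)}$ (so we forget $y_i$). Since $\alpha$ is quasi-diagonal, the segments of $\alpha$ adjacent to the diagonal step at $(i,i)$ — namely the step coming into $(i,i)$ and the step leaving $(i+1,i+1)$ — are constrained: quasi-diagonality means $\alpha$ passes through $(i,i)$ and $(i+1,i+1)$, so the incoming step ends at $(i,i)$ and the outgoing step starts at $(i+1,i+1)$. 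If either of those adjacent steps is itself diagonal, or creates a forbidden local configuration, I would first dispatch that case via Lemma~\ref{lem:degen-1} (three consecutive right or three consecutive up segments, or a diagonal flanked appropriately), exactly as in the proof of Lemma~\ref{lem:degen-3}. So I may assume the incoming step is (say) up and the outgoing step is right, or some such generic local picture, and read off the defining inequalities of $O_\alpha$ near coordinate $i$: something like $y_{i-1}<x_{i}$ on one side and $x_{i}<y_{i+1}$ on the other, together with $x_i = y_i$ for $\alpha$.

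Next I would compute $q^{-1}(O_{\bar\beta})$ where $\bar\beta$ is the $(n,n-1)$ Delannoy path obtained by deleting the up-step into $(i,i)$ from $\beta$ (equivalently the down-shift of the portion of $\beta$ past that step). The preimage $q^{-1}(O_{\bar\beta})$ is a union of orbits $O_{\alpha_1},\dots,O_{\alpha_r}$; these are obtained from the inequalities of $O_\beta$ by dropping all constraints involving $y_i$ and then reintroducing $y_i$ in all possible positions. Three of these reinsertions give $\alpha$ ($y_i=x_i$), $\beta$ ($x_i<y_i<\text{(next bound)}$), and $\gamma$ ($\text{(previous bound)}<y_i<x_i$); every other reinsertion places $y_i$ in a position that either produces two consecutive right segments adjacent to a right or diagonal segment, or fails quasi-diagonality, so that $C_{\alpha_j}\in\fa$ by Lemma~\ref{lem:degen-1}, Lemma~\ref{lem:degen-2}, or Lemma~\ref{lem:degen-3}. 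Matrix multiplication then gives
\begin{displaymath}
C_{\bar\beta}\circ p_* = \sum_{j=1}^r C_{\alpha_j},
\end{displaymath}
where the left-hand side factors through $\sA(\bR^{(n-1)})$ and hence lies in $\fa$. Rearranging, $C_\alpha + C_\beta + C_\gamma = (C_{\bar\beta}\circ p_*) - \sum_{j\colon\alpha_j\notin\{\alpha,\beta,\gamma\}} C_{\alpha_j} \in \fa$, which is the claim.

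The main obstacle I anticipate is the careful enumeration of the orbits $O_{\alpha_j}$ in $q^{-1}(O_{\bar\beta})$ and verifying that all of them except the three distinguished ones are handled by the earlier lemmas — in particular, checking that the local pictures one obtains really do match the hypotheses of Lemma~\ref{lem:degen-1} or Lemma~\ref{lem:degen-2}, or else genuinely violate quasi-diagonality so that Lemma~\ref{lem:degen-3} applies. There is also a bookkeeping subtlety about whether to forget $y_i$ or $x_i$ (I chose $y_i$ above via the incoming up-step; if the local picture near $(i,i)$ is instead "right into $(i,i)$," one should forget $x_i$ and run the symmetric argument), and one must make sure the chosen projection is compatible with quasi-diagonality of $\bar\beta$ so that $C_{\bar\beta}$ is the matrix of an honest morphism. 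None of this is deep, but it requires the same patient case-checking as Lemma~\ref{lem:degen-3}.
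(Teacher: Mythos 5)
Your proposal is correct and uses the same essential mechanism as the paper: write $C_{\alpha}+C_{\beta}+C_{\gamma}$, plus indicator functions of non-quasi-diagonal orbits, as the indicator function of the fiber of a forgetful projection, which visibly factors through $\sA(\bR^{(n-1)})$ and hence lies in $\fa$, and then kill the extra terms by Lemma~\ref{lem:degen-3}. The one real difference is the choice of projection: the paper deletes the coincident coordinate from \emph{both} factors, taking $q=p\times p$ with $p$ omitting the $(i+1)$st coordinate, so that the degenerate element is $p^*\,C_{\ol{\alpha}}\,p_*$ and the combinatorial check is that a quasi-diagonal path in the fiber must agree with $\alpha$ at every step except the one from $(i,i)$ to $(i+1,i+1)$; you delete the coordinate from only one factor, which also works, the check there being that the only placements of the re-inserted $y$-coordinate giving a quasi-diagonal path are the three lying strictly between the surrounding diagonal points. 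Two slips to fix in your write-up: the coordinate governing the square from $(i,i)$ to $(i+1,i+1)$ is the $(i+1)$st (the diagonal step corresponds to $y_{i+1}=x_{i+1}$), not the $i$th; and with your orientation $O_{\ol{\beta}}\subset\bR^{(n-1)}\times\bR^{(n)}$ the degenerate element must be $p^*\circ C_{\ol{\beta}}$ rather than $C_{\ol{\beta}}\circ p_*$, which does not compose. Finally, the preliminary case analysis you anticipate on the segments adjacent to the diagonal step is unnecessary: every non-distinguished orbit in the fiber simply fails quasi-diagonality, so Lemma~\ref{lem:degen-3} alone suffices and no appeal to Lemmas~\ref{lem:degen-1} or~\ref{lem:degen-2} is needed.
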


\begin{proof}
Let $p \colon \bR^{(n)} \to \bR^{(n-1)}$ be the projection map omitting the $i+1$th coordiante, and let
\begin{displaymath}
q \colon \bR^{(n)} \times \bR^{(n)} \to \bR^{(n-1)} \times \bR^{(n-1)}\
\end{displaymath}
be the map $p \times p$. Then $q(O_{\alpha})=O_{\ol{\alpha}}$, where $\ol{\alpha}$ is obtained by deleting the diagonal segment from $(i,i)$ to $(i+1,i+1)$ in $\alpha$, and sliding the rest of the diagram down and to the left.

Now, suppose $\epsilon$ is a quasi-diagonal $(n,n)$ Delannoy path such that $q(O_{\epsilon})=O_{\ol{\alpha}}$. To specify a quasi-diagonal path, we have to choose one of three options for the path from $(j,j)$ to $(j+1, j+1)$, for each $j$. The condition $q(O_{\epsilon})=O_{\ol{\alpha}}$ means that $\epsilon$ makes the same choice as $\alpha$ for all $j \ne i$. We thus see that $\epsilon$ must be one of $\alpha$, $\beta$, or $\gamma$. Moreover, each of these three possibilities works for $\epsilon$.

The above discussion shows that we have an orbit decomposition
\begin{displaymath}
q^{-1}(O_{\ol{\alpha}}) = O_{\alpha} \sqcup O_{\beta} \sqcup O_{\gamma} \sqcup \bigsqcup_{j=1}^r O_{\delta_j},
\end{displaymath}
where each $\delta_j$ is not quasi-diagonal. A simple computation shows that the matrix $p^* C_{\ol{\alpha}} p^*$ is the indicator function of $q^{-1}(O_{\ol{\alpha}})$. We thus find
\begin{displaymath}
p^* \, C_{\ol{\alpha}} \, p_* =C_{\alpha}+C_{\beta}+C_{\gamma} + \sum_{j=1}^r C_{\delta_j}.
\end{displaymath}
Since the left side and the $C_{\delta_j}$ belong to $\fa$, the result follows.
\end{proof}

\begin{proof}[Proof of Proposition~\ref{prop:degen}]
By Lemma~\ref{lem:degen-3}, $R/\fa$ is spanned by the $C_\alpha$ with $\alpha$ quasi-diagonal. If $\alpha$ is quasi-diagonal and has a diagonal segment then, by Lemma~\ref{lem:degen-4}, the image of $C_\alpha$ in $R/\fa$ is in the span of the $C_\beta$ with $\beta$ quasi-diagonal and strictly fewer diagonal segments. The conclusion follows by induction.
\end{proof}

\subsection{Classification of indecomposables}\label{s:classindec}

We now show that the indecomposable objects of $\cA$ that we have constructed account for all such objects.

\begin{theorem}\label{thm:class}
Every indecomposable object of $\cA$ is isomorphic to $M_{\lambda}$ for some $\lambda$.
\end{theorem}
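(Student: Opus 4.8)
The plan is to prove the theorem by strong induction on $n$, establishing that every indecomposable summand of the Schwartz space $\sA(\bR^{(n)})$ is isomorphic to some $M_\lambda$; since $\cA$ is Krull--Schmidt (\S\ref{ss:krull-schmidt}) and every object of $\cA$ is a summand of a finite direct sum of Schwartz spaces $\sA(\bR^{(n)})$, this yields the full statement. The base case $n=0$ is immediate, as $\sA(\bR^{(0)})=\bbone=M_\varnothing$. So fix $n\ge 1$, assume the claim for all smaller values, write $R=\End_{\cA}(\sA(\bR^{(n)}))$, and let $\fa\subset R$ be the ideal of degenerate endomorphisms from \S\ref{ss:degen}. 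An indecomposable summand of $\sA(\bR^{(n)})$ has the form $e\sA(\bR^{(n)})$ for a primitive idempotent $e\in R$, and I would split into the cases $e\in\fa$ and $e\notin\fa$. The degenerate case is easy: if $e\in\fa$, write $e=\sum_i g_i h_i$ with $h_i\colon\sA(\bR^{(n)})\to\sA(\bR^{(n-1)})$ and $g_i$ in the other direction, bundle the $h_i$ (resp.\ $g_i$) into a single map to (resp.\ from) $\sA(\bR^{(n-1)})^{\oplus N}$, and use $e^2=e$ to exhibit $e\sA(\bR^{(n)})$ as a retract of $\sA(\bR^{(n-1)})^{\oplus N}$; since $\cA$ is Karoubian this retract splits off, so by Krull--Schmidt and the inductive hypothesis $e\sA(\bR^{(n)})\cong M_\mu$ for some $\mu$ with $\ell(\mu)<n$.

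The substance lies in the non-degenerate case, where I would analyze the finite-dimensional algebra $S:=R/\fa$. Proposition~\ref{prop:degen} gives $\dim_k S\le 2^n$. On the other hand, the $2^n$ weights $\lambda$ with $\ell(\lambda)=n$ supply idempotents $E_\lambda\in R$ with $E_\lambda R E_\lambda=\End_{\cA}(M_\lambda)=k$ (Proposition~\ref{prop:Xind}), and I claim their images $\bar E_\lambda\in S$ are $2^n$ pairwise non-conjugate primitive idempotents. They are nonzero: if $E_\lambda\in\fa$, the retract argument above would force $M_\lambda\cong M_\mu$ with $\ell(\mu)<n$, contradicting Proposition~\ref{prop:Phi-E}; hence $\bar E_\lambda S\bar E_\lambda$ is a nonzero quotient of $k$, so $\bar E_\lambda S\bar E_\lambda=k$ and $\bar E_\lambda$ is primitive. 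If $\bar E_\lambda$ and $\bar E_\mu$ were conjugate in $S$ for some $\lambda\ne\mu$, I would lift elements of $\bar E_\mu S\bar E_\lambda$ and $\bar E_\lambda S\bar E_\mu$ realizing the conjugacy to elements of $E_\mu R E_\lambda$ and $E_\lambda R E_\mu$; since $E_\lambda R E_\lambda=k$ their product is a nonzero scalar multiple of $E_\lambda$, which presents $M_\lambda$ as a retract of $M_\mu$, again contradicting Proposition~\ref{prop:Phi-E}. A primitive idempotent determines an indecomposable projective, hence a simple $S$-module (its top), so $S$ has at least $2^n$ pairwise non-isomorphic simples; thus $\dim_k S\ge\dim_k\bigl(S/\mathrm{rad}(S)\bigr)\ge 2^n$, forcing $\dim_k S=2^n$, $\mathrm{rad}(S)=0$, and $S\cong k^{2^n}$ with the $\bar E_\lambda$ as its primitive idempotents.

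To conclude, let $e\in R$ be primitive with $e\notin\fa$, and set $N:=e\sA(\bR^{(n)})$. Its image $\bar e$ in $S\cong k^{2^n}$ is a nonzero idempotent, hence of the form $\sum_{\lambda\in T}\bar E_\lambda$ for some nonempty $T$; since $\End_{\cA}(N)=eRe$ is local, so is its quotient $\bar e S\bar e\cong k^{|T|}$, forcing $|T|=1$, say $\bar e=\bar E_\lambda$. Writing $e=E_\lambda+a$ with $a\in\fa$, I would consider the maps $f=E_\lambda|_N\colon N\to M_\lambda$ and $g=e|_{M_\lambda}\colon M_\lambda\to N$; a short computation using $e^2=e$ gives $g\circ f=\id_N-(eae)|_N$. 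Since $eae$ lies in $e\fa e$, which is the kernel of the surjection $eRe\twoheadrightarrow\bar e S\bar e=k$ and hence the maximal ideal of the local ring $eRe$, the endomorphism $g\circ f$ is invertible. Therefore $N$ is a retract of the indecomposable $M_\lambda$, so $N\cong M_\lambda$, and the induction is complete.

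I expect the only genuine obstacle to be Proposition~\ref{prop:degen} itself --- the combinatorial bound $\dim_k R/\fa\le 2^n$ --- which has already been established in \S\ref{ss:degen}; everything after that is a formal argument in the module theory of $R$, turning on the observation that this bound is forced to be an equality precisely because there are exactly $2^n$ indecomposables $M_\lambda$ of length $n$, each with endomorphism algebra $k$ and with no isomorphisms or retractions among them (Propositions~\ref{prop:Xind} and~\ref{prop:Phi-E}). The secondary point requiring care is the inductive bookkeeping: both the handling of idempotents in $\fa$ and the non-vanishing of the $\bar E_\lambda$ rely on the classification of the indecomposables of $\sA(\bR^{(m)})$ for $m<n$, so the ingredients must be assembled in the right order.
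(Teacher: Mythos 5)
Your proposal is correct and takes essentially the same approach as the paper: both arguments rest on the bound $\dim(R/\fa)\le 2^n$ from Proposition~\ref{prop:degen}, the retract argument showing that degenerate primitive idempotents yield summands of $\sA(\bR^{(n-1)})$ (the paper's Lemma~\ref{lem:class-2}), and the fact that the $2^n$ objects $M_\lambda$ with $\ell(\lambda)=n$ are pairwise non-isomorphic with $\End(M_\lambda)=k$. The only difference is organizational: the paper counts linearly independent orthogonal idempotents in $R/\fa$ to bound the number of ``level $n$'' indecomposables directly, whereas you identify $R/\fa\cong k^{2^n}$ and then match each non-degenerate primitive idempotent to a specific $E_\lambda$ by a unit computation in the local ring $eRe$ --- a slightly longer but equally valid route.
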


Recall that $\cA$ is a Krull--Schmidt category (\S \ref{ss:krull-schmidt}). This will play an important role throughout the proof. We say that an indecomposable object $M$ of $\cA$ has \defn{level $n$} if $M$ is a summand of $\sA(\bR^{(n)})$ with $n$ minimal. Every indecomposable has some level, and there are only finitely many indecomposables of a given level up to isomorphism; both statements follow from the Krull--Schmidt property.

\begin{lemma} \label{lem:class-1}
The indecomposable object $M_{\lambda}$ has level $n=\ell(\lambda)$.
\end{lemma}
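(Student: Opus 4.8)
The plan is to show that $M_{\lambda}$ is a summand of $\sA(\bR^{(n)})$ (which is immediate from the construction, since $M_{\lambda} = E_{\lambda}\sA(\bR^{(n)})$) but not a summand of $\sA(\bR^{(m)})$ for any $m < n$; equivalently, that $M_{\lambda}$ does not factor through $\sA(\bR^{(n-1)})$. The natural tool is the functor $\Phi$ together with Proposition~\ref{prop:Phi-E}: we know $\Phi(M_{\lambda}) = L_{\lambda} \oplus L_{\lambda^{\flat}}$, and in the semisimple category $\cC$ the simple $L_{\lambda}$ first appears in Schwartz space at level $\ell(\lambda) = n$. So the key is to transfer this lower bound across $\Phi$.

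First I would argue as follows. Suppose for contradiction that $M_{\lambda}$ is a summand of $\sA(\bR^{(m)})$ with $m \le n-1$. Applying $\Phi$ and using the decomposition $\Phi(\sA(\bR^{(m)})) \cong \sC(\bR^{(m)}) \oplus \sC(\bR^{(m-1)})$ from \S\ref{ss:functor}, we would get that $L_{\lambda}$ is a summand of $\sC(\bR^{(m)}) \oplus \sC(\bR^{(m-1)})$. But by the simple decomposition of Schwartz space recalled in \S\ref{ss:delcat}, $L_{\lambda}$ occurs in $\sC(\bR^{(r)})$ only when $\ell(\lambda) \le r$. Since $\ell(\lambda) = n > m \ge m-1$, this is a contradiction, giving $m \ge n$. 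Combined with the fact that $M_{\lambda}$ is visibly a summand of $\sA(\bR^{(n)})$, we conclude the level is exactly $n$.

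The one point requiring a little care — and the main (if modest) obstacle — is justifying that ``$M_\lambda$ is a summand of $\sA(\bR^{(m)})$'' really does pass through $\Phi$ to ``$L_\lambda$ is a summand of $\Phi(\sA(\bR^{(m)}))$.'' This is just the statement that an additive functor preserves direct summands (it sends the splitting idempotent of $M_\lambda$ in $\End(\sA(\bR^{(m)}))$ to a splitting idempotent in $\End(\Phi(\sA(\bR^{(m)})))$ realizing $\Phi(M_\lambda) = L_\lambda \oplus L_{\lambda^\flat}$ as a summand), so no faithfulness of $\Phi$ is even needed here — only additivity. Then one invokes Krull--Schmidt in the semisimple category $\cC$ to read off which simples occur. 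I expect the whole argument to be a few lines; the substance is entirely in Proposition~\ref{prop:Phi-E} and the known Schwartz-space decomposition in $\cC$, both already available.
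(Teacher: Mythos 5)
Your proposal is correct and is essentially identical to the paper's proof: both argue that $M_{\lambda}$ is a summand of $\sA(\bR^{(n)})$ by construction, then use Proposition~\ref{prop:Phi-E} together with the decomposition $\Phi(\sA(\bR^{(m)}))=\sC(\bR^{(m)})\oplus\sC(\bR^{(m-1)})$ and the known occurrence of $L_{\lambda}$ in Schwartz spaces of $\cC$ to rule out level $<n$. Your extra remark that only additivity of $\Phi$ (not faithfulness) is needed here is accurate.
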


\begin{proof}
By definition, $M_{\lambda}$ is a summand of $\sA(\bR^{(n)})$, and thus has level $\le n$. By Proposition~\ref{prop:Phi-E}, $L_{\lambda}$ is a summand of $\Phi(M_{\lambda})$. If $m<n$, then $L_{\lambda}$ is not a summand of
\begin{displaymath}
\Phi(\sA(\bR^{(m)})) = \sC(\bR^{(m)}) \oplus \sC(\bR^{(m-1)}),
\end{displaymath}
and so $M_{\lambda}$ does not have level $<n$.
\end{proof}

\begin{lemma} \label{lem:class-2}
Let $e$ be a primitive idempotent endomorphism of $\sA(\bR^{(n)})$. If $e$ is degenerate (\S \ref{ss:degen}) then $e \sA(\bR^{(n)})$ has level $<n$.
\end{lemma}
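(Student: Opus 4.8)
The plan is to show that if $e$ is a primitive degenerate idempotent on $\sA(\bR^{(n)})$, then $e\sA(\bR^{(n)})$ is a summand of $\sA(\bR^{(n-1)})$, hence has level $\le n-1 < n$. The key point is that ``degenerate'' means $e$ is a sum of endomorphisms factoring through $\sA(\bR^{(n-1)})$; I want to upgrade this to the statement that $e$ \emph{itself} factors through $\sA(\bR^{(n-1)})$ as a composite of maps involving the idempotent $e$, and then use the Karoubi/Krull--Schmidt property to conclude the image object is a summand of $\sA(\bR^{(n-1)})$.

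Concretely: since $e$ is degenerate, write $e = \sum_{j} g_j h_j$ with $h_j \colon \sA(\bR^{(n)}) \to \sA(\bR^{(n-1)})$ and $g_j \colon \sA(\bR^{(n-1)}) \to \sA(\bR^{(n)})$. Multiplying on both sides by $e$ and using $e^2 = e$ gives $e = \sum_j (e g_j)(h_j e)$, so after replacing $g_j$ by $eg_j$ and $h_j$ by $h_j e$ we may assume all maps factor through $e\sA(\bR^{(n)})$. Bundling the finitely many $j$ together, set $g = (g_1, \ldots, g_r) \colon \sA(\bR^{(n-1)})^{\oplus r} \to e\sA(\bR^{(n)})$ and $h = (h_1, \ldots, h_r)^{\mathrm t} \colon e\sA(\bR^{(n)}) \to \sA(\bR^{(n-1)})^{\oplus r}$; then $g \circ h = \id_{e\sA(\bR^{(n)})}$. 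Thus $e\sA(\bR^{(n)})$ is a retract (direct summand) of $\sA(\bR^{(n-1)})^{\oplus r}$. Since $\cA$ is Krull--Schmidt, the indecomposable object $e\sA(\bR^{(n)})$ must then be a summand of $\sA(\bR^{(n-1)})$ itself (it appears as a summand of some indecomposable summand of $\sA(\bR^{(n-1)})$), so it has level $\le n-1$.

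I expect the only subtle point to be the bookkeeping that lets one absorb $e$ into the factoring maps so that $gh$ becomes the identity on the image object rather than the idempotent $e$ on the ambient object; this is a standard Karoubi-envelope manipulation but should be spelled out. The appeal to Krull--Schmidt at the end is routine given \S\ref{ss:krull-schmidt}: a retract of a finite direct sum of objects with local endomorphism rings, being itself indecomposable, is isomorphic to one of the indecomposable summands, and the indecomposable summands of $\sA(\bR^{(n-1)})^{\oplus r}$ are exactly those of $\sA(\bR^{(n-1)})$.
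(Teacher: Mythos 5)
Your argument is correct and is essentially the paper's own proof: both absorb the idempotent $e$ into the factoring maps (the paper via the idempotent $f=aeb$ on $\sA(\bR^{(n-1)})^{\oplus r}$ and the explicit mutually inverse maps $fae$, $ebf$; you via the identity $e=\sum_j(eg_j)(h_je)$ giving a retraction $g\circ h=\id_{e\sA(\bR^{(n)})}$), and both then invoke Krull--Schmidt to pass from a summand of $\sA(\bR^{(n-1)})^{\oplus r}$ to a summand of $\sA(\bR^{(n-1)})$. The bookkeeping you flag as the subtle point is exactly the content of the paper's computation, and your version of it is valid.
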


\begin{proof}
Put $X=\sA(\bR^{(n)})$, $Y=\sA(\bR^{(n-1)})$, and $M=eX$. Since $e$ is degenerate, we have an expression $e=\sum_{i=1}^r b_i a_i$ for some morphisms $a_i \colon X \to Y$ and $b_i \colon Y \to X$. Let $a \colon X \to Y^{\oplus r}$ and $b \colon Y^{\oplus r} \to X$ be the maps defined by the $a_i$'s and $b_i$'s, so that $e=ba$. One easily sees that $f=aeb$ is an idempotent endomorphism of $Y^{\oplus r}$, and that the maps
\begin{displaymath}
fae \colon X \to Y^{\oplus r}, \qquad ebf \colon Y^{\oplus r} \to X
\end{displaymath}
induce mutually inverse isomorphisms between $eX$ and $f Y^{\oplus r}$. Thus $M$ is a summand of $Y^{\oplus r}$. By Krull--Schmidt, it is thus a summand of $Y$, and so $M$ has level $<n$ as required.
\end{proof}

\begin{proof}[Proof of Theorem~\ref{thm:class}]
Let $Q_1, \ldots, Q_r$ be the indecomposable objects of level $\le n$, labeled so that $Q_1, \ldots, Q_t$ have level $n$ and $Q_{t+1}, \ldots, Q_r$ have level $<n$. By Krull--Schmidt, we have an isomorphism
\begin{displaymath}
\sA(\bR^{(n)}) = Q_1^{\oplus m_1} \oplus \cdots \oplus Q_r^{\oplus m_r}
\end{displaymath}
where the $m_i$ are uniquely determined multiplicities. Let $R$ be the endomorphism algebra of $\sA(\bR^{(n)})$. For $1 \le i \le t$, let $e_i \in R$, be the projection onto the first $Q_i$ summand; note that $m_1, \ldots, m_t$ are non-zero, by definition of level. The $e_i$'s are thus orthogonal primitive idempotents in $R$. 

Let $\fa \subset R$ be the ideal of degenerate morphisms (\S \ref{ss:degen}), and let $\pi \colon R \to R/\fa$ be the quotient map. For $1 \le i \le t$ we have $e_i \not\in \fa$ by Lemma~\ref{lem:class-2}, and so $\pi(e_i)$ is a non-zero idempotent of $R/\fa$. Since these idempotents are orthogonal, they are necessarily linearly independent. We thus have
\begin{displaymath}
t \le \dim(R/\fa) \le 2^n,
\end{displaymath}
where the second inequality comes from Proposition~\ref{prop:degen}. Since the $M_{\lambda}$ with $\ell(\lambda)=n$ are $2^n$ indecomposables of level $n$ (Lemma~\ref{lem:class-1}), it follows that they are the only indecomposables of level $n$. Since $n$ is arbitrary, the result follows.
\end{proof}

\subsection{Maps of indecomposables}

We now determine the maps between the indecomposables objects of $\cA$, thereby completing the proof of Theorem~\ref{mainthm1}.

\begin{theorem} \label{thm:maps}
For $\lambda \in \Lambda$, there are non-zero maps
\begin{displaymath}
d_\lambda \colon M_{\lambda \ww} \to M_{\lambda}, \qquad
u_\lambda \colon M_{\lambda} \to M_{\lambda \bb}, \qquad
u_\lambda\circ d_\lambda \colon M_{\lambda \ww} \to M_{\lambda \bb}.
\end{displaymath}
These are the only maps (up to scalar multiples) between $M_{\alpha}$ and $M_{\beta}$ with $\alpha \ne \beta$.
\end{theorem}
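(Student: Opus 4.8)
The plan is to deduce everything from the faithful tensor functor $\Phi\colon\cA\to\cC$ together with the semisimplicity of $\cC$, using the identity $\Phi(M_{\lambda})=L_{\lambda}\oplus L_{\lambda^{\flat}}$ of Proposition~\ref{prop:Phi-E}. For weights $\alpha\ne\beta$, faithfulness of $\Phi$ gives an embedding
\[
\Hom_{\cA}(M_{\alpha},M_{\beta})\hookrightarrow\Hom_{\cC}\bigl(L_{\alpha}\oplus L_{\alpha^{\flat}},\;L_{\beta}\oplus L_{\beta^{\flat}}\bigr),
\]
and since the $L_{\mu}$ are pairwise non-isomorphic simple objects of the semisimple category $\cC$, the right-hand side is nonzero only when $\{\alpha,\alpha^{\flat}\}$ and $\{\beta,\beta^{\flat}\}$ share a member. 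A short case analysis shows that, for $\alpha\ne\beta$, this forces the unordered pair $\{\alpha,\beta\}$ to be one of $\{\mu,\mu\ww\}$, $\{\mu,\mu\bb\}$, or $\{\mu\ww,\mu\bb\}$ for some weight $\mu$, and that in each case the shared simple is a single copy of $L_{\mu}$. Hence $\Hom_{\cA}(M_{\alpha},M_{\beta})=0$ unless $\{\alpha,\beta\}$ lies in one of these three families, and is at most one-dimensional in those cases.

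Next I would produce the three morphisms of the theorem and show each is nonzero. With $n=\ell(\lambda)$ and $p=p_{n+1,n+1}\colon\bR^{(n+1)}\to\bR^{(n)}$ the projection forgetting the last coordinate, set
\[
d_{\lambda}=E_{\lambda}\circ p_{*}\circ E_{\lambda\ww}\colon M_{\lambda\ww}\to M_{\lambda},\qquad
u_{\lambda}=E_{\lambda\bb}\circ p^{*}\circ E_{\lambda}\colon M_{\lambda}\to M_{\lambda\bb}.
\]
Nonvanishing of $d_{\lambda}$ and of $u_{\lambda}$ is checked by evaluating each on one explicit Schwartz function (e.g.\ the indicator of a closed bounded interval in the base); this is the single computational input, and it is exactly here that the measure $\mu_{2}$ is used rather than $\mu_{1}$, since the fibers of $p$ and the supports that arise are intervals unbounded above, on which the two measures disagree (cf.\ \S\ref{ss:matrix-ex} and \S\ref{ss:four-meas}). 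Granting this, the first paragraph shows $\Phi(d_{\lambda})$ and $\Phi(u_{\lambda})$ are nonzero scalar multiples of the identity of $L_{\lambda}$, viewed as maps between the copies of $L_{\lambda}$ in the relevant $\Phi(M_{-})$'s; because these summands match up, $\Phi(u_{\lambda}\circ d_{\lambda})\ne0$, so $u_{\lambda}\circ d_{\lambda}\ne0$. Combined with the upper bound, each of the three $\Hom$ spaces in the statement is one-dimensional, spanned by the displayed map.

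It remains to exclude the ``reverse'' maps, and this is the real obstacle: the embedding into the semisimple $\cC$ records only an ``undirected'' picture, in which all three pairs carry a one-dimensional $\Hom$ in both directions, so the directionality must be recovered by a different mechanism. Concretely, I must show
\[
\Hom_{\cA}(M_{\lambda},M_{\lambda\ww})=\Hom_{\cA}(M_{\lambda\bb},M_{\lambda})=\Hom_{\cA}(M_{\lambda\bb},M_{\lambda\ww})=0 .
\]
Take $f\colon M_{\lambda}\to M_{\lambda\ww}$; the arguments for the other two $\Hom$ spaces are the same, with $d_{\lambda}$ replaced by $u_{\lambda}$ and by $u_{\lambda}\circ d_{\lambda}$ respectively. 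By the previous step $\Hom_{\cA}(M_{\lambda\ww},M_{\lambda})=k\cdot d_{\lambda}$, so $d_{\lambda}\circ f\in\End(M_{\lambda})=k$ by Proposition~\ref{prop:Xind}. If $d_{\lambda}\circ f$ were a nonzero scalar, then $f$ would be a split monomorphism, making the indecomposable $M_{\lambda}$ a direct summand of the indecomposable $M_{\lambda\ww}\not\cong M_{\lambda}$, which is impossible in the Krull--Schmidt category $\cA$. Hence $d_{\lambda}\circ f=0$. Applying $\Phi$ and using that $\Phi(f)$ and $\Phi(d_{\lambda})$ each have a single possibly-nonzero matrix entry, both a scalar multiple of the identity of $L_{\lambda}$ between the two copies of $L_{\lambda}$ involved, the relation $\Phi(d_{\lambda}\circ f)=0$ forces the scalar for $\Phi(f)$ to vanish; thus $\Phi(f)=0$ and $f=0$ by faithfulness. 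The nonvanishing of $d_{\lambda}$ and $u_{\lambda}$, and the bookkeeping around the empty-weight conventions ($L_{\varnothing^{\flat}}=0$), are routine but need a little care.
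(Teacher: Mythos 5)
Your proposal is correct and follows essentially the same route as the paper: bound all $\Hom$ spaces via the faithful functor $\Phi$ and Proposition~\ref{prop:Phi-E} (forcing $\{\alpha,\alpha^{\flat}\}\cap\{\beta,\beta^{\flat}\}\neq\varnothing$ and dimension at most one), produce $d_\lambda$ (and dually $u_\lambda$) from the projection forgetting the last coordinate with the same $\mu_2$-computation on an interval unbounded above, and kill the reverse maps by composing with the forward ones and using $\End(M_\mu)=k$. The only cosmetic difference is that you compose on the opposite side and invoke Krull--Schmidt to get a split summand contradiction, where the paper directly contradicts $\End(M_{\lambda\ww})=k$; this is the same mechanism.
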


\begin{proof}
We break the proof into a number of cases. Throughout we use that $\Phi$ is faithful, and the explicit computation of $\Phi(M_{\lambda})$ from Proposition~\ref{prop:Phi-E}.

\textit{Case 1: $M_{\lambda\ww} \to M_{\lambda}$.} Let $p \colon \bR^{(n+1)} \to \bR^{(n)}$ be the projection omitting the final coordinate, where $n=\ell(\lambda)$, so that we have an associated map $p_* \colon \sA(\bR^{(n+1)}) \to \sA(\bR^{(n)})$. We claim that $p_* E_{\lambda\ww} \ne 0$. Indeed, take $(y,x) \in \cE_{\lambda \ww}$. We have
\begin{displaymath}
(p_\ast E_{\lambda\ww})(p(y),x) \;=\;
\int_{\bR^{(n+1)}}\;\delta_{p(y),p(z)}\; E_{\lambda\ww}(z,x) dz
\;=\;\int_{\bR} E_{\lambda\ww}(y_1,\ldots, y_{n},z_{n+1},x) dz_{n+1}.
\end{displaymath}
The integrand on the right is~1 if $z_{n+1}$ belongs to the interval $[x_{n+1}, \infty)$, and~0 otherwise. Thus the integral is equal to the volume of this interval, which is~1; here we are using the explicit description of the measure $\mu_2$ from \S \ref{ss:four-meas}. This establishes the claim.

The above calculation shows that $p_*$ restricts to a non-zero map $M_{\lambda \ww} \to \sA(\bR^{(n)})$. Decomposing $\sA(\bR^{(n)})$ into indecomposables, we see that there is a non-zero map $M_{\lambda \ww} \to M_{\mu}$ for some weight $\mu$ of length $\le n$. Now, we have an injection
\begin{displaymath}
\Hom_{\cA}(M_{\lambda \ww}, M_{\mu}) \to \Hom_{\cC}(\Phi(M_{\lambda \ww}), \Phi(M_{\mu})).
\end{displaymath}
If $\mu \ne \lambda$ then the target vanishes, and so we must have $\mu=\lambda$. In this case, the target is one-dimensional. We thus see that there is a non-zero map $d_{\lambda}$ and it spans the source.

\textit{Case 2: $M_{\lambda} \to M_{\lambda \bb}$.} Applying duality to Case~1, we see that there is a non-zero map $u_{\lambda}$ which spans $\Hom(M_{\lambda}, M_{\lambda \bb})$.

\textit{Case 3: $M_{\lambda \ww} \to M_{\lambda \bb}$.} Consider the composition
\begin{displaymath}
\xymatrix{
M_{\lambda \ww} \ar[r]^-{d_{\lambda}} & M_{\lambda} \ar[r]^-{u_{\lambda}} & M_{\lambda \bb}. }
\end{displaymath}
Applying $\Phi$, the two maps are isomorphisms on the unique $L_{\lambda}$ summands, and so the composition is non-zero. Since $\Hom(\Phi(M_{\lambda \ww}), \Phi(M_{\lambda \bb}))$ is one-dimensional, we see that $u_{\lambda} \circ d_{\lambda}$ spans the space $\Hom(M_{\lambda\ww}, M_{\lambda\bb})$.

\textit{Case 4: $M_{\lambda} \to M_{\lambda \ww}$.} Supppose we have a non-zero map $f \colon M_{\lambda} \to M_{\lambda \ww}$. Then $\Phi(f)$ is non-zero on the unique copy of $L_{\lambda}$ in the source and target. Thus, as in Case~3, we see that $f \circ d_{\lambda}$ is non-zero. Since $\Phi(d_{\lambda})$ kills $L_{\lambda \ww}$, we see that $f \circ d_{\lambda}$ is not an isomorphism. But this is a contradiction, since $\End(M_{\lambda \ww})$ is spanned by the identity (Proposition~\ref{prop:Xind}). We conclude $\Hom(M_{\lambda}, M_{\lambda \ww})=0$.

\textit{Case 5: $M_{\lambda \bb} \to M_{\lambda}$.} Applying duality to Case~4, we see that $\Hom(M_{\lambda \bb}, M_{\lambda})$ vanishes.

\textit{Case 6: $M_{\lambda \bb} \to M_{\lambda \ww}$.} Suppose we have a non-zero map $f \colon M_{\lambda \bb} \to M_{\lambda \ww}$. Then $\Phi(f)$ is non-zero on the unique copy of $L_{\lambda}$ in the source and target. Thus, as in Case~3, we see that $d_{\lambda} \circ f$ is non-zero. But this contradicts Case~5. We thus find that $\Hom(M_{\lambda \bb}, M_{\lambda \ww})$ vanishes.

\textit{Remaining cases.} Suppose we have a non-zero map $M_{\alpha} \to M_{\beta}$, with $\alpha \ne \beta$. Applying $\Phi$, we obtain a non-zero map $\Phi(M_{\alpha}) \to \Phi(M_{\beta})$. We thus see that the sets $\{\alpha, \alpha^{\flat} \}$ and $\{\beta, \beta^{\flat} \}$ must have non-empty intersection. This implies that we are in one of the first six cases, and completes the proof.
\end{proof}

\subsection{Semi-simplification} \label{ss:ss}

Let $\fn$ be the ideal of negligible morphisms in $\cA$ (see \cite{AK, EtingofOstrik}). Proposition~\ref{prop:Xind} implies that the identity map of $M_{\lambda}$ is negligible if $\lambda$ is non-empty. It follows that the semi-simplification $\cA/\fn$ is equivalent to the category of vector spaces\footnote{This can also be seen using the method of \cite[\S 4.6]{arboreal}.}. We let
\begin{displaymath}
\Theta \colon \cA \to \Vec
\end{displaymath}
be the semi-simplification functor. This is the unique tensor functor that kills all $M_{\lambda}$ with $\lambda$ non-empty. The functor $\Theta$ also kills each $\sA(\bR^{(n)})$ with $n$ positive, since this object does not contain $\bbone$ as a summand; one can see this directly, or using Corollary~\ref{cor:schwartz-decomp} below. This functor will play an important role in our analysis of $\bD(\cB)$ in \S \ref{s:B2}. For now, we record one useful consequence of it.

\begin{proposition} \label{prop:notsummand}
If $\lambda,\mu \in \Lambda$ are weights, at least one of which is non-empty, then $\bbone$ is not a summand of $M_{\lambda} \otimes M_{\mu}$.
\end{proposition}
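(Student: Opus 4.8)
The plan is to apply the semi-simplification functor $\Theta \colon \cA \to \Vec$ constructed in \S\ref{ss:ss}. Recall that $\Theta$ is a tensor functor which kills $M_{\nu}$ for every non-empty weight $\nu$, and that $\Theta(\bbone) = k \neq 0$. Suppose, for contradiction, that $\bbone$ is a direct summand of $M_{\lambda} \otimes M_{\mu}$. Without loss of generality $\lambda$ is non-empty, so $\Theta(M_{\lambda}) = 0$. Since $\Theta$ is monoidal, $\Theta(M_{\lambda} \otimes M_{\mu}) \cong \Theta(M_{\lambda}) \otimes \Theta(M_{\mu}) = 0$ in $\Vec$.

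On the other hand, $\Theta$ is additive, so it carries the assumed splitting $M_{\lambda} \otimes M_{\mu} \cong \bbone \oplus N$ to a splitting $0 \cong \Theta(M_{\lambda}\otimes M_{\mu}) \cong \Theta(\bbone) \oplus \Theta(N) \cong k \oplus \Theta(N)$, forcing $k = 0$, a contradiction. Hence $\bbone$ is not a summand of $M_{\lambda} \otimes M_{\mu}$. There is no real obstacle here: the only inputs are that $\Theta$ exists as a tensor functor and is nonzero on the unit, both of which are recorded in \S\ref{ss:ss}; the argument is otherwise a formal consequence of additivity and monoidality.
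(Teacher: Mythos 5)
Your proposal is correct and is essentially identical to the paper's own proof: both apply the semi-simplification functor $\Theta$, use monoidality to conclude $\Theta(M_{\lambda}\otimes M_{\mu})=0$, and derive a contradiction from $\Theta(\bbone)=k\neq 0$. Nothing to add.
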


\begin{proof}
Indeed, we have
\begin{displaymath}
\Theta(M_{\lambda} \otimes M_{\mu}) = \Theta(M_{\lambda}) \otimes \Theta(M_{\mu}) = 0,
\end{displaymath}
and so $M_{\lambda} \otimes M_{\mu}$ cannot contain $\bbone$ as a summand since $\Theta(\bbone)=k$.
\end{proof}

Using our thorough understanding of $\cA$, we can completely classify its tensor ideals:

\begin{proposition}\label{prop:negl}
The only proper tensor ideals in $\cA$ are $\fn$ and the zero ideal.
\end{proposition}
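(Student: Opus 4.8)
The plan is to use the explicit classification of indecomposables (Theorem~\ref{thm:class}) and the maps between them (Theorem~\ref{thm:maps}) to pin down what a tensor ideal can look like. Recall a tensor ideal $\cI$ is a collection of subspaces $\cI(X,Y)\subseteq\Hom(X,Y)$ closed under composition with arbitrary morphisms on both sides and under tensoring with arbitrary objects. Since $\cA$ is Krull--Schmidt, $\cI$ is determined by the morphisms it contains between indecomposable objects, and by Theorem~\ref{mainthm1} the only morphisms between indecomposables (up to scalar) are: the identities $\id_{M_\lambda}$, the maps $d_\lambda$, the maps $u_\lambda$, and the composites $u_\lambda\circ d_\lambda$. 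So I would organize the proof around: (i) if $\cI$ contains some $\id_{M_\lambda}$ then $\cI\supseteq\fn$ or $\cI=\cA$; (ii) if $\cI$ contains some $d_\lambda$, $u_\lambda$, or $u_\lambda d_\lambda$ but no identity, derive a contradiction (such an $\cI$ cannot be closed); and conclude $\cI$ is $0$, $\fn$, or all of $\cA$, the last being excluded if we only count ideals that are proper (note $\fn$ is proper since $\id_\bbone\notin\fn$, as $\dim\bbone=1\neq0$, while $\id_{M_\lambda}\in\fn$ for $\lambda$ nonempty by Proposition~\ref{prop:Xind}).

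First I would handle the case where $\cI$ contains an identity morphism. If $\id_\bbone=\id_{M_\varnothing}\in\cI$, then tensoring shows $\id_X\in\cI$ for all $X$, so $\cI=\cA$ is not proper. If $\id_{M_\lambda}\in\cI$ with $\lambda$ nonempty, I claim $\cI\supseteq\fn$. Indeed $\fn$ is generated as a tensor ideal by the identities of the $M_\lambda$ with $\lambda$ nonempty (this is essentially the content of \S\ref{ss:ss}: $\fn$ consists precisely of the morphisms that factor through objects with no $\bbone$-summand, equivalently through sums of $M_\mu$ with $\mu$ nonempty). So it suffices to show $\id_{M_\mu}\in\cI$ for every nonempty $\mu$. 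Using the maps $d$ and $u$: from $\id_{M_\lambda}\in\cI$ and the factorization $\id_{M_\lambda}= $ (something)$\circ d_\lambda$? That direction is awkward since $d_\lambda\colon M_{\lambda\ww}\to M_\lambda$ is not split. Instead note $d_\lambda\circ(\text{incl})$-type arguments: $\id_{M_{\lambda\ww}}$ need not follow from $\id_{M_\lambda}$. The cleaner route: $\fn$ is the ideal of negligible morphisms, and a standard fact (see \cite{AK, EtingofOstrik}) is that $\fn$ is the unique maximal proper tensor ideal in a rigid tensor category with $\End(\bbone)=k$. Hence any proper tensor ideal is contained in $\fn$, so it only remains to show $\fn$ has no proper nonzero tensor subideal.

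So the crux is: \emph{$\fn$ contains no nonzero proper tensor ideal.} Suppose $\cI\subseteq\fn$ is a nonzero tensor ideal. Then $\cI$ contains a nonzero morphism between some indecomposables, hence (up to scalar, and after noting it can't be an $\id_{M_\lambda}$ unless we then get all of $\fn$) one of $d_\lambda$, $u_\lambda$, $u_\lambda d_\lambda$, or $\id_{M_\lambda}$ with $\lambda$ nonempty. If $\id_{M_\lambda}\in\cI$, I would show $\cI=\fn$: using $\Phi$ and the explicit description $\Phi(M_\lambda)=L_\lambda\oplus L_{\lambda^\flat}$, and the fact that $d$, $u$ act as isomorphisms on the appropriate $L$-summands, one propagates: $d_\lambda\circ\id_{M_{\lambda\ww}}$ won't help, but tensoring $\id_{M_\lambda}$ with $M_\mu$ and projecting onto indecomposable summands of $M_\lambda\otimes M_\mu$ (whose decomposition is known, and which, being nonempty-indexed when $\lambda$ is, contains no $\bbone$) shows $\id$ of many indecomposables lies in $\cI$; combined with $d,u$ connecting all weights into one poset, one gets all of $\fn$. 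If instead $\cI$ contains $d_\lambda$ (or $u_\lambda$, dually) but no identity: compose $u_\lambda\circ d_\lambda\in\cI$; but also, tensoring and composing, try to produce $\id_{M_\lambda}$ from $d_\lambda$. Since $d_\lambda\colon M_{\lambda\ww}\to M_\lambda$ and the only map $M_\lambda\to M_{\lambda\ww}$ is zero (Case~4 of Theorem~\ref{thm:maps}), we cannot split off an identity this way — so I must instead rule out such $\cI$ by a dimension/consistency count, e.g.\ computing $\Phi$ of everything and using that the image would be a tensor ideal of $\cC$; but $\cC$ is semisimple so its only tensor ideals are $0$ and $\cC$, forcing $\Phi(\cI)=0$, i.e.\ $\cI\subseteq\ker\Phi$. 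But $\Phi$ is faithful, so $\cI=0$, a contradiction. This last observation is in fact the key unifying device.

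Let me therefore restructure the whole argument around faithfulness of $\Phi$ and semisimplicity of $\cC$: given any tensor ideal $\cI$ of $\cA$, the image under the tensor functor $\Phi$ generates a tensor ideal $\Phi(\cI)\cC$ of $\cC$, which (by semisimplicity of $\cC$, whose only tensor ideals are $0$ and the whole category) is either $0$ or all of $\cC$. In the first case $\cI\subseteq\ker\Phi=0$ since $\Phi$ is faithful, so $\cI=0$. In the second case $\id_{L_\lambda}\in\Phi(\cI)\cC$ for all $\lambda$; in particular $\id_\bbone=\id_{L_\varnothing}$ lies in the tensor ideal generated by $\Phi(\cI)$, and one traces this back to show $\cI$ must contain a morphism that is an isomorphism on some $L_\lambda$-summand for every $\lambda$ — and then $\fn\subseteq\cI$. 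Finally if $\cI\supseteq\fn$ and $\cI\neq\fn$, pick $f\in\cI\setminus\fn$; then $f$ is non-negligible, hence (by the maximality of $\fn$ among proper ideals, \cite{AK,EtingofOstrik}) generates everything, so $\cI=\cA$. Hence the only ideals are $0$, $\fn$, $\cA$, and the only proper ones are $0$ and $\fn$. The main obstacle I anticipate is the bookkeeping in the ``$\Phi(\cI)\cC=\cC$ implies $\fn\subseteq\cI$'' step: one needs to check carefully that surjectivity of the induced ideal in $\cC$ forces enough identities $\id_{M_\lambda}$ (or at least non-negligible morphisms) into $\cI$, using the precise form of $\Phi(M_\lambda)=L_\lambda\oplus L_{\lambda^\flat}$ and the fusion rules for $M_\lambda\otimes M_\mu$; the rest is formal.
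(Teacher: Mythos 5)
There is a genuine gap, and it sits exactly where you flag "the main obstacle": the claim that a nonzero tensor ideal $\cI$ must contain $\fn$. Your unifying device --- push $\cI$ into $\cC$ via $\Phi$, note that the tensor ideal of $\cC$ generated by $\Phi(\cI)$ is $0$ or all of $\cC$ by semisimplicity, and conclude accordingly --- only yields information in the degenerate direction. Since $\Phi$ is faithful, \emph{every} nonzero $\cI$ has $\Phi(\cI)\neq 0$ and hence generates all of $\cC$; so the dichotomy never "forces $\Phi(\cI)=0$" as you assert in your Case B, and the statement "$\id_{L_\lambda}$ lies in the ideal generated by $\Phi(\cI)$" carries no information about $\cI$ itself, because the generating process composes with arbitrary morphisms of $\cC$, which do not lie in the image of the (faithful but not full) functor $\Phi$. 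In particular nothing in your argument rules out a nonzero tensor ideal strictly contained in $\fn$, e.g.\ one generated by a single $d_\lambda$ or by $u_\lambda\circ d_\lambda$; the "tracing back" you invoke is precisely the content of the proposition and is never carried out. (The top half of your argument is fine: $\fn$ is the unique maximal proper ideal by the standard trace argument, and your observation that $\id_{M_\lambda}\in\cI$ for one nonempty $\lambda$ forces $\id_{M_\nu}\in\cI$ for all nonempty $\nu$ via the fusion rule does work, since $M_\bb$ is a summand of $M_\lambda\otimes M_{\lambda^\vee}$ and $M_\nu$ is a summand of $M_\bb\otimes M_\nu$.)

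The paper closes this gap by a different and much shorter route: by \cite[Theorem~3.1.1]{Selecta} (see also \cite{AK}), tensor ideals of $\cA$ are in bijection with subfunctors of $\Hom(\bbone,-)\colon\cA\to\Vec$. By Theorem~\ref{thm:maps}, the only indecomposables receiving a nonzero map from $\bbone$ are $\bbone$ and $M_{\bb}$, so $\Hom(\bbone,-)$ has a unique nonzero proper subfunctor (generated by $\bbone\to M_{\bb}$), whence a unique nonzero proper ideal, which must be $\fn$. If you want to avoid citing that bijection, the viable repair of your approach is to work with rigidity inside $\cA$ rather than pushing to $\cC$: from any nonzero $f\in\cI(M_\alpha,M_\beta)$ pass to its name $\bbone\to M_\alpha^*\otimes M_\beta$, use Proposition~\ref{prop:notsummand} and Theorem~\ref{thm:maps} to see its only nonzero components land in $M_{\bb}$-summands, and then show the resulting map $\bbone\to M_{\bb}$ generates $\fn$. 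Either way, the argument must be run internally to $\cA$; semisimplicity of $\cC$ cannot do this work.
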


\begin{proof}
By \cite[Theorem~3.1.1]{Selecta}, see also \cite{AK}, tensor ideals in $\cA$ are in natural bijection with subfunctors of 
\begin{displaymath}
\Hom(\bbone,-) \colon \cA \to \Vec.
\end{displaymath}
By Theorem~\ref{thm:maps}, there are only two indecomposable objects that admit non-zero morphisms from $\bbone$, namely $\bbone$ itself and $M_\bb$. The map $\bbone\to M_{\bb}$ generates the unique non-zero proper subfunctor, and hence the unique non-zero proper ideal, which is $\fn$.
\end{proof}

\subsection{The Grothendieck ring} \label{ss:groth}

Let $\rK(-)$ denote the Grothendieck group of an abelian category, and let $\rK^{\oplus}(-)$ denote the split Grothendieck group of an additive category. Thus $\rK^{\oplus}(\cA)$ is the free $\bZ$-module with basis $[M_{\lambda}]$ for $\lambda \in \Lambda$, and $\rK(\cC)=\rK^{\oplus}(\cC)$ is the free $\bZ$-module with basis $[L_{\lambda}]$ for $\lambda \in \Lambda$. Both $\rK(\cC)$ and $\rK^{\oplus}(\cA)$ are rings, via the tensor product. Moreover, the functor $\Phi$ induces a ring homomorphism
\begin{displaymath}
\phi \colon \rK^{\oplus}(\cA) \to \rK(\cC), \qquad [M_{\lambda}] \mapsto [L_{\lambda}] + [L_{\lambda^{\flat}}],
\end{displaymath}
where, as usual, the $[L_{\lambda^{\flat}}]$ term is omitted when $\lambda$ is empty.

\begin{proposition} \label{prop:groth}
The ring homomorphism $\phi$ is an isomorphism.
\end{proposition}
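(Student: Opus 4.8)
The plan is to exploit the explicit formula $\phi([M_\lambda]) = [L_\lambda] + [L_{\lambda^\flat}]$ to show that $\phi$ is ``unitriangular'' with respect to the filtration of $\Lambda$ by length, hence an isomorphism of abelian groups; since $\phi$ is already a ring homomorphism, it is then automatically a ring isomorphism.

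First I would record the basis statements. By Theorems~\ref{thm:class} and~\ref{thm:maps}, together with the pairwise non-isomorphism of the $M_\lambda$ from Proposition~\ref{prop:Phi-E}, the group $\rK^{\oplus}(\cA)$ is free on the classes $[M_\lambda]$ for $\lambda \in \Lambda$; and $\rK(\cC)$ is free on the classes $[L_\lambda]$ for $\lambda \in \Lambda$, since $\cC$ is semi-simple with simple objects exactly the $L_\lambda$. For $n \ge 0$, let $F_n \subset \rK^{\oplus}(\cA)$ be the span of the $[M_\lambda]$ with $\ell(\lambda) \le n$, and $G_n \subset \rK(\cC)$ the span of the $[L_\lambda]$ with $\ell(\lambda) \le n$. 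These are exhaustive increasing filtrations by free direct summands, and $F_n / F_{n-1}$ (resp. $G_n/G_{n-1}$) is free with basis indexed by the weights of length exactly $n$.

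Next, since $\ell(\lambda^\flat) = \ell(\lambda) - 1$ when $\lambda \neq \varnothing$, and $[L_{\lambda^\flat}]=0$ when $\lambda=\varnothing$ by our boundary convention, the formula for $\phi$ gives $\phi(F_n) \subseteq G_n$, and the induced map $F_n/F_{n-1} \to G_n/G_{n-1}$ sends $[M_\lambda] \mapsto [L_\lambda]$, i.e. is the identity (the base case being $\phi([M_\varnothing]) = [L_\varnothing]$, so $F_0 \cong G_0$). An elementary induction on $n$, comparing the short exact sequences $0 \to F_{n-1} \to F_n \to F_n/F_{n-1} \to 0$ with their $G$-analogues, then shows $\phi$ restricts to an isomorphism $F_n \to G_n$ for all $n$; passing to the union over $n$ shows $\phi$ is bijective, hence a ring isomorphism. (Alternatively, surjectivity follows directly from the telescoping identity $[L_\lambda] = \sum_{k \ge 0} (-1)^k \phi([M_{\lambda^{(k)}}])$, where $\lambda^{(k)}$ is $\lambda$ with its last $k$ letters deleted, a finite sum; injectivity is then the triangularity observation above.) There is no real obstacle here: granted the classification of indecomposables and the computation $\Phi(M_\lambda) = L_\lambda \oplus L_{\lambda^\flat}$, the statement reduces to the standard fact that a filtered morphism of free $\bZ$-modules inducing an isomorphism on associated graded is an isomorphism; the only point to state with care is the convention at $\lambda = \varnothing$, which supplies the base case and the termination of the telescoping sum.
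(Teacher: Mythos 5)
Your proposal is correct and is essentially the paper's argument, which simply notes that there is an upper triangular change of basis between the $[L_\lambda]$ and the $\phi([M_\lambda])$; you have spelled out the same triangularity (plus the explicit telescoping inverse) in more detail. No issues.
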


\begin{proof}
There is an upper triangular change of basis between the $[L_{\lambda}]$ and $\phi([M_{\lambda}])$ in $\rK(\cC)$.
\end{proof}

\begin{corollary} \label{cor:lyndon}
As a commutative $\bQ$-algebra, $\bQ \otimes \rK^{\oplus}(\cA)$ is freely generated by the classes $[M_{\lambda}]$ as $\lambda$ varies over Lyndon words.
\end{corollary}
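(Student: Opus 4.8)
The plan is to push everything through the isomorphism $\phi$ of Proposition~\ref{prop:groth} and reduce to a statement about $\rK(\cC)$, which is then handled by a triangular change of generators. By Proposition~\ref{prop:groth}, $\phi$ induces an isomorphism of commutative $\bQ$-algebras $\bQ\otimes\rK^{\oplus}(\cA)\cong\bQ\otimes\rK(\cC)$ carrying $[M_\lambda]$ to $g_\lambda:=[L_\lambda]+[L_{\lambda^{\flat}}]$. So it is enough to show that $\bQ\otimes\rK(\cC)$ is freely generated as a commutative $\bQ$-algebra by the elements $g_w$ with $w$ a Lyndon word; the assertion for $\cA$ then follows by applying $\phi^{-1}$.

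For that I would first recall the structure of $\rK(\cC)$ from \cite{line}. Equip $\rK(\cC)$ with the increasing filtration $F_\bullet$ in which $F_n$ is spanned by the classes $[L_\lambda]$ with $\ell(\lambda)\le n$. Two facts are used. (i) $F_\bullet$ is an algebra filtration, $F_a\cdot F_b\subseteq F_{a+b}$: indeed $L_\lambda$ is a summand of $\sC(\bR^{(\ell(\lambda))})$, and $\sC(\bR^{(a)})\otimes\sC(\bR^{(b)})=\sC(\bR^{(a)}\times\bR^{(b)})$ decomposes over Delannoy paths into a sum of Schwartz spaces of $\bbG$-orbits, each isomorphic to some $\bR^{(k)}$ with $k\le a+b$, whose simple constituents have length $\le a+b$. (ii) As a filtered $\bQ$-algebra, $\bQ\otimes\rK(\cC)$ is freely generated by the classes $[L_w]$ for $w$ Lyndon, the generator $[L_w]$ lying in filtration degree $\ell(w)$; equivalently $\mathrm{gr}^{F}(\bQ\otimes\rK(\cC))$ is a polynomial ring on the leading symbols of the $[L_w]$. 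Point (ii) is the substantive input; it is consistent with the counts, since $\#\{\lambda:\ell(\lambda)=n\}=2^n$ while a polynomial ring with one generator of degree $d$ for each of the $N_d$ Lyndon words of length $d$ has the matching Hilbert series, by the necklace identity $\prod_{d\ge1}(1-t^d)^{N_d}=1-2t$.

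Granting (i) and (ii), the last step is the change of generators. Consider the $\bQ$-algebra homomorphism $\bQ[X_w:w\text{ Lyndon}]\to\bQ\otimes\rK(\cC)$ sending $X_w\mapsto g_w$, and filter the source by $\deg X_w=\ell(w)$. Since $\ell(w^{\flat})<\ell(w)$, this homomorphism is filtered, and on associated graded rings it sends the symbol of $X_w$ to the symbol of $[L_w]$ (the term $[L_{w^{\flat}}]$ lies in strictly lower filtration and is killed). By (ii) the induced map of graded rings is an isomorphism; a filtered homomorphism between algebras with exhaustive filtrations bounded below that induces an isomorphism on associated gradeds is itself an isomorphism. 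Hence $\{g_w\}_{w\text{ Lyndon}}$ is a free polynomial generating set of $\bQ\otimes\rK(\cC)$, and pulling back through $\phi$ proves the corollary.

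I expect the main obstacle to be (ii): that $\rK(\cC)_{\bQ}$ is polynomial with a Lyndon-indexed triangular generating set. I would extract this from the fusion rule \cite[Corollary~7.3]{line} (or cite it directly from \cite{line} if already recorded there). The remaining ingredients — the algebra filtration in (i) and the filtered-isomorphism argument — are routine.
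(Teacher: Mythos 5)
Your proposal is correct and follows essentially the same route as the paper: the paper's proof is the one-line observation that the statement follows from the corresponding result for $\rK(\cC)$ (which is \cite[Corollary~7.21]{line}, exactly your input (ii)) via the isomorphism $\phi$ of Proposition~\ref{prop:groth}, with the triangular change of generators $[L_w]\rightsquigarrow [L_w]+[L_{w^\flat}]$ left implicit. You have simply spelled out the filtered-algebra argument that the paper takes for granted.
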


\begin{proof}
This follows from the corresponding result for $\rK(\cC)$ \cite[Corollary~7.21]{line}.
\end{proof}

\begin{corollary} \label{cor:schwartz-decomp}
For $n \ge 1$ we have
\begin{displaymath}
\sA(\bR^{(n)}) = \bigoplus M_{\lambda}^{\oplus m(\lambda)}, \qquad
m(\lambda) = \binom{n-1}{\ell(\lambda)-1},
\end{displaymath}
where the sum is taken over weights $\lambda$ of length $\le n$.
\end{corollary}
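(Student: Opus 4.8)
The plan is to deduce this decomposition from what we already know about $\cC$ via the faithful tensor functor $\Phi$, together with the Krull--Schmidt property of $\cA$. First I would record the two relevant facts: on the one hand, Theorem~\ref{thm:class} tells us that $\sA(\bR^{(n)})$ is a direct sum of copies of the $M_{\lambda}$, say $\sA(\bR^{(n)}) = \bigoplus_{\lambda} M_{\lambda}^{\oplus m(\lambda)}$ for some multiplicities $m(\lambda) \ge 0$ to be determined, and by Lemma~\ref{lem:class-1} only weights of length $\le n$ can occur. On the other hand, applying the faithful functor $\Phi$ and using Proposition~\ref{prop:Phi-E}, we get
\begin{displaymath}
\Phi(\sA(\bR^{(n)})) = \bigoplus_{\lambda} \bigl(L_{\lambda} \oplus L_{\lambda^{\flat}}\bigr)^{\oplus m(\lambda)}.
\end{displaymath}
But we also know $\Phi(\sA(\bR^{(n)})) \cong \sC(\bR^{(n)}) \oplus \sC(\bR^{(n-1)})$ from \S\ref{ss:functor}, and the simple decomposition of $\sC(\bR^{(k)})$ is given in \S\ref{ss:delcat}: the multiplicity of $L_{\mu}$ in $\sC(\bR^{(k)})$ is $\binom{k}{\ell(\mu)}$. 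Comparing multiplicities of a fixed simple $L_{\mu}$ on both sides (using that $\Phi$ is faithful and $\cC$ is semi-simple, so the decomposition into simples is unambiguous) yields a linear recursion for the $m(\lambda)$.

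Concretely, for a weight $\mu$ of length $j$, the copies of $L_{\mu}$ on the left come from the summand $M_{\mu}$ (contributing $m(\mu)$ copies of $L_{\mu}$) and from the summands $M_{\mu\bb}$ and $M_{\mu\ww}$ (each contributing $L_{(\mu\bb)^{\flat}} = L_{\mu}$, resp. $L_{(\mu\ww)^{\flat}} = L_{\mu}$), for a total of $m(\mu) + m(\mu\bb) + m(\mu\ww)$. On the right side the multiplicity of $L_{\mu}$ is $\binom{n}{j} + \binom{n-1}{j}$. So the recursion to solve is
\begin{displaymath}
m(\mu) + m(\mu\bb) + m(\mu\ww) = \binom{n}{\ell(\mu)} + \binom{n-1}{\ell(\mu)}.
\end{displaymath}
I would then simply verify that $m(\lambda) = \binom{n-1}{\ell(\lambda)-1}$ solves this, using the Pascal identity: with $\ell(\mu)=j$, the left side becomes $\binom{n-1}{j-1} + 2\binom{n-1}{j}$, while the right side is $\binom{n}{j} + \binom{n-1}{j} = \binom{n-1}{j-1} + \binom{n-1}{j} + \binom{n-1}{j} = \binom{n-1}{j-1} + 2\binom{n-1}{j}$, which agrees. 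Since the recursion determines the $m(\lambda)$ uniquely once we know that weights of length $> n$ do not appear (so that the recursion terminates, anchored by $m(\lambda)=0$ for $\ell(\lambda)>n$), this proves the formula. Note the constraint $\ell(\lambda) \ge 1$ is automatic since $\binom{n-1}{-1}=0$, consistent with $\bbone = M_{\varnothing}$ not appearing — which also matches the remark in \S\ref{ss:ss} that $\Theta(\sA(\bR^{(n)}))=0$ for $n \ge 1$.

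The only mild subtlety — and the step I would be most careful about — is justifying that the multiplicity comparison is valid, i.e., that matching multiplicities of simples in $\cC$ after applying $\Phi$ really pins down the multiplicities $m(\lambda)$ in $\cA$. This works because $\cA$ is Krull--Schmidt (\S\ref{ss:krull-schmidt}), so the $m(\lambda)$ are well-defined, and because $\Phi$ sends the indecomposable $M_{\lambda}$ to $L_{\lambda}\oplus L_{\lambda^\flat}$ with the ``highest'' term $L_{\lambda}$ appearing only from $M_{\lambda}$ itself and never from any $M_{\mu}$ with $\ell(\mu) < \ell(\lambda)$; this triangularity (exactly as in the proof of Proposition~\ref{prop:groth}) lets us solve for the $m(\lambda)$ top-down in $\ell(\lambda)$. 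So there is in fact no real obstacle here: once the recursion is set up, it is a one-line induction. Alternatively, and perhaps most cleanly, one can phrase the whole argument inside $\rK^{\oplus}(\cA)$: by Proposition~\ref{prop:groth} the map $\phi$ is an isomorphism, $[\sA(\bR^{(n)})] = \phi^{-1}([\sC(\bR^{(n)})] + [\sC(\bR^{(n-1)})])$, and since $\phi^{-1}([L_\mu]) = \sum_{k\ge 0}(-1)^k[M_{\mu'}]$ over the suffixes $\mu'$ obtained by... — but expanding $\phi^{-1}$ is exactly the triangular inversion above, so I would present the direct multiplicity recursion as it is shortest.
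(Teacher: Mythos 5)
Your proposal is correct and is essentially the paper's own argument: the paper also reduces to comparing multiplicities of each simple $L_{\mu}$ in $\Phi(\sA(\bR^{(n)}))=\sC(\bR^{(n)})\oplus\sC(\bR^{(n-1)})$ versus in $\bigoplus_{\lambda}\Phi(M_{\lambda})^{\oplus m(\lambda)}$, using Proposition~\ref{prop:Phi-E} and Pascal's rule, packaged as the equality $\phi([X])=\phi([Y])$ in $\rK(\cC)$ with $\phi$ injective by Proposition~\ref{prop:groth}. Your recursion-plus-uniqueness phrasing and the paper's direct computation of the coefficients $a_{\lambda}=b_{\lambda}$ are the same calculation organized in two equivalent ways.
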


\begin{proof}
Let $X$ and $Y$ be the two sides of the equation. It suffices to verify $[X]=[Y]$ in $\rK^{\oplus}(\cA)$. By Proposition~\ref{prop:groth}, it thus suffices to verify $\phi([X])=\phi([Y])$. Write 
\begin{displaymath}
\phi([X]) = \sum_{\lambda} a_{\lambda} [L_{\lambda}], \qquad
\phi([Y]) = \sum_{\lambda} b_{\lambda} [L_{\lambda}].
\end{displaymath}
We have
\begin{displaymath}
\Phi(\sA(\bR^{(n)})) = \sC(\bR^{(n)}) \oplus \sC(\bR^{(n-1)})).
\end{displaymath}
Thus, by \cite[Theorem~4.7]{line}, we have
\begin{displaymath}
a_{\lambda} = \binom{n}{\ell(\lambda)} + \binom{n-1}{\ell(\lambda)}
\end{displaymath}
On the other hand, Proposition~\ref{prop:Phi-E} shows that the multiplicity of $L_{\lambda}$ in $\Phi(M_{\mu})$ is~1 if $\mu \in \{\lambda, \lambda\ww, \lambda\bb\}$, and~0 otherwise. We thus find
\begin{displaymath}
b_{\lambda} = \binom{n-1}{\ell(\lambda)-1} + 2\binom{n-1}{\ell(\lambda)}.
\end{displaymath}
The equality $a_{\lambda}=b_{\lambda}$ follows from Pascal's rule.
\end{proof}

\subsection{The tensor product rule}

We now give an explicit rule for tensor products of $M_{\lambda}$'s. For this, we must first recall the explicit tensor product rule in the Delannoy category. Let $\lambda$ and $\mu$ be weights of lengths $m$ and $n$. An \defn{$(m,n)$-ruffle} is a pair of maps $\rho_1 \colon [m] \to [\ell]$ and $\rho_2 \colon [n] \to [\ell]$ such that $\rho_1$ and $\rho_2$ are each injective and order preserving, and $[\ell]=\im(\rho_1) \cup \im(\rho_2)$. We say that there is a \defn{collision} at $i\in [\ell]$ if $i \in \im(\rho_1) \cap \im(\rho_2)$. Suppose $i$ is a collision, and write $i=\rho_1(a)$ and $i=\rho_2(b)$ We say that the collision is \defn{white} if $\lambda_a$ and $\mu_b$ are both white, \defn{black} if they are both black, and \defn{neutral} otherwise. Let $S \subset [\ell]$ denote the set of neutral collisions. A \defn{marking} of the ruffle is a function
\begin{displaymath}
\rho_3 \colon S \to \{ \bb, \ww, \varnothing \}.
\end{displaymath}
We associate to a marked ruffle $\rho$ a weight $\nu$ as follows. For $i \in [\ell]$, we define $\nu_i$ as follows:
\begin{itemize}
\item If $i$ is not a collision then $\nu_i=\lambda_a$ if $i=\rho_1(a)$ and $\nu_i=\mu_b$ if $i=\rho_2(b)$.
\item If $i$ is a black (resp.\ white) collision then $\nu_i$ is black (resp.\ white).
\item If $i$ is a neutral collision then $\nu_i=\rho_3(i)$.
\end{itemize}
Finally, $\nu$ is the concatenation of the weights $\nu_1, \ldots, \nu_{\ell}$ (each of which has length~0 or~1).

Let $\Omega_{\lambda,\mu}$ denote the set of marked ruffles, and we let $\omega \colon \Omega_{\lambda,\mu} \to \Lambda$ be the function assigning to a marked ruffle the corresponding weight. The tensor product rule is then
\begin{displaymath}
L_{\lambda} \otimes L_{\mu} = \bigoplus_{\rho \in \Omega_{\lambda,\mu}} L_{\omega(\rho)}.
\end{displaymath}
This is \cite[Corollary~7.3]{line}. In words, the simples on the right side are obtained by ruffling the words $\lambda$ and $\mu$, with the following rule for collisions: white collisions remain white, black collisions remain black, and neutral collisions assume each possibility of white, black, or empty.

Define $\Omega'_{\lambda,\mu}$ to be the subset of $\Omega_{\lambda,\mu}$ consisting of marked ruffles $\rho$ satisfying the following condition: if $\rho$ has length $\ell$ then $\rho_3(\ell)$ (if defined) is not $\varnothing$. In words, this means that if we have a neutral collision in the final position, we must use either white or black; empty is not allowed.

\begin{theorem} \label{thm:tensor-rule}
We have
\begin{displaymath}
M_{\lambda} \otimes M_{\mu} = \bigoplus_{\rho \in \Omega'_{\lambda,\mu}} M_{\omega(\rho)}.
\end{displaymath}
\end{theorem}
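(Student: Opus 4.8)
The plan is to deduce the formula from an identity in the split Grothendieck ring, which we then transport to the first Delannoy category $\cC$ where the fusion rule is already known, and finally verify by an explicit combinatorial bijection on ruffles. First, since $\cA$ is Krull--Schmidt (\S\ref{ss:krull-schmidt}) and every indecomposable of $\cA$ is some $M_\nu$ (Theorem~\ref{thm:class}), the object $M_\lambda\otimes M_\mu$ is a finite direct sum of $M_\nu$'s, and this decomposition is recorded faithfully by the class $[M_\lambda\otimes M_\mu]$ in the free $\bZ$-module $\rK^{\oplus}(\cA)=\bigoplus_\nu\bZ[M_\nu]$. As the right-hand side is by construction $\sum_\nu d_\nu[M_\nu]$ with $d_\nu=\#\{\rho\in\Omega'_{\lambda,\mu}:\omega(\rho)=\nu\}\ge 0$, it suffices to prove $[M_\lambda\otimes M_\mu]=\sum_{\rho\in\Omega'_{\lambda,\mu}}[M_{\omega(\rho)}]$ in $\rK^{\oplus}(\cA)$. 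Applying the ring isomorphism $\phi\colon\rK^{\oplus}(\cA)\to\rK(\cC)$ of Proposition~\ref{prop:groth}, which sends $[M_\nu]$ to $[L_\nu]+[L_{\nu^{\flat}}]$, and using the Delannoy fusion rule $[L_\alpha]\cdot[L_\beta]=\sum_{\rho\in\Omega_{\alpha,\beta}}[L_{\omega(\rho)}]$ of \cite[Corollary~7.3]{line}, the target identity becomes
\begin{displaymath}
\sum_{\rho\in\Omega_{\lambda,\mu}}[L_{\omega(\rho)}]+\sum_{\rho\in\Omega_{\lambda^{\flat},\mu}}[L_{\omega(\rho)}]+\sum_{\rho\in\Omega_{\lambda,\mu^{\flat}}}[L_{\omega(\rho)}]+\sum_{\rho\in\Omega_{\lambda^{\flat},\mu^{\flat}}}[L_{\omega(\rho)}]=\sum_{\rho\in\Omega'_{\lambda,\mu}}\bigl([L_{\omega(\rho)}]+[L_{\omega(\rho)^{\flat}}]\bigr),
\end{displaymath}
i.e.\ an equality of multisets of weights.

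The engine for this equality is the map $\partial$ that deletes the last position of a ruffle. Assuming $\lambda$ and $\mu$ are both nonempty, $\partial$ sends $\Omega_{\lambda,\mu}$ onto $\Omega_{\lambda^{\flat},\mu}\sqcup\Omega_{\lambda,\mu^{\flat}}\sqcup\Omega_{\lambda^{\flat},\mu^{\flat}}$ according to whether the final position of the ruffle is fed only by $\lambda$, only by $\mu$, or by both (a collision); one checks this is well defined and surjective. Over a ruffle $\sigma\in\Omega_{\lambda^{\flat},\mu}$ (resp.\ $\Omega_{\lambda,\mu^{\flat}}$) the fibre is a single element, automatically in $\Omega'_{\lambda,\mu}$, with $\omega(\rho)^{\flat}=\omega(\sigma)$. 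The delicate point is the fibre over $\sigma\in\Omega_{\lambda^{\flat},\mu^{\flat}}$: if $\lambda$ and $\mu$ end in the same letter the new terminal position is a forced colored collision (one element, in $\Omega'$), whereas if they end in different letters it is a terminal neutral collision, with three markings $\bb,\ww,\varnothing$, of which precisely the first two lie in $\Omega'_{\lambda,\mu}$ and contribute $\omega(\rho)^{\flat}=\omega(\sigma)$, while the $\varnothing$-marked one lies in $\Omega_{\lambda,\mu}\setminus\Omega'_{\lambda,\mu}$ with $\omega(\rho)=\omega(\sigma)$. Carrying this fibrewise count through both sides of the displayed identity, one finds in either case that the $\Omega_{\lambda^{\flat},\mu^{\flat}}$-part of the right-hand side accounts exactly for $\sum_{\Omega_{\lambda^{\flat},\mu^{\flat}}}[L_{\omega(\rho)}]$ together with one copy each of $\sum_{\Omega_{\lambda^{\flat},\mu}}[L_{\omega(\rho)}]$ and $\sum_{\Omega_{\lambda,\mu^{\flat}}}[L_{\omega(\rho)}]$ coming from the $\omega(\rho)^{\flat}$ terms over the single-element fibres, and the identity drops out. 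The degenerate cases $\lambda=\varnothing$ or $\mu=\varnothing$ are handled directly: then $\Omega'_{\lambda,\mu}$ is a single ruffle with $\omega$ equal to the nonempty word, and the claim reduces to $\bbone\otimes M_\mu=M_\mu$.

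I expect the main obstacle to be exactly this last bookkeeping: moving between $\Omega_{\lambda,\mu}$ and $\Omega'_{\lambda,\mu}$, handling the three markings of a terminal neutral collision, and keeping straight when $\omega(\partial\rho)$ equals $\omega(\rho)$ versus $\omega(\rho)^{\flat}$. Once the bijection $\partial$ and its fibres are set up carefully, the remainder---reducing to $\rK^{\oplus}(\cA)$ via Krull--Schmidt and transporting along the ring isomorphism $\phi$---is formal given Theorem~\ref{thm:class}, Proposition~\ref{prop:groth}, and \cite[Corollary~7.3]{line}.
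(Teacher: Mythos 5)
Your proposal is correct and follows essentially the same route as the paper: both reduce to an identity in $\rK(\cC)$ via the ring isomorphism $\phi$ of Proposition~\ref{prop:groth} together with the fusion rule of \cite[Corollary~7.3]{line}, and then establish the resulting combinatorial identity on ruffles. The only difference is organizational — the paper builds two jointly bijective maps $f,g\colon\Omega'_{\lambda,\mu}\to\Omega_{\lambda,\mu}\amalg\Omega_{\lambda,\mu^{\flat}}\amalg\Omega_{\lambda^{\flat},\mu}\amalg\Omega_{\lambda^{\flat},\mu^{\flat}}$, whereas you count fibres of the last-position deletion map $\partial$ going the other way; these are the same bijection seen from opposite ends, and your fibre bookkeeping (including the two-element fibre over $\Omega_{\lambda^{\flat},\mu^{\flat}}$ when the terminal letters differ) checks out.
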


\begin{proof}
If either $\lambda$ or $\mu$ is empty then the result is clear, so assume this is not the case. Since $\phi \colon \rK^{\oplus}(\cA) \to \rK(\cC)$ is a ring isomorphism (Proposition~\ref{prop:groth}), it suffices to verify the identity after passing to the target. This means we want to prove
\begin{displaymath}
(L_{\lambda} \oplus L_{\lambda^{\flat}}) \otimes (L_{\mu} \oplus L_{\mu^{\flat}}) = \bigoplus_{\rho \in \Omega'_{\lambda,\mu}} (L_{\omega(\rho)} \oplus L_{\omega(\rho)^{\flat}})
\end{displaymath}
To do this, we construct functions
\begin{displaymath}
f,g \colon \Omega'_{\lambda,\mu} \to \Omega_{\lambda,\mu} \amalg \Omega_{\lambda,\mu^{\flat}} \amalg \Omega_{\lambda^{\flat}, \mu} \amalg \Omega_{\lambda^{\flat}, \mu^{\flat}}
\end{displaymath}
such that:
\begin{enumerate}[(i)]
\item $f$ and $g$ are jointly bijective, i.e., they are both injective and their images are a disjoint cover of the target.
\item $\omega(\rho)=\omega(f(\rho))$ and $\omega(\rho)^{\flat}=\omega(g(\rho))$
\end{enumerate}
This will establish the above isomorphism by providing a bijection between the simple summands of the source and target. The definition of $f$ is easy: it is simply the inclusion of $\Omega'_{\lambda,\mu}$ into $\Omega_{\lambda,\mu}$. This is clearly compatible with $\omega$.

Let $\rho \in \Omega'_{\lambda,\mu}$ be given, and let $\ell$ be the length of $\rho$. First suppose that $\ell$ is not a collision, and $\ell=\rho_1(m)$. We define $g(\rho) \in \Omega_{\lambda^{\flat},\mu}$ by simply restricting $\rho$, that is, $g(\rho)_1$ is the restriction of $\rho_1$ to $[m-1]$, while $g(\rho)_2$ and $g(\rho)_3$ are equal to $\rho_2$ and $\rho_3$. It is clear that $\omega(g(\rho))=\omega(\rho)^{\flat}$. If $\ell=\rho_2(n)$, we define $g(\rho) \in \Omega_{\lambda,\mu^{\flat}}$ in a similar fashion.

Now suppose that $\ell$ is a white collision. We then define $g(\rho) \in \Omega_{\lambda^{\flat}, \omega^{\flat}}$ by restriction again, that is, $g(\rho)_1$ is the restriction of $\rho$ to $[m-1]$, $g(\rho)_2$ is the restriction of $\rho_2$ to $[n-1]$, and $g(\rho)_3$ is the restriction of $\rho_3$ to $S \setminus \{\ell\}$. Once again, it is clear that $\omega(g(\rho))=\omega(\rho)^{\flat}$. The case where $\ell$ is a black collision is similar.

Finally, suppose that $\ell$ is a neutral collision. To define $g(\rho)$, we must arbitrarily introduce some asymmetry between black and white. If $\rho_3(\ell)$ is black, we define $g(\rho) \in \Omega_{\lambda,\mu}$ to be the same as $\rho$, except for one difference: we put $g(\rho)_3(\ell)=\varnothing$. If $\rho_3(\ell)$ is white, we define $g(\rho) \in \Omega_{\lambda^{\flat}, \mu^{\flat}}$ by restriction. Again, it is clear that $\omega(g(\rho))=\omega(\rho)^{\flat}$.

We have thus defined $f$ and $g$, and explained why (ii) holds. A moment's thought shows that (i) also holds.
\end{proof}

\begin{example}
We have
\begin{displaymath}
L_{\bb} \otimes L_{\ww} = L_{\bb\ww} \oplus L_{\ww\bb} \oplus L_{\bb} \oplus L_{\ww} \oplus L_{\varnothing}.
\end{displaymath}
The first two simples on the right come from the two shuffles (ruffles without collisions), while the final three terms come from the ruffle where the two letters collide. We have
\begin{displaymath}
M_{\bb} \otimes M_{\ww} = M_{\bb\ww} \oplus M_{\ww\bb} \oplus M_{\bb} \oplus M_{\ww}.
\end{displaymath}
Again, the first two terms come from shuffles, while the final two terms come from the ruffle where the two letters collide; in this case, the white and black letters cannot cancel since the collision occurs at the end of the word. We have
\begin{displaymath}
(L_{\bb} \oplus L_{\varnothing}) \otimes (L_{\ww} \otimes L_{\varnothing}) = (L_{\bb\ww} \oplus L_{\ww\bb} \oplus L_{\bb} \oplus L_{\ww} \oplus L_{\varnothing}) \oplus L_{\bb} \oplus L_{\ww} \oplus L_{\varnothing}.
\end{displaymath}
The proof of Theorem~\ref{thm:tensor-rule} matches the trivial representation in $\Phi(M_{\bb})$ with the first trivial representation above, and the one in $\Phi(M_{\ww})$ with the second.
\end{example}

The tensor product rule gives another proof of Proposition~\ref{prop:notsummand}. It also allows us to prove the following result of a similar nature.

\begin{corollary}\label{cor:notsummand2}
If $\lambda,\mu \in \Lambda_{\ww}$ then every indecomposable summand of $M_{\lambda} \otimes M_{\mu}$ has the form $M_{\nu}$ with $\nu \in \Lambda_{\ww}$.
\end{corollary}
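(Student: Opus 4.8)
The plan is to reduce the statement to the explicit tensor product rule of Theorem~\ref{thm:tensor-rule}, which gives $M_{\lambda} \otimes M_{\mu} = \bigoplus_{\rho \in \Omega'_{\lambda,\mu}} M_{\omega(\rho)}$. Since $\lambda$ and $\mu$ are non-empty, every $\omega(\rho)$ is a non-empty weight, so it suffices to show that the final letter of $\omega(\rho)$ is $\ww$ for every marked ruffle $\rho$; in fact I expect to need only $\rho \in \Omega_{\lambda,\mu}$, with the restriction to $\Omega'_{\lambda,\mu}$ playing no role.

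First I would pin down which letters of $\lambda$ and $\mu$ control the final position of a ruffle. Writing $m = \ell(\lambda)$, $n = \ell(\mu)$ and letting $\ell$ be the length of $\rho$, the maximal element $\ell \in [\ell]$ lies in $\im(\rho_1) \cup \im(\rho_2)$, and since $\rho_1$ and $\rho_2$ are order-preserving injections, $\ell \in \im(\rho_1)$ forces $\ell = \rho_1(m)$ and $\ell \in \im(\rho_2)$ forces $\ell = \rho_2(n)$. Thus the last letter $\nu_{\ell}$ of $\omega(\rho)$ is determined by $\lambda_m$ and/or $\mu_n$, both of which equal $\ww$ by hypothesis.

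Then I would run through the cases in the definition of $\nu_{\ell}$: if $\ell$ is not a collision, $\nu_{\ell}$ equals $\lambda_m = \ww$ or $\mu_n = \ww$; if $\ell$ is a collision, then $\lambda_m = \mu_n = \ww$, so the collision is white and $\nu_{\ell}$ is white. In particular the last slot is never a neutral collision, which is precisely why the defining constraint of $\Omega'_{\lambda,\mu}$ is vacuous here. In every case $\nu_{\ell} = \ww$, so $\omega(\rho) \in \Lambda_{\ww}$ and the corollary follows.

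I do not expect a genuine obstacle: all the substance sits in Theorem~\ref{thm:tensor-rule}, and what is left is the elementary fact that ruffling two words ending in $\ww$ yields a word ending in $\ww$. The only point to state with a little care is the case split on whether the final position is a collision, so as to dispatch the neutral-collision possibility that $\Omega'$ was introduced to control.
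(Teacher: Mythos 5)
Your argument is correct and is exactly the intended one: the paper states this corollary as an immediate consequence of Theorem~\ref{thm:tensor-rule} and omits the combinatorial verification, which you have supplied. Your case analysis on the final position of the ruffle (non-collision versus collision, with a collision of two $\ww$'s necessarily white and hence never neutral or deleted) is the right and complete justification.
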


\section{The pre-Tannakian category $\cD$} \label{s:D}

In this section, we study the category $\cD$. The main result is that it admits the structure of a pre-Tannakian category (Theorem~\ref{mainthm2}). Note that while we arrived at $\cD$ as the Ringel dual of the pre-sheaf category $\cB$ (see \S \ref{s:over}), in this section we study $\cD$ directly. The category $\cB$ and the Ringel dual perspective will be discussed in \S \ref{s:B}.

\subsection{The combinatorial category}\label{def:D}

We recall the definition of the combinatorial category $\fD$ from \S \ref{ss:overview-D}. The set of objects is the set of weights $\Lambda$. Let $\Lambda^{\alt}$ denote the set of alternating weights, i.e., those that do not contain $\ww\ww$ or $\bb\bb$ as a substring, and let $\Lambda^{\alt}_{\ww}$ and $\Lambda^{\alt}_{\bb}$ be the sets of non-empty alternating weights ending in $\ww$ and $\bb$. The morphism space $\Hom_{\fD}(\lambda, \mu)$ is one dimensional (with a distinguished basis vector) if $\lambda=\mu$, or $\mu \in \lambda \Lambda^{\alt}_{\ww}$, or $\lambda \in \mu \Lambda^{\alt}_{\bb}$, and otherwise vanishes. The composition of two distinguished morphisms $\lambda \to \mu$ and $\mu \to \nu$ is the distinguished morphism $\lambda \to \nu$, if it exists, and is otherwise zero. 

\subsection{The module category} \label{def:Dmod}

Let $\cD^{\inf}$ be the category of $\fD$-modules, i.e., functors from $\fD$ to the category of vector spaces. We say that a $\fD$-module $M$ is \defn{pointwise finite} if $M(\lambda)$ is finite dimensional for all $\lambda$, and \defn{finite} if it is pointwise finite and $M(\lambda)$ vanishes for all but finitely many $\lambda$. We let $\cD^{\pf}$ and $\cD$ be the pointwise finite and finite subcategories of $\cD$.

Let $M$ be a $\fD$-module and let $\Xi$ be a subset of $\Lambda$. We say that $M$ is the \defn{full $\fD$-module} on $\Xi$ if $M(\lambda)=k$ for $\lambda \in \Xi$ and $M(\lambda)=0$ for $\lambda \not\in \Xi$, and where distinguished morphisms between two elements of $\Xi$ act by the identity. Given $\Xi$, the full $\fD$-module on $\Xi$ may or may not exist. We let $\bS_{\lambda}$ be the full $\fD$-module on $\{\lambda\}$. These are exactly the simple $\fD$-modules. We let $\bP_{\lambda}$ be the projective $\fD$-module $\Hom_{\fD}(\lambda, -)$. The module $\bP_{\lambda}$ is pointwise finite, but not finite. It is multiplicity-free: $\bS_{\mu}$ appears with multiplicity at most one in $\bP_{\lambda}$ for each $\mu$. In fact, $\bP_\lambda$ is the full $\fD$-module on the set
\begin{equation} \label{eq:setproj}
\{\lambda\}\cup\lambda\Lambda_{\ww}^{\alt}\cup\{\mu\mid \lambda\in \mu\Lambda^{\alt}_{\bb}\}.
\end{equation}
 We will freely use the following observation: if $M$ is the full module on a set $\Xi$, and $\bP_\lambda\to M$ is a non-zero morphism, then the image is the full module on the intersection of $\Xi$ and the set in \eqref{eq:setproj}.

The category $\fD$ has a self-duality, given on objects by $\lambda \mapsto \lambda^{\vee}$. This induces a duality on $\fD$-modules, denoted $(-)^{\vee}$, as follows:
\begin{displaymath}
M^{\vee}(\lambda) = M(\lambda^{\vee})^*,
\end{displaymath}
where $(-)^*$ is the ordinary vector space dual. Duality is defined on all $\fD$-modules, and is involutory on the pointwise finite and finite subcategories. We have $\bS_{\mu}^{\vee}=\bS_{\mu^{\vee}}$. The dual of $\bP_{\lambda^{\vee}}$ is the injective envelope $\bI_{\lambda}$ of $\bS_{\lambda}$ (see \cite[\S 3.4]{brauercat}).

\subsection{Structure of morphisms}

We define the \defn{basic morphisms} of $\fD$ to be the distinguished morphisms
\begin{displaymath}
\lambda \ww\bb \to \lambda, \qquad \lambda \bb \bb \to \lambda \bb, \qquad \bb \to \varnothing,
\end{displaymath}
\begin{displaymath}
\lambda \to \lambda \bb\ww, \qquad \lambda \ww \to \lambda \ww \ww, \qquad \varnothing \to \ww,
\end{displaymath}
where $\lambda$ is an arbitrary weight. They are important due to the following result:

\begin{proposition} \label{prop:basic}
Every distinguished morphism in $\fD$ factors uniquely into a composition of basic morphisms.
\end{proposition}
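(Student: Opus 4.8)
The basic morphisms are clearly distinguished morphisms of $\fD$, so the content of the proposition is that every distinguished morphism is a composition of basic morphisms in exactly one way. I would prove existence and uniqueness together by induction on $\ell(\lambda)+\ell(\mu)$ for a distinguished morphism $\lambda\to\mu$, the key tool being a structural description of arbitrary compositions of basic morphisms.

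The first step would be a ``valley'' lemma. Every basic morphism strictly raises (by $1$ or $2$) or strictly lowers (by $1$ or $2$) the length of the weight; call these \emph{up-basics} (the three shapes $\lambda\to\lambda\bb\ww$, $\lambda\ww\to\lambda\ww\ww$, $\varnothing\to\ww$) and \emph{down-basics}. The target of any up-basic ends in $\ww$, and conversely any basic morphism whose source ends in $\ww$ is an up-basic (only the first two shapes can occur). Hence in a composition of basic morphisms, once an up-basic appears all subsequent factors are up-basics; so the composition is a (possibly empty) run of down-basics followed by a (possibly empty) run of up-basics, and the lengths of the intermediate weights strictly decrease and then strictly increase. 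In particular a nonempty composition of basics is never an identity: if it had equal endpoints, the ``descend then ascend'' shape together with the description of morphisms in $\fD$ would force some alternating word to end in both $\bb$ and $\ww$.

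Next I would run the induction. If $\lambda=\mu$ the morphism is the identity, realized by the empty composition, and by the valley lemma no nonempty composition equals it. If $\ell(\mu)>\ell(\lambda)$ then $\mu=\lambda\nu$ with $\nu$ a nonempty alternating word ending in $\ww$, so $\mu$ ends in $\ww$ and exactly one of ``$\mu$ ends in $\bb\ww$'', ``$\mu$ ends in $\ww\ww$'', ``$\mu=\ww$'' holds; correspondingly there is a \emph{unique} up-basic $c\colon\rho\to\mu$ with target $\mu$. A short check on $\nu$ (peeling $\bb\ww$ off its end when $\ell(\nu)\ge 2$, and treating $\ell(\nu)=1$ by hand, where $\rho$ is $\lambda$ or $\lambda$ with its last letter deleted) shows $\Hom_{\fD}(\lambda,\rho)\neq 0$; hence the distinguished morphism $f''\colon\lambda\to\rho$ exists and $c\circ f''$ is the distinguished morphism $\lambda\to\mu$, i.e.\ equals $f$. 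Since $\ell(\rho)<\ell(\mu)$, induction applies to $f''$ and gives existence. For uniqueness, in any factorization $f=c_m\circ\cdots\circ c_1$ the valley lemma forces $c_m$ to be an up-basic with target $\mu$, so $c_m=c$; then $c_{m-1}\circ\cdots\circ c_1$ and $f''$ are both nonzero elements of the one-dimensional space $\Hom_{\fD}(\lambda,\rho)$, hence equal, so induction determines the remaining factors. The case $\ell(\mu)<\ell(\lambda)$ is dual — one peels off the unique down-basic with source $\lambda$ — or it follows from the previous case via the self-duality $\lambda\mapsto\lambda^{\vee}$ of $\fD$, which interchanges up-basics with down-basics.

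The conceptual engine is the valley lemma: it makes the last basic of a length-increasing distinguished morphism (and the first basic of a length-decreasing one) unambiguous, which is exactly what lets the induction close. The only real labor is the routine bookkeeping with the alternating-word conditions in the two case checks, and that is the step I expect to be the most tedious, though not the hardest.
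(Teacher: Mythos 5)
Your proof is correct, and it shares its backbone with the paper's: the observation that every down-basic has source ending in $\bb$ while every up-basic has target ending in $\ww$, so that any composition of basics is a run of down-basics followed by a run of up-basics, is exactly the paper's opening move (``the only option is first to remove letters and then to add letters''). Where you diverge is in how this is exploited. The paper pins down the entire sequence at once via a global invariant: each of the four moves changes the black-minus-white balance weakly in one direction, so the parity of $\ell(\nu)$ forces almost every step to be ``add $\bb\ww$,'' with at most one exceptional step whose position is dictated by the final letter of $\lambda$. You instead induct on $\ell(\lambda)+\ell(\mu)$, peeling off the uniquely determined terminal up-basic (dually, the initial down-basic). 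Your route handles existence and uniqueness in one uniform pass and avoids the parity case analysis; the paper's has the side benefit of exhibiting the factorization explicitly, which is reused later (the length of the basic factorization reappears in Proposition~\ref{prop:uniserial}). Two points of polish, both immediate from the definition of $\fD$: when you conclude that $c_{m-1}\circ\cdots\circ c_1$ \emph{equals} $f''$ from both being nonzero in a one-dimensional $\Hom$ space, you should invoke that any composite of distinguished morphisms is either the distinguished morphism or zero (so you get equality, not merely proportionality); and in the ``nonempty composition is never the identity'' step you should first dispose of the case where the composite is zero before running the argument that the down-run and up-run would produce an alternating word ending in both $\bb$ and $\ww$.
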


\begin{proof} 
Consider a distinguished morphism $\lambda \to \mu$ where $\mu=\lambda \alpha$ and $\alpha \in \Lambda_{\ww}^{\alt}$. We show that there is a unique sequence of weights
\begin{displaymath}
\lambda=\mu_0,\mu_1,\mu_2,\ldots,\mu_{n-1},\mu_n=\mu
\end{displaymath}
such that each $\mu_i$ is obtained from $\mu_{i-1}$ by either adding $\bb\ww$, removing $\ww\bb$, adding $\ww$ in case $\mu_{i-1}$ does not end in $\bb$, or removing $\bb$ in case $\mu_i$ does not end in $\ww$. This will establish the proposition for the kind of morphism under consideration. Duality then gives the other kind of morphism (where $\lambda = \mu \beta$ with $\beta \in \Lambda^{\alt}_{\bb}$).

It is clear that the only option is first to remove letters and then to add letters, since adding letters always results in a weight ending in $\ww$ and removing letters always requires a weight ending in $\bb$. Next we divide into cases. Assume first that $\alpha$ is of even length. Then it follows that adding $\ww$ or removing $\bb$ is not allowed at any intermediate step. Indeed, each of the four types of step results in a weight that has a balance of black dots minus white dots either the same or increased. It then quickly follows that the only option is precisely adding $\bb\ww$ in every step. Next, assume that $\alpha$ has odd length. It then similarly follows that every intermediate step will consist of adding $\bb\ww$ or removing $\ww\bb$ and, precisely once, either add $\ww$ or remove $\bb$. If $\lambda$ ends in $\ww$, the only option is to add first $\ww$, followed by adding a number of $\bb\ww$. In case $\lambda$ ends in $\bb$, the only option is first to remove $\bb$ followed by adding a number of $\bb\ww$.
\end{proof}

\begin{corollary}\label{cor:D-ext-simple}
The space $\Ext^1_{\cD}(\bS_\lambda, \bS_\mu)$ is one-dimensional if there is a basic morphism $\lambda \to \mu$, and otherwise vanishes. A basic morphism exists in exactly the following cases:
\begin{enumerate}
\item $\mu=\lambda\bb\ww$
\item $\lambda=\mu\ww\bb$
\item $\mu=\lambda\ww$ and $\lambda$ does not end in $\bb$
\item $\lambda=\mu \bb$ and $\mu$ does not end in $\ww$.
\end{enumerate}
\end{corollary}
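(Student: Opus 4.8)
I would compute $\Ext^1_{\cD}(\bS_\lambda,\bS_\mu)$ directly, in terms of short exact sequences. An element is represented by a sequence $0\to\bS_\mu\to M\to\bS_\lambda\to 0$ in $\cD$, and any such $M$ has composition factors $\bS_\lambda,\bS_\mu$; hence $M(\nu)=0$ unless $\nu\in\{\lambda,\mu\}$, and $\sum_\nu\dim_k M(\nu)=2$. If $\lambda=\mu$, then $M$ is supported at the single weight $\lambda$, and since $\End_{\fD}(\lambda)=k\cdot\id$, such an $M$ is merely a $2$-dimensional vector space, hence a semisimple $\fD$-module; so the sequence splits and $\Ext^1_{\cD}(\bS_\lambda,\bS_\lambda)=0$ (consistent with (a)--(d), none of which permits $\lambda=\mu$ for length reasons).

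\textbf{The case $\lambda\neq\mu$.} Now $M(\lambda)=M(\mu)=k$, and the only morphisms of $\fD$ relevant to the module structure are $\id_\lambda$, $\id_\mu$, and the distinguished morphisms in $\Hom_{\fD}(\lambda,\mu)$ and $\Hom_{\fD}(\mu,\lambda)$, each at most one-dimensional. If $\Hom_{\fD}(\lambda,\mu)=0$, then $\bS_\lambda$ is automatically a submodule of $M$ and the extension splits, so $\Ext^1=0$; this matches (a)--(d), each of which posits a morphism $\lambda\to\mu$. Otherwise $\Hom_{\fD}(\lambda,\mu)=k\phi$ for a distinguished $\phi$, and a short inspection of the definition of $\fD$ (comparing lengths and the final letter of the connecting alternating word) shows $\Hom_{\fD}(\mu,\lambda)=0$. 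Thus the full subcategory of $\fD$ on $\{\lambda,\mu\}$ is the path category of a single arrow $\phi\colon\lambda\to\mu$, and a two-dimensional $\fD$-module of the required shape is precisely a choice of scalar, the action of $\phi$; the extension is split exactly when this scalar vanishes. Hence $\dim_k\Ext^1_{\cD}(\bS_\lambda,\bS_\mu)\le 1$, with equality iff there is such an $M$ with $M(\phi)\neq 0$. (Equivalently one could run this through $\Hom_{\cD}(\rad\bP_\lambda,\bS_\mu)$, but the direct argument is cleaner.)

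\textbf{Detecting basicness.} It remains to see that $M(\phi)$ can be nonzero exactly when $\phi$ is basic. If $\phi$ is not basic, then by Proposition~\ref{prop:basic} it factors as a composition of at least two basic morphisms through a weight $\nu$; the factorization constructed there runs through weights strictly between $\lambda$ and $\mu$ (distinguished from both endpoints by length or by the length/parity of the segments added), so $\nu\notin\{\lambda,\mu\}$, whence $M(\nu)=0$ and $M(\phi)=0$. Thus $\Ext^1_{\cD}(\bS_\lambda,\bS_\mu)=0$ in this case. If $\phi$ is basic, the module with $M(\phi)$ an isomorphism is a legitimate non-split extension, so $\dim_k\Ext^1_{\cD}(\bS_\lambda,\bS_\mu)=1$. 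Finally one reads off the list from the six basic morphisms: $\lambda\to\lambda\bb\ww$ gives (a); $\lambda\ww\bb\to\lambda$ gives (b); $\lambda\ww\to\lambda\ww\ww$ together with $\varnothing\to\ww$ give (c); and $\lambda\bb\bb\to\lambda\bb$ together with $\bb\to\varnothing$ give (d).

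\textbf{Main obstacle.} Every step is routine except one: the claim that a \emph{non-basic} distinguished morphism acts by zero on any two-dimensional module concentrated on its source and target. This is exactly what the unique-factorization statement of Proposition~\ref{prop:basic} supplies, provided one extracts from its proof that the intermediate weights are genuinely new; so the real work has already been done in that proposition, and here it is just a matter of invoking it correctly.
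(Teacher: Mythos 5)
Your proof is correct, and it is the Yoneda-extension face of the computation the paper does on the projective side. The paper's (one-line) proof uses the isomorphism $\Hom_{\cD}(\bP_\lambda,\bP_\mu)=\Hom_{\fD}(\mu,\lambda)$ together with Proposition~\ref{prop:basic} to write down a \emph{minimal} projective presentation of $\bS_\lambda$, whose first term is the sum of $\bP_\nu$ over the targets $\nu$ of basic morphisms out of $\lambda$; applying $\Hom(-,\bS_\mu)$ then gives the corollary immediately. You instead classify the length-two modules representing classes in $\Ext^1$. The two routes are equivalent and both rest entirely on Proposition~\ref{prop:basic}; yours is more hands-on but has to do the ``minimality'' bookkeeping by hand. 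Concretely, the one step you state rather than prove is that for basic $\phi\colon\lambda\to\mu$ the assignment $M(\phi)\neq 0$ really defines a $\fD$-module: functoriality would force $M(\phi)=0$ if $\phi$ admitted a factorization $\lambda\to\nu\to\mu$ with non-zero composite through some $\nu\notin\{\lambda,\mu\}$, and one must also rule out non-zero composites $\lambda\to\nu\to\lambda$. Both exclusions follow from the uniqueness clause of Proposition~\ref{prop:basic} (a length-$\ge 2$ basic factorization of a basic morphism, or a non-empty basic factorization of an identity, would contradict uniqueness) --- the same mechanism you already invoke to see that the intermediate weights of a non-basic morphism avoid $\{\lambda,\mu\}$. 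With that sentence added, your argument is complete and matches the paper's conclusion, including the translation of the six basic morphisms into the four cases (a)--(d).
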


\begin{proof}
A minimal projective presentation of $\bS_\lambda$ follows from the natural isomorphism
\begin{displaymath}
\Hom_{\cD}(\bP_{\lambda}, \bP_{\mu}) = \Hom_{\fD}(\mu, \lambda)
\end{displaymath}
and the proposition.
\end{proof}

\begin{example}
The morphism $\lambda \ww \bb \to \lambda \ww$ is not basic. Its basic factorization is
\begin{displaymath}
\lambda \ww \bb \to \lambda \to \lambda \ww.
\end{displaymath}
\end{example}

\subsection{Standard modules} \label{ss:Dstd}

We define the \defn{standard module} $\bDelta_{\lambda}$ via the presentation
\begin{displaymath}
\bP_{\lambda\ww}\oplus \bP_{\lambda\bb\ww}\to \bP_\lambda\to \bDelta_\lambda\to 0.
\end{displaymath}
Since $\bP_\lambda$ is the full module on the set in \eqref{eq:setproj}, it follows quickly that the image of $\bP_{\lambda\ww}\oplus \bP_{\lambda\bb\ww}\to \bP_\lambda$ is the full module on the subset $\lambda\Lambda_{\circ}^{\alt}$. Hence $\bDelta_\lambda$ is the full module on the set $\{\lambda \} \cup \{ \mu \mid \lambda \in \mu \Lambda^{\alt}_{\bb} \}$, from which we see that $\bDelta_{\lambda}$ has finite length. Yet equivalently, we can define $\bDelta_\lambda$ as the maximal quotient of $\bP_\lambda$ that does not contain any simple constituent $\bS_\mu$ with $\ell(\mu)>\ell(\lambda)$. Similarly, we define the \defn{costandard module} $\bnabla_{\lambda}$ to be the full $\fD$-module on the set $\{\lambda\} \cup \{\mu \mid \lambda \in \mu \Lambda^{\alt}_{\ww} \}$. These two kinds of modules are dual: we have $\bnabla_{\lambda^{\vee}} = \bDelta_{\lambda}^{\vee}$.

\begin{proposition}\label{prop:D-tilt}
We have the following:
\begin{enumerate}
\item If $\lambda \in \Lambda_{\ww}$ then $\bS_{\lambda}=\bDelta_{\lambda}$ and there is a short exact sequence
\begin{displaymath}
0 \to \bS_{\lambda} \to \bnabla_{\lambda} \to \bDelta_{\lambda^{\flat}} \to 0
\end{displaymath}
\item If $\lambda \in \Lambda_{\bb}$ then $\bS_{\lambda}=\bnabla_{\lambda}$ and there is a short exact sequence
\begin{displaymath}
0 \to \bnabla_{\lambda^{\flat}} \to \bDelta_{\lambda} \to \bS_{\lambda} \to 0
\end{displaymath}
\item If $\lambda=\varnothing$ then $\bDelta_{\lambda}=\bnabla_{\lambda}=\bS_{\lambda}$.
\end{enumerate}
\end{proposition}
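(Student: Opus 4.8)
The plan is to read everything off the explicit descriptions of $\bS_\lambda$, $\bDelta_\lambda$, and $\bnabla_\lambda$ as full $\fD$-modules on prescribed subsets of $\Lambda$, together with elementary combinatorics of alternating words; part~(b) will then come for free from part~(a) by the duality $(-)^\vee$. Part~(c) is immediate: there is no nonempty word $\nu$ with $\varnothing = \mu\nu$, so the defining sets of $\bDelta_\varnothing$ and $\bnabla_\varnothing$ both reduce to $\{\varnothing\}$, whence $\bDelta_\varnothing = \bnabla_\varnothing = \bS_\varnothing$.

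For part~(a), assume $\lambda \in \Lambda_{\ww}$. Since a word of the form $\mu\nu$ with $\nu \in \Lambda^{\alt}_{\bb}$ ends in $\bb$, the set $\{\mu \mid \lambda \in \mu\Lambda^{\alt}_{\bb}\}$ is empty, so $\bDelta_\lambda$ is the full module on $\{\lambda\}$, that is, $\bDelta_\lambda = \bS_\lambda$. For the short exact sequence, write the support of $\bnabla_\lambda$ as $\{\lambda\} \sqcup S$, where $S = \{\mu \mid \lambda \in \mu\Lambda^{\alt}_{\ww}\}$ consists of proper prefixes of $\lambda$. First I would check that $\bS_\lambda$ occurs as the submodule of $\bnabla_\lambda$ supported on $\{\lambda\}$: this amounts to showing there is no distinguished morphism $\lambda \to \mu$ in $\fD$ with $\mu \in S$, which holds because such a morphism forces either $\mu \in \lambda\Lambda^{\alt}_{\ww}$ (so $\mu$ would be strictly longer than $\lambda$) or $\lambda \in \mu\Lambda^{\alt}_{\bb}$ (impossible, since the part of $\lambda$ following the prefix $\mu \in S$ ends in $\ww$, not $\bb$). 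The quotient $\bnabla_\lambda/\bS_\lambda$ is then the full module on $S$, and it remains to identify $S$, with its full-module structure, with $\bDelta_{\lambda^\flat}$. Since $\nu \mapsto \nu\ww$ is a bijection from $\Lambda^{\alt}_{\bb} \cup \{\varnothing\}$ onto $\Lambda^{\alt}_{\ww}$, applying it to the portion of $\lambda$ past a prefix $\mu$ identifies $S$ with $\{\lambda^\flat\} \cup \{\mu \mid \lambda^\flat \in \mu\Lambda^{\alt}_{\bb}\}$, which is precisely the defining set of $\bDelta_{\lambda^\flat}$; two full modules on the same set agree, giving the sequence.

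Part~(b) follows by applying $(-)^\vee$ to the sequence of part~(a) for the weight $\lambda^\vee \in \Lambda_{\ww}$. Using $\bS_\mu^\vee = \bS_{\mu^\vee}$, the relations $\bDelta_\mu^\vee = \bnabla_{\mu^\vee}$ and $\bnabla_\mu^\vee = \bDelta_{\mu^\vee}$ (both forms of $\bnabla_{\mu^\vee} = \bDelta_\mu^\vee$), and the identity $(\lambda^\vee)^\flat = (\lambda^\flat)^\vee$, the dualized sequence becomes $0 \to \bnabla_{\lambda^\flat} \to \bDelta_\lambda \to \bS_\lambda \to 0$, and $\bnabla_\lambda = \bS_\lambda$ is the dual of $\bDelta_{\lambda^\vee} = \bS_{\lambda^\vee}$ (or is seen directly, as in part~(a)). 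The one genuinely delicate point — and the step I would be most careful about — is the verification that $\bS_\lambda$ is a \emph{sub}module of $\bnabla_\lambda$, not merely a subquotient, i.e.\ the ruling out of distinguished morphisms out of $\lambda$; everything else is bookkeeping with prefixes and suffixes of alternating words. (Granting the highest weight structure on $\cD$ as the Ringel dual of $\cB$, this submodule claim is just the statement that $\bnabla_\lambda$ has simple socle $\bS_\lambda$, but we do not need that here.)
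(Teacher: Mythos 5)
Your proof is correct and follows essentially the same route as the paper: for $\lambda\in\Lambda_{\ww}$ one checks that $\bS_\lambda$ sits inside $\bnabla_\lambda$ as the socle (no distinguished morphisms out of $\lambda$ land in the rest of the support) and identifies the quotient as the full module on a set that coincides with the defining set of $\bDelta_{\lambda^\flat}$, with (b) obtained by duality and (c) immediate. You merely spell out in more detail the submodule verification and the prefix/suffix bijection that the paper compresses into "by construction" and "which coincides with".
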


\begin{proof}
(a) By construction, the socle of $\bnabla_\lambda$ is $\bS_\lambda$. The quotient $\bnabla_\lambda/\bS_\lambda$ is the full module on the set $\{\mu \mid \lambda \in \mu \Lambda^{\alt}_{\ww} \}$, which coincides with $\{\mu \mid \lambda^\flat \in \mu \Lambda^{\alt}_{\bb} \}\cup\{\lambda^\flat\}$, and so the full module is actually $\bDelta_{\lambda^\flat}$. (b) follows from (a) by duality, and (c) is immediate.
\end{proof}

The composition series of (co)standard modules can be derived either from Corollary~\ref{cor:D-ext-simple} or from iterating Proposition~\ref{prop:D-tilt}. For completeness, we record the precise statement for standard modules; the statement for costandard modules can be obtained by duality. We omit the proof since the statement will not be used.

\begin{proposition}\label{prop:uniserial}
The module $\bDelta_{\lambda}$ is uniserial. Precisely,
\begin{enumerate}
\item If $\lambda=\kappa(\ww\bb)^j$ with $\kappa$ not ending in $\bb$, then $\bDelta_\lambda$ is of Loewy length $2j+1$, with
\begin{displaymath}
\Rad^i(\bDelta_\lambda)/\Rad^{i+1}(\bDelta_\lambda)\;\cong\; \begin{cases} \bS_{\kappa (\ww\bb)^{j-i}}&\mbox{for } i\le j,\\
 \bS_{\kappa (\ww\bb)^{i-j-1}\ww}&\mbox{for } j< i\le 2j.\end{cases}
\end{displaymath}
\item If $\lambda=\kappa\bb(\ww\bb)^j$ with $\kappa$ not ending in $\ww$, then $\bDelta_\lambda$ is of Loewy length $2j+2$, with 
\begin{displaymath}
\Rad^i(\bDelta_\lambda)/\Rad^{i+1}(\bDelta_\lambda)\;\cong\; \begin{cases} \bS_{\kappa\bb (\ww\bb)^{j-i}}&\mbox{for } i\le j,\\
 \bS_{\kappa (\bb\ww)^{i-j-1}}&\mbox{for } j< i\le 2j+1.\end{cases}
\end{displaymath}
\end{enumerate}
Moreover, if $\Rad^i(\bDelta_\lambda)/\Rad^{i+1}(\bDelta_\lambda) = \bS_{\mu}$ then $i$ is the length of the factorization of the morphism $\lambda \to \mu$ into basic morphisms.
\end{proposition}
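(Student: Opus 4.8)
The plan is to argue directly from the explicit presentation of $\bDelta_\lambda$ recorded in \S\ref{ss:Dstd}: $\bDelta_\lambda$ is the full $\fD$-module on the set $\Xi_\lambda=\{\lambda\}\cup\{\mu\mid\lambda\in\mu\Lambda^{\alt}_{\bb}\}$. For a full module $M$ on any subset $\Xi\subseteq\Lambda$, a subset $\Xi'\subseteq\Xi$ underlies a submodule exactly when it is closed under following nonzero morphisms of $\fD$, i.e.\ $\mu\in\Xi'$ and $\Hom_{\fD}(\mu,\nu)\neq 0$ with $\nu\in\Xi$ force $\nu\in\Xi'$. So the submodule lattice of $M$ is isomorphic to the lattice of down-sets of the relation $\preceq$ on $\Xi$ given by $\nu\preceq\mu$ whenever there is a directed path $\mu\to\cdots\to\nu$ of nonzero morphisms in $\fD$. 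That relation is a partial order, since the graph of nonzero morphisms of $\fD$ is acyclic: by Proposition~\ref{prop:basic} it suffices to see this for basic morphisms; $\#\bb-\#\ww$ is non-increasing along them, and among the basic morphisms preserving it, the length-raising ones land in words ending in $\bb\ww$, to which no length-lowering such morphism applies — so along any directed path the word length is eventually monotone and cannot return. Consequently $M$ is uniserial if and only if $(\Xi,\preceq)$ is totally ordered, in which case its Loewy layers from top to bottom are the $\bS_\nu$ as $\nu$ ranges over $\Xi$ from the maximum down to the minimum.

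So I would reduce the proposition to identifying $\Xi_\lambda$ and the order $\preceq$, treating the two cases separately. The combinatorial input is: every suffix of the word $(\ww\bb)^j$, and of $\bb(\ww\bb)^j$, is alternating and ends in $\bb$; and since $\kappa$ does not end in $\bb$ in case (a) (resp.\ not in $\ww$ in case (b)), no alternating suffix of $\lambda$ ending in $\bb$ extends past the $(\ww\bb)^j$- (resp.\ $\bb(\ww\bb)^j$-) tail into $\kappa$. Hence $\Xi_\lambda\setminus\{\lambda\}$ is exactly the set of weights obtained from $\lambda$ by deleting a suffix of that tail; listing these by the length of the deleted suffix recovers precisely the $2j+1$ weights $\nu_0=\lambda,\nu_1,\dots,\nu_{2j}$ of (a), respectively the $2j+2$ weights of (b). One then verifies that each consecutive pair $\nu_i,\nu_{i+1}$ differs by one of the moves ``remove $\ww\bb$'', ``remove $\bb$'' (available exactly when the word ends in $\bb\bb$ or is $\bb$), ``add $\ww$'' (available when the word ends in $\ww$ or is empty), or ``add $\bb\ww$'', and that in the case under consideration each such move is a basic morphism of $\fD$. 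This gives the total order $\nu_0\succ\nu_1\succ\cdots$, hence uniseriality and the layer formulas; and since $\lambda\to\nu_i$ is then a composite of $i$ basic morphisms, Proposition~\ref{prop:basic} shows its basic factorization has length $i$, which is the last assertion.

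The hard part is purely the bookkeeping: enumerating $\Xi_\lambda$ correctly, matching the resulting chain to the two piecewise formulas, and checking that each consecutive move is genuinely basic in the relevant situation (for instance that $\kappa\to\kappa\ww$ in case (a) is basic, which uses that $\kappa$ does not end in $\bb$). An alternative route is induction on $\ell(\lambda)$, peeling off the final letter via the short exact sequences of Proposition~\ref{prop:D-tilt}: uniseriality comes for free at each step because $\bnabla_\lambda$ has simple socle $\bS_\lambda$ (so every nonzero submodule contains it and the submodules form a chain by induction), and dually $\bDelta_\lambda$ with $\lambda\in\Lambda_{\bb}$ has simple top $\bS_\lambda$ and radical $\bnabla_{\lambda^{\flat}}$; but unwinding that recursion into the stated closed forms takes the same bookkeeping.
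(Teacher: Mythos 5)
Your proposal is correct. Note that the paper deliberately omits a proof of this proposition (it remarks only that the composition series can be derived from Corollary~\ref{cor:D-ext-simple} or by iterating Proposition~\ref{prop:D-tilt}), so there is nothing to match against verbatim; your primary argument is a third, self-contained route, and your closing alternative coincides with one of the paper's two suggested derivations. Your route --- reading off the submodule lattice of a full $\fD$-module from the directed graph of distinguished morphisms on its support, proving that graph acyclic via the non-increasing statistic $\#\bb-\#\ww$ together with the observation that ``add $\bb\ww$'' can never be followed by ``remove $\ww\bb$'', and then exhibiting the support of $\bDelta_\lambda$ as a single chain of basic morphisms --- is sound, and the enumeration of $\Xi_\lambda$ and the matching with the two piecewise formulas check out, including the two places where the hypothesis on $\kappa$ is used (so that $\kappa\to\kappa\ww$, resp.\ $\kappa\bb\to\kappa$, is basic). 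One small imprecision: the submodule lattice of a full module on $\Xi$ is governed by reachability along morphisms \emph{whose vertices stay in $\Xi$}, not by arbitrary directed paths in $\fD$ as you state; a path leaving $\Xi$ imposes no closure condition. This is harmless here because you realize every cover of your chain by a single basic morphism between elements of $\Xi$, so the within-$\Xi$ reachability relation is sandwiched between your chain order and the (acyclic) ambient one and all three coincide; but the statement as written should be adjusted. The ``moreover'' clause also follows as you say, using uniqueness of the basic factorization from Proposition~\ref{prop:basic}.
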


We illustrate the proposition with an example.

\begin{example}
Consider the standard module $\bDelta_{\ww\bb\ww\bb}$. It has five simple constituents, the simples corresponding to $\varnothing$, $\ww$, $\ww\bb$, $\ww\bb\ww$, and $\ww\bb\ww\bb$. The relevant basic morphisms of $\fD$ are
\begin{displaymath}
\ww\bb\ww\bb \to \ww\bb \to \varnothing \to \ww \to \ww\bb\ww.
\end{displaymath}
This picture shows that $\bDelta_{\ww\bb\ww\bb}$ is a uniserial module with top $\bS_{\ww\bb\ww\bb}$ and socle $\bS_{\ww\bb\ww}$.
\end{example}

We conclude this section with an important $\Ext$ calculation.

\begin{proposition}\label{prop:HomExt}
For $\lambda,\mu\in \Lambda$ and $i\in\bN$, we have
\begin{displaymath}
\Ext^i_{\ast}(\bDelta_\lambda,\bnabla_\mu)=
\begin{cases}
k & \text{if $\lambda=\mu$ and $i=0$} \\
0 & \text{otherwise,}
\end{cases}
\end{displaymath}
where $\ast$ is any of $\cD^{\inf}$, $\cD^{\pf}$, $\Ind\cD$, or $\cD$.
\end{proposition}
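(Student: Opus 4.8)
The plan is to compute $\Ext^i_{\cD^{\inf}}(\bDelta_\lambda,\bnabla_\mu)$ by means of an explicit projective resolution of $\bDelta_\lambda$, and then observe that the answer is unchanged over $\cD^{\pf}$, $\Ind\cD$ and $\cD$ because the relevant resolutions stay inside those subcategories. Two ingredients do the work: the vanishing of $\Ext^{\ge 1}$, which is combinatorial, and the value of $\Hom$, which is a length argument.

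For the resolution: $P_0=\bP_\lambda$, and by \S\ref{ss:Dstd} the kernel of $\bP_\lambda\to\bDelta_\lambda$ is the full module on $\lambda\Lambda^{\alt}_\ww$. Using the support descriptions \eqref{eq:setproj} of the $\bP_\nu$'s together with the observation following \eqref{eq:setproj}, one checks that this kernel splits as a direct sum of the full module on the weights of $\lambda\Lambda^{\alt}_\ww$ whose first letter after $\lambda$ is $\ww$ and the one on those whose first letter after $\lambda$ is $\bb$, and that these two summands are the cokernels of the distinguished maps $\bP_{\lambda\ww\ww}\to\bP_{\lambda\ww}$ and $\bP_{\lambda\bb\ww\ww}\to\bP_{\lambda\bb\ww}$ respectively. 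Iterating this produces the minimal projective resolution
\[
\cdots \to \bP_{\lambda\ww^{n}}\oplus\bP_{\lambda\bb\ww^{n}} \to \cdots \to \bP_{\lambda\ww}\oplus\bP_{\lambda\bb\ww} \to \bP_\lambda \to \bDelta_\lambda \to 0 ,
\]
where $\ww^{n}$ denotes the constant word of length $n$. The crucial feature is that for every $n\ge 1$ the module $P_n$ is a finite sum of $\bP_\nu$'s with $\ell(\nu)>\ell(\lambda)$.

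Now apply $\Hom_{\cD^{\inf}}(-,\bnabla_\mu)$. Since $\Hom_{\cD^{\inf}}(\bP_\nu,N)=N(\nu)$, and $N(\nu)\ne 0$ precisely when $\bS_\nu$ is a composition factor of $N$, and since $\bnabla_\mu$ is supported on weights of length $\le\ell(\mu)$ (by its description in \S\ref{ss:Dstd}), we get $\Hom(P_n,\bnabla_\mu)=0$ for all $n\ge 1$ as soon as $\ell(\mu)\le\ell(\lambda)$; hence $\Ext^i_{\cD^{\inf}}(\bDelta_\lambda,\bnabla_\mu)=0$ for $i\ge 1$ in that regime. For $\ell(\mu)>\ell(\lambda)$, apply the same argument to the injective resolution of $\bnabla_\mu$ obtained by running the construction above for $\bDelta_{\mu^\vee}$ and dualizing: its terms are injectives $\bI_\nu$ with $\ell(\nu)>\ell(\mu)\ge\ell(\lambda)$ (equivalently, invoke the duality $(-)^\vee$ of \S\ref{def:Dmod}, which gives $\Ext^i(\bDelta_\lambda,\bnabla_\mu)\cong\Ext^i(\bDelta_{\mu^\vee},\bnabla_{\lambda^\vee})$, reducing to the first regime). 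For $i=0$: a nonzero map $\bDelta_\lambda\to\bnabla_\mu$ has image with simple top $\bS_\lambda$ (it is a quotient of $\bDelta_\lambda$) and simple socle $\bS_\mu$ (it is a submodule of $\bnabla_\mu$, whose socle is $\bS_\mu$ by the proof of Proposition~\ref{prop:D-tilt}); comparing with the composition factors of $\bDelta_\lambda$ and of $\bnabla_\mu$ — each multiplicity-free, supported in length $\le\ell(\lambda)$ resp.\ $\le\ell(\mu)$, with $\bS_\lambda$ resp.\ $\bS_\mu$ the unique factor of maximal length — forces $\lambda=\mu$ and the image to equal $\bS_\lambda$, so $\Hom(\bDelta_\lambda,\bnabla_\mu)=\delta_{\lambda,\mu}k$.

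Finally, the resolution $P_\bullet$ consists of finite sums of $\bP_\nu$'s, hence is a projective resolution also in $\cD^{\pf}$, while the dual injective resolution of $\bnabla_\mu$ consists of $\bI_\nu$'s, which are injective objects of $\cD^{\pf}$ and of $\Ind\cD$; so $\Ext^i$ over $\cD^{\pf}$ and over $\Ind\cD$ coincides with $\Ext^i$ over $\cD^{\inf}$, and for $\cD$ one uses that $\cD$ is a Serre subcategory of $\Ind\cD$ closed under extensions and that $\bDelta_\lambda,\bnabla_\mu$ are finite. I expect the one genuinely laborious step to be the construction of $P_\bullet$ — checking by induction that the successive syzygies of $\bDelta_\lambda$ remain supported on weights of length $>\ell(\lambda)$. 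This is routine bookkeeping with \eqref{eq:setproj}, but it is the part that requires care; everything after it is formal.
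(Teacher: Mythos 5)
Your proposal is correct and takes essentially the same route as the paper: resolve $\bDelta_\lambda$ by projectives which in positive homological degree are indexed by weights strictly longer than $\lambda$, coresolve $\bnabla_\mu$ dually by injectives indexed by weights strictly longer than $\mu$, and play the two length bounds against each other; the degree-zero computation and the transfer to the four ambient categories are also as in the paper. (Your minimal resolution $P_n=\bP_{\lambda\ww^n}\oplus\bP_{\lambda\bb\ww^n}$ is the correct one; only the bound $\ell>\ell(\lambda)$ on the indexing weights is actually used.)

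Two small points in the final transfer step deserve more care. First, to compute $\Ext_{\Ind\cD}$ from the coresolution you must check that each $\bI_\nu$ is actually an \emph{object} of $\Ind\cD$, i.e.\ a union of finite-length submodules; this is not automatic (the analogous statement for $\bP_\nu$ is false, since $\bP_\nu$ is generated by one element but has infinite length), and the paper verifies it via the observation that the weights in $\bI_\nu$ longer than $\nu$ end in $\bb$ while those in any $\bP_\mu$ longer than $\mu$ end in $\ww$, so finitely generated submodules of $\bI_\nu$ have finite length. Second, "$\cD$ is a Serre subcategory of $\Ind\cD$ closed under extensions" only gives agreement of $\Ext^1$; for the identification $\Ext^i_{\cD}=\Ext^i_{\Ind\cD}$ in all degrees one needs the general comparison theorem for a category inside its ind-completion (the paper cites Kashiwara--Schapira, Theorem~15.3.1). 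Both are easily repaired, and everything else in your argument goes through.
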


\begin{proof}
We have a projective resolution of $\fD$-modules $P^{\bullet} \to \bDelta_{\lambda}$, where $P^0=\bP_{\lambda}$ and $P^{-n}=\bP_{\lambda \ww^n} \oplus \bP_{\lambda (\bb\ww)^n}$ for $n \ge 1$. Changing $\lambda$ to $\mu^{\vee}$ and dualizing, we obtain a similar injective resolution $\nabla_{\mu} \to I^{\bullet}$. Since these resolutions belong to $\cD^{\pf}$, we can compute the $\Ext$ group in question in $\cD^{\inf}$ or in $\cD^{\pf}$ using either resolution. Now, for $n>0$, the space $\Hom(P^{-n}, \bnabla_{\mu})$ vanishes if $\ell(\lambda) \ge \ell(\mu)$, while the space $\Hom(\bDelta_{\lambda}, I^n)$ vanishes for $\ell(\mu) \ge \ell(\lambda)$. The space $\Hom(P^0, \bnabla_{\mu})$ vanishes unless $\lambda=\mu$ or $\mu \in \lambda \Lambda^{\alt}_{\ww}$, while the space $\Hom(\bDelta_{\lambda}, I^0)$ vanishes unless $\lambda=\mu$ or $\lambda \in \mu \Lambda^{\alt}_{\bb}$. Combining all of this gives the result when $\ast$ is $\cD^{\inf}$ or $\cD^{\pf}$.

We identify $\Ind\cD$ with the full subcategory of $\cD^{\inf}$ spanned by those modules that are unions of modules in $\cD$. The injective $\bI_{\lambda}$ belongs to $\Ind\cD$. Indeed, the weights appearing in $\bI_{\lambda}$ that are longer than $\lambda$ end in $\bb$, while the weights appearing in a projective $\bP_{\mu}$ that are longer than $\mu$ end in $\ww$. Thus a submodule of $\bI_{\lambda}$ generated in degree $\mu$ is supported on weights of length at most $\max(\ell(\lambda), \ell(\mu))$, and so any finitely generated submodule of $\bI_{\lambda}$ has finite length. We thus see that the injective resolution $I^{\bullet}$ of $\bnabla_{\mu}$ belongs to $\Ind\cD$. Computing with this resolution, we find
\begin{displaymath}
\Ext^i_{\Ind\cD}(M, \bnabla_\mu) = \Ext^i_{\cD^{\inf}}(M, \bnabla_\mu)
\end{displaymath}
for any object $M$ of $\Ind\cD$ and any $i \ge 0$. Applying this with $M=\bDelta_{\lambda}$ gives the result when $\ast$ is $\Ind\cD$. Finally, if $M$ and $N$ are objects of $\cD$ then we have an identification
\begin{displaymath}
\Ext^i_{\cD}(M, N) = \Ext^i_{\Ind\cD}(M, N)
\end{displaymath}
by \cite[Theoren 15.3.1]{KS}. This completes the proof.
\end{proof}

\subsection{Tilting modules} \label{ss:Dtilt}

For each $\lambda\in\Lambda$ we define the indecomposable \defn{tilting module} $\bT_\lambda$ as
\begin{displaymath}
\bT_\lambda=\begin{cases}
\bDelta_\lambda &\mbox{if}\; \lambda\in\Lambda_{\bb}\\
\bnabla_\lambda &\mbox{if}\; \lambda\in\Lambda_{\ww}\\
\bS_\lambda=\bDelta_\lambda=\bnabla_\lambda &\mbox{if}\;\lambda=\varnothing.
\end{cases}
\end{displaymath}
Equivalently, $\bT_\lambda$ is the full $\fD$-module on the set $\{\lambda\}\cup\{\mu \mid \lambda \in \mu \Lambda^{\alt}\}$. We note that $\bT_{\lambda}^{\vee} = \bT_{\lambda^{\vee}}$. A \defn{tilting module} is a $\fD$-module that is isomorphic to a direct sum of $\bT_{\lambda}$'s. The term tilting module will be justified later, but for now we note that each tilting module has both a finite standard and a finite costandard filtration by Proposition~\ref{prop:D-tilt}. That proposition also gives some important resolutions and short exact sequences.

\begin{corollary} \label{Cor:TiltRes}
We have the following:
\begin{enumerate}
\item If $\lambda=\kappa\bb^i$ with $\kappa \in \Lambda_{\ww} \cup \{\varnothing\}$ then we have an exact sequence
\begin{displaymath}
0\to \bT_{\kappa}\to \bT_{\kappa\bb} \to\cdots\to \bT_{\kappa\bb^{i-1}} \to \bT_{\lambda}\to \bS_\lambda\to 0.
\end{displaymath}
\item If $\lambda=\kappa\ww^i$ with $\kappa \in \Lambda_{\bb} \cup \{\varnothing\}$ then we have an exact sequence
\begin{displaymath}
0\to \bS_\lambda \to \bT_{\lambda} \to \bT_{\kappa\ww^{i-1}}\to\cdots\to \bT_{\kappa\ww}\to \bT_{\kappa}\to 0.
\end{displaymath}
\end{enumerate}
\end{corollary}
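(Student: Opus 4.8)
The plan is to obtain both long exact sequences by concatenating the short exact sequences provided by Proposition~\ref{prop:D-tilt}, and to deduce statement (a) from statement (b) using the pointwise duality $(-)^{\vee}$ on $\cD$. Indeed, $(-)^{\vee}$ is a contravariant exact (hence arrow-reversing) operation on finite $\fD$-modules with $\bT_{\lambda}^{\vee}=\bT_{\lambda^{\vee}}$ and $\bS_{\lambda}^{\vee}=\bS_{\lambda^{\vee}}$, and a weight $\lambda=\kappa\ww^{i}$ with $\kappa\in\Lambda_{\bb}\cup\{\varnothing\}$ has $\lambda^{\vee}=\kappa^{\vee}\bb^{i}$ with $\kappa^{\vee}\in\Lambda_{\ww}\cup\{\varnothing\}$; so applying $(-)^{\vee}$ to the exact sequence in (b) yields precisely the exact sequence in (a), with $\kappa$ replaced by $\kappa^{\vee}$. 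Thus it suffices to prove (a).

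For (a), the first step is to record, for each $j$ with $1\le j\le i$, the short exact sequence obtained by applying Proposition~\ref{prop:D-tilt}(b) to the weight $\kappa\bb^{j}\in\Lambda_{\bb}$ (using $\bDelta_{\kappa\bb^{j}}=\bT_{\kappa\bb^{j}}$):
\begin{displaymath}
0\longrightarrow \bnabla_{\kappa\bb^{j-1}}\longrightarrow \bT_{\kappa\bb^{j}}\longrightarrow \bS_{\kappa\bb^{j}}\longrightarrow 0,
\end{displaymath}
where $\bS_{\kappa\bb^{j}}$ is the (simple) head of the standard module $\bT_{\kappa\bb^{j}}=\bDelta_{\kappa\bb^{j}}$ and $\bnabla_{\kappa\bb^{j-1}}$ is its radical. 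The second step is to identify the submodule: Proposition~\ref{prop:D-tilt}(b) also gives $\bnabla_{\kappa\bb^{j-1}}=\bS_{\kappa\bb^{j-1}}$ when $2\le j\le i$, while for $j=1$ the hypothesis $\kappa\in\Lambda_{\ww}\cup\{\varnothing\}$ gives $\bnabla_{\kappa}=\bT_{\kappa}$ directly from the definition of $\bT_{\kappa}$. The third step is to splice: for $2\le j\le i$ take $\bT_{\kappa\bb^{j-1}}\to\bT_{\kappa\bb^{j}}$ to be the composite $\bT_{\kappa\bb^{j-1}}\twoheadrightarrow\bS_{\kappa\bb^{j-1}}=\bnabla_{\kappa\bb^{j-1}}\hookrightarrow\bT_{\kappa\bb^{j}}$ of the quotient onto the head with the inclusion above; take the leftmost map to be $\bT_{\kappa}=\bnabla_{\kappa}\hookrightarrow\bT_{\kappa\bb}$ from the $j=1$ sequence; and take the rightmost map to be $\bT_{\lambda}=\bT_{\kappa\bb^{i}}\twoheadrightarrow\bS_{\kappa\bb^{i}}=\bS_{\lambda}$ from the $j=i$ sequence. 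Exactness at $\bT_{\kappa}$ is the injectivity of $\bT_{\kappa}\hookrightarrow\bT_{\kappa\bb}$, exactness at $\bS_{\lambda}$ is the surjectivity of $\bT_{\lambda}\twoheadrightarrow\bS_{\lambda}$, and at each $\bT_{\kappa\bb^{j}}$ ($1\le j\le i$) the image of the incoming map is $\bnabla_{\kappa\bb^{j-1}}$, which is exactly the radical of $\bT_{\kappa\bb^{j}}$, i.e.\ the kernel of the outgoing map — so the complex is exact. (Equivalently, one may argue by induction on $i$, splicing the $\kappa\bb^{i}$-sequence onto the right end of the sequence already built for $\kappa\bb^{i-1}$; this is the same computation reorganized.)

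I do not expect a genuine obstacle here: the statement is a routine consequence of Proposition~\ref{prop:D-tilt}, and the only points requiring care are the bookkeeping at the splice nodes and the exceptional behavior of the first term, namely the identification $\bnabla_{\kappa}=\bT_{\kappa}$, which is exactly why $\kappa$ is required to end in $\ww$ in~(a) (respectively in $\bb$ in~(b)) or to be empty; were $\kappa$ to end in the other letter, $\bnabla_{\kappa}$ would reduce to $\bS_{\kappa}\neq\bT_{\kappa}$ and the resolution would not close up in tilting modules. If desired, one can eliminate any ambiguity in the words ``head'' and ``radical'' by invoking the uniseriality of $\bDelta_{\kappa\bb^{j}}$ from Proposition~\ref{prop:uniserial}, which makes all the maps in the splice canonical.
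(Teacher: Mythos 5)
Your argument is correct and is exactly the derivation the paper intends: the paper states this as an immediate corollary of Proposition~\ref{prop:D-tilt} with no written proof, and your splicing of the short exact sequences $0\to\bnabla_{\kappa\bb^{j-1}}\to\bDelta_{\kappa\bb^{j}}\to\bS_{\kappa\bb^{j}}\to 0$, together with the identifications $\bnabla_{\kappa\bb^{j-1}}=\bS_{\kappa\bb^{j-1}}$ for $j\ge 2$ and $\bnabla_{\kappa}=\bT_{\kappa}$ at the left end, supplies the missing details. The only quibble is the wording of the duality reduction (you announce deducing (a) from (b) but then prove (a)); since $(-)^{\vee}$ is an involution exchanging the two sequences, this is harmless.
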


\begin{corollary}\label{Cor:SES}
We have the following:
\begin{enumerate}
\item If $\lambda \in \Lambda_{\bb} \cup \{\varnothing\}$ then there is a short exact sequence
\begin{displaymath}
0\to \bS_{\lambda}\to \bT_{\lambda\bb} \to \bS_{\lambda\bb} \to 0.
\end{displaymath}
\item If $\lambda \in \Lambda_{\ww} \cup \{\varnothing\}$ then there is a short exact sequence
\begin{displaymath}
0\to \bT_{\lambda}\to \bT_{\lambda\bb} \to \bS_{\lambda\bb} \to 0.
\end{displaymath}
\end{enumerate}
\end{corollary}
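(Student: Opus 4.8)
The plan is to derive both short exact sequences from a single application of Proposition~\ref{prop:D-tilt}(b), followed by a reinterpretation of one of the terms using the case-by-case definition of the tilting modules.

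Since the weight $\lambda\bb$ ends in $\bb$, it lies in $\Lambda_{\bb}$, so by definition $\bT_{\lambda\bb}=\bDelta_{\lambda\bb}$. Applying Proposition~\ref{prop:D-tilt}(b) to $\lambda\bb$ and using $(\lambda\bb)^{\flat}=\lambda$ produces the short exact sequence
\begin{displaymath}
0\to\bnabla_{\lambda}\to\bT_{\lambda\bb}\to\bS_{\lambda\bb}\to 0 .
\end{displaymath}
It then remains only to identify the subobject $\bnabla_{\lambda}$ in the two cases. For part (a), where $\lambda\in\Lambda_{\bb}\cup\{\varnothing\}$, I would invoke Proposition~\ref{prop:D-tilt}(b) (if $\lambda\in\Lambda_{\bb}$) or Proposition~\ref{prop:D-tilt}(c) (if $\lambda=\varnothing$) to get $\bnabla_{\lambda}=\bS_{\lambda}$, which turns the displayed sequence into the asserted one. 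For part (b), where $\lambda\in\Lambda_{\ww}\cup\{\varnothing\}$, the definition of $\bT_{\lambda}$ (supplemented by Proposition~\ref{prop:D-tilt}(c) when $\lambda=\varnothing$) gives $\bnabla_{\lambda}=\bT_{\lambda}$, and the displayed sequence becomes the claim. One may alternatively observe that part (b) is precisely the $i=1$ instance of Corollary~\ref{Cor:TiltRes}(a).

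I do not anticipate a genuine obstacle: the argument is essentially bookkeeping, and the only points that warrant a moment's care are the identity $(\lambda\bb)^{\flat}=\lambda$ and the fact that the degenerate weight $\lambda=\varnothing$ is covered by Proposition~\ref{prop:D-tilt}(c) in both parts (and consistently, since there the two parts overlap). If one preferred an argument not routed through Proposition~\ref{prop:D-tilt}, one could instead work directly with the descriptions of the modules as full $\fD$-modules: $\bT_{\lambda\bb}$ is the full module on $\{\lambda\bb\}\cup\{\mu\mid\lambda\bb\in\mu\Lambda^{\alt}\}$, the surjection onto $\bS_{\lambda\bb}$ kills every weight strictly below the top, and a short check identifies the support $\{\mu\mid\lambda\bb\in\mu\Lambda^{\alt}_{\bb}\}$ of the kernel with $\{\lambda\}\cup\{\mu\mid\lambda\in\mu\Lambda^{\alt}_{\ww}\}$, i.e.\ with the support of $\bnabla_{\lambda}$, whence the same two short exact sequences after specializing the description of $\bnabla_\lambda$ (resp.\ $\bT_\lambda$) in each case.
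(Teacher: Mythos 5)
Your proposal is correct and follows exactly the route the paper intends: the corollary is stated as an immediate consequence of Proposition~\ref{prop:D-tilt}, and your derivation (apply part (b) to $\lambda\bb$, then identify $\bnabla_\lambda$ with $\bS_\lambda$ or $\bT_\lambda$ according to the final letter of $\lambda$, with (c) covering $\lambda=\varnothing$) is precisely the bookkeeping the paper leaves to the reader. The alternative check via full $\fD$-modules is also valid but unnecessary.
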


The proposition has one additional important corollary.

\begin{corollary} \label{cor:D-tilt-quot}
Every simple module is a quotient of a tilting module: precisely, $\bS_{\lambda}$ is a quotient of $\bT_{\lambda \ww}$.
\end{corollary}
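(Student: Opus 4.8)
The plan is to read the statement off from Proposition~\ref{prop:D-tilt}(a) together with the fact that $\bDelta_{\lambda}$ has $\bS_{\lambda}$ as its unique simple quotient.

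First I would note that $\lambda\ww$ always belongs to $\Lambda_{\ww}$, so by the definition of the tilting modules in \S\ref{ss:Dtilt} we have $\bT_{\lambda\ww}=\bnabla_{\lambda\ww}$. Proposition~\ref{prop:D-tilt}(a), applied to the weight $\lambda\ww$ and using $(\lambda\ww)^{\flat}=\lambda$, then gives a short exact sequence
\begin{displaymath}
0 \to \bS_{\lambda\ww} \to \bT_{\lambda\ww} \to \bDelta_{\lambda} \to 0,
\end{displaymath}
which exhibits $\bDelta_{\lambda}$ as a quotient of the tilting module $\bT_{\lambda\ww}$.

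It then remains to compose this with a surjection $\bDelta_{\lambda}\twoheadrightarrow\bS_{\lambda}$. Such a surjection exists because, by its construction in \S\ref{ss:Dstd}, $\bDelta_{\lambda}$ is a nonzero quotient of the representable module $\bP_{\lambda}=\Hom_{\fD}(\lambda,-)$; since $\End_{\fD}(\lambda)=k$, the module $\bP_{\lambda}$ has simple head $\bS_{\lambda}$, and hence so does its quotient $\bDelta_{\lambda}$. Composing $\bT_{\lambda\ww}\twoheadrightarrow\bDelta_{\lambda}\twoheadrightarrow\bS_{\lambda}$ yields the claim. The edge case $\lambda=\varnothing$, where $\lambda\ww=\ww$ and $\bDelta_{\varnothing}=\bS_{\varnothing}$, is covered by the same argument.

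There is no real obstacle here: once Proposition~\ref{prop:D-tilt} is in hand the statement is essentially immediate. The only minor points to keep track of are the identification $(\lambda\ww)^{\flat}=\lambda$ and the (standard) fact that standard modules, being generated in a single degree, have simple tops.
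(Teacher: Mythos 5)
Your argument is correct and is exactly the paper's proof: Proposition~\ref{prop:D-tilt}(a) applied to $\lambda\ww$ exhibits $\bDelta_{\lambda}$ as a quotient of $\bT_{\lambda\ww}=\bnabla_{\lambda\ww}$, and $\bS_{\lambda}$ is the simple top of $\bDelta_{\lambda}$. The extra justifications you supply (the identification $(\lambda\ww)^{\flat}=\lambda$ and the simple-head argument via $\bP_{\lambda}$) are accurate and harmless.
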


\begin{proof}
The proposition shows that $\bDelta_{\lambda}$ is a quotient of $\bT_{\lambda \ww}$, and $\bS_{\lambda}$ is a quotient of $\bDelta_{\lambda}$.
\end{proof}

\begin{remark}
It is not true that every finite $\fD$-module is a quotient of a tilting module: indeed, this would imply that $\cD$ is the abelian envelope of $\cA$ by \cite[Theorem~2.2.1]{CEOP}, contradicting Theorem~\ref{mainthm3}. An example of such a module is given by the indecomposable length two module with top $\bS_{\varnothing}$ and socle $\bS_{\bb\ww}$. However, every finite $\fD$-module is a subquotient of a tilting module; this follows from Proposition~\ref{prop:equiv:triangle} below.
\end{remark}

We now examine maps between tilting modules.

\begin{proposition}\label{prop:homT}
We have the following:
\begin{enumerate}
\item The space $\End(\bT_{\lambda})$ is one dimensional.
\item The space $\Hom(\bT_{\lambda\ww}, \bT_{\lambda})$ is one dimensional with a distinguished basis vector $d_{\lambda}$.
\item The space $\Hom(\bT_{\lambda}, \bT_{\lambda\bb})$ is one dimensional with a distinguished basis vector $u_{\lambda}$.
\item The space $\Hom(\bT_{\lambda\ww}, \bT_{\lambda\bb})$ is one dimensional, and spanned by $u_{\lambda} d_{\lambda}$.
\item We have $\Hom(\bT_{\lambda}, \bT_{\mu})=0$ outside of the above four cases.
\end{enumerate}
\end{proposition}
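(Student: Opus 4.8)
The plan is to compute every Hom space from the $\Ext$-orthogonality of standard and costandard modules (Proposition~\ref{prop:HomExt}) together with the very short standard and costandard filtrations of the tilting modules supplied by Proposition~\ref{prop:D-tilt}. First I would record these filtration multiplicities (here $(-:-)$ denotes multiplicity in a filtration): by Proposition~\ref{prop:D-tilt}, if $\lambda\in\Lambda_\ww$ then $\bT_\lambda=\bnabla_\lambda$, whose costandard filtration is $\bnabla_\lambda$ alone and whose standard filtration has subquotients $\bDelta_\lambda$ and $\bDelta_{\lambda^\flat}$, each once; if $\lambda\in\Lambda_\bb$ then $\bT_\lambda=\bDelta_\lambda$, dually; and if $\lambda=\varnothing$ then $\bT_\lambda=\bS_\varnothing$ and both filtrations are trivial. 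Thus the set $A_\lambda$ of weights $\alpha$ with $(\bT_\lambda:\bDelta_\alpha)\neq 0$ equals $\{\lambda\}$, together with $\lambda^\flat$ when $\lambda\in\Lambda_\ww$; dually the set $B_\mu$ of weights $\beta$ with $(\bT_\mu:\bnabla_\beta)\neq 0$ equals $\{\mu\}$, together with $\mu^\flat$ when $\mu\in\Lambda_\bb$, and all multiplicities are $0$ or $1$.

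Next I would invoke the standard d\'{e}vissage of highest weight categories: if $M$ admits a $\bDelta$-filtration and $N$ admits a $\bnabla$-filtration, then induction on the two filtration lengths --- with base case Proposition~\ref{prop:HomExt}, and using the vanishing of $\Ext^{>0}$ to keep the induction running --- gives $\Ext^{>0}_{\cD}(M,N)=0$ and $\dim\Hom_{\cD}(M,N)=\sum_\alpha(M:\bDelta_\alpha)(N:\bnabla_\alpha)$. Applying this with $M=\bT_\lambda$ and $N=\bT_\mu$ yields $\dim\Hom_{\cD}(\bT_\lambda,\bT_\mu)=|A_\lambda\cap B_\mu|$. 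Since each of $A_\lambda$, $B_\mu$ has at most two elements, with the shorter one present only under the stated last-letter condition, a short check shows $A_\lambda\cap B_\mu$ has at most one element and is nonempty exactly when $\lambda=\mu$; or $\mu=\lambda\bb$ (matching $\lambda=\mu^\flat\in B_\mu$, which needs $\mu\in\Lambda_\bb$); or $\lambda=\mu\ww$ (matching $\mu=\lambda^\flat\in A_\lambda$, which needs $\lambda\in\Lambda_\ww$); or $(\lambda,\mu)=(\kappa\ww,\kappa\bb)$ (matching $\lambda^\flat=\mu^\flat$, which needs $\lambda\in\Lambda_\ww$ and $\mu\in\Lambda_\bb$). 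These are exactly cases (a)--(d), each with a one-dimensional Hom space, and the space vanishes otherwise, which is (e). Now fix nonzero $d_\lambda\in\Hom(\bT_{\lambda\ww},\bT_\lambda)$ and $u_\lambda\in\Hom(\bT_\lambda,\bT_{\lambda\bb})$.

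It remains to show $u_\lambda\circ d_\lambda\neq 0$, which then forces it to span the one-dimensional space in (d). I would do this by tracking the weight-$\lambda$ component. Each of $\bT_{\lambda\ww}$, $\bT_\lambda$, $\bT_{\lambda\bb}$ is a full $\fD$-module whose value at $\lambda$ is one-dimensional (as $\ww,\bb\in\Lambda^{\alt}$), so it suffices to check that $d_\lambda$ and $u_\lambda$ are nonzero on these components. For $d_\lambda$: if $\lambda\in\Lambda_\ww\cup\{\varnothing\}$ then $\bT_\lambda$ has simple socle $\bS_\lambda$ concentrated in degree $\lambda$, so the nonzero submodule $\im(d_\lambda)$ meets degree $\lambda$; if $\lambda\in\Lambda_\bb$ then $\bT_\lambda=\bDelta_\lambda$ is a quotient of $\bP_\lambda$, hence generated in degree $\lambda$, and Proposition~\ref{prop:D-tilt}(a) applied to $\lambda\ww$ gives a surjection $\bT_{\lambda\ww}=\bnabla_{\lambda\ww}\twoheadrightarrow\bDelta_\lambda=\bT_\lambda$, which is a scalar multiple of $d_\lambda$ and is nonzero in degree $\lambda$. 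Applying the duality $(-)^\vee$ --- under which $\bT_\mu^\vee=\bT_{\mu^\vee}$ and $u_\lambda$ becomes a map of the same shape as $d_{\lambda^\vee}$ --- yields the analogous statement for $u_\lambda$. Hence $u_\lambda d_\lambda$ is an isomorphism on weight-$\lambda$ components, in particular nonzero.

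Everything up to the dimension count is a routine application of Proposition~\ref{prop:HomExt}; I expect the one genuine subtlety to be the last paragraph, where one must keep track of the weight-$\lambda$ component and of the fact that the nature of $\bT_\lambda$ (standard, costandard, or simple) changes with the final letter of $\lambda$, so that the argument for $d_\lambda$ being nonzero in degree $\lambda$ must be split according to that letter.
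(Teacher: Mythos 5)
Your proposal is correct and follows the paper's own route for the dimension count: the $\Ext$-orthogonality of Proposition~\ref{prop:HomExt} combined with the length-two standard/costandard filtrations of the tilting modules from Proposition~\ref{prop:D-tilt}, giving $\dim\Hom(\bT_\lambda,\bT_\mu)=\sum_\nu(\bT_\lambda:\bDelta_\nu)(\bT_\mu:\bnabla_\nu)$ and hence exactly cases (a)--(e). For the nonvanishing of $u_\lambda d_\lambda$ the paper instead observes that, according to the last letter of $\lambda$, either $d_\lambda$ is surjective or $u_\lambda$ is injective (e.g.\ $\bT_\lambda\cong\bnabla_\lambda\subset\bT_{\lambda\bb}$ when $\lambda\notin\Lambda_{\bb}$); your argument tracking the one-dimensional weight-$\lambda$ components of all three tilting modules is a valid, essentially equivalent alternative.
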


\begin{proof}
The dimensions of the morphism spaces follow from the filtrations of the tilting modules given in Proposition~\ref{prop:D-tilt} in combination with Proposition~\ref{prop:HomExt} (more precisely the description of morphisms and the vanishing of $\Ext^1$). For example, to compute the dimension of $\Hom(\bT_{\lambda\ww},\bT_{\lambda \bb})$ we can observe that $\bT_{\lambda\ww}$ is an extension of $\bDelta_{\lambda}$ and $\bDelta_{\lambda\ww}$, while $\bT_{\lambda\bb}$ is an extension of $\bnabla_{\lambda}$ and $\bnabla_{\lambda\bb}$. The common occurrence of label $\lambda$ yields the one dimension of morphisms.

Finally we prove the claim about composition in (d), we can observe that in $\bT_{\lambda\ww}\to\bT_\lambda\to \bT_{\lambda\bb}$ either the left morphism is surjective or the right morphism is injective.
For example, assume that $\lambda\not\in\Lambda_{\bb}$. In this case $\bT_\lambda\cong\bnabla_\lambda$, while $\bnabla_\lambda\subset\bT_{\lambda\bb}$ by Proposition~\ref{prop:D-tilt}. 
\end{proof}

This proposition has a very important corollary, which explains how the category $\cD$ is connected to the second Delannoy category $\cA$. Let $\Tilt(\cD)$ be the full subcategory of $\cD$ spanned by tilting modules.

\begin{corollary} \label{cor:D-tilt-A}
We have a $k$-linear equivalence $\Psi \colon \cA \to \Tilt(\cD)$ given by $\Psi(M_{\lambda}) = \bT_{\lambda}$.
\end{corollary}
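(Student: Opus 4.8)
The plan is to read the equivalence $\Psi$ directly off the structural descriptions of $\cA$ and of $\Tilt(\cD)$ that are already in hand, since these two categories have ``the same'' combinatorial skeleton. On the $\cA$ side: $\cA$ is Krull--Schmidt (\S\ref{ss:krull-schmidt}) with indecomposables $\{M_\lambda\}_{\lambda\in\Lambda}$ (Theorem~\ref{thm:class}), each satisfying $\End_\cA(M_\lambda)=k$ (Proposition~\ref{prop:Xind}), and the only non-zero $\Hom$ spaces between indecomposables are the one-dimensional spaces spanned by $\id_{M_\lambda}$, $d_\lambda\colon M_{\lambda\ww}\to M_\lambda$, $u_\lambda\colon M_\lambda\to M_{\lambda\bb}$, and $u_\lambda\circ d_\lambda\colon M_{\lambda\ww}\to M_{\lambda\bb}$ (Theorem~\ref{thm:maps}). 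On the $\Tilt(\cD)$ side: $\Tilt(\cD)$ is Krull--Schmidt essentially by definition, since its objects are finite direct sums of the $\bT_\lambda$ and each $\bT_\lambda$ has $\End(\bT_\lambda)=k$ (Proposition~\ref{prop:homT}(a)), hence is indecomposable; and Proposition~\ref{prop:homT} shows its indecomposables are exactly the $\bT_\lambda$, with $\Hom$ spaces described by the identical list of one-dimensional spaces and generators $\id$, $d_\lambda$, $u_\lambda$, $u_\lambda\circ d_\lambda$.

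Given this, I would fix a non-zero choice of $d_\lambda$ and of $u_\lambda$ in each category (each is determined up to a scalar), and define a $k$-linear functor $\Psi_0$ from the full subcategory of $\cA$ on $\{M_\lambda\}$ to the full subcategory of $\Tilt(\cD)$ on $\{\bT_\lambda\}$ by $M_\lambda\mapsto\bT_\lambda$, sending each named generator to the generator of the same name and extending $k$-linearly on $\Hom$ spaces. Checking that $\Psi_0$ is a functor reduces, by bilinearity of composition and the fact that all the relevant $\Hom$ spaces are at most one-dimensional, to comparing the composites of pairs of named generators in the two categories. Up to identities, the only composable pair with a non-zero composite is $u_\lambda\circ d_\lambda$, which is the fourth generator by construction in both categories; the remaining composable pairs are $d_\lambda\circ d_{\lambda\ww}$, $u_{\lambda\bb}\circ u_\lambda$, $(u_\lambda\circ d_\lambda)\circ d_{\lambda\ww}$, and $u_{\lambda\bb}\circ(u_\lambda\circ d_\lambda)$, each of which lands in a $\Hom$ space between indecomposables whose indices differ in length by two, hence vanishes in $\cA$ by Theorem~\ref{thm:maps} and in $\Tilt(\cD)$ by Proposition~\ref{prop:homT}(e). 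Thus $\Psi_0$ is a well-defined $k$-linear functor, and it is visibly bijective on objects and an isomorphism on every $\Hom$ space, hence an equivalence.

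Finally, I would promote $\Psi_0$ to the desired $\Psi$ using the standard fact that an additive equivalence between the full subcategories on the indecomposable objects of two Krull--Schmidt categories extends, uniquely up to natural isomorphism, to an equivalence of the categories themselves (formally, each side is the idempotent completion of the additive hull of its skeleton of indecomposables; cf.\ \cite{krause}). Here one uses that $\cA=\uPerm(\bbG,\mu_2)^{\rm kar}$ is Karoubian with every object a finite direct sum of $M_\lambda$'s, and that $\Tilt(\cD)$ is by construction the additive span of the $\bT_\lambda$ inside $\cD$. I do not expect a genuine obstacle anywhere: once Theorem~\ref{thm:maps} and Proposition~\ref{prop:homT} are available the statement is pure bookkeeping, and the only point deserving a moment's attention --- the matching of compositions --- is immediate from the single non-trivial relation $u_\lambda\circ d_\lambda\neq 0$ (recorded in Theorem~\ref{thm:maps} and Proposition~\ref{prop:homT}(d)) together with the vanishing of all other composites for length reasons.
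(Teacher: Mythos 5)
Your proposal is correct and is essentially the paper's own proof, which simply says the corollary ``follows from comparing Theorem~\ref{thm:maps} and Proposition~\ref{prop:homT}''; you have just written out the bookkeeping (matching indecomposables, generators, and the single non-trivial composite $u_\lambda\circ d_\lambda$, then extending over the additive/Karoubian hulls) that the paper leaves implicit. The only nitpick is that the composite $(u_\lambda\circ d_\lambda)\circ d_{\lambda\ww}\colon M_{\lambda\ww\ww}\to M_{\lambda\bb}$ involves weights whose lengths differ by one, not two, but it still vanishes by the cited classifications, so nothing is affected.
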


\begin{proof}
This follows from comparing Theorem~\ref{thm:maps} and Proposition~\ref{prop:homT}.
\end{proof}

The following result describes the derived category of $\cD$ purely in terms of tilting modules. We write $\bD(-)$ for the bounded derived category and $\bD^+(-)$ for the bounded above derived category. We write $\bK(-)$ and $\bK^+(-)$ for the analogous homotopy categories.

\begin{proposition}\label{prop:equiv:triangle}
The natural functor $\bK(\Tilt{\cD}) \to \bD(\cD)$ is an equivalence.
\end{proposition}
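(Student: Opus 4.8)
The plan is to verify the two standard hypotheses that make a tilting-module homotopy category equivalent to the bounded derived category of a highest weight category: every object of $\cD$ has a finite tilting resolution (and coresolution), and $\Ext^{>0}$ between tilting modules vanishes. The vanishing of higher $\Ext$ between tilting modules is immediate from Proposition~\ref{prop:HomExt} (combined with the fact that each $\bT_\lambda$ has both a finite standard and a finite costandard filtration, by Proposition~\ref{prop:D-tilt}): an object with a finite $\bDelta$-filtration has no higher $\Ext$ into an object with a finite $\bnabla$-filtration. Given these two inputs, the standard argument applies: $\bK(\Tilt\cD)$ is generated as a triangulated category by the tilting modules, the functor $\bK(\Tilt\cD)\to\bD(\cD)$ is full and faithful on the additive subcategory $\Tilt\cD$ by the $\Ext$-vanishing (a Beilinson-type lemma), and it is essentially surjective because every object of $\cD$ is quasi-isomorphic to a bounded complex of tiltings.

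The first step is therefore to produce, for each simple $\bS_\lambda$, a finite tilting resolution and a finite tilting coresolution. Corollary~\ref{Cor:TiltRes} does exactly this: part~(a) gives a finite resolution $0\to\bT_\kappa\to\cdots\to\bT_\lambda\to\bS_\lambda\to 0$ when $\lambda$ ends in a block of $\bb$'s, and part~(b) gives a finite coresolution when $\lambda$ ends in a block of $\ww$'s; the case $\lambda=\varnothing$ is trivial since $\bS_\varnothing=\bT_\varnothing$. (Strictly, Corollary~\ref{Cor:TiltRes}(a) is stated for $\lambda=\kappa\bb^i$ with $\kappa\in\Lambda_\ww\cup\{\varnothing\}$ and (b) for $\lambda=\kappa\ww^i$ with $\kappa\in\Lambda_\bb\cup\{\varnothing\}$, which together cover every weight.) Then I would extend from simples to all finite modules by dévissage on the length: given $0\to M'\to M\to M''\to 0$ with $M'$, $M''$ admitting bounded tilting resolutions, one splices the resolutions using the horseshoe lemma to get a bounded tilting resolution of $M$; starting from the simples and inducting on length handles every object of $\cD$. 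This shows every object of $\bD(\cD)$ lies in the triangulated subcategory generated by $\Tilt\cD$, giving essential surjectivity.

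For full faithfulness I would argue as follows. The functor $\bK(\Tilt\cD)\to\bD(\cD)$ is a triangulated functor between triangulated categories, both generated (in the appropriate sense) by $\Tilt\cD$, and on $\Tilt\cD$ itself it induces isomorphisms $\Hom_{\bK(\Tilt\cD)}(T,T'[n])\xrightarrow{\sim}\Hom_{\bD(\cD)}(T,T'[n])$ for all $n$: for $n=0$ both sides are $\Hom_\cD(T,T')$, for $n<0$ both vanish, and for $n>0$ the left side vanishes because $\bK$ of an additive category has no negative-degree-shifted maps in this direction — wait, rather: the right side $\Hom_{\bD(\cD)}(T,T'[n])=\Ext^n_\cD(T,T')$ vanishes for $n>0$ by the $\Ext$-vanishing above, and the left side $\Hom_{\bK(\Tilt\cD)}(T,T'[n])$ also vanishes for $n>0$ since a chain map from a complex concentrated in degree $0$ to one concentrated in degree $-n$ is zero. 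A standard five-lemma/generation argument (see e.g.\ the proof that $\bK(\mathrm{proj})\simeq\bD^b$ for algebras of finite global dimension, or \cite{BS}) then upgrades this to full faithfulness on all of $\bK(\Tilt\cD)$: one fixes $X$, lets $Y$ range, and observes that the class of $Y$ for which $\Hom_{\bK(\Tilt\cD)}(X,Y[n])\to\Hom_{\bD(\cD)}(X,Y[n])$ is an isomorphism for all $n$ is triangulated and contains $\Tilt\cD$, hence is everything; then one does the same in the first variable.

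The main obstacle is really just bookkeeping in the dévissage: one must be careful that the horseshoe-lemma splicing genuinely terminates, i.e.\ that the tilting resolutions of the simples are uniformly bounded in a way compatible with induction on length — here the key point is that in Corollary~\ref{Cor:TiltRes} the resolution of $\bS_\lambda$ involves only tilting modules $\bT_\mu$ with $\ell(\mu)\le\ell(\lambda)$, so lengths do not blow up. There is also a minor subtlety that $\cD$ is not of finite global dimension (the $\bDelta$ and $\bnabla$ modules can be arbitrarily long), so one cannot invoke the finite-global-dimension shortcut; but the highest weight structure — specifically the existence of finite tilting (co)resolutions for standards and costandards, which is precisely the content of Proposition~\ref{prop:D-tilt} and its corollaries — is exactly what replaces it. Everything else is formal.
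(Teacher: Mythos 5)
Your overall strategy is the same as the paper's: full faithfulness comes from the vanishing of higher $\Ext$ between tilting modules (via Proposition~\ref{prop:HomExt} and the filtrations of Proposition~\ref{prop:D-tilt}) together with induction on the number of non-zero terms of the complexes, and essential surjectivity comes from the finite tilting (co)resolutions of the simples in Corollary~\ref{Cor:TiltRes} plus d\'evissage.

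One step of your essential-surjectivity argument is stated in a form that would fail. You propose to splice tilting resolutions of $M'$ and $M''$ via the horseshoe lemma to get a genuine tilting resolution of $M$. The horseshoe lemma requires, at each stage, an epimorphism from an object of $\Tilt(\cD)$ onto the module being resolved, and such epimorphisms do not always exist in $\cD$: the paper explicitly notes (in the remark after Corollary~\ref{cor:D-tilt-quot}) that not every finite $\fD$-module is a quotient of a tilting module --- the indecomposable length-two module with top $\bS_{\varnothing}$ and socle $\bS_{\bb\ww}$ is a counterexample --- and indeed if every object were a quotient of a tilting then $\cD$ would be the abelian envelope of $\cA$, contradicting Theorem~\ref{mainthm3}. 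The repair is standard and costs nothing: you do not need honest resolutions, only that every object lies in the triangulated subcategory of $\bD(\cD)$ generated by $\Tilt(\cD)$ (equivalently, in the essential image of the fully faithful functor $\bK(\Tilt\cD)\to\bD(\cD)$, which is automatically a strictly full triangulated subcategory). That subcategory contains the simples by Corollary~\ref{Cor:TiltRes}, hence contains every finite-length object by induction on length using the distinguished triangles coming from short exact sequences. With that substitution your argument matches the paper's proof.
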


\begin{proof}
Let $M$ and $N$ be objects of $\bK(\Tilt{\cD})$, and consider the map
\begin{displaymath}
\Hom_{\bK(\Tilt{\cD})}(M, N) \to \Hom_{\bD(\cD)}(M, N).
\end{displaymath}
If $M$ and $N$ are each concentrated in a single degree then this map is an isomorphism. The key point is that all higher $\Ext$ groups vanish between tilting modules vanish, as a consequence of the $\Ext$-vanishing in Proposition~\ref{prop:HomExt} and the filtrations in Proposition~\ref{prop:D-tilt}. We now see that this map is an isomorphism for general $M$ and $N$ by induction on the number of non-zero terms in the complexes. Thus the functor $\bK(\Tilt{\cD})\to \bD(\cD)$ is fully faithful. To complete the proof, we must show it is essentially surjective. For this, it is enough to show that simple objects are in the essential image, which follows from Corollary~\ref{Cor:TiltRes}
\end{proof}

\begin{remark}
In Proposition~\ref{prop:equiv:triangle}, it is crucial that we work with bounded complexes: the functor $\bK^+(\Tilt\cD)\to\bD^+(\cD)$ is not faithful. Indeed, consider the complex
\begin{displaymath}
\cdots\to\bT_{\ww\ww\ww}\to \bT_{\ww\ww}\to \bT_{\ww}\to\bT_{\varnothing}\to0.
\end{displaymath}
This complex is not zero in $\bK^+(\Tilt\cD)$: indeed, there is no non-zero morphism $\bT_{\varnothing} \to \bT_{\ww}$, and so the identity is not null homotopic. However, the above complex is exact everywhere (see, e.g., Corollary~\ref{Cor:SES}), so it is zero in $\bD^+(\cD)$.
\end{remark}

\subsection{The tensor structure}\label{sec:tensorD}

Let $\cA'=\Tilt(\cD)$ be the category tilting objects in $\cD$. Recall (Corollary~\ref{cor:D-tilt-A}) that we have an equivalence of categories $\cA' \cong \cA$. We give $\cA'$ the structure of a tensor category by transferring the tensor structure from $\cA$. In this way, $\bK(\cA')$ is a tensor triangulated category. On the other hand, the equivalence $\bK(\cA')\cong \bD(\cD)$ in Proposition~\ref{prop:equiv:triangle} equips $\bK(\cA')$ with a t-structure whose heart $\bK(\cA')^{\heartsuit}$ is equivalent to $\cD$. Our main result establishes the compatibility of the two structures:

\begin{theorem}\label{Thm:Tensor}
The subcategory $\bK(\cA')^{\he}\subset \bK(\cA')$ is a rigid monoidal subcategory. Consequently, the category $\cD$ is pre-Tannakian. 
\end{theorem}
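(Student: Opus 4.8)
The plan is to verify directly that the heart $\bK(\cA')^{\he}$, which is equivalent to $\cD$, is closed under the monoidal product of $\bK(\cA')$, and that it is closed under the duality $(-)^{*}$ of $\bK(\cA')$ (which exists because $\cA'\cong\cA$ is rigid). Granting both, $\cD$ inherits a symmetric monoidal structure whose product is exact — since $\otimes$ is a biexact functor of triangulated categories which, being heart-preserving, restricts to an exact bifunctor on $\cD$ — with unit $\bS_{\varnothing}=\bT_{\varnothing}$ satisfying $\End_{\cD}(\bS_{\varnothing})=k$ by Proposition~\ref{prop:homT}; all objects of $\cD$ have finite length and finite-dimensional $\Hom$ spaces by construction; and rigidity of every object is exactly the second assertion. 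Hence $\cD$ is pre-Tannakian. Throughout I use the monoidal equivalence $\cA\cong\cA'=\Tilt(\cD)$ of Corollary~\ref{cor:D-tilt-A}, the identification $\bK(\cA')=\bK(\Tilt(\cD))\cong\bD(\cD)$ of Proposition~\ref{prop:equiv:triangle} (so that $M\otimes N$ for $M,N\in\cD$ is the total complex of the termwise tensor of tilting resolutions), and the fact, immediate from Theorem~\ref{thm:tensor-rule}, that a tensor product of tilting modules is again tilting.

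The duality statement is the formal half. On $\bK(\Tilt(\cD))$ the functor $(-)^{*}$ is computed termwise, with cohomological degree reversed; the same is true of the pointwise duality $(-)^{\vee}$ of \S\ref{def:Dmod} extended to $\bD(\cD)=\bK(\Tilt(\cD))$, since $(-)^{\vee}$ is an exact contravariant auto-equivalence of $\cD$. These two functors agree on $\Tilt(\cD)$: transporting $M_{\lambda}^{*}\cong M_{\lambda^{\vee}}$ (Proposition~\ref{prop:Phi-E}) along $\cA\cong\cA'$ gives $\bT_{\lambda}^{*}\cong\bT_{\lambda^{\vee}}$, which coincides with $\bT_{\lambda}^{\vee}$. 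Being termwise, $(-)^{*}$ and $(-)^{\vee}$ therefore agree on all of $\bK(\cA')$. Since $(-)^{\vee}$ is exact on $\cD$ it preserves the heart, hence so does $(-)^{*}$; this gives closure under duality, and in particular $X^{\vee}$ is a dual of $X$ for every $X\in\cD$.

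It remains to show the heart is closed under $\otimes$. By biexactness of $\otimes$ on $\bK(\cA')$ and the fact that each of $\bD^{\le 0}(\cD)$ and $\bD^{\ge 0}(\cD)$ is closed under extensions, a filtration of $M$ and $N$ by simple objects reduces us to proving $\bS_{\lambda}\otimes\bS_{\mu}$ lies in the heart for all $\lambda,\mu$. Here I would use the explicit tilting (co)resolutions of simples: by Corollary~\ref{Cor:TiltRes}, $\bS_{\mu}$ with $\mu$ ending in $\bb$ (or $\mu=\varnothing$) has a finite left resolution by tilting modules, while $\bS_{\mu}$ with $\mu$ ending in $\ww$ has a finite right resolution, and dually for $\bS_{\lambda}$. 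Tensoring such a resolution of $\bS_{\lambda}$ with such a resolution of $\bS_{\mu}$ represents $\bS_{\lambda}\otimes\bS_{\mu}$ by an explicit bounded complex of tilting modules — whose terms are known direct sums of $\bT_{\nu}$'s by Theorem~\ref{thm:tensor-rule}, and, when $\lambda,\mu\in\Lambda_{\ww}$ (resp.\ $\Lambda_{\bb}$), involve only $\nu\in\Lambda_{\ww}$ (resp.\ $\Lambda_{\bb}$) by Corollary~\ref{cor:notsummand2}. One then computes the cohomology of this complex and shows it is concentrated in degree $0$; equivalently, one shows that applying a fixed $\bT_{\sigma}\otimes(-)$ to the defining short exact sequences of the tilting modules in Corollary~\ref{Cor:SES} again yields short exact sequences, so that the left/right resolutions remain resolutions after $\otimes\,\bT_{\sigma}$.

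That last vanishing — the absence of higher $\Tor$ in $\bT_{\sigma}\otimes(-)$ applied to the structural sequences $0\to\bT_{\lambda}\to\bT_{\lambda\bb}\to\bS_{\lambda\bb}\to0$ and their duals — is where the real work lies, and is the main obstacle: the tensor product on $\cA$ is known only combinatorially (Theorem~\ref{thm:tensor-rule}) and, unlike in the cases treated in \cite{FlakeGruber}, is not a priori compatible with the highest weight structure, so one cannot simply invoke a ``$\bDelta$-filtered tensor $\bDelta$-filtered'' principle. The route I would take is a joint induction on word length showing simultaneously that $\bT_{\sigma}\otimes\bDelta_{\mu}$ is standardly filtered and $\bT_{\sigma}\otimes\bnabla_{\mu}$ is costandardly filtered: the base cases ($\mu$ of the parity making the (co)standard itself tilting) follow from Theorem~\ref{thm:tensor-rule}, and the inductive step uses the short exact sequences of Proposition~\ref{prop:D-tilt}, the $\Ext$-orthogonality $\Ext^{>0}(\bDelta,\bnabla)=0$ of Proposition~\ref{prop:HomExt}, and rigidity of $\bT_{\sigma}$ — via the adjunction isomorphisms $\Hom(\bT_{\sigma}\otimes X,\bnabla_{\beta})\cong\Hom(X,\bT_{\sigma^{\vee}}\otimes\bnabla_{\beta})$ — to pin down the relevant $\Hom$ dimensions and cokernels; the explicit uniserial structure of the standard and tilting $\fD$-modules (Propositions~\ref{prop:uniserial} and~\ref{prop:D-tilt}, \S\ref{ss:Dtilt}) is what makes these bookkeeping computations finite. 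Once $\bT_{\sigma}\otimes N\in\cD$ is established for every tilting $\bT_{\sigma}$ and every $N\in\cD$ (this follows from the filtered statements together with the long exact cohomology sequence attached to a triangle with two vertices in the heart), the reduction of the previous paragraph upgrades it to $M\otimes N\in\cD$ for arbitrary $M,N\in\cD$, which completes the proof.
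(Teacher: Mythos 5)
Your overall architecture is sound and matches the paper's in its reductions: pass to $\bK(\cA')\cong\bD(\cD)$, reduce closure of the heart under $\otimes$ to the case of simples, represent simples by the finite tilting (co)resolutions of Corollary~\ref{Cor:TiltRes}, and observe that everything comes down to showing that the monomorphisms $\bS_\lambda\to\bT_{\lambda\bb}$ and $\bT_\lambda\to\bT_{\lambda\bb}$ of Corollary~\ref{Cor:SES} remain monomorphisms after applying $-\otimes\bT_\sigma$. The duality half of your argument is correct (it is essentially Remark~\ref{remarkdualD} together with Lemma~\ref{Lem:HeartDual}).

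The gap is precisely in the step you yourself flag as ``where the real work lies'': you do not prove the exactness of $\bT_\sigma\otimes(-)$ on the structural sequences, and the route you sketch does not close. In the joint induction ``$\bT_\sigma\otimes\bDelta_\mu$ is $\bDelta$-filtered and $\bT_\sigma\otimes\bnabla_\mu$ is $\bnabla$-filtered,'' the inductive step must show that the cokernel $C$ of $\bT_\sigma\otimes\bT_\mu\to\bT_\sigma\otimes\bDelta_{\mu^\flat}$ (equivalently $\rH^1$ of $\bT_\sigma\otimes\bS_\mu$) vanishes. Detecting this by computing $\Hom(-,\bnabla_\beta)$ via the adjunction you cite requires knowing that $\bT_{\sigma^\vee}\otimes\bnabla_\beta$ is costandardly filtered for \emph{arbitrary} $\beta$, not just for $\beta$ shorter than $\mu$ --- the auxiliary weight is not controlled by your induction, so the argument is circular. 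Moreover, even granting that, $\Hom(C,\bnabla_\beta)=0$ for all $\beta$ does not obviously force $C=0$ for an arbitrary finite $\fD$-module $C$, since costandards are injective only relative to $\bDelta$-filtered objects. The paper avoids the highest-weight bookkeeping entirely via Lemma~\ref{Lem:Tech}: because every simple is a quotient of a tilting module (Corollary~\ref{cor:D-tilt-quot}), a morphism $f$ in the heart is a monomorphism iff $f\circ g\neq 0$ for every non-zero $g$ out of a tilting object; by adjunction $\Hom(T,X\otimes Z)\cong\Hom(Z^*\otimes T,X)$ with $Z^*\otimes T$ again tilting, injectivity of $X\otimes Z\to Y\otimes Z$ reduces to injectivity of $X\to Y$ inside the heart. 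One application of this lemma per step of an induction on $\ell(\lambda)$, applied to exactly the sequences of Corollary~\ref{Cor:SES} you identified (first for $\bS_\lambda\otimes\bT_\mu$, then for $\bS_\lambda\otimes\bS_\mu$, since the lemma's hypothesis $Z^*\otimes T\in\heart$ must be established for $Z=\bS_\mu$ before the second stage), finishes the proof; no compatibility of $\otimes$ with the (co)standard filtrations is needed, and indeed the authors state elsewhere that they do not know whether such compatibility holds.
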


We will freely use that for a distinguished triangle $X\to Y\to Z\to X[1]$ in $\bK(\cA') $, the property $X,Z \in \bK(\cA')^{\heartsuit}$ implies that $Y \in \bK(\cA')^{\heartsuit}$.

We start the proof with some preparatory lemmas. Any tensor product $\otimes$ and duality $(-)^\ast$ refers to the tensor product on $\bK(\cA')$. With slight abuse of notation, we write $\bS_\lambda$ also for the object in $\heart$ that is represented by the corresponding (co)resolution in $\bK(\cA')$ provided by Corollary~\ref{Cor:TiltRes}, or equivalently the image of $\bS_\lambda$ under $\cD\subset\bD(\cD)\cong\bK(\cA')$.

\begin{lemma}\label{Lem:Tech}
Consider $X,Y,Z \in \heart$ such that $X\otimes Z\in\heart$, $Y\otimes Z\in\heart$ and $Z^\ast\otimes T\in\heart$ for all $T\in\cA'$. For any monomorphism $X\to Y$ in $\heart$, the induced morphism $X\otimes Z\to Y\otimes Z$ is also a monomorphism in $\heart$.
\end{lemma}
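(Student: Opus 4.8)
The plan is to reduce the statement to showing that $W\otimes Z$ lies in $\heart$, where $W$ is the cokernel of $X\to Y$ in $\heart$. Indeed, the short exact sequence $0\to X\to Y\to W\to 0$ in $\cD$ gives a distinguished triangle $X\to Y\to W\to X[1]$ in $\bK(\cA')$, and tensoring with $Z$ (an exact operation on the tensor triangulated category $\bK(\cA')$) yields a triangle
\[
X\otimes Z\longrightarrow Y\otimes Z\longrightarrow W\otimes Z\longrightarrow (X\otimes Z)[1].
\]
Since $X\otimes Z$ and $Y\otimes Z$ lie in $\heart$ by hypothesis, the long exact cohomology sequence of this triangle shows that $W\otimes Z$ has nonzero cohomology only in degrees $-1$ and $0$, and that $\rH^{-1}(W\otimes Z)$ is precisely the kernel, formed in $\cD$, of the map $X\otimes Z\to Y\otimes Z$. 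Thus it suffices to prove $\rH^{-1}(W\otimes Z)=0$.

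First I would convert the hypothesis on $Z^{*}$ into an $\Ext$ vanishing. For every weight $\lambda$, rigidity of $Z$ in $\bK(\cA')$ gives a natural isomorphism
\[
\Hom_{\bK(\cA')}\bigl(\bT_{\lambda},(W\otimes Z)[n]\bigr)\;\cong\;\Hom_{\bK(\cA')}\bigl(\bT_{\lambda}\otimes Z^{*},W[n]\bigr),
\]
and since $\bT_{\lambda}\otimes Z^{*}=Z^{*}\otimes\bT_{\lambda}$ lies in $\heart$ while $W$ also lies in $\heart$, the right-hand side equals $\Ext^{n}_{\cD}(Z^{*}\otimes\bT_{\lambda},W)$, which vanishes for all $n<0$. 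Next I would feed this into the truncation triangle $\rH^{-1}(W\otimes Z)[1]\to W\otimes Z\to \rH^{0}(W\otimes Z)\to \rH^{-1}(W\otimes Z)[2]$: applying $\Hom_{\bK(\cA')}(\bT_{\lambda},-)$ and using that negative $\Ext$ groups in $\cD$ vanish, the relevant stretch of the long exact sequence collapses to an isomorphism
\[
\Hom_{\cD}\bigl(\bT_{\lambda},\rH^{-1}(W\otimes Z)\bigr)\;\cong\;\Hom_{\bK(\cA')}\bigl(\bT_{\lambda},(W\otimes Z)[-1]\bigr),
\]
whose right-hand side is $0$ by the previous step. So $\Hom_{\cD}(\bT_{\lambda},\rH^{-1}(W\otimes Z))=0$ for every weight $\lambda$.

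To conclude, I would invoke the following fact, which I expect to be the crux of the argument: every nonzero object $N$ of $\cD$ receives a nonzero morphism from some indecomposable tilting module. Granting this, the vanishing of $\Hom_{\cD}(\bT_{\lambda},\rH^{-1}(W\otimes Z))$ for all $\lambda$ forces $\rH^{-1}(W\otimes Z)=0$, and the lemma follows. For the fact itself I would argue via the highest weight structure of $\cD$: pick $\lambda$ of maximal length among the weights $\mu$ with $[N:\bS_{\mu}]\ne 0$; a nonzero morphism $\bP_{\lambda}\to N$ (which exists since $[N:\bS_{\lambda}]\ne 0$) has image whose composition factors all have length $\le\ell(\lambda)$, hence it factors through $\bDelta_{\lambda}$, so $\Hom_{\cD}(\bDelta_{\lambda},N)\ne 0$. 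If $\lambda$ ends in $\bb$ or is empty, then $\bDelta_{\lambda}=\bT_{\lambda}$ is already a tilting; if $\lambda$ ends in $\ww$, then $\bDelta_{\lambda}=\bS_{\lambda}$ by Proposition~\ref{prop:D-tilt}, and $\bS_{\lambda}$ is a quotient of the tilting $\bT_{\lambda\ww}$ by Corollary~\ref{cor:D-tilt-quot}, so precomposition yields a nonzero map $\bT_{\lambda\ww}\to N$. Apart from establishing this detection fact, the only point needing a little care is checking that a nonzero map $\bT_{\lambda}\to\rH^{-1}(W\otimes Z)$ really does give a nonzero map $\bT_{\lambda}\to (W\otimes Z)[-1]$ — and this is exactly what the collapse of the long exact sequence in the previous paragraph guarantees. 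Everything else is formal manipulation with triangles, long exact sequences, and rigidity.
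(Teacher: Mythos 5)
Your proof is correct, and it takes a different route from the paper's, which is considerably shorter. The paper's argument observes that, because every simple $\bS_\lambda$ is a quotient of the tilting module $\bT_{\lambda\ww}$ (Corollary~\ref{cor:D-tilt-quot}), a morphism $f$ in $\heart$ is a monomorphism precisely when $f\circ g\neq 0$ for every non-zero $g\colon T\to\operatorname{source}(f)$ with $T\in\cA'$; it then applies this criterion to $X\otimes Z\to Y\otimes Z$ and uses rigidity of $Z$ to rewrite $g\colon T\to X\otimes Z$ as $g'\colon Z^\ast\otimes T\to X$, at which point the hypothesis $Z^\ast\otimes T\in\heart$ puts everything inside the heart and the conclusion is immediate from $X\to Y$ being a monomorphism there. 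You instead pass to the cokernel $W$, identify $\ker(X\otimes Z\to Y\otimes Z)$ with $\rH^{-1}(W\otimes Z)$, and kill it by showing $\Hom_{\bK(\cA')}(\bT_\lambda,(W\otimes Z)[-1])\cong\Ext^{-1}_{\cD}(Z^\ast\otimes\bT_\lambda,W)=0$ together with the same tilting-detection fact. The two ingredients — rigidity/adjunction to move $Z^\ast$ across, and detection of non-zero objects of $\cD$ by maps from tilting modules — are identical, but your packaging through the truncation triangle and negative-$\Ext$ vanishing costs more bookkeeping, while the paper's monomorphism criterion dispenses with cohomology entirely. One remark on your detection lemma: your highest-weight argument (maximal-length weight, factoring through $\bDelta_\lambda$) is valid, but it is quicker to take a simple submodule $\bS_\lambda\hookrightarrow N$ of the finite-length object $N$ and compose with the surjection $\bT_{\lambda\ww}\twoheadrightarrow\bS_\lambda$; this is the form in which the paper uses the fact.
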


\begin{proof}
Since the simple module $\bS_\lambda$ is the quotient of $\bT_{\lambda\ww}$ (Corollary~\ref{cor:D-tilt-quot}) a morphism $f$ in $\heart$ is a monomorphism if and only if $f\circ g\not=0$, for $g$ running over all non-zero morphisms from objects $T\in\cA'$. By adjunction, we thus need to show that, for any non-zero morphism $Z^\ast\otimes T\to X$ (in $\bK(\cA')$), the composite $Z^\ast\otimes T\to X\to Y$ is not zero. By assumption, all these objects actually live in $\heart$, so the conclusion follows since $X\to Y$ is a monomorphism, by assumption.
\end{proof}

\begin{lemma}\label{Lem:HeartDual}
The duality functor $(-)^*$ on $\bK(\cA')$ preserves $\heart$, and satisfies $\bS_\lambda^\ast \cong \bS_{\lambda^{\vee}}$ for any weight $\lambda$. 
\end{lemma}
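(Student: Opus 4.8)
The plan is to prove the two assertions largely independently. For the statement $\bS_\lambda^\ast \cong \bS_{\lambda^\vee}$, I would work with the explicit (co)resolutions of $\bS_\lambda$ by tilting modules supplied by Corollary~\ref{Cor:TiltRes}. Recall that every tilting object $\bT_\mu$ corresponds to $M_\mu$ under $\Psi^{-1}$, and Proposition~\ref{prop:Phi-E} gives $M_\mu^\ast \cong M_{\mu^\vee}$, hence $\bT_\mu^\ast \cong \bT_{\mu^\vee}$ inside $\cA' = \Tilt(\cD)$. Dualizing the complex from Corollary~\ref{Cor:TiltRes}(a) for, say, $\lambda = \kappa\bb^i$, one gets a complex of the shape appearing in Corollary~\ref{Cor:TiltRes}(b) for the weight $\lambda^\vee = \kappa^\vee\ww^i$, since duality is a contravariant monoidal equivalence on $\cA'$ and reverses the arrows; this is exactly a coresolution of $\bS_{\lambda^\vee}$. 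Thus in $\bD(\cD) \cong \bK(\cA')$ we obtain $\bS_\lambda^\ast \cong \bS_{\lambda^\vee}$, and in particular $\bS_\lambda^\ast$ lies in $\heart$.

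For the claim that $(-)^\ast$ preserves $\heart$ in general, I would argue that every object of $\heart$ (i.e.\ of $\cD$) is built from the $\bS_\lambda$ by finitely many extensions. More precisely, each object $M \in \cD$ has finite length, so there is a finite filtration with simple subquotients $\bS_{\lambda}$; correspondingly there are distinguished triangles $\bS_\lambda \to M' \to M'' \to \bS_\lambda[1]$ in $\bK(\cA')$ with $M'$ a subobject and $M''$ a quotient, all lying in $\heart$. Applying the exact (contravariant, triangulated) functor $(-)^\ast$ turns such a triangle into $ (M'')^\ast \to (M')^\ast \to \bS_\lambda^\ast \to (M'')^\ast[1]$. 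Since $\bS_\lambda^\ast \in \heart$ by the first part, and $\heart$ is closed under the two-out-of-three property for triangles noted just before the lemma, an induction on the length of $M$ shows $M^\ast \in \heart$, with the base case $M$ simple already handled.

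The one point requiring care — and the main obstacle — is the direction of the triangle: $(-)^\ast$ is contravariant, so a short exact sequence $0 \to A \to M \to B \to 0$ in $\heart$ becomes a triangle with $B^\ast$ a ``subobject'' and $A^\ast$ a ``quotient''; I must be sure the inductive hypothesis is applied to objects of strictly smaller length (namely $A$ and $B$, equivalently $A^\ast$ and $B^\ast$), which it is. A secondary subtlety is knowing that $(-)^\ast$ on $\bK(\cA')$, defined as the termwise dual of a bounded complex of tilting objects, is genuinely a triangulated (contravariant) functor — this is standard for the dual of a complex in a rigid monoidal category, and I would simply cite it. With those two observations in place the induction goes through cleanly, and no heavy computation is needed beyond unwinding the resolutions in Corollary~\ref{Cor:TiltRes}.
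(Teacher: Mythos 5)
Your proposal is correct and follows essentially the same route as the paper: the isomorphism $\bS_\lambda^\ast\cong\bS_{\lambda^\vee}$ is obtained by dualizing the tilting (co)resolutions of Corollary~\ref{Cor:TiltRes} (using $\bT_\mu^\ast\cong\bT_{\mu^\vee}$, which comes from $M_\mu^\ast\cong M_{\mu^\vee}$), and preservation of the heart then follows by induction on length via the two-out-of-three property for triangles. Your care about the contravariance reversing the roles of sub and quotient is a point the paper leaves implicit, but the argument is the same.
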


\begin{proof}
The second statement follows from Corollary~\ref{Cor:TiltRes} (duality takes the resolution in (a) to the one in (b), and vice versa). To show that duality preserves the heart, we now argue by induction on length.
\end{proof}

\begin{proof}[Proof of Theorem~\ref{Thm:Tensor}]
First we show that $\bS_\lambda \otimes \bT_\mu \in \heart$ for arbitrary weights $\lambda,\mu$.
We do this by induction on the length of the weight $\lambda$, the case of the empty weight being trivial.
Assume first that $\lambda$ ends in $\bb$, and $\lambda^\flat$ does not end in $\bb$, then the conclusion follows from Corollary~\ref{Cor:SES}(b) and Lemma~\ref{Lem:Tech}. Indeed, the latter shows that $\bT_{\lambda^\flat} \otimes \bT_\mu \to \bT_\lambda \otimes \bT_\mu$ remains a monomorphism in $\cD$. Since the complex $0 \to \bT_{\lambda^\flat} \to \bT_\lambda \to 0$ represents $\bS_\lambda$, the complex $0 \to \bT_{\lambda^\flat} \otimes \bT_\mu\to \bT_\lambda \otimes \bT_\mu \to 0$ represents $\bS_\lambda\otimes \bT_\mu$, and its cohomology is thus indeed contained in degree $0$. If $\lambda$ ends in $\bb$ and $\lambda^\flat$ does not end in $\ww$, we can use Corollary~\ref{Cor:SES}(a). Indeed, we can assume that $\bS_{\lambda^\flat}\otimes \bT_\mu\in \heart$, by the induction hypothesis, so we can again apply Lemma~\ref{Lem:Tech}. We can deal with the case where $\lambda$ ends in $\ww$ similarly.
 
Next we prove that $\bS_\lambda \otimes \bS_\mu\in\heart$, by applying the same argument as above, but with $\bT_\mu$ replaced with $\bS_\mu$. Indeed, we can now apply Lemma~\ref{Lem:Tech} with $Z=\bS_\mu$ since we have proved that $\bS_\mu^\ast\otimes T\in\heart$ for all $T\in\cA'$.

The fact that $X\otimes Y\in\heart$ for arbitrary $X,Y\in\heart$ can now be proved by induction on the lengths of $X,Y$.

We thus see that $\otimes$ gives $\heart$ the structure of a tensor category. It is rigid since it is a full monoidal subcategory of a rigid category that is closed under duality (Lemma~\ref{Lem:HeartDual}).
\end{proof}

\begin{remark} \label{remarkdualD}
We have two dualities on $\cD$, the pointwise duality $(-)^{\vee}$ and the monoidal duality $(-)^*$. They agree on the category of tilting modules, since we observed $\bT^\vee_\lambda\cong \bT_{\lambda^\vee}\cong \bT_\lambda^\ast$. Both functors are fully faithful and send distinguished morphisms to distinguished morphisms. Hence they are isomorphic on $\cD$. We thus see that at least part of the monoidal structure on $\cD$ (duality) can be be defined without reference to the oligomorphic theory.
\end{remark}

\subsection{A mapping property} \label{ss:D-map}

We now give a characterization of exact functors $\cD \to \cT$, where $\cT$ is an arbitrary $k$-linear abelian category. Suppose we have a $k$-linear functor $F \colon \Tilt(\cD) \to \cT$. Consider the following two conditions:
\begin{itemize}
\item[(SF)]  For every $\lambda \in \Lambda_{\bb} \cup \{\varnothing\}$, the following complex is exact:
\begin{displaymath}
\cdots \to F(\bT_{\lambda\ww\ww}) \to F(\bT_{\lambda\ww}) \to F(\bT_{\lambda}) \to 0
\end{displaymath}
\item[(CF)]  For every $\lambda \in \Lambda_{\ww} \cup \{\varnothing\}$, the following complex is exact:
\begin{displaymath}
0 \to F(\bT_{\lambda}) \to F(\bT_{\lambda\bb}) \to F(\bT_{\lambda\bb\bb}) \to \cdots
\end{displaymath}
\end{itemize}
The labels (SF) and (CF) stand for ``standard filtered'' and ``costandard filtered,'' as we explain in Remark~\ref{rmk:B-mod-SF}. 

\begin{proposition} \label{prop:exact-extension}
The functor $F$ satisfies (SF) and (CF) if and only if it can be extended to an exact functor $\cD \to \cT$. Moreover, any two exact extensions are isomorphic.
\end{proposition}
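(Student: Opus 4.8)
The plan is to build the exact extension $G\colon\cD\to\cT$ of $F$ using the identification $\bD(\cD)\simeq\bK(\Tilt\cD)$ from Proposition~\ref{prop:equiv:triangle}, and then to extract an honest functor on the hearts. First I would observe that a $k$-linear functor $F\colon\Tilt(\cD)\to\cT$ extends automatically to an exact (triangulated) functor $\bK F\colon\bK(\Tilt\cD)\to\bK(\cT)$ by applying $F$ termwise to complexes; composing with the equivalence of Proposition~\ref{prop:equiv:triangle} gives a triangulated functor $\widetilde{F}\colon\bD(\cD)\to\bK(\cT)$. The issue is that $\bK(\cT)$ is not $\bD(\cT)$ and $\widetilde F$ need not land in degree $0$ on objects of $\cD\subset\bD(\cD)$. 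This is exactly where conditions (SF) and (CF) enter: by Corollary~\ref{Cor:TiltRes}, every simple $\bS_\lambda$ is represented in $\bK(\Tilt\cD)$ by a (bounded) complex built from the exact sequences there, and (SF)/(CF) say precisely that $F$ applied to the relevant resolutions of $\bS_\lambda$ — and hence, by dévissage along the finite filtrations of arbitrary finite $\fD$-modules, $F$ applied to any object of $\cD$ viewed in $\bK(\Tilt\cD)$ — has cohomology concentrated in degree $0$. Thus one defines $G(M)=H^0(\widetilde F(M))$ for $M\in\cD$; functoriality and additivity are clear.

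Second, I would prove $G$ is exact. Given a short exact sequence $0\to M'\to M\to M''\to 0$ in $\cD$, it yields a distinguished triangle in $\bD(\cD)$, hence a distinguished triangle $\widetilde F(M')\to\widetilde F(M)\to\widetilde F(M'')\to\widetilde F(M')[1]$ in $\bK(\cT)$. Since each term already lies in degree $0$ (by the previous paragraph), the long exact cohomology sequence collapses to a short exact sequence $0\to G(M')\to G(M)\to G(M'')\to 0$ in $\cT$. That $G$ restricts to $F$ on $\Tilt(\cD)$ is immediate, since a tilting module viewed in $\bK(\Tilt\cD)$ is its own one-term complex, so $\widetilde F$ returns $F(\bT_\lambda)$ in degree $0$. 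Conversely, if $F$ admits an exact extension $G$, then applying $G$ to the exact sequences of Corollary~\ref{Cor:TiltRes} gives exactly the complexes in (SF) and (CF) with their exactness, so the conditions are necessary.

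For uniqueness: if $G_1,G_2\colon\cD\to\cT$ are two exact extensions of $F$, I would show $G_1\cong G_2$ by induction on length (number of simple constituents) of objects of $\cD$, using that every object of $\cD$ is a subquotient of a tilting module (the remark following Corollary~\ref{cor:D-tilt-quot}), or more directly, that every $M\in\cD$ fits into a short exact sequence relating it to objects with strictly shorter filtrations and to tilting modules via Corollary~\ref{Cor:TiltRes} and Proposition~\ref{prop:D-tilt}. An exact functor is determined, up to canonical isomorphism, by its values and morphism action on a generating set closed under the relevant extensions; since $G_1$ and $G_2$ agree on $\Tilt(\cD)$ and both send the distinguished short exact sequences (e.g.\ those of Corollary~\ref{Cor:SES}) to short exact sequences, one propagates a compatible system of isomorphisms $G_1(M)\cong G_2(M)$ by the five lemma, checking naturality at each step.

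The main obstacle is the first step — verifying that (SF) and (CF) really do force $\widetilde F(M)$ to be concentrated in degree $0$ for \emph{all} finite $\fD$-modules $M$, not merely for simples. The point is that (SF) and (CF) are stated only for simples, so one must show that degree-$0$-concentration is closed under extensions in $\cD$: given $0\to M'\to M\to M''\to 0$ with $\widetilde F(M'),\widetilde F(M'')$ in degree $0$, the triangle forces $\widetilde F(M)$ to have cohomology only in degrees $0$ and possibly $-1$ (from the connecting map), and one must rule out $H^{-1}$. This requires knowing that the connecting morphism $\widetilde F(M'')\to\widetilde F(M')[1]$ in $\bK(\cT)$ vanishes, which is not automatic. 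I would handle this by working instead at the level of the explicit complexes in $\bK(\Tilt\cD)$: choose the standard bounded resolutions of $M',M,M''$ compatibly (this is possible since $\cD$ has enough structure — every finite module has a finite tilting resolution by Proposition~\ref{prop:equiv:triangle} and Corollary~\ref{Cor:TiltRes}), so that the triangle is represented by a genuine termwise short exact sequence of bounded complexes of tilting modules; applying $F$ termwise preserves this short exact sequence of complexes, and the associated long exact sequence in cohomology, together with the inductive hypothesis on the outer terms, pins down $H^\bullet(F(\text{complex for }M))$ in degree $0$. Making this compatible-resolution argument precise — essentially a horseshoe-type lemma in $\bK(\Tilt\cD)$ — is the technical heart of the proof; everything else is routine dévissage.
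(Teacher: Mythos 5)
Your construction is essentially the paper's: extend $F$ termwise to a triangulated functor on $\bK(\Tilt\cD)\cong\bD(\cD)$, use (SF)/(CF) together with Corollary~\ref{Cor:TiltRes} to see that simples land in the heart, and define $G=\rH^0\circ\widetilde F$. However, the step you single out as ``the technical heart'' is not actually an obstacle, and your diagnosis of it is incorrect. Given a distinguished triangle $\widetilde F(M')\to\widetilde F(M)\to\widetilde F(M'')\to\widetilde F(M')[1]$ (in $\bK(\cT)$ or $\bD(\cT)$), the long exact cohomology sequence reads $\rH^i(\widetilde F(M'))\to \rH^i(\widetilde F(M))\to \rH^i(\widetilde F(M''))$ for every $i$; if the outer terms vanish for all $i\ne 0$, so does the middle one, with no reference whatsoever to the connecting morphism. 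In particular $\rH^{-1}(\widetilde F(M))$ is trapped between two zero groups. So the class of objects sent into degree $0$ is automatically closed under extensions, d\'evissage to simples is immediate, and no horseshoe-type compatible-resolution argument is needed. (This is just the standard fact that the heart of a t-structure is extension-closed, which is why the paper can say ``it suffices to verify that $\widetilde F$ maps simple objects into $\cT$'' without further comment.)

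For uniqueness, your induction-plus-five-lemma scheme can be made to work but leaves the naturality checks genuinely unaddressed. The paper's argument is cleaner and you may want to adopt it: an exact extension $G$ induces $\widetilde G\colon\bD(\cD)\to\bD(\cT)$ by termwise application, and under the identification $\bD(\cD)=\bK(\Tilt\cD)$ the functor $\widetilde G$ is computed on complexes of tilting objects, where it agrees with $\widetilde F$ since $G$ restricts to $F$ on $\Tilt(\cD)$; hence $G=\rH^0\circ\widetilde G|_{\cD}\cong\rH^0\circ\widetilde F|_{\cD}$.
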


\begin{proof}
Suppose an exact extension $G \colon \cD \to \cT$ exists. The complex 
\begin{displaymath}
\cdots\to\bT_{\lambda\ww\ww}\to\bT_{\lambda\ww}\to\bT_\lambda\to 0,
\end{displaymath}
is exact in $\cD$. Applying $G$, we see that the resulting complex in $\cT$ is exact, and so $F$ satisfies (SF). The proof of (CF) is similar.

Conversely, suppose $F$ satisfies (SF) and (CF). Consider the triangulated functor
\begin{displaymath}
\tilde{F} \colon \bK(\Tilt(\cD)) \to \bD(\cT)
\end{displaymath}
extending $F$. Identifying $\bK(\Tilt(\cD))$ with $\bD(\cD)$, we claim that $\tilde{F}$ is t-exact, i.e., it maps $\cD \subset \bD(\cD)$ into $\cT\subset\bD(\cT)$. It suffices to verify that $\tilde{F}$ maps simple objects into $\cT$. By Corollary~\ref{Cor:TiltRes}, all simple objects in $\cD$ have a finite tilting (co)resolution, and the (SF) and (CF) conditions ensure that $\tilde{F}$ sends these to complexes in $\cT$ that have cohomology only in the appropriate degree. We have thus shown that $\tilde{F}$ is t-exact, and so the restriction of $\rH^0 \circ \tilde{F}$ to $\cD$ is an exact extension of $F$.

We now prove uniqueness. Suppose $G$ is an arbitrary exact extension of $F$. Since $G$ is exact, there is a functor $\tilde{G} \colon \bD(\cD) \to \bD(\cT)$ obtained by applying $G$ to a complex, and we have $G = \rH^0\circ \tilde{G}$ when restricted to $\cD \subset \bD(\cD)$. Under the identification $\bD(\cD) = \bK(\Tilt(\cD))$, the functor $\tilde{G}$ is identified with the functor $\tilde{F}$ in the previous paragraph. Thus $G$ is identified with $\rH^0 \circ \tilde{F}$, and so uniqueness follows.
\end{proof}

\begin{proposition} \label{prop:exact-tensor-extension}
Suppose $\cT$ and $F$ have tensor structures and $F$ satisfies (SF) and (CF). Then there is an exact tensor functor $\cD \to \cT$ extending $F$.
\end{proposition}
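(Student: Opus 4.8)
The plan is to combine Proposition~\ref{prop:exact-extension} with the tensor structure results already established for $\bK(\cA')$. Concretely: by Proposition~\ref{prop:exact-extension}, the conditions (SF) and (CF) guarantee that $F$ extends to an exact functor $G \colon \cD \to \cT$, and that this extension is unique up to isomorphism. So the content of the present statement is only that $G$ can be upgraded to a \emph{tensor} functor when $F$ is a tensor functor. I would first pass to the triangulated level: extend $F$ to a triangulated functor $\widetilde{F} \colon \bK(\cA') \to \bK(\cT)$ (using $\cA'=\Tilt(\cD)$ and the identification $\bK(\cA') \cong \bD(\cD)$ from Proposition~\ref{prop:equiv:triangle}). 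Since $F$ is a tensor functor on $\cA'$, the induced functor on bounded homotopy categories is naturally monoidal: the monoidal constraints $F(T)\otimes F(T') \to F(T\otimes T')$ for $T,T'\in\cA'$ extend termwise to complexes, giving a monoidal structure on $\widetilde{F}$ compatible with the triangulated structure (this is the standard fact that a monoidal additive functor between additive monoidal categories induces a monoidal triangulated functor between bounded homotopy categories).

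Next I would invoke $t$-exactness of $\widetilde{F}$, which is exactly what the proof of Proposition~\ref{prop:exact-extension} establishes from (SF) and (CF): $\widetilde{F}$ carries $\cD = \bK(\cA')^{\he}$ into $\cT \subset \bD(\cT)$. The key observation is then that the restriction of a monoidal triangulated functor to hearts, when it is $t$-exact, is automatically a monoidal functor on the hearts, provided the hearts are themselves monoidal subcategories closed under $\otimes$ — which is precisely the content of Theorem~\ref{Thm:Tensor} for $\bK(\cA')^{\he}$, and is trivially true for $\cT \subset \bD(\cT)$. So for $X, Y \in \cD$, the object $X \otimes Y$ (computed in $\bK(\cA')$) lies in $\cD$, and $\widetilde F(X\otimes Y) = \widetilde F(X) \otimes \widetilde F(Y)$ lies in $\cT$; applying $\rH^0$ (which is the identity on $\cT$) gives $G(X\otimes Y) \cong G(X) \otimes G(Y)$, and the monoidal constraint of $\widetilde F$ restricts to give the constraint for $G$. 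One must also check compatibility with the unit object $\bbone = \bS_\varnothing = \bT_\varnothing$, which is immediate since $F$ already respects the unit. The associativity, unit, and symmetry coherence diagrams for $G$ commute because they commute for $\widetilde F$ and all the relevant objects lie in the hearts.

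The main obstacle — though it is really a bookkeeping matter rather than a deep one — is making precise that the monoidal structure maps of $\widetilde{F}$ are genuinely $t$-exact, i.e. that on objects of the heart they are isomorphisms in $\cT$ and not merely quasi-isomorphisms with some hidden higher cohomology. This follows because $\widetilde{F}$ is $t$-exact (hence sends the iso $X\otimes Y \xrightarrow{\sim} X\otimes Y$ in $\cD$ and the monoidal comparison, both living in the heart, to isomorphisms in $\cT$), but one should spell out that the derived tensor product on $\bD(\cD)$ restricted to $\cD\times\cD$ agrees with the exact tensor product from Theorem~\ref{Thm:Tensor}, so that no derived correction terms appear. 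Once that is in place, rigidity and symmetry are inherited for free from $\bK(\cA')$, and the uniqueness clause of Proposition~\ref{prop:exact-extension} shows the underlying functor is the same $G$ as before. I would also remark that exactness of $G$ together with the tensor structure makes $G$ an exact tensor functor in the required sense, completing the proof.
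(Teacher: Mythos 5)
Your proposal is correct and follows essentially the same route as the paper: the paper's proof consists of the single remark that $\rH^0\circ\tilde F$ "clearly admits a tensor structure," where $\tilde F$ is the t-exact triangulated extension constructed in the proof of Proposition~\ref{prop:exact-extension}, and your argument simply spells out the details behind that remark (the monoidal structure on $\tilde F$ at the level of $\bK(\cA')$, t-exactness, and the fact that the heart is a monoidal subcategory by Theorem~\ref{Thm:Tensor}).
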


\begin{proof}
The functor $\rH^0 \circ \tilde{F}$ clearly admits a tensor structure in this case.
\end{proof}

\begin{remark} \label{rmk:exact-extension}
If $\cT$ is a rigid abelian tensor category and $F$ is a tensor functor then $F$ satisfies (SF) if and only if it satisfies (CF) since the two complexes are dual to each other.
\end{remark}

\subsection{The Grothendieck ring} 
We now look at the Grothendieck ring of $\cD$.

\begin{proposition} \label{prop:groth-D}
We have the following:
\begin{enumerate}
\item We have a ring isomorphism $i \colon \rK^{\oplus}(\cA) \to \rK(\cD)$ satisfying $i([M_{\lambda}])=[\bT_{\lambda}]$.
\item We have a ring isomorphism $j \colon \rK(\cD) \to \rK(\cC)$ satisfying $j([\bT_{\lambda}])=[L_{\lambda}] \oplus [L_{\lambda^{\flat}}]$.
\item The algebra $\bQ \otimes \rK(\cD)$ is freely generated by the $[\bT_{\lambda}]$ where $\lambda$ is a Lyndon word.
\item In characteristic~0, $i$ and $j$ are isomorphisms of $\lambda$-rings; in particular, the Adams operations on $\rK(\cD)$ are trivial (i.e., the identity).
\end{enumerate}
\end{proposition}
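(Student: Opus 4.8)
The plan is to derive everything from the tensor equivalence $\Psi \colon \cA \to \Tilt(\cD)$ of Corollary~\ref{cor:D-tilt-A}, the highest weight structure of $\cD$, and the ring isomorphism $\phi$ of Proposition~\ref{prop:groth}. For (a), I would first show that the classes $[\bT_\lambda]$ form a $\bZ$-basis of $\rK(\cD)$. Since every object of $\cD$ has finite length, $\rK(\cD)$ is free on the $[\bS_\lambda]$. The standard classes give another basis: the composition factors of $\bDelta_\lambda$ are $\bS_\lambda$ together with $\bS_\mu$'s of strictly smaller length (Proposition~\ref{prop:uniserial}, or directly from the description of $\bDelta_\lambda$ as a full module), so $[\bDelta_\lambda] = [\bS_\lambda] + \sum_{\ell(\mu) < \ell(\lambda)} (\cdots) [\bS_\mu]$ is unitriangular for any length-refining total order, hence invertible over $\bZ$. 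Finally Proposition~\ref{prop:D-tilt} gives $[\bT_\lambda] = [\bDelta_\lambda]$ for $\lambda \in \Lambda_{\bb} \cup \{\varnothing\}$ and $[\bT_\lambda] = [\bDelta_\lambda] + [\bDelta_{\lambda^{\flat}}]$ for $\lambda \in \Lambda_{\ww}$, again unitriangular in the $[\bDelta_\mu]$, so $\{[\bT_\lambda]\}$ is a basis and the additive map $i([M_\lambda]) = [\bT_\lambda]$ is a group isomorphism. It is multiplicative because $\Psi$ is a tensor functor into the monoidal subcategory $\Tilt(\cD) \subseteq \cD$: if $M_\lambda \otimes M_\mu \cong \bigoplus_\nu M_\nu^{\oplus c_\nu}$ in $\cA$ (Krull--Schmidt), then applying $\Psi$ gives $\bT_\lambda \otimes \bT_\mu \cong \bigoplus_\nu \bT_\nu^{\oplus c_\nu}$ in $\cD$, so $i([M_\lambda][M_\mu]) = \sum_\nu c_\nu [\bT_\nu] = [\bT_\lambda][\bT_\mu]$ in $\rK(\cD)$, which is a ring because $\otimes$ on $\cD$ is exact (Theorem~\ref{mainthm2}).

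Parts (b) and (c) follow formally. For (b), set $j = \phi \circ i^{-1}$: this is a composite of ring isomorphisms, and $j([\bT_\lambda]) = \phi([M_\lambda]) = [L_\lambda] + [L_{\lambda^{\flat}}]$, as required. For (c), tensor $i$ with $\bQ$ and apply Corollary~\ref{cor:lyndon}: since $\bQ \otimes \rK^{\oplus}(\cA)$ is the polynomial ring on the $[M_\lambda]$ with $\lambda$ Lyndon, $\bQ \otimes \rK(\cD)$ is the polynomial ring on the classes $i([M_\lambda]) = [\bT_\lambda]$ with $\lambda$ Lyndon (and $\rK(\cD)$ is commutative since $\cD$ is symmetric).

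For (d) we use that in characteristic $0$ the Grothendieck group of any $\bQ$-linear Karoubian symmetric tensor category carries a $\lambda$-ring structure with $\lambda^n[X] = [\wedge^n X]$, and that a symmetric tensor functor, realizing each Schur functor $\bS^\nu$ as the image of a Young idempotent in $\bQ[\fS_n]$ acting on an $n$-th tensor power, commutes with Schur functors and hence induces a $\lambda$-ring homomorphism on Grothendieck groups. Applied to $\Phi$ and to $\Psi$, this makes $\phi$ and $i$ into $\lambda$-ring homomorphisms, hence $\lambda$-ring isomorphisms (a ring isomorphism respecting the $\lambda$-operations has inverse doing the same), and therefore $j = \phi \circ i^{-1}$ is one too. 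One should record the minor point that a Schur functor of a tilting module is again tilting (it is a summand of a tensor power of tiltings, and $\Tilt(\cD)$ is closed under $\otimes$ and under summands), so these Schur functors may be computed interchangeably in $\cA \cong \Tilt(\cD)$ or in $\cD$. The Adams operation $\psi^n$ on a $\lambda$-ring is determined by the $\lambda$-operations; since $\psi^n$ is the identity on $\rK(\cC)$ \cite{delchar}, transporting along the $\lambda$-ring isomorphism $j$ shows $\psi^n$ is the identity on $\rK(\cD)$.

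Essentially all the work has already been done: the classification of the $M_\lambda$, the equivalence $\cA \cong \Tilt(\cD)$, the highest weight structure of $\cD$, and the isomorphism $\phi$. The argument above is then bookkeeping, and the only genuinely external input is the triviality of Adams operations on $\rK(\cC)$, imported from \cite{delchar}; the only other point needing a line of care is the Schur-functor compatibility between $\Tilt(\cD)$ and $\cD$ invoked in (d).
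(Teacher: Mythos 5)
Your proposal is correct and follows essentially the same route as the paper: establish that the $[\bT_\lambda]$ form a $\bZ$-basis of $\rK(\cD)$ (the paper gets spanning from Corollary~\ref{Cor:TiltRes} rather than your unitriangular change of basis through the standards, but this is the same bookkeeping), define $j=\phi\circ i^{-1}$, quote Corollary~\ref{cor:lyndon}, and for (d) use that tensor functors induce $\lambda$-ring maps together with the triviality of Adams operations on $\rK(\cC)$ (which the paper cites from \cite[Theorem~8.2]{line} rather than \cite{delchar}). Your extra care about multiplicativity via Krull--Schmidt and about Schur functors preserving $\Tilt(\cD)$ is fine but not a different argument.
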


\begin{proof}
(a) The tensor equivalence $\cA \to \Tilt(\cD)$ induces a ring isomorphism $\rK^{\oplus}(\cA) \to \rK^{\oplus}(\Tilt(\cD))$. Composing with the natural ring homomorphism $\rK^{\oplus}(\Tilt(\cD)) \to \rK(\cD)$ gives a ring homomorphism $i$ satisfying the stated formula. By construction, $\bT_{\lambda}$ contains $\bS_{\lambda}$ with multiplicity one, and all other simple constituents have the form $\bS_{\mu}$ with $\ell(\mu)<\ell(\lambda)$. It follows that the classes $[\bT_{\lambda}]$ are $\bZ$-linearly independent. On the other hand, these classes span by Corollary~\ref{Cor:TiltRes}. Thus $i$ maps a $\bZ$-basis to a $\bZ$-basis, and is therefore an isomorphism.

(b) Recall the ring isomorphism $\phi \colon \rK^{\oplus}(\cA) \to \rK(\cC)$ from \S \ref{ss:groth}. We take $j=\phi \circ i^{-1}$.

(c) This follows from (a) and Corollary~\ref{cor:lyndon}.

(d) Since $i$ and $\phi$ are induced from tensor functors, they are maps of $\lambda$-rings, and so the same is true for $j$. Since the Adams operations on $\rK(\cC)$ are trivial \cite[Theorem~8.2]{line}, the result follows.
\end{proof}

\begin{remark} \label{rem:Z} 
The category $\cD$ is independent of the coefficient field, in many ways. The classification of simple objects in $\cD$ is independent of the field $k$, essentially by construction; in fact, $\cD$ admits a $\bZ$-form, in which the simple objects assemble into $\bZ$-flat families. The $\Ext^1$-quiver of simple objects is also independent of $k$, by Corollary~\ref{cor:D-ext-simple}. Finally, the Grothendieck ring is also independent of $k$, as Proposition~\ref{prop:groth-D} shows.
\end{remark}

\subsection{Highest weight structure} \label{ss:D-traingular}

By Proposition~\ref{prop:HomExt}, the category $\cD$ is a \defn{lower finite highest weight category} in the sense of \cite{BS}, see for instance \cite[Corollary~3.64]{BS}. The partial order on the labeling set $\Lambda$ of simple objects corresponding to this structure is given by $\mu<\lambda$ if and only if $\ell(\mu)<\ell(\lambda)$. The indecomposable modules $\bT_\lambda$ then correspond to the indecomposable tilting modules as defined and classified in \cite[Theorem~4.2]{BS}, justifying our terminology.

In the next section, we will consider the ``Ringel dual'' of $\cD$. Using those results the fact that $\cD$ is a highest weight category is immediate, and we could have observed that Corollary~\ref{cor:D-tilt-A} is an instance of the general principle in \cite[Corollary~4.30]{BS}.

Finally we point out that the highest weight structure on $\cD$ is the reflection of a ``triangular'' structure on $\fD$, in the sense\footnote{Actually, we must make a slight modification since \cite{brauercat} only treats the upper finite case.} of \cite{brauercat}. More precisely, let $\fD^+$ (resp.\ $\fD^-$) be the subcategory of $\fD$ containing all objects, but only those morphisms $\lambda \to \mu$ with $\ell(\lambda) \ge \ell(\mu)$ (resp.\ $\ell(\lambda) \le \ell(\mu)$). We think of $\fD^+$ as the ``upwards'' subcategory, even though its morphisms go from longer words to shorter words. Similarly, $\fD^-$ is the ``downwards'' subcategory. We note that duality interchanges the two subcategories, that is, if $f \colon \lambda \to \mu$ belongs to $\fD^+$ then $f^{\vee}$ belongs to $\fD^-$.

We prove the key factorization property for a triangular structure:

\begin{proposition}
For $\lambda,\mu \in \Lambda$, the natural map
\begin{displaymath}
\bigoplus_{\rho \in \Lambda} \Hom_{\fD^+}(\rho, \mu) \otimes \Hom_{\fD^-}(\lambda, \rho) \to \Hom(\lambda, \mu)
\end{displaymath}
induced by composition is an isomorphism.
\end{proposition}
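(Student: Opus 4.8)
The plan is to use the classification of $\Hom$ spaces in $\fD$ via the description of distinguished morphisms (\S\ref{def:D}) together with Proposition~\ref{prop:basic}. First I would reduce to the case where $\Hom_{\fD}(\lambda,\mu)$ is nonzero, since if it vanishes the right-hand side must vanish as well: any $\rho$ contributing a nonzero term would have to satisfy $\lambda\in\rho\Lambda_{\bb}^{\alt}\cup\{\rho\}$ and $\mu\in\rho\Lambda_{\ww}^{\alt}\cup\{\rho\}$, and then the composite of the two distinguished morphisms $\lambda\to\rho\to\mu$ is itself a distinguished morphism $\lambda\to\mu$ (this is exactly the kind of composition analyzed in Proposition~\ref{prop:basic}), contradicting the vanishing. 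So assume there is a distinguished morphism $\lambda\to\mu$.

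By symmetry (and duality, which interchanges $\fD^+$ and $\fD^-$) it suffices to treat the case $\mu=\lambda\alpha$ with $\alpha\in\Lambda^{\alt}_{\ww}$; the case $\lambda=\mu$ is trivial, and the case $\lambda=\mu\beta$ with $\beta\in\Lambda^{\alt}_{\bb}$ is dual. In this situation the target $\Hom_{\fD}(\lambda,\mu)$ is one-dimensional. The core of the argument is the unique factorization established in the proof of Proposition~\ref{prop:basic}: there is a unique chain $\lambda=\mu_0,\mu_1,\ldots,\mu_n=\mu$ passing through ``first remove letters, then add letters,'' which pins down a unique weight $\rho$ — namely the ``turning point'' $\mu_j$ of that chain — through which the morphism factors, and moreover $\lambda\to\rho$ lies in $\fD^-$ (it only removes letters... wait, I should be careful) — actually, re-examining: in that proof one first removes $\ww\bb$ pairs (or a trailing $\bb$), reaching a minimal weight $\rho$, and then adds $\bb\ww$ pairs (and possibly a leading $\ww$). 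Removing letters shortens, adding lengthens, so $\lambda\to\rho$ shortens length and hence lies in $\fD^+$ by the convention $\ell(\lambda)\ge\ell(\rho)$ — but wait, $\fD^+$ goes from longer to shorter, so $\lambda\to\rho$ is in $\fD^+$ only if $\ell(\lambda)\ge\ell(\rho)$. Hmm, but in the statement the sum is over $\Hom_{\fD^+}(\rho,\mu)\otimes\Hom_{\fD^-}(\lambda,\rho)$, so we need $\lambda\to\rho$ in $\fD^-$, i.e., $\ell(\lambda)\le\ell(\rho)$, and $\rho\to\mu$ in $\fD^+$, i.e., $\ell(\rho)\ge\ell(\mu)$. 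So $\rho$ should be the \emph{longest} weight on the chain, the ``peak.'' Let me re-read the factorization: one could equally traverse the chain by \emph{adding} first and \emph{removing} after, and the proof shows this is impossible (adding forces ending in $\ww$, removing forces ending in $\bb$), so the valley decomposition is the only one — meaning $\rho$ is the minimal-length intermediate weight, so $\lambda\to\rho$ is in $\fD^+$ and $\rho\to\mu$ in $\fD^-$. This suggests the roles of $\fD^+$ and $\fD^-$ in the displayed map are as written with $\Hom_{\fD^+}(\rho,\mu)$ meaning a morphism from $\rho$ \emph{up} to $\mu$ even though $\ell(\rho)\le\ell(\mu)$: indeed the paper explicitly warns ``we think of $\fD^+$ as the upwards subcategory, even though its morphisms go from longer words to shorter words,'' so $\Hom_{\fD^+}(\rho,\mu)$ with $\ell(\rho)\le\ell(\mu)$ would be \emph{zero} unless $\rho=\mu$. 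I would therefore identify $\rho$ carefully by matching the convention: $\fD^+$ has morphisms $\lambda\to\mu$ with $\ell(\lambda)\ge\ell(\mu)$, so in $\Hom_{\fD^+}(\rho,\mu)\otimes\Hom_{\fD^-}(\lambda,\rho)$ we need $\ell(\rho)\ge\ell(\mu)$ and $\ell(\lambda)\le\ell(\rho)$; since in our case $\ell(\lambda)\le\ell(\mu)$, the unique candidate is forced and I would exhibit it as the weight reached from $\mu$ by applying the ``downward'' part of the basic factorization as far as it goes, i.e., $\rho$ is $\mu$ with a maximal alternating $\ww$-tail peeled off in a way compatible with also being $\lambda$ with an alternating $\bb$-tail peeled off.

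The key steps, in order: (1) dispose of the vanishing case via the composition rule for distinguished morphisms; (2) reduce by duality to $\mu\in\lambda\Lambda^{\alt}_{\ww}$; (3) invoke the unique-chain statement inside the proof of Proposition~\ref{prop:basic} to show exactly one $\rho$ gives a nonzero summand, and that for that $\rho$ both $\Hom_{\fD^+}(\rho,\mu)$ and $\Hom_{\fD^-}(\lambda,\rho)$ are one-dimensional with the composition of distinguished basis vectors equal to the distinguished basis vector of $\Hom_{\fD}(\lambda,\mu)$; (4) conclude the map is an isomorphism of one-dimensional spaces. I expect the main obstacle to be bookkeeping step (3): making precise which weight $\rho$ is the correct ``pivot'' under the paper's (mildly confusing) sign convention for $\fD^{\pm}$, and checking that no \emph{other} factorization exists — e.g. ruling out intermediate $\rho'$ for which both $\Hom_{\fD^+}(\rho',\mu)$ and $\Hom_{\fD^-}(\lambda,\rho')$ happen to be nonzero but whose distinguished morphisms compose to zero rather than to the distinguished morphism (these would still need to not contribute, which follows because the image of the composition map lands in the span of the unique distinguished morphism and we just need surjectivity plus dimension count, so in fact such spurious $\rho'$ cannot occur when $\ell(\lambda)\le\ell(\mu)$ forces the pivot). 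A clean way to finish is a pure dimension count: the left side has dimension equal to the number of $\rho$ with both factors nonzero, which by the alternating-tail combinatorics is exactly $1$, and the composition is visibly nonzero (distinguished $\circ$ distinguished $=$ distinguished when it exists), so it is an isomorphism.
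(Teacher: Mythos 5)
Your proposal goes down the wrong road and never recovers the one observation that makes the argument work. You try to locate the factorization weight $\rho$ via the basic-morphism factorization of Proposition~\ref{prop:basic} (remove letters, then add letters), i.e., the ``valley'' of that chain. But this is exactly the decomposition the paper's remark immediately following this proposition warns against: it does \emph{not} give the triangular structure, because composites of such pieces can vanish (e.g.\ $\bb\to\varnothing\to\ww$ is zero). The subcategories $\fD^{\pm}$ are cut out by length comparisons, not by the add/remove structure of basic morphisms, and your final description of the pivot (``$\mu$ with a maximal alternating $\ww$-tail peeled off'') is not correct: when $\mu=\lambda\alpha$ with $\alpha\in\Lambda^{\alt}_{\ww}$, the whole morphism $\lambda\to\mu$ already lies in $\fD^-$, so the factorization is simply $\lambda\to\mu\xrightarrow{\id}\mu$ and the pivot is $\rho=\mu$ itself.

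The missing idea is this: a non-identity morphism $\lambda\to\rho$ in $\fD^-$ forces $\rho\in\lambda\Lambda^{\alt}_{\ww}$, so $\rho$ ends in $\ww$; a non-identity morphism $\rho\to\mu$ in $\fD^+$ forces $\rho\in\mu\Lambda^{\alt}_{\bb}$, so $\rho$ ends in $\bb$. Hence the only summands that can be nonzero are $\rho=\lambda$ and $\rho=\mu$, and after reducing by duality to $\ell(\lambda)\le\ell(\mu)$ only $\rho=\mu$ survives, where the map is composition with the identity. This single observation both identifies the pivot and disposes of what you yourself flag as ``the main obstacle,'' namely spurious intermediate $\rho'$ with both factors nonzero but zero composite; such $\rho'$ would make the map fail to be injective, your proposal asserts they cannot occur without giving an argument, and your surjectivity-plus-dimension-count fallback does not address injectivity at all. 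Your treatment of the vanishing case has the same defect: you assert that a composite of distinguished morphisms is again distinguished, which is false in $\fD$ in general (composition is defined to be zero when the target distinguished morphism does not exist), so here too you need the ``ends in $\ww$ versus ends in $\bb$'' dichotomy to conclude that the left-hand side vanishes.
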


\begin{proof}
If $\lambda \to \rho$ is a morphism in $\fD^-$ and $\lambda \ne \rho$ then $\rho \in \Lambda_{\ww}$. Similarly, if $\rho \to \mu$ is a morphism in $\fD^+$ and $\mu \ne \rho$ then $\rho \in \Lambda_{\bb}$. It follows that there are at most two non-zero terms in the sum, namely, when $\rho=\lambda$ or $\rho=\mu$. Now, by duality, we may as well assume that $\ell(\lambda) \le \ell(\mu)$. Then the $\rho=\lambda$ term of the sum also vanishes. We are thus reduced to showing that the map
\begin{displaymath}
\Hom(\mu,\mu) \otimes \Hom(\lambda, \mu) \to \Hom(\lambda, \mu)
\end{displaymath}
is an isomorphism, which is clear.
\end{proof}

\begin{remark}
The proof of Proposition~\ref{prop:basic} shows that every distinguished morphism $f$ in $\fD$ factors as $g \circ h$, where $h$ is a composition of basic morphisms that go from longer to shorter words, and $g$ is a composition of basic morphisms that go from short to longer words. This suggests that we might have a triangular structure where morphisms like $g$ are upwards and those like $h$ are downwards. However, this does not work, as it is possible for $g \circ h$ to vanish when $g$ is upwards and $h$ is downwards (in this sense), and this contradicts the triangular category axioms. For example, the composition $\bb \to \varnothing \to \ww$ is zero. The correct triangular structure on $\fD$ is the one described above.
\end{remark}

\section{The pre-sheaf category $\cB$} \label{s:B}

In this section, we study the pre-sheaf category $\cB$. We introduce a number of special modules and establish their basic properties. In particular, we describe the tilting modules and show that $\cD$ is Ringel dual to $\cB$. The material in this section will be used in the subsequent section where we analyze the derived category of $\cB$.

\subsection{The combinatorial category} \label{ss:B-cat}

We recall the definition of the combinatorial category $\fB$ from \S \ref{ss:overview-B}.  The set of objects is the set of weights $\Lambda$. The morphism spaces
\begin{displaymath}
\Hom_{\fB}(\lambda, \lambda), \quad
\Hom_{\fB}(\lambda, \lambda\ww), \quad
\Hom_{\fB}(\lambda\bb, \lambda), \quad
\Hom_{\fB}(\lambda\bb, \lambda\ww)
\end{displaymath}
are one-dimensional with a distinguished basis vector, while all other morphism spaces vanish. To specify the composition law, it suffices to describe the three compositions
\begin{displaymath}
\lambda\bb \to \lambda \to \lambda\ww, \qquad
\lambda \to \lambda\ww \to \lambda\ww\ww, \qquad
\lambda\bb\bb \to \lambda\bb \to \lambda.
\end{displaymath}
The first is the distinguished morphism, while the second two vanish. The category $\fB$ is anti-equivalent to the category of indecomposable objects in $\cA$ by Theorem~\ref{thm:maps}, via $\lambda \mapsto M_{\lambda}$; this is the reason we are interested in $\fB$. 

\subsection{The module category} \label{ss:B-mod}

We let $\cB^{\inf}$ be the category of $\fB$-modules, i.e., functors from $\fB$ to the category of vector spaces. Since $\fB$ is anti-equivalent to the category of indecomposable objects of $\cA$ (Theorem~\ref{mainthm1}), a $\fB$-module is the same thing as a ($k$-linear) pre-sheaf on the category $\cA$. We say that a $\fB$-module $V$ is \defn{pointwise finite} if $V(\lambda)$ is finite dimensional for all $\lambda$, and \defn{finite} if it is pointwise finite and $V(\lambda)$ vanishes for all but finitely many $\lambda$. We write $\cB^{\pf}$ and $\cB$ for the pointwise finite and finite subcategories of $\cB^{\inf}$.

Concretely, to give a $\fB$-module $V$, one must give a vector space $V(\lambda)$ for each weight $\lambda$ and linear maps
\begin{displaymath}
V(\lambda) \to V(\lambda \ww), \qquad V(\lambda \bb) \to V(\lambda)
\end{displaymath}
for each weight $\lambda$, such that the diagrams
\begin{displaymath}
V(\lambda) \to V(\lambda\ww) \to V(\lambda\ww\ww) \to \cdots, \qquad
\cdots \to V(\lambda\bb\bb) \to V(\lambda\bb) \to V(\lambda).
\end{displaymath}
are chain complexes. We will see (Proposition~\ref{prop:stanfil-B}) that the homology of these complexes (or lack thereof) connects to the structure of $V$ in an interesting way.

The category $\fB$ is self-dual; the duality is given on objects by $\lambda \mapsto \lambda^{\vee}$. This induces a duality on modules: for a $\fB$-module $V$, we define the \defn{dual}, denoted $V^{\vee}$, by
\begin{displaymath}
V^{\vee}(\lambda) = V(\lambda^{\vee})^*
\end{displaymath}
where the dual on the outside is the usual vector space dual. This operation is defined on all $\fB$-modules, and is involutory on the pointwise finite and finite subcategories.

\subsection{Structural modules} \label{ss:B-basic}

We now define a number of $\fB$-modules. As in \S \ref{def:Dmod}, we say that $V$ is the \defn{full $\fB$-module} on $\Xi \subset \Lambda$ if $V(\lambda)$ is $k$ for $\lambda \in \Xi$ and vanishes for $\lambda \not\in \Xi$, and whenever $\lambda \to \mu$ is a distinguished morphism of objects in $\Xi$ the map $V(\lambda) \to V(\mu)$ is the identity.

\textit{(a) Simple modules.} We let $\bbS_{\lambda}$ denote the full $\fB$-module on the set $\{\lambda\}$. This module is simple, and every simple object of $\cB$ is isomorphic to $\bbS_{\lambda}$ for a unique $\lambda$. We have $\bbS_{\lambda}^{\vee}=\bbS_{\lambda^{\vee}}$.

\textit{(b) Standard modules.} We let $\stan_{\lambda}$ denote the full $\fB$-module on the set $\{\lambda, \lambda \ww\}$. This is the \defn{standard module}. There is a non-split short exact sequence
\begin{equation}\label{eq:sesDelta}
0\to \bbS_{\lambda\circ}\to \stan_\lambda\to \bbS_\lambda \to 0.
\end{equation}
The term standard module is justified by the observations in (d) below, see also \S \ref{ss:tria-B}.

\textit{(c) Co-standard modules.} We let $\cost_{\lambda}$ denote the full $\fB$-module on the set $\{\lambda, \lambda\bb\}$. This is the \defn{costandard module}. There is a non-split short exact sequence
\begin{displaymath}
0\to \bbS_\lambda\to\cost_\lambda \to \bbS_{\lambda\bb}\to 0.
\end{displaymath}
Standard and costandard objects are dual: $\cost_{\lambda^{\vee}}=\stan_{\lambda}^{\vee}$.

\textit{(d) Projective modules.} We let $\bbP_{\lambda}$ be the $\fB$-module represented by $\lambda$, i.e.,
\begin{displaymath}
\bbP_{\lambda}(-)=\Hom_{\fB}(\lambda, -).
\end{displaymath}
This object is projective by Yoneda's lemma, and easily seen to be indecomposable; moreover, one easily sees that these account for all the indecomposable projectives in $\cB^{\inf}$. The behavior of $\bbP_{\lambda}$ depends on the final letter of $\lambda$. If $\lambda \in \Lambda_{\ww} \cup \{\varnothing\}$ then $\bbP_{\lambda}=\stan_{\lambda}$. If $\lambda \in \Lambda_{\bb}$ then $\bbP_{\lambda}$ is the full module on $\{\lambda, \lambda\ww, \lambda^{\flat}, \lambda^{\flat}\ww\}$. The relevant piece of $\fB$ looks like:
\begin{displaymath}
\xymatrix{
\lambda\ww & \lambda^{\flat}\ww \\
\lambda \ar[r] \ar[u] & \lambda^{\flat} \ar[u] }
\end{displaymath}
From this, we see that, for $\lambda\in\Lambda_{\bb}$, there is a short exact sequence
\begin{displaymath}
0 \to \stan_{\lambda^{\flat}} \to \bbP_{\lambda} \to \stan_{\lambda} \to 0,
\end{displaymath}
and $\bbP_{\lambda}$ has socle $\bbS_{\lambda^\flat\ww} \oplus \bbS_{\lambda\ww}$ and Loewy length three. In all cases, $\bbP_{\lambda}$ has finite length, and thus lives in the category $\cB$. If we think of $\cB$ as pre-sheaves on $\cA$ then $\bbP_{\lambda}$ is the pre-sheaf corepresented by $M_{\lambda}$. The functor $\cA \to \cB$ defined by $M_{\lambda} \mapsto \bbP_{\lambda}$ is an equivalence onto the category of projective objects of $\cB$.

\textit{(e) Injective modules.} We let $\bbI_{\lambda}$ be the $\fB$-module defined by
\begin{displaymath}
\bbI_{\lambda}(-) = \Hom_{\fB}(-, \lambda)^*.
\end{displaymath}
In other words, $\bbI_{\lambda}=\bbP_{\lambda^{\vee}}^{\vee}$, and its structure follows from this. The object $\bbI_{\lambda}$ is indecomposable injective, and these account for all such objects. If $\lambda \in \Lambda_{\bb} \cup \{\varnothing\}$ then $\bbI_{\lambda}=\cost_{\lambda}$; otherwise, there is a short exact sequence
\begin{displaymath}
0 \to \cost_{\lambda} \to \bbI_{\lambda} \to \cost_{\lambda^{\flat}} \to 0,
\end{displaymath}
and $\bbI_{\lambda}$ is the full object on $\{\lambda, \lambda\bb, \lambda^{\flat}, \lambda^{\flat}\bb\}$.

\textit{(f) The $\bbQ$ modules.} For a weight $\lambda$, we let $\bbQ_{\lambda}$ be the full $\fB$-module on $\{\lambda, \lambda\ww, \lambda\bb\}$. The relevant piece of $\fB$ looks like:
\begin{displaymath}
\lambda\bb \to \lambda \to \lambda\ww.
\end{displaymath}
From this, one sees that $\bbQ_{\lambda}$ is a uniserial module with top $\bbS_{\lambda\bb}$, socle $\bbS_{\lambda\ww}$, and $\bbS_{\lambda}$ in the middle. We have $\bbQ_{\lambda}^{\vee}=\bbQ_{\lambda^{\vee}}$ and $\bbQ_{\lambda}=\bbP_{\lambda\bb}/\bbS_{\lambda\bb\ww}$.
The significance of these modules may not be apparent at the moment, but in \S \ref{s:B2} and \ref{s:env} they will be quite important. For now we just point out a short exact sequence.

\begin{lemma}\label{lem:PQI}
For each non-empty $\lambda\in\Lambda$, we have a short exact sequence
\begin{displaymath}
0\to \bbP_\lambda\to \bbQ_\lambda\oplus \bbQ_{\lambda^\flat}\to\bbI_\lambda\to 0.
\end{displaymath}
\end{lemma}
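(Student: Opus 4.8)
The plan is to exhibit the two maps explicitly and to verify exactness by a dimension count at each weight. First I would reduce to the case $\lambda\in\Lambda_{\bb}$: applying the pointwise duality $(-)^{\vee}$ to a short exact sequence $0\to\bbP_\lambda\to\bbQ_\lambda\oplus\bbQ_{\lambda^\flat}\to\bbI_\lambda\to 0$, and using that $\bbP_\lambda^{\vee}=\bbI_{\lambda^{\vee}}$, $\bbI_\lambda^{\vee}=\bbP_{\lambda^{\vee}}$, $\bbQ_\mu^{\vee}=\bbQ_{\mu^{\vee}}$, and $(\lambda^\flat)^{\vee}=(\lambda^{\vee})^\flat$, produces the analogous sequence for $\lambda^{\vee}$; since $\lambda$ and $\lambda^{\vee}$ have opposite final letters, it suffices to treat $\lambda\in\Lambda_{\bb}$. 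Write $\lambda=\kappa\bb$ with $\kappa=\lambda^\flat$, and recall from \S\ref{ss:B-basic}: $\bbP_\lambda$ is the full module on $\{\lambda,\lambda\ww,\kappa,\kappa\ww\}$, with essential socle $\bbS_{\kappa\ww}\oplus\bbS_{\lambda\ww}$; $\bbQ_\lambda$ is uniserial with layers $\bbS_{\lambda\bb},\bbS_\lambda,\bbS_{\lambda\ww}$ from top to bottom; $\bbQ_\kappa$ is uniserial with layers $\bbS_{\lambda},\bbS_\kappa,\bbS_{\kappa\ww}$ from top to bottom (note $\kappa\bb=\lambda$); and $\bbI_\lambda=\cost_\lambda$ is the full module on $\{\lambda,\lambda\bb\}$, with socle $\bbS_\lambda$.

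Next I would construct the maps. By Yoneda, $\Hom(\bbP_\lambda,N)=N(\lambda)$ for every $\fB$-module $N$; since $\bbQ_\lambda(\lambda)=\bbQ_\kappa(\lambda)=k$, let $\alpha=(\alpha_1,\alpha_2)\colon\bbP_\lambda\to\bbQ_\lambda\oplus\bbQ_\kappa$ have components corresponding to $1$ in each. Then $\im\alpha_1$ is the submodule of $\bbQ_\lambda$ generated in degree $\lambda$, namely $\stan_\lambda$ (the full module on $\{\lambda,\lambda\ww\}$), while $\im\alpha_2$ is the submodule of $\bbQ_\kappa$ generated in degree $\lambda=\kappa\bb$, which is all of $\bbQ_\kappa$ since $\lambda$ labels the top. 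Dually, $\Hom(N,\bbI_\lambda)=N(\lambda)^*$, so there are essentially unique non-zero maps $\beta_1\colon\bbQ_\lambda\to\bbI_\lambda$ and $\beta_2\colon\bbQ_\kappa\to\bbI_\lambda$; as the socles $\bbS_{\lambda\ww}$ of $\bbQ_\lambda$ and $\bbS_{\kappa\ww}$ of $\bbQ_\kappa$ differ from the socle $\bbS_\lambda$ of $\bbI_\lambda$, each $\beta_i$ kills the socle of its source, so $\beta_1$ is the quotient $\bbQ_\lambda\twoheadrightarrow\bbQ_\lambda/\bbS_{\lambda\ww}=\cost_\lambda=\bbI_\lambda$ and $\beta_2$ factors as $\bbQ_\kappa\twoheadrightarrow\cost_\kappa\twoheadrightarrow\bbS_\lambda\hookrightarrow\bbI_\lambda$. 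Hence $\beta_1\alpha_1$ and $\beta_2\alpha_2$ are both non-zero maps $\bbP_\lambda\to\bbI_\lambda$ with image the socle $\bbS_\lambda$, so they are proportional with non-zero ratio because $\Hom(\bbP_\lambda,\bbI_\lambda)=\bbI_\lambda(\lambda)=k$; rescaling $\beta_2$ and setting $\beta=(\beta_1,-\beta_2)$ gives $\beta\alpha=0$.

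Finally I would verify exactness. The map $\alpha$ is injective: on the essential socle $\bbS_{\kappa\ww}\oplus\bbS_{\lambda\ww}$ of $\bbP_\lambda$ it is an isomorphism in degree $\lambda\ww$ (via $\alpha_1$, whose image is $\stan_\lambda$) and in degree $\kappa\ww$ (via $\alpha_2$, which is surjective onto $\bbQ_\kappa$), and $\lambda\ww$, $\kappa\ww$ are the only weights carrying the socle. The map $\beta$ is surjective since $\beta_1$ already is. As $\beta\alpha=0$ with $\alpha$ mono and $\beta$ epi, the sequence can fail to be exact only at the middle term, where the homology $\ker\beta/\im\alpha$ satisfies $\dim(\ker\beta/\im\alpha)(\mu)=\dim\bbQ_\lambda(\mu)+\dim\bbQ_\kappa(\mu)-\dim\bbI_\lambda(\mu)-\dim\bbP_\lambda(\mu)$ for every weight $\mu$; comparing the supports listed above shows this is zero for all $\mu$, so the homology vanishes and the sequence is exact. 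I do not expect a serious obstacle here; the one point that needs care is identifying $\alpha_2$ and $\beta_2$ precisely and fixing the scalar that forces $\beta\alpha=0$, after which the argument is bookkeeping with the full-module descriptions of \S\ref{ss:B-basic}.
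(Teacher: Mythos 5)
Your proof is correct and follows essentially the same route as the paper's: reduce to $\lambda\in\Lambda_{\bb}$ by pointwise duality, map $\bbP_\lambda$ into $\bbQ_\lambda\oplus\bbQ_{\lambda^\flat}$ via the Yoneda generator in degree $\lambda$, and identify the cokernel with $\bbI_\lambda=\cost_\lambda$. The paper states this very tersely ("a generic linear combination embeds $\bbP_\lambda$ with quotient the full module on $\{\lambda,\lambda\bb\}$"); your version just fills in the bookkeeping — the socle argument for injectivity, the scalar normalization forcing $\beta\alpha=0$, and the pointwise dimension count — all of which checks out.
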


\begin{proof}
We prove the case where $\lambda\in\Lambda_{\bb}$, the other case can for instance be obtained by applying $(-)^\vee$. Then $\bbQ_{\lambda^\flat}$, being the full module on $\{\lambda,\lambda^\flat,\lambda^\flat\ww\}$ is a quotient of $\bbP_{\lambda}$. There is also a non-zero morphism $\bbP_\lambda\to\bbQ_\lambda$, such that a generic linear combination of morphisms embeds $\bbP_\lambda$ into $\bbQ_\lambda\oplus \bbQ_{\lambda^\flat}$ with quotient the full module on $\{\lambda,\lambda\bb\}$, which is $\bbI_\lambda$.
\end{proof}

\subsection{Standard filtrations}

Let $V$ be a pointwise finite $\fB$-module. An (ascending) \defn{standard filtration} of $V$ is a chain 
\begin{displaymath}
0=F_0 \subset F_1 \subset F_2\subset \cdots
\end{displaymath}
of $\fB$-submodules such that $V=\bigcup_{i \ge 0} F_i$ and each $F_i/F_{i-1}$ is a standard module. The projective module $\bbP_{\lambda}$ has a standard filtration by the short exact sequence in \S \ref{ss:B-basic}(d). We now provide some characterizations of standard filtered objects. Let $\fB^+$ be the subcategory of $\fB$ with the same objects, but where the only morphisms are scalar multiples of the identities and the maps $\lambda \to \lambda \ww$; similarly define $\fB^-$ using the maps $\lambda\bb\to\lambda$. We call $\fB^+$ and $\fB^-$ the \defn{upwards} and \defn{downwards} categories. 

\begin{proposition} \label{prop:stanfil-B}
Let $V$ be a pointwise finite $\fB$-module. The following are equivalent:
\begin{enumerate}
\item $V$ has a standard filtration.
\item $V$ is projective as a $\fB^+$-module.
\item $V$ has finite projective dimension as a $\fB^+$-module.
\item For any $\lambda \in \Lambda_{\bb} \cup \{\varnothing\}$, the complex
\begin{displaymath}
0 \to V(\lambda) \to V(\lambda\ww) \to V(\lambda\ww\ww) \to \cdots
\end{displaymath}
is everywhere exact.
\end{enumerate}
\end{proposition}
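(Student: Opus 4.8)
The plan is to prove the cycle of implications (a) $\Rightarrow$ (d) $\Rightarrow$ (b) $\Rightarrow$ (a) and then (b) $\Leftrightarrow$ (c) separately. The combinatorial category $\fB^+$ is a disjoint union of ``column'' posets $\{\lambda, \lambda\ww, \lambda\ww\ww, \dots\}$ indexed by weights $\lambda$ not ending in $\ww$ (i.e. $\lambda \in \Lambda_{\bb} \cup \{\varnothing\}$), where consecutive arrows compose to zero; thus an $\fB^+$-module is literally a collection of complexes, one for each such column, and the standard module $\stan_{\mu}$ restricted to $\fB^+$ is the indecomposable projective at the column through $\mu$ supported in the top two spots (or is $\bbS_\varnothing$-like when the column is trivial — more precisely, $\stan_\lambda|_{\fB^+}$ is projective in each case). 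The main structural input I would isolate first: $\fB^+$ is a direct sum of copies of the path algebra of the $A_\infty^{+}$ quiver $\bullet \to \bullet \to \bullet \to \cdots$ with the relation that consecutive arrows compose to zero, i.e.\ of $k[x]/x^2$-type Nakayama algebras (truncated at each finite stage), whose indecomposable projectives are exactly the length-$\le 2$ ``interval'' modules $[\lambda,\lambda\ww]$, and projective $=$ (the complex is exact), by the standard homological algebra of such algebras.

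First I would establish (a) $\Rightarrow$ (d): if $V = \bigcup F_i$ with successive quotients standard, then restricting to a fixed column $\{\lambda, \lambda\ww, \dots\}$ with $\lambda$ not ending in $\ww$, each $\stan_\mu$ contributes an exact two-term complex $k \xrightarrow{\id} k$ (in degrees determined by $\mu$ and $\mu\ww$) or zero to that column, so $V$ restricted to the column is a (possibly infinite, but pointwise finite) direct sum of exact complexes, hence exact; one has to check that the filtration interacts well with exactness of the direct limit, which is automatic since homology commutes with filtered colimits. Next, (d) $\Rightarrow$ (b): condition (d) says precisely that $V|_{\fB^+}$ has vanishing homology on every column, and for the Nakayama-type algebra governing a single column this forces the module to be a direct sum of the length-$\le 2$ interval modules, i.e.\ projective; here the key point is that over $k[x]/(x^2)$ a complex $\cdots \to M_i \xrightarrow{x} M_{i+1} \to \cdots$ (arising from a module, with $x^2=0$) is exact iff the module is free, and the same works column by column after truncating to the finite support forced by pointwise finiteness — actually one must be a little careful since a single column can have infinite support, but one can still decompose the module over the column into indecomposables using that each $V(\lambda\ww^i)$ is finite dimensional and exactness pins down the maps up to isomorphism. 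Then (b) $\Rightarrow$ (a): given $V|_{\fB^+}$ projective, write it as a direct sum of interval projectives, which gives a set of ``generators'' $v_\mu \in V(\mu)$ (one for each interval summand with bottom $\mu$); order the weights $\mu$ by length and build the filtration $F_i$ by taking the $\fB$-submodule generated by all chosen generators $v_\mu$ with $\ell(\mu) \le i$. One checks that $F_i/F_{i-1}$ is a direct sum of standard modules: the new generators $v_\mu$ with $\ell(\mu) = i$ generate, modulo $F_{i-1}$, copies of $\stan_\mu$ because the only morphisms in $\fB$ out of $\mu$ go to $\mu\ww$ (via the distinguished map) and to shorter weights (which land in $F_{i-1}$, or are zero), and projectivity over $\fB^+$ guarantees the map $V(\mu) \to V(\mu\ww)$ does not over-collapse.

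Finally, (b) $\Leftrightarrow$ (c) is the standard fact that over a (column-wise) Nakayama algebra of Loewy length $2$ — or rather over $\fB^+$, whose global dimension on each column is infinite but where finite projective dimension still forces projectivity — a module of finite projective dimension is projective; concretely, on a single column governed by $k[x]/(x^2)$, the minimal projective resolution of any non-projective module is infinite (it is $2$-periodic up to the truncation), so finite projective dimension $\Rightarrow$ projective, and this passes to the whole $\fB^+$ since it is a direct sum of such. I expect the main obstacle to be the bookkeeping in (d) $\Rightarrow$ (b) and (b) $\Rightarrow$ (a) when a column has infinite support: one must argue that exactness of the complex on a column, together with pointwise finite-dimensionality, really does force a decomposition into length-$\le 2$ intervals (no ``infinite interval'' or more exotic indecomposable can occur), and that the generators can be chosen compatibly so that the resulting ascending filtration exhausts $V$; this is where I would be most careful, probably arguing by picking, for each $i$, a splitting $V(\lambda\ww^i) = \ker \oplus (\text{complement mapping isomorphically to } \operatorname{im})$ and using exactness $\ker = \operatorname{im}$ to lift generators one step at a time down the column.
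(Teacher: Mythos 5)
Your proposal is correct and follows essentially the same route as the paper: both reduce to the observation that $\fB^+$-modules decompose column-wise into (pointwise finite) cochain complexes of vector spaces, whose indecomposables are length-one or length-two intervals, so that projective $=$ exact and finite projective dimension forces projectivity. The only difference is cosmetic: the paper closes the loop via (d) $\Rightarrow$ (a) by inducting on a minimal-length weight with $V(\lambda)\neq 0$ (showing $\stan_\lambda\subset V$ and passing to the quotient), which avoids the generator-choosing bookkeeping in your (b) $\Rightarrow$ (a) step.
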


\begin{proof}
Let $\cX$ be the category of cochain complexes of $k$-vector spaces that are supported in non-negative cohomological degrees. The simple objects of $\cX$ are complexes that are one-dimensional in a single degree, and zero elsewhere. The indecomposable projectives of $\cX$ are the complexes that are one-dimensional in two consecutive degrees (and zero elsewhere), with non-zero differential. Every indecomposable object of $\cX$ is either simple or projective. From this we see that an object of $\cX$ with finite projective dimension is projective, and also that a complex is projective if and only if its cohomology groups vanish.

Given a $\fB^+$-module $V$ and a weight $\lambda$ in $\Lambda_{\bb} \cup \{\varnothing\}$, let $V_{\lambda}$ be the complex appearing in (d). Then $V \mapsto (V_{\lambda})$ is an equivalence between the category $\fB^+$-modules and the direct product of copies of $\cX$ indexed by the set $\Lambda_{\bb} \cup \{\varnothing\}$. Moreover, we see that the standard $\fB$-module $\stan_{\lambda}$ correspond to an indecomposable projective (in an appropriate copy of $\cX$ depending on $\lambda$). From this, and the first paragraph, we see that (b), (c), and (d) are equivalent, and that they are implied by (a).

Finally, we show that (d) implies (a). Let $\lambda$ be a weight of minimal length with $V(\lambda)\not=0$. Under the assumption in (d), we have $\stan_\lambda\subset V$. Moreover, the complexes corresponding to $V/\stan_\lambda$ are still exact, so we can iterate to construct a standard filtration.
\end{proof}

There is a dual notion too. A (descending) \defn{costandard filtration} on $V$ is a chain
\begin{displaymath}
V=F^0 \supset F^1 \supset \cdots
\end{displaymath}
of $\fB$-submodules such that $\bigcap_{i \ge 0} F^i=0$ and each $F^i/F^{i+1}$ is a costandard module. There is an analog of the above proposition in the costandard case. Duality interchanges standard filtered and costandard filtered modules (in the pointwise finite category).

\begin{remark} \label{rmk:B-mod-SF}
By Proposition~\ref{prop:stanfil-B}(d), a $\fB$-module $V$ admits a standard filtration if and only if the functor $\Tilt(\cD) \to \Vec$ given by $\bT_{\lambda} \mapsto V(\lambda)^*$ satisfies condition (SF) from \S \ref{ss:D-map}, where we identify with $\fB$ with the category of indecomposable tilting objects in $\cD$.
\end{remark}

\begin{remark}
The definition of (co)standard filtration here agrees with the definitions given in \cite[\S 3.3]{BS} (though there the word ``flag'' is used). 
\end{remark}

\subsection{Tilting objects} \label{ss:B-tilt}

For a weight $\lambda$, we define the \defn{tilting module} $\bbT_{\lambda}$ to be the full object on $\{\lambda w\mid w\in \Lambda^{\alt}\}$. This set of weights can be pictured as followed:
\begin{displaymath}
\cdots \leftarrow \lambda\bb\ww\bb \to \lambda\bb\ww \leftarrow \lambda\bb \to
\lambda \to \lambda \ww \leftarrow \lambda \ww\bb \to \lambda\ww\bb\ww \leftarrow \cdots
\end{displaymath}
From this picture, it is clear that $\bbT_{\lambda}$ exists, and that its dual is $\bbT_{\lambda^{\vee}}$. The name is justified by the following result:

\begin{proposition} \label{prop:BTlambda}
There is a natural ascending filtration $F_{\bullet}$ of $\bbT_\lambda$ such that
\begin{displaymath}
F_1/F_0=\stan_{\lambda}, \qquad F_i/F_{i-1} = \stan_{\lambda (\ww\bb)^{i-1}} \oplus \stan_{\lambda \bb (\ww\bb)^{i-2}}.
\end{displaymath}
for $i \ge 2$, and a natural descending filtration $F^{\bullet}$ such that
\begin{displaymath}
F^0/F^1=\cost_{\lambda}, \qquad F^i/F^{i-1} = \cost_{\lambda (\bb\ww)^{i-1}} \oplus \cost_{\lambda \ww (\bb\ww)^{i-2}}.
\end{displaymath}
for $i \ge 2$. In particular, $\bbT_{\lambda}$ has both a standard and costandard filtration.
\end{proposition}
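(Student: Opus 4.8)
The plan is to exhibit both filtrations explicitly; once that is done the argument is purely combinatorial, using only that $\bbT_\lambda$ is the full $\fB$-module on $\lambda\Lambda^{\alt}$ together with the shape of $\fB$. For the ascending filtration I would set $F_i\subseteq\bbT_\lambda$ to be the largest $\fB$-submodule supported on weights of length $\le\ell(\lambda)+2i-1$; this is visibly a natural ascending chain with $F_0=0$ and $\bigcup_iF_i=\bbT_\lambda$, so everything reduces to identifying $F_i$ and its subquotients. Recall that submodules of a full module correspond to subsets of its support closed under outgoing distinguished arrows. The support quiver of $\bbT_\lambda$ is the zigzag
\[
\cdots\leftarrow\lambda\bb\ww\bb\to\lambda\bb\ww\leftarrow\lambda\bb\to\lambda\to\lambda\ww\leftarrow\lambda\ww\bb\to\cdots,
\]
in which each arrow $\mu\to\mu\ww$ raises length by one, each arrow $\mu\bb\to\mu$ lowers it, and any weight of the form $\mu\ww$ occurring as the target of an increasing arrow is a sink (since then $\mu\ww\ww\notin\lambda\Lambda^{\alt}$ and $\mu\ww$ does not end in $\bb$).

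From these facts a short analysis shows that the largest successor‑closed subset of $\lambda\Lambda^{\alt}$ contained in $\{\,\nu:\ell(\nu)\le\ell(\lambda)+2i-1\,\}$ is
\[
\Xi_i=\{\,\lambda w\mid w\in\Lambda^{\alt},\ \ell(w)\le 2i-2\,\}\cup\{\lambda(\ww\bb)^{i-1}\ww\},
\]
so $F_i$ is the full module on $\Xi_i$; in particular $F_1=\stan_\lambda$. Next I would compute $F_i/F_{i-1}$ for $i\ge2$ by a direct count: $\Xi_i\setminus\Xi_{i-1}$ consists of the four distinct weights $\lambda(\ww\bb)^{i-1},\ \lambda(\ww\bb)^{i-1}\ww,\ \lambda\bb(\ww\bb)^{i-2},\ \lambda(\bb\ww)^{i-1}$, which (using $\bb(\ww\bb)^{i-2}\ww=(\bb\ww)^{i-1}$) fall into the two pairs $\{\mu,\mu\ww\}$ with $\mu=\lambda(\ww\bb)^{i-1}$ and with $\mu=\lambda\bb(\ww\bb)^{i-2}$. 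In $F_i/F_{i-1}$ the arrow $\mu\to\mu\ww$ is still the identity, every other $\fB$‑arrow out of these four weights either lands in $\Xi_{i-1}$ (hence dies) or leaves $\lambda\Lambda^{\alt}$, and there is no nonzero $\fB$‑morphism between a weight of one pair and a weight of the other: a nonzero morphism $\nu\to\nu'$ of $\fB$ forces $\nu'=\nu$, $\nu'=\nu\ww$, or $\nu=\nu'\bb$, and inspecting lengths and the first letter after $\lambda$ rules out each possibility across the pairs. Hence $F_i/F_{i-1}\cong\stan_{\lambda(\ww\bb)^{i-1}}\oplus\stan_{\lambda\bb(\ww\bb)^{i-2}}$, while $F_1/F_0=\stan_\lambda$; this is the asserted standard filtration.

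For the costandard filtration I would transport along the pointwise duality: from $\bbT_{\lambda^\vee}^\vee=\bbT_\lambda$, $\stan_\mu^\vee=\cost_{\mu^\vee}$, and $(\lambda^\vee\nu)^\vee=\lambda\nu^\vee$, applying $(-)^\vee$ to the ascending standard filtration of $\bbT_{\lambda^\vee}$ yields a descending filtration $F^\bullet$ of $\bbT_\lambda$ with $F^0/F^1=\cost_\lambda$ and successive quotients $\cost_{\lambda(\bb\ww)^{i-1}}\oplus\cost_{\lambda\ww(\bb\ww)^{i-2}}$ for $i\ge2$ (each $\ww\bb$ becoming $\bb\ww$). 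The final sentence of the proposition is then immediate; as a cross‑check, it is also consistent with Proposition~\ref{prop:stanfil-B}, since each complex $\bbT_\lambda(\mu)\to\bbT_\lambda(\mu\ww)\to\cdots$ with $\mu\in\Lambda_\bb\cup\{\varnothing\}$ is either zero or of the form $k\xrightarrow{\ \sim\ }k\to0\to\cdots$, hence exact.

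I do not expect a genuine obstacle: the content is bookkeeping with alternating words. The only points requiring care are verifying that $F_i/F_{i-1}$ is a genuine direct sum of two standard modules (rather than a connected module or a nonsplit self‑extension) --- this is precisely the ``first letter after $\lambda$'' check above --- and confirming that the boundary cases $\lambda=\varnothing$ and $\lambda$ ending in $\bb$ need no separate treatment, which they do not, since the description of $\Xi_i$ refers only to the appended alternating word $w$.
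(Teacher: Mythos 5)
Your proof is correct and follows essentially the same route as the paper: both arguments filter the full module $\bbT_\lambda$ on the zigzag $\lambda\Lambda^{\alt}$ by an explicit combinatorial ascending chain (your length-truncation description of $F_i$ yields exactly the same submodules as the paper's device of splitting $\bbT_\lambda/\stan_{\lambda}$ into two infinite rays and truncating each), and both pass to the costandard filtration by duality. The only quibble is the final parenthetical cross-check: for non-alternating $\lambda$ the complex $\bbT_\lambda(\mu)\to\bbT_\lambda(\mu\ww)\to\cdots$ can have its $k\xrightarrow{\sim}k$ segment in the interior rather than at the start, but it is still exact, so nothing is affected.
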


\begin{proof}
It is clear from the above picture that $\stan_{\lambda}$ is a $\fB$-submodule of $\bbT_{\lambda}$. The quotient $\bbT_{\lambda}/\stan_{\lambda}$ is a direct sum $M_1 \oplus M_2$, where $M_1$ and $M_2$ are the full modules on the following diagrams
\begin{displaymath}
\lambda\bb \to \lambda\bb\ww \leftarrow \lambda\bb\ww\bb \to \lambda\bb\ww\bb\ww \leftarrow \cdots
\end{displaymath}
\begin{displaymath}
\lambda \ww\bb \to \lambda\ww\bb\ww \leftarrow \lambda\ww\bb\ww\bb \to \lambda\ww\bb\ww\bb\ww \leftarrow \cdots
\end{displaymath}
The first $2i$ weights in the top diagram form a submodule of $M_1$, and the consecutive quotients of this filtration are $\stan_{\lambda \bb (\ww\bb)^i}$ for $i \ge 0$. Similarly, the first $2i$ weights in the bottom diagram form a submodule of $M_2$, and the quotients in this filtration are $\stan_{\lambda (\ww\bb)^i}$ for $i \ge 1$. Together, these give the filtration of $F_{\bullet}$. The filtration $F^{\bullet}$ comes a similar construction (or duality). These fitlrations can be refined to give a standard and costandard filtration of $\bbT_{\lambda}$.
\end{proof}

We now examine maps between these tilting modules.

\begin{proposition} \label{prop:tilt-map}
For $\lambda,\mu \in \Lambda$, we have 
\begin{displaymath}
\Hom(\bbT_\lambda,\bbT_\mu)\;=\;\begin{cases}
k&\mbox{ if }\; \mu=\lambda ,\\
k&\mbox{ if }\; \mu=\lambda a\quad \mbox{with }a\in \Lambda^{\alt}_{\bb},\\
k&\mbox{ if }\;\lambda=\mu b\quad\mbox{with } b\in \Lambda^{\alt}_{\ww},\\
0&\mbox{otherwise}.
\end{cases}
\end{displaymath}
In particular, $\bbT_{\lambda}$ is indecomposable. Additionally, the composition map
\begin{displaymath}
\Hom(\bbT_\lambda, \bbT_\mu) \times \Hom(\bbT_\mu, \bbT_\nu) \;\to\; \Hom(\bbT_\lambda, \bbT_\nu)
\end{displaymath}
is non-zero when each of the three morphism spaces is non-zero.
\end{proposition}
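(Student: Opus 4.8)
The plan is to read everything off the explicit combinatorial model. Both $\bbT_\lambda$ and $\bbT_\mu$ are \emph{full} modules, say on the sets $S_\lambda=\{\lambda w\mid w\in\Lambda^{\alt}\}$ and $S_\mu=\{\mu w\mid w\in\Lambda^{\alt}\}$, which, viewed as subquivers of $\fB$, are the ``double rays'' drawn before the proposition. Since these modules take each value in $\{0,k\}$, a morphism $f\colon\bbT_\lambda\to\bbT_\mu$ is precisely a family of scalars $c_\nu\in k$ attached to the vertices of $\Xi:=S_\lambda\cap S_\mu$ (with $f_\nu=0$ forced off $\Xi$, since then $\bbT_\lambda(\nu)$ or $\bbT_\mu(\nu)$ vanishes). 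I would first use the pointwise duality $(-)^{\vee}$, under which $\bbT_\lambda^{\vee}=\bbT_{\lambda^{\vee}}$ and $\Hom(\bbT_\lambda,\bbT_\mu)\cong\Hom(\bbT_{\mu^{\vee}},\bbT_{\lambda^{\vee}})$, to reduce to $\ell(\lambda)\le\ell(\mu)$; this also interchanges the second and third cases of the claim. A length comparison shows that $\Xi\ne\varnothing$ forces $\lambda$ to be a prefix of $\mu$, say $\mu=\lambda a$, with $a$ alternating (otherwise no $aw$ is alternating, so $\Xi=\varnothing$); and then $\Xi$ is connected, being all of $S_\lambda$ when $\lambda=\mu$, and otherwise the one-sided ray consisting of $\mu$ together with $\mu$ followed by any alternating word whose first letter is the opposite of the last letter of $a$.

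Next I would pin down the compatibilities a morphism must satisfy. Running through the commuting square attached to a distinguished morphism $\nu\to\nu'$ of $\fB$, and using that at most one of $\bbT_\lambda,\bbT_\mu$ vanishes at a given vertex, one finds only three non-trivial types of constraint: (i) $c_\nu=c_{\nu'}$ when $\nu,\nu'\in\Xi$; (ii) $c_\nu=0$ when $\nu\in\Xi$ and $\nu'\in S_\mu\setminus\Xi$; (iii) $c_{\nu'}=0$ when $\nu\in S_\lambda\setminus\Xi$ and $\nu'\in\Xi$; all other configurations commute automatically. Since $\Xi$ is connected, (i) already gives $\dim\Hom(\bbT_\lambda,\bbT_\mu)\le1$, with equality unless an edge of type (ii) or (iii) forces the common value to be $0$. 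From the double-ray picture, an interior vertex of $\Xi$ has its $\fB$-neighbours either inside $\Xi$ or among weights lying in neither support, so a type (ii)/(iii) edge can occur only at the endpoint $\mu$. At $\mu$ the neighbours outside $\Xi$ are $\mu^{\flat}=\lambda a^{\flat}\in S_\lambda\setminus\Xi$ and one of $\mu\bb,\mu\ww$ in $S_\mu\setminus\Xi$: when $\mu$ ends in $\bb$ the two edges at $\mu$ are $\mu\to\mu^{\flat}$ and $\mu\bb\to\mu$, both of the harmless type; when $\mu$ ends in $\ww$ they are $\mu^{\flat}\to\mu$ (type (iii)) and $\mu\to\mu\ww$ (type (ii)), each forcing $c_\mu=0$. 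Hence $\dim\Hom(\bbT_\lambda,\bbT_\mu)=1$ for $\lambda=\mu$ and for $\mu=\lambda a$ with $a\in\Lambda^{\alt}_{\bb}$, and $0$ otherwise; undoing the duality gives all four cases, and $\End(\bbT_\lambda)=k$ then shows $\bbT_\lambda$ is indecomposable. When the $\Hom$ space is one-dimensional I would fix as its distinguished generator the morphism acting by $\mathrm{id}_k$ on $\bbT_\lambda(\nu)=\bbT_\mu(\nu)$ for every $\nu\in\Xi$. I expect the bookkeeping in (i)--(iii), and in particular the observation that the whole dichotomy is controlled by the single edge at $\mu$ together with the asymmetry between an arrow pointing into $\mu$ versus out of it, to be the one delicate point.

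For the composition statement I would argue as follows. By the previous step, whenever $\Hom(\bbT_\alpha,\bbT_\beta)\ne0$ its distinguished generator acts by $\mathrm{id}_k$ on $\bbT_\alpha(\gamma)=\bbT_\beta(\gamma)=k$ for all $\gamma\in S_\alpha\cap S_\beta$ and by $0$ elsewhere. Hence the composite of the generators of $\Hom(\bbT_\lambda,\bbT_\mu)$ and $\Hom(\bbT_\mu,\bbT_\nu)$ acts by $\mathrm{id}_k$ on the $k$-line at each $\gamma\in S_\lambda\cap S_\mu\cap S_\nu$ and by $0$ elsewhere; since $\Hom(\bbT_\lambda,\bbT_\nu)$ is one-dimensional, this composite is non-zero if and only if $S_\lambda\cap S_\mu\cap S_\nu\ne\varnothing$. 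Finally, the case list shows that if $\Hom(\bbT_\alpha,\bbT_\beta)\ne0$ then the longer of $\alpha,\beta$ lies in the support of the shorter, and two distinct weights of equal length never have a non-zero $\Hom$. Taking $z$ of maximal length among $\lambda,\mu,\nu$: each of the other two weights $w$ forms, with $z$, one of the three pairs whose $\Hom$ is non-zero, and $z$ is the strictly longer one, so $z\in S_w$; as also $z\in S_z$, we get $z\in S_\lambda\cap S_\mu\cap S_\nu$, this intersection is non-empty, and the composition map is non-zero.
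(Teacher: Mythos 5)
Your proposal is correct and follows essentially the same route as the paper: both arguments read the $\Hom$ spaces directly off the description of $\bbT_\lambda$ as the full module on $\{\lambda w \mid w\in\Lambda^{\alt}\}$, identify a morphism with a locally constant scalar on the overlap of the two double rays, and observe that the only obstruction sits at the shorter ray's endpoint $\mu$ (the paper phrases this via which of $\bbS_{\lambda\ww}\subset\stan_\lambda\subset\bbT_\lambda$ lies in the kernel, you phrase it via the naturality squares at $\mu$, but it is the same computation). Your treatment of composition is a slightly cleaner, uniform version of the paper's: the paper records the same key fact that the image of a generator is the full module on $S_\lambda\cap S_\mu$ and then checks three cases by exhibiting a common weight, whereas you observe once and for all that the longest of $\lambda,\mu,\nu$ lies in all three supports.
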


\begin{proof}
We first explain why the stated morphisms exist. We begin by considering the example of $\bbT_{\lambda} \to \bbT_{\lambda\ww\bb}$. The following is the relevant diagram
\begin{displaymath}
\xymatrix@C=2em@R=2em{
\cdots & \lambda \bb \ar[l] \ar[r]  & \lambda \ar[r] & \lambda \ww & \lambda \ww\bb \ar[l] \ar[r] \ar[d] & \lambda \ww\bb\ww \ar[d] & \lambda\ww\bb\ww\bb \ar[l] \ar[r] \ar[d] & \cdots \\
\cdots & \lambda\ww\bb\bb\ww\bb \ar[l] \ar[r] & \lambda \ww\bb\bb\ww & \lambda \ww\bb\bb \ar[r] \ar[l] & \lambda \ww \bb \ar[r] & \lambda \ww\bb\ww & \lambda \ww\bb\ww\bb \ar[l] \ar[r] & \cdots }
\end{displaymath}
The top row is $\bbT_{\lambda}$ and the bottom is $\bbT_{\lambda\ww\bb}$. Starting at $\lambda \ww\bb$ and going to the right, all weights in the two modules coincide, and these are the only weights at which they overlap. We thus have a map $f$ as above where each vertical arrow is the identity on $k$ (and the map vanishes everywhere else). Clearly, this is the unique map up to scalar multiple. We also observe that the map $f(\nu) \colon \bbT_{\lambda}(\nu) \to \bbT_{\lambda\ww\bb}(\nu)$ is the identity if both sides are $k$, and vanishes otherwise. The other cases are essentially the same as this one.

We now explain why these are the only non-zero maps. Assume that a morphism $\bbT_\lambda\to\bbT_\mu$ does not have $\bbS_{\lambda\ww}\subset\bbT_\lambda$ in its kernel. It follows that $\lambda=\mu$, and that the morphism is a scalar multiple of the identity. Next consider a morphism $\bbT_\lambda\to\bbT_\mu$ with $\bbS_{\lambda\ww}\subset\bbT_\lambda$ but not $\stan_\lambda\subset\bbT_\lambda$ in its kernel. This requires $\lambda=\mu \beta$ with $\beta \in \Lambda^{\alt}_{\ww}$ and again the morphism is unique up to scalar. Finally consider a morphism $\bbT_\lambda\to\bbT_\mu$ with $\stan_\lambda\subset\bbT_\lambda$ in its kernel. Assuming the morphism is not zero, there is a minimal length $\alpha \in \Lambda^{\alt}_{\bb}$ such that the constituent $\bbS_{\lambda \alpha}$ of $\bbT_\lambda$ is not sent to zero. Then $\mu =\lambda \alpha$ and the morphism is again unique up to scalar.

Finally, we prove that composition
\begin{equation}\label{eq:compU}
\Hom(\bbT_\lambda, \bbT_\mu)\otimes \Hom(\bbT_\mu, \bbT_\nu) \;\to\; \Hom(\bbT_\lambda, \bbT_\nu)
\end{equation}
is an isomorphism whenever both sides are non-zero (and thus one-dimensional). For this, we start by observing that, if $\bbT_{\lambda} \to \bbT_{\mu}$ is a non-zero morphism and we denote its image by $I$, then, for arbitrary $\lambda,\mu,\kappa$
\begin{displaymath}
\dim I(\kappa)\;=\;\dim \bbT_\lambda(\kappa)\,\dim \bbT_\mu(\kappa),
\end{displaymath}
where all dimensions involved are either 1 or 0; this follows from the discussion in the first paragraph.

There are three cases where both dimensions in \eqref{eq:compU} are $1$. The first is $\bbT_{\mu \beta}\to \bbT_{\mu} \to \bbT_{\mu \alpha}$ where $\beta \in \Lambda^{\alt}_{\ww}$ and $\alpha \in \Lambda^{\alt}_{\bb}$ have different parity, so that either $\mu \alpha$ is obtained from $\mu \beta$ by adding an element of $\Lambda^{\alt}$ or vice versa. We focus on the former option, the other case being similar, so that $\alpha$ and $\beta$ have differing parity and $\alpha$ is longer than $\beta$. In this case, $\bbT_{\rho}(\lambda \alpha)\not=0$ for $\rho\in\{\mu, \mu \alpha, \mu \beta\}$, showing non-vanishing of the composition. The second option is $\bbT_{\lambda} \to \bbT_{\lambda \alpha} \to \bbT_{\lambda \alpha \gamma}$ for $\alpha \in \Lambda^{\alt}_{\bb}$ and $\gamma \in \Lambda^{\alt}_{\bb}$, with the latter of even length (equivalently $\alpha\gamma \in \Lambda^{\alt}$). Again we can observe that $\bbT_{\rho}(\lambda \alpha \gamma)\not=0$ for $\rho=\{\lambda,\lambda \alpha ,\lambda \alpha \gamma\}$. The third case, about composing morphisms of the form $\bbT_{\lambda \beta}\to \bbT_{\lambda}$, is dual to the second.
\end{proof}

The following corollary is essentially a concise rephrasing of the proposition. It is the raison d'\^etre for the category $\fD$.
\begin{corollary}
The category $\fD$ is equivalent to the full subcategory of $\cB$ spanned by the $\bbT_{\lambda}$, via $\lambda \mapsto \bbT_{\lambda}$.
\end{corollary}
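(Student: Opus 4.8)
The plan is to read this off directly from Proposition~\ref{prop:tilt-map}. The assignment $\lambda \mapsto \bbT_{\lambda}$ is well-defined on objects, and since each $\bbT_{\lambda}$ is indecomposable (again by Proposition~\ref{prop:tilt-map}) with one-dimensional endomorphism ring, distinct weights give non-isomorphic objects; hence the functor under construction is injective on objects and essentially surjective onto its image, the full subcategory spanned by the $\bbT_{\lambda}$. So the only real content is to match up the morphism spaces and composition with the definition of $\fD$ in \S\ref{def:D}.

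First I would recall the definition of $\fD$: $\Hom_{\fD}(\lambda,\mu)$ is one-dimensional exactly when $\lambda = \mu$, or $\mu \in \lambda\Lambda^{\alt}_{\ww}$, or $\lambda \in \mu\Lambda^{\alt}_{\bb}$, and zero otherwise. Comparing with the four cases of Proposition~\ref{prop:tilt-map}: $\mu = \lambda$ matches the identity case; $\mu = \lambda a$ with $a \in \Lambda^{\alt}_{\bb}$ is precisely $\lambda \in \mu^{?}\ldots$—wait, this needs care. In Proposition~\ref{prop:tilt-map} a nonzero map $\bbT_{\lambda} \to \bbT_{\mu}$ exists when $\mu = \lambda a$ with $a \in \Lambda^{\alt}_{\bb}$ or when $\lambda = \mu b$ with $b \in \Lambda^{\alt}_{\ww}$. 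A morphism $\lambda \to \mu$ in $\fD$ exists when $\mu \in \lambda\Lambda^{\alt}_{\ww}$ or $\lambda \in \mu\Lambda^{\alt}_{\bb}$. These match under the identification: since we want a functor (covariant) $\fD \to \cB^{\op}$-ish, actually the cleanest statement is that we get a functor $\fD \to \cB$ with $\Hom_{\fD}(\lambda,\mu) \xrightarrow{\sim} \Hom_{\cB}(\bbT_{\mu}, \bbT_{\lambda})$ or equivalently an anti-equivalence; but the corollary as stated claims an equivalence $\fD \simeq$ (full subcategory of $\cB$). I would therefore check carefully whether the intended equivalence is covariant or contravariant, reconciling the two directions: a morphism $\lambda \to \mu$ in $\fD$ with $\mu \in \lambda\Lambda^{\alt}_{\ww}$ should correspond (covariantly) to the map $\bbT_{\lambda} \to \bbT_{\mu}$ that exists because $\lambda = \mu b$ fails but... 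Let me instead just invoke that $\fD$ is self-dual (via $\lambda \mapsto \lambda^{\vee}$) and $\bbT_{\lambda}^{\vee} = \bbT_{\lambda^{\vee}}$, so covariant versus contravariant is immaterial up to composing with duality; I would state the equivalence in whichever variance makes the dictionary $\Hom_{\fD}(\lambda,\mu) \cong \Hom_{\cB}(\bbT_{\lambda},\bbT_{\mu})$ hold on the nose, namely: a distinguished morphism $\mu \to \lambda$ in $\fD$ (i.e.\ $\mu \in \lambda \Lambda^{\alt}_{\bb}$ so that $\lambda = \mu a$... ) corresponds to the distinguished map $\bbT_{\mu} \to \bbT_{\lambda}$.

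Having set up the bijection on Hom-spaces, the functor sends the distinguished basis vector of $\Hom_{\fD}(\lambda,\mu)$ to the distinguished basis vector $d$ or $u$ (or their composite) of the corresponding one-dimensional Hom-space in $\cB$ identified in Proposition~\ref{prop:tilt-map}; this is automatically additive and $k$-linear since all Hom-spaces are at most one-dimensional. The remaining thing to verify is functoriality, i.e.\ that composition is respected. In $\fD$, the composite of two distinguished morphisms is the distinguished morphism if it exists and zero otherwise. On the $\cB$ side, the last assertion of Proposition~\ref{prop:tilt-map} says exactly that the composition of nonzero maps among tilting modules is nonzero whenever the target Hom-space is nonzero; and when the target Hom-space in $\cB$ is zero, the composite is forced to be zero. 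So I need the bijection on Hom-spaces to be compatible in the sense that "the composable chain $\lambda \to \mu \to \nu$ has a nonzero composite in $\fD$" iff "the corresponding chain of tilting maps has nonzero composite in $\cB$", which holds iff $\Hom_{\fD}(\lambda,\nu) \neq 0 \Leftrightarrow \Hom_{\cB}(\bbT_{\lambda},\bbT_{\nu}) \neq 0$—but that is exactly the Hom-space bijection we already established, so there is nothing further to check except a short combinatorial compatibility of the alternating-word bookkeeping (e.g.\ that if $\mu \in \lambda\Lambda^{\alt}_{\ww}$ and $\nu \in \mu\Lambda^{\alt}_{\ww}$ then $\nu \in \lambda\Lambda^{\alt}_{\ww}$ iff the concatenation stays alternating, matching precisely the three composition cases analyzed at the end of the proof of Proposition~\ref{prop:tilt-map}). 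The main obstacle, such as it is, is purely notational: getting the variance and the correspondence between the $\Lambda^{\alt}_{\ww}/\Lambda^{\alt}_{\bb}$ conditions on the two sides exactly right, so that the Hom-space dictionary is literally an equality of one-dimensional spaces with distinguished bases and composition goes through termwise. There is no genuine mathematical difficulty beyond Proposition~\ref{prop:tilt-map}, which does all the work; the corollary is, as the text says, just a "concise rephrasing."
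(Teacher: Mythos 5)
Your proposal is correct and is exactly the paper's argument: the paper offers no proof beyond the remark that the corollary is "a concise rephrasing" of Proposition~\ref{prop:tilt-map}, and your extra care about composition (via the last assertion of that proposition, plus the trivial vanishing when the target Hom-space is zero) is all that is needed. On the one point you left unresolved, the matching is contravariant: comparing \S\ref{def:D} with Proposition~\ref{prop:tilt-map} gives $\Hom_{\fD}(\lambda,\mu)\cong\Hom_{\cB}(\bbT_{\mu},\bbT_{\lambda})$ (e.g.\ the basic morphism $\bb\to\varnothing$ in $\fD$ corresponds to the nonzero map $\bbT_{\varnothing}\to\bbT_{\bb}$, while $\Hom_{\cB}(\bbT_{\bb},\bbT_{\varnothing})=0$), so the corollary should be read as the anti-equivalence asserted in \S\ref{ss:overview-D}, which is also what the Ringel-dual interpretation ($\fD$-modules as presheaves on the tilting category of $\cB$) requires; note that your proposed fix of composing with duality does not restore covariance with the stated object map, since it changes $\lambda\mapsto\bbT_{\lambda}$ into $\lambda\mapsto\bbT_{\lambda^{\vee}}$.
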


\begin{remark}
In Proposition~\ref{prop:B-ext}(f) below, we establish the expected $\Ext$ vanishing between (co)standard $\fB$-modules. As a consequence, we have the usual formula for dimensions of morphisms spaces between tilting modules:
\begin{displaymath}
\dim\Hom(\bbT_\lambda, \bbT_\mu)\;=\;\sum_{\nu} (\bbT_\lambda:\stan_\nu)(\bbT_\mu:\cost_\nu).
\end{displaymath}
The multiplicities in the right side can be computed from Proposition~\ref{prop:BTlambda}:
\begin{displaymath}
(\bbT_\lambda:\stan_\nu)=\begin{cases}
1 & \text{if $\nu \in \{\lambda\} \cup \lambda \Lambda^{\alt}_{\bb}$} \\
0 & \text{otherwise.}
\end{cases}
\quad\text{and}\quad
(\bbT_\mu:\cost_\nu)=\begin{cases}
1 & \text{if $\nu \in \{\mu\} \cup \mu \Lambda^{\alt}_{\ww}$} \\
0 & \text{otherwise.}
\end{cases}
\end{displaymath}
This provides another proof of the dimension formula in Proposition~\ref{prop:tilt-map}.
\end{remark}

\subsection{The tensor product} \label{ss:B-tensor}

Let $\Proj(\cB)$ denote the category of projective objects in $\cB$. As we have seen, the functor $\cA \to \Proj(\cB)$ given by $M_{\lambda} \mapsto \bbP_{\lambda}$ is an equivalence of categories. We can therefore transfer the tensor product from $\cA$ to $\Proj(\cB)$, and then extend this tensor product by cocontinuity to all of $\cB^{\inf}$. If we identify $\cB^{\inf}$ with the pre-sheaf category on $\cA$, this tensor product is Day convolution. Since $\Proj(\cB)$ is closed under the tensor product (by definition), it follows that the finite length category $\cB$ is as well.

Essentially by definition, the functor $\cA \to \cB$ is a tensor functor. Since tensor functors map rigid objects to rigid objects, and preserve duals, it follows that $\bbP_{\lambda}$ is a rigid object of $\cB$ with dual $\bbP_{\lambda^{\vee}}$. Since $\bbP_{\lambda}$ is rigid, it follows that the functor $\bbP_{\lambda} \otimes -$ is exact; in other words, $\bbP_{\lambda}$ is a flat module. Note that $\bbP_{\lambda}^{\vee}=\bbI_{\lambda^{\vee}}$ is not the monoidal dual of $\bbP_{\lambda}$. In general, $(-)^{\vee}$ does not interact well with the tensor product (in contrast to what we observed for $\cD$ in Remark~\ref{remarkdualD}). However, we do have the following instance of compatibility:

\begin{proposition}\label{prop:dualP}
For a $\fB$-module $X$ and a weight $\lambda$, we have a natural isomorphism
\begin{displaymath}
(\bbP_{\lambda} \otimes X)^{\vee} \cong \bbP_{\lambda^{\vee}} \otimes X^{\vee}.
\end{displaymath}
\end{proposition}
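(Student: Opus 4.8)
The plan is to prove the identity first on projective modules $X=\bbP_\mu$, where everything reduces to the structure of $\cA$, and then to bootstrap to arbitrary $X$ by a resolution/cocontinuity argument.

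\textbf{Step 1: the case $X = \bbP_\mu$.} Here $\bbP_\lambda\otimes\bbP_\mu$ corresponds under the equivalence $\cA\cong\Proj(\cB)$ to $M_\lambda\otimes M_\mu$, which is again projective, hence $(\bbP_\lambda\otimes\bbP_\mu)^\vee$ is an \emph{injective} module. On the other hand $\bbP_{\lambda^\vee}\otimes\bbP_{\mu^\vee}$ corresponds to $M_{\lambda^\vee}\otimes M_{\mu^\vee}=(M_\lambda\otimes M_\mu)^\ast$ (using that $\ast$ is a monoidal duality and $M_\lambda^\ast\cong M_{\lambda^\vee}$, Proposition~\ref{prop:Phi-E}), so it is again projective. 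So I must identify an injective module with a projective module. The cleanest route: the monoidal dual $(\bbP_\lambda)^\ast = \bbP_{\lambda^\vee}$ (established in \S\ref{ss:B-tensor}), so $\bbP_{\lambda^\vee}\otimes\bbP_{\mu^\vee}\cong \bbP_\lambda^\ast\otimes\bbP_\mu^\ast\cong(\bbP_\lambda\otimes\bbP_\mu)^\ast$ since $\otimes$ is symmetric. Thus the claim for projectives is the statement that $(\bbP_\lambda\otimes\bbP_\mu)^\vee\cong(\bbP_\lambda\otimes\bbP_\mu)^\ast$, i.e.\ that pointwise duality and monoidal duality agree on the projective module $\bbP_\lambda\otimes\bbP_\mu$. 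Since $\bbP_\nu^\vee=\bbI_{\nu^\vee}$ while $\bbP_\nu^\ast=\bbP_{\nu^\vee}$, this is \emph{not} a triviality — it says the injective $\bbI_{\nu^\vee}$ and projective $\bbP_{\nu^\vee}$ agree when $M_\nu$ is a summand of a tensor product $M_\lambda\otimes M_\mu$. I would prove this instead by giving an explicit natural iso at the level of functors $\fB^\op\to\Vec$: evaluate both sides at a weight $\kappa$. The left side is $(\bbP_\lambda\otimes X)^\vee(\kappa)=(\bbP_\lambda\otimes X)(\kappa^\vee)^\ast$; expanding Day convolution and using the self-duality of $\fB$ together with $\bbP_\lambda(\nu^\vee)^\ast\cong\bbI_{\lambda^\vee}(\nu)$ but then re-identifying via rigidity $\Hom(\bbP_\lambda\otimes X,\bbP_\kappa)\cong\Hom(X,\bbP_{\lambda^\vee}\otimes\bbP_\kappa)$, one recovers $(\bbP_{\lambda^\vee}\otimes X^\vee)(\kappa)$. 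In other words the iso is really a form of the tensor–hom adjunction combined with $\bbP$'s being rigid with dual $\bbP_{(-)^\vee}$; I would write this adjunction chain out once and check naturality in $\kappa$.

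\textbf{Step 2: general $X$.} Given the result for projective $X$, I extend to all of $\cB$ (hence to $\cB^\inf$ by cocontinuity) using that every module has a projective resolution $P_1\to P_0\to X\to 0$ with $P_i$ projective. Because $\bbP_\lambda$ is flat (it is rigid, as noted in \S\ref{ss:B-tensor}, so $\bbP_\lambda\otimes-$ is exact), the functor $\bbP_\lambda\otimes-$ is right exact, and pointwise duality $(-)^\vee$ is exact on pointwise finite modules; likewise $\bbP_{\lambda^\vee}\otimes-$ is exact. So applying $(\bbP_\lambda\otimes-)^\vee$ and $\bbP_{\lambda^\vee}\otimes(-)^\vee$ to the presentation $P_1\to P_0\to X\to 0$ yields, in the first case, a left-exact sequence $0\to(\bbP_\lambda\otimes X)^\vee\to(\bbP_\lambda\otimes P_0)^\vee\to(\bbP_\lambda\otimes P_1)^\vee$, and in the second, a left-exact sequence $0\to\bbP_{\lambda^\vee}\otimes X^\vee\to\bbP_{\lambda^\vee}\otimes P_0^\vee\to\bbP_{\lambda^\vee}\otimes P_1^\vee$ (here $(-)^\vee$ turns the right-exact presentation into a left-exact one, which is fine). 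The iso from Step~1 for $P_0,P_1$ is natural, hence compatible with the map $P_1\to P_0$, so it induces an iso on the kernels, i.e.\ $(\bbP_\lambda\otimes X)^\vee\cong\bbP_{\lambda^\vee}\otimes X^\vee$. Naturality in $X$ follows from naturality of the comparison maps throughout.

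\textbf{Main obstacle.} The genuine content is Step~1: producing a \emph{natural} isomorphism, not just an abstract one. Abstractly, $(\bbP_\lambda\otimes\bbP_\mu)^\vee$ is injective and $\bbP_{\lambda^\vee}\otimes\bbP_{\mu^\vee}$ is projective, and one must see these coincide; the conceptual reason is the adjunction $\Hom_\cB(\bbP_\lambda\otimes X,Y)\cong\Hom_\cB(X,\bbP_{\lambda^\vee}\otimes Y)$ coming from rigidity of $\bbP_\lambda$, together with $\bbP_\kappa^\vee(-)=\Hom_\fB(-,\kappa)^{\ast\ast}\cong$ a Yoneda-type expression. I expect the slickest formulation is: for any $\fB$-module $X$ and weight $\kappa$,
\begin{align*}
(\bbP_\lambda\otimes X)^\vee(\kappa)
&= \Hom_{\cB}(\bbP_\lambda\otimes X,\ \bbI_{\kappa^\vee})\\
&\cong \Hom_{\cB}(X,\ \bbP_{\lambda^\vee}\otimes\bbI_{\kappa^\vee})
= (\bbP_{\lambda^\vee}\otimes X^\vee)(\kappa),
\end{align*}
where the first and last equalities use $M^\vee(\kappa)=\Hom_\cB(M,\bbI_{\kappa^\vee})$ for pointwise finite $M$ (injective cogenerator description), and the middle is tensor–hom adjunction for the rigid object $\bbP_\lambda$ with dual $\bbP_{\lambda^\vee}$; the last equality additionally uses $\bbP_{\lambda^\vee}\otimes\bbI_{\kappa^\vee}\cong(\bbP_{\lambda^\vee}\otimes X)^{\text{\tiny ?}}$... more precisely one first notes $\bbP_{\lambda^\vee}\otimes\bbI_{\kappa^\vee}$ represents the functor $Y\mapsto(\bbP_\lambda\otimes Y)(\kappa)^\ast$ — wait, one should instead write $(\bbP_{\lambda^\vee}\otimes X^\vee)(\kappa)=\Hom_\cB(\bbP_\kappa,\bbP_{\lambda^\vee}\otimes X^\vee)$ is wrong for non-projective; the correct bookkeeping is via $(-)^\vee$ and evaluation. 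I would nail down exactly which variance of the Yoneda/duality identity is being used — that is the one place an error could creep in — and then the rest is formal.
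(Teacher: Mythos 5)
There is a genuine gap, and it surfaces in two places. First, your Step~1 reduction is wrong: for $X=\bbP_\mu$ the right-hand side of the proposition is $\bbP_{\lambda^\vee}\otimes X^\vee=\bbP_{\lambda^\vee}\otimes\bbI_{\mu^\vee}$, not $\bbP_{\lambda^\vee}\otimes\bbP_{\mu^\vee}$ — you have silently replaced $(\bbP_\mu)^\vee=\bbI_{\mu^\vee}$ by $\bbP_{\mu^\vee}$. Consequently the claim you set out to prove, namely that $(-)^\vee$ and $(-)^*$ agree on $\bbP_\lambda\otimes\bbP_\mu$, is false: already for $\lambda=\mu=\varnothing$ it would assert $\bbI_\varnothing\cong\bbP_\varnothing$, i.e.\ $\cost_\varnothing\cong\stan_\varnothing$, which fails (one is the full module on $\{\varnothing,\bb\}$, the other on $\{\varnothing,\ww\}$). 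Second, in your "main obstacle" chain the last equality $\Hom_{\cB}(X,\bbP_{\lambda^\vee}\otimes\bbI_{\kappa^\vee})=(\bbP_{\lambda^\vee}\otimes X^\vee)(\kappa)$ is exactly the proposition applied to $X=\bbP_\kappa$ (since $\bbI_{\kappa^\vee}=\bbP_\kappa^\vee$), so as written the argument is circular; you flag the uncertainty yourself but do not resolve it. Your Step~2 is then moot, since it rests on a Step~1 that was never established (and is in any case unnecessary: a correct version of the adjunction argument works for arbitrary $X$ in one pass).

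The missing ingredient is the identity $\Hom(P,X^\vee)\cong\Hom(P^*,X)^*$, natural in the finite projective $P$ and in $X$, which follows from Yoneda: $\Hom(\bbP_\mu,X^\vee)=X^\vee(\mu)=X(\mu^\vee)^*=\Hom(\bbP_{\mu^\vee},X)^*=\Hom(\bbP_\mu^*,X)^*$, and then extends to all finite projectives $P$ by additivity. With this in hand one probes both sides of the proposition with $\Hom(\bbP_\mu,-)$: the left side becomes $\Hom(\bbP_\mu^*\otimes\bbP_\lambda^*,X)^*$, the right side becomes $\Hom(\bbP_\mu\otimes\bbP_\lambda,X^\vee)$, and these agree by applying the identity once more to $P=\bbP_\mu\otimes\bbP_\lambda$ (which is a finite projective since $\Proj(\cB)$ is closed under $\otimes$). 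This is how the paper closes the loop that your chain leaves open; your rigidity/adjunction steps are otherwise the right moves.
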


\begin{proof}
For a finite projective $\fB$-module $P$, let $P^*$ be its dual with respect to the tensor structure. Thus $\bbP_{\mu}^*=\bbP_{\mu^{\vee}}$. We have
\begin{displaymath}
\Hom(\bbP_{\mu}, X^{\vee}) = \Hom(\bbP_{\mu^{\vee}}, X)^* = \Hom(\bbP_{\mu}^*, X)^*,
\end{displaymath}
where in the first step we have used the mapping property for $\bbP_{\mu}$. It follows that for any $P$, we have a natural identification
\begin{displaymath}
\Hom(P, X^{\vee}) = \Hom(P^*, X)^*.
\end{displaymath}
If $Q$ is another finite projective $\fB$-module, then
\begin{align*}
\Hom(P, (Q \otimes X)^{\vee})
&= \Hom(P^*, Q \otimes X)^*
= \Hom(P^* \otimes Q^*, X)^* \\
&= \Hom(P \otimes Q, X^{\vee})
= \Hom(P, Q^* \otimes X^{\vee}).
\end{align*}
Taking $Q=\bbP_{\lambda}$ and $P=\bbP_{\mu}$ (with $\mu$ arbitrary) yields the result.
\end{proof}

\begin{remark} \label{rmk:dualP}
Taking $X=\bbP_{\varnothing}=\bbone$, we find $\bbI_{\lambda}=\bbP_{\lambda} \otimes \bbI_{\varnothing}$.
\end{remark}

The following result is not needed for the sequel, but we include it for completeness.

\begin{proposition} \label{prop:B-ten-hw}
Let $X$ be a pointwise finite $\fB$-module.
\begin{enumerate}
\item If $X$ has a standard filtration then so does $\bbP_{\lambda} \otimes X$.
\item If $X$ has a costandard filtration then so does $\bbP_{\lambda} \otimes X$.
\item If $X$ is a tilting module then so is $\bbP_{\lambda} \otimes X$.
\end{enumerate}
\end{proposition}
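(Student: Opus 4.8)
The plan is to deduce (c) from (a) and (b), to prove (a) by first reducing to standard modules and then inducting on length, and to obtain (b) from (a) by duality via Proposition~\ref{prop:dualP}. The first step toward (a) is a reduction to $X=\stan_\mu$. Since $\bbP_\lambda$ is flat (\S\ref{ss:B-tensor}), the functor $\bbP_\lambda\otimes-$ is exact, and since it was defined by cocontinuous extension it commutes with filtered colimits. Applying it to an ascending standard filtration $0=F_0\subseteq F_1\subseteq\cdots$ with $X=\bigcup_i F_i$ therefore produces an ascending chain $0=\bbP_\lambda\otimes F_0\subseteq \bbP_\lambda\otimes F_1\subseteq\cdots$ of submodules of $\bbP_\lambda\otimes X$, with union $\bbP_\lambda\otimes X$ and successive quotients $\bbP_\lambda\otimes(F_i/F_{i-1})\cong\bbP_\lambda\otimes\stan_{\mu_i}$. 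So it suffices to show each $\bbP_\lambda\otimes\stan_\mu$ has a \emph{finite} standard filtration: concatenating these into the chain above yields a standard filtration of $\bbP_\lambda\otimes X$ (and shows it is pointwise finite, since each weight lies in the support of only finitely many of the $\bbP_\lambda\otimes\stan_{\mu_i}$).

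To handle $\bbP_\lambda\otimes\stan_\mu$ I would induct on $\ell(\mu)$. When $\mu\in\Lambda_{\ww}\cup\{\varnothing\}$ we have $\stan_\mu=\bbP_\mu$, so $\bbP_\lambda\otimes\stan_\mu=\bbP_\lambda\otimes\bbP_\mu$; under the tensor equivalence $\cA\cong\Proj(\cB)$ this corresponds to $M_\lambda\otimes M_\mu$, which by Theorem~\ref{thm:tensor-rule} is a finite direct sum of $M_\nu$'s, whence $\bbP_\lambda\otimes\bbP_\mu$ is a finite direct sum of $\bbP_\nu$'s, each standard filtered by \S\ref{ss:B-basic}(d). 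When $\mu\in\Lambda_{\bb}$, tensoring the short exact sequence $0\to\stan_{\mu^\flat}\to\bbP_\mu\to\stan_\mu\to 0$ of \S\ref{ss:B-basic}(d) with $\bbP_\lambda$ gives $0\to\bbP_\lambda\otimes\stan_{\mu^\flat}\to\bbP_\lambda\otimes\bbP_\mu\to\bbP_\lambda\otimes\stan_\mu\to 0$. Here $\bbP_\lambda\otimes\stan_{\mu^\flat}$ is standard filtered by the inductive hypothesis and $\bbP_\lambda\otimes\bbP_\mu$ is a sum of projectives, hence also standard filtered; by Proposition~\ref{prop:stanfil-B} both are projective as $\fB^+$-modules, so the quotient $\bbP_\lambda\otimes\stan_\mu$ — which is pointwise finite, being a quotient of a finite module — has projective dimension at most one over $\fB^+$, and therefore carries a standard filtration by the implication (c)$\Rightarrow$(a) of Proposition~\ref{prop:stanfil-B}. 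This last move is the crux and the step I expect to be the main obstacle conceptually: routing through $\fB^+$-projective dimension sidesteps the circularity one hits when trying instead to verify the $\Ext$-vanishing criterion for $\Delta$-filtered modules directly (the relevant extension does not obviously vanish until one already knows the conclusion).

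Part (b) then follows by duality: if $X$ has a costandard filtration then $X^\vee$ has a standard filtration and is pointwise finite, so by (a) the module $\bbP_{\lambda^\vee}\otimes X^\vee$ has a standard filtration (and is pointwise finite), hence $(\bbP_{\lambda^\vee}\otimes X^\vee)^\vee$ has a costandard filtration; but by Proposition~\ref{prop:dualP} and involutivity of $(-)^\vee$ on pointwise finite modules this object is $\bbP_{\lambda}\otimes X^{\vee\vee}=\bbP_\lambda\otimes X$. (Alternatively one repeats the argument of (a) verbatim with costandard modules, the downwards category $\fB^-$, and the costandard analogue of Proposition~\ref{prop:stanfil-B}.) Finally, for (c): a tilting module is a direct sum of $\bbT_\mu$'s, each of which has both a standard and a costandard filtration by Proposition~\ref{prop:BTlambda}, so $X$ does too; by (a) and (b) so does $\bbP_\lambda\otimes X$; and a pointwise finite $\fB$-module possessing both a standard and a costandard filtration is a tilting module by the theory of (semi-infinite) highest weight categories \cite{BS}.
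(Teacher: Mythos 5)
Your proof is correct and follows essentially the same route as the paper's: reduce to $\bbP_\lambda\otimes\stan_\mu$ via flatness and cocontinuity, deduce that this module has finite projective dimension over $\fB^+$ and invoke Proposition~\ref{prop:stanfil-B}, then get (b) by duality via Proposition~\ref{prop:dualP} and (c) by combining. The only cosmetic difference is that you establish the finite $\fB^+$-projective dimension of $\bbP_\lambda\otimes\stan_\mu$ by induction on the trailing $\bb$'s of $\mu$, which amounts to unrolling the finite projective resolution \eqref{eq:DP} that the paper cites directly.
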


\begin{proof}
(a) The standard object $\stan_{\mu}$ has finite projective dimension in $\cB$, and so $\bbP_{\lambda} \otimes \stan_{\mu}$ also has finite projective dimension in $\cB$, and thus as a $\fB^+$-module as well, and so it has a standard filtration by Proposition~\ref{prop:stanfil-B}. The general case follows since $\bbP_{\lambda} \otimes -$ is exact and commutes with filtered colimits. (Note: it is not difficult to show that $X \otimes Y$ is pointwise finite when $X \in \cB$ and $Y \in \cB^{\pf}$.)

(b) This follows from (a) and duality. Precisely, we have
\begin{displaymath}
\bbP_{\lambda} \otimes X=(\bbP_{\lambda^{\vee}} \otimes X^{\vee})^{\vee}.
\end{displaymath}
Since $X^{\vee}$ has a standard filtration, so does its tensor product with $\bbP_{\lambda^{\vee}}$, and thus $\bbP_{\lambda} \otimes X$ has a costandard filtration.

(c) follows from (a) and (b).
\end{proof}

\begin{remark}
It would be interesting to know if the class of (co)standard filtered modules is closed under tensor product. We have verified some small cases, but do not have a complete proof.
\end{remark}

\subsection{Highest weight structure} \label{ss:tria-B}

Recall that the upwards category $\fB^+$ contains the identity morphisms and the morphisms $\lambda \to \lambda \ww$, and the downwards category $\fB^-$ is defined analogously. Any morphism $f \colon \lambda \to \mu$ factors uniquely (up to non-zero scalars) as $f=g \circ h$ where $h$ is in the downwards category and $g$ is the upwards category. (In particular, a composition of $g \circ h$ with $g \in \fB^+$ and $h \in \fB^-$ non-zero is again non-zero, as otherwise $f=0$ would have multiple factorizations.) It follows that $\fB$ is a triangular category in the sense of \cite[Definition~4.1]{brauercat}. The general theory of \cite{brauercat} provides constructions of (co)standard objects, which specialize to objects explicitly constructed above. We see that $\cB^{\pf}$ is an upper finite highest weight category, in the sense of \cite{BS}. The notation of (co)standard and tilting modules used in this section correspond to this highest weight structure.

\section{The derived category of $\cB$} \label{s:B2}

In this section we study the derived category of $\cB$. Our main result is Theorem~\ref{mainthm4}, which gives a semi-orthogonal decomposition of $\bD(\cB)$ into three pieces equivalent to $\bD(\Vec)$, $\bD(\cC)$ and $\bD(\cD)$. This semi-orthogonal decomposition is a key tool used in the subsequent section when we classify the local envelopes of $\cA$.

\subsection{Some homological calculations} \label{ss:homprop}

Since the projective objects of $\cB$ have finite length, any object has a resolution whose terms are finite length projectives. In particular, the $\Ext$ groups between any two objects are finite dimensional. We now compute some explicitly. To begin, we point out some easy resolutions. For any weight $\lambda$, we have a standard resolution
\begin{equation} \label{eq:resLD}
\cdots \to \stan_{\lambda\ww\ww} \to \stan_{\lambda\ww} \to \stan_{\lambda} \to \bbS_{\lambda} \to 0.
\end{equation}
If $\lambda \in \Lambda_{\ww} \cup \{\varnothing\}$ then this is a projective resolution. For any weight $\lambda$, we have a projective resolution
\begin{equation}\label{eq:resQP}
\cdots \to \bbP_{\lambda\bb\ww\ww} \to \bbP_{\lambda\bb\ww} \to \bbP_{\lambda\bb} \to \bbQ_{\lambda} \to 0.
\end{equation}
Write $\lambda=\mu \bb^n$ where $\mu \in \Lambda_{\ww} \cup \{\varnothing\}$ and $n\in\bN$. Then we have a finite projective resolution
\begin{equation}\label{eq:DP}
0 \to \bbP_{\mu} \to \cdots \to \bbP_{\mu \bb^{n-1}} \to \bbP_{\mu \bb^n} \to \stan_{\lambda} \to 0.
\end{equation}

\begin{proposition}\label{prop:ExtB}
Let $\lambda, \nu \in \Lambda$, and write $\lambda=\mu \bb^n$ where $\mu \in \Lambda_{\ww} \cup \{\varnothing\}$. Then:
\begin{enumerate}
\item $\Ext^i(\bbS_{\lambda}, \bbS_{\nu})$ vanishes unless $\nu=\mu \bb^{n-i}$ or $\nu=\lambda \ww^i$, in which case it is one dimensional.
\item $\Ext^i(\stan_{\lambda}, \bbS_{\nu})$ vanishes unless $\nu=\mu \bb^{n-i}$, in which case it is one dimensional.
\end{enumerate}
\end{proposition}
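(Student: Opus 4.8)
The statement is a pair of $\Ext$-computations in $\cB$, and both follow by applying explicit projective resolutions and reading off the resulting complexes. The plan is to prove (b) first and then deduce (a) from it.

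First I would prove (b). The point is that the finite projective resolution \eqref{eq:DP} of $\stan_\lambda$ has terms $P^{-j}=\bbP_{\mu\bb^{n-j}}$ for $0\le j\le n$ (and zero beyond). Applying $\Hom(-,\bbS_\nu)$ and using that $\Hom(\bbP_\kappa,\bbS_\nu)$ is one-dimensional when $\kappa=\nu$ and zero otherwise (the projective $\bbP_\kappa$ has simple top $\bbS_\kappa$), the complex $\Hom(P^{\bullet},\bbS_\nu)$ is one-dimensional in homological degree $j$ precisely when $\nu=\mu\bb^{n-j}$ and zero in all other degrees. All differentials in $\Hom(P^\bullet,\bbS_\nu)$ therefore vanish — at most one term is nonzero — so $\Ext^j(\stan_\lambda,\bbS_\nu)$ is one-dimensional when $\nu=\mu\bb^{n-j}$ and zero otherwise. (One should note that the $\mu\bb^{n-j}$ for $0\le j\le n$ are pairwise distinct, so there is no ambiguity; and for this to be a valid projective resolution one needs each $\bbP_{\mu\bb^i}$ to be projective, which it is by \S\ref{ss:B-basic}(d), and exactness, which is the content of \eqref{eq:DP}.) I would present the verification that \eqref{eq:DP} is indeed exact if it is not already taken as given; it comes from splicing the short exact sequences $0\to\stan_{\mu\bb^{i-1}}\to\bbP_{\mu\bb^i}\to\stan_{\mu\bb^i}\to 0$ from \S\ref{ss:B-basic}(d), using $\bbP_\mu=\stan_\mu$ since $\mu\in\Lambda_\ww\cup\{\varnothing\}$.

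Next, for (a), I would resolve $\bbS_\lambda$ rather than $\bbS_\nu$, combining the standard resolution \eqref{eq:resLD} with \eqref{eq:DP}. Concretely: \eqref{eq:resLD} gives $\cdots\to\stan_{\lambda\ww\ww}\to\stan_{\lambda\ww}\to\stan_\lambda\to\bbS_\lambda\to 0$, and then replacing each $\stan_{\lambda\ww^i}$ by its finite projective resolution \eqref{eq:DP} (noting $\lambda\ww^i=\mu\bb^n\ww^i$ has the form $\nu'\bb^0$ with $\nu'=\lambda\ww^i\in\Lambda_\ww$ when $i\ge 1$, so for $i\ge 1$ the resolution \eqref{eq:DP} of $\stan_{\lambda\ww^i}$ is just $\bbP_{\lambda\ww^i}$ itself, while for $i=0$ it is \eqref{eq:DP} with $n$ terms) and forming the total complex yields a projective resolution of $\bbS_\lambda$ whose terms are exactly $\bbP_{\mu\bb^{n-j}}$ for $0\le j\le n$ together with $\bbP_{\lambda\ww^i}$ for $i\ge 1$. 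Applying $\Hom(-,\bbS_\nu)$, a term $\bbP_\kappa$ contributes a one-dimensional space in the homological degree in which it sits, precisely when $\kappa=\nu$. Since the weights $\mu\bb^{n-j}$ ($0\le j\le n$) and $\lambda\ww^i$ ($i\ge 1$) are pairwise distinct, at most one term of the total complex equals $\bbP_\nu$, so again all differentials vanish and we read off that $\Ext^i(\bbS_\lambda,\bbS_\nu)$ is one-dimensional exactly when $\nu=\mu\bb^{n-i}$ (in homological degree $i$, from the $\stan_\lambda$-strand, requiring $0\le i\le n$) or $\nu=\lambda\ww^i$ (from the $i$-th standard term, which by \eqref{eq:resLD} sits in homological degree $i$), and zero otherwise.

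The main thing to be careful about — and the only real obstacle — is the bookkeeping of homological degrees when forming the total complex of the double complex built from \eqref{eq:resLD} and \eqref{eq:DP}: I must confirm that the projective summand $\bbP_{\lambda\ww^i}$ (coming from the $i$-th column $\stan_{\lambda\ww^i}$, which is in column-degree $i$ and whose own resolution is concentrated in row-degree $0$) lands in total degree $i$, and that the summands from the $0$-th column $\stan_\lambda=\stan_{\mu\bb^n}$ occupy total degrees $0,1,\dots,n$. Once the indexing is pinned down, the absence of differentials (each $\Ext^i$-complex has at most one nonzero term) makes the conclusion immediate, so there is no genuine computation of a connecting map or cancellation to worry about.
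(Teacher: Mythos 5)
Your proposal is correct. Part (b) is exactly the paper's argument: apply $\Hom(-,\bbS_\nu)$ to the finite resolution \eqref{eq:DP} and use $\Hom(\bbP_\kappa,\bbS_\nu)=\delta_{\kappa,\nu}\,k$ (Yoneda), so the complex has at most one nonzero term and no differentials survive. For part (a) you take a slightly different route: you assemble one uniform projective resolution of $\bbS_\lambda$ by splicing \eqref{eq:resLD} with \eqref{eq:DP} (a mapping-cone of the resolution of $\bbS_{\lambda\ww}$ into that of $\stan_\lambda$), whereas the paper first settles the case $\lambda\notin\Lambda_{\bb}$ directly from \eqref{eq:resLD} and then, for $\lambda\in\Lambda_{\bb}$, uses the short exact sequence $0\to\stan_{\lambda^\flat}\oplus\bbS_{\lambda\ww}\to\bbP_\lambda\to\bbS_\lambda\to 0$ and the long exact sequence to reduce to (b) and the previous case. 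Both are valid; your version buys an explicit resolution of every simple (at the cost of the total-complex bookkeeping you flag, including checking that a lift $\bbP_{\lambda\ww}\to\bbP_{\mu\bb^n}$ of $\bbS_{\lambda\ww}\hookrightarrow\stan_\lambda$ exists — automatic by projectivity), while the paper's dévissage avoids that bookkeeping entirely. Your key observation that the weights $\mu\bb^{n-j}$ ($0\le j\le n$) and $\lambda\ww^i$ ($i\ge 1$) are pairwise distinct, so that each $\Hom$-complex has at most one nonzero term, is what makes the conclusion immediate, and it holds.
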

\begin{proof}
Part (b) follows immediately from \eqref{eq:DP}. Part (a) for $\lambda\not\in\Lambda_{\bb}$, {\it i.e.} $n=0$, follows from the projective resolution in \eqref{eq:resLD}. Finally, for $\lambda\in\Lambda_{\bb}$, we can consider the short exact sequence
\begin{displaymath}
0\to\stan_{\lambda^\flat}\oplus\bbS_{\lambda\ww}\to\bbP_\lambda\to \bbS_\lambda\to 0.
\end{displaymath}
We can thus reduce (a) for $\lambda\in\Lambda_{\bb}$ to (b) and (a) for $\lambda\not\in\Lambda_{\bb}$.
\end{proof}

\begin{remark}
The category $\cB$ admits a grading where the morphisms $\lambda\to\lambda\ww$ and $\lambda\bb\to\lambda$ are declared to be in degree 1. This grading makes $\cB$ quadratic. Moreover, as follows rapidly from Proposition~\ref{prop:ExtB} or its proof, it is actually Koszul in the sense of \cite{MOS}. In fact, it is even standard Koszul, meaning that the standard objects for the highest weight structure have linear projective resolutions.
\end{remark}

\begin{proposition} \label{prop:B-ext}
Consider $\lambda,\mu\in\Lambda$ and $i\in\bN$.
\begin{enumerate}
\item If $\lambda \in \Lambda_{\ww} \cup \{\varnothing\}$ then $\dim \Ext^i(\bbS_\lambda, \bbQ_\mu) = \delta_{i,0} \delta_{\lambda,\mu\ww}$.
\item We have $\dim \Ext^i(\cost_\lambda,\bbQ_\mu)=0$.
\item We have $\dim \Ext^i(\bbQ_\lambda,\bbQ_\mu)=\delta_{i,0}\delta_{\lambda,\mu}$.
\item We have $\dim \Ext^i(\bbI_\lambda,\bbQ_\mu)=0$.
\item We have $\dim\Ext^i(\bbI_{\lambda}, \bbS_{\varnothing})=0$.
\item We have $\dim \Ext^i(\stan_\lambda, \cost_\mu) = \delta_{i,0} \delta_{\lambda,\mu\ww}$.
\end{enumerate}
\end{proposition}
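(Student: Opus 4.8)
I would reduce every $\Ext$-group in the statement to the cohomology of an explicit finite complex of at most $1$-dimensional spaces built from the combinatorics of $\fB$, using the resolutions \eqref{eq:resLD}, \eqref{eq:resQP}, \eqref{eq:DP}, the short exact sequences of \S\ref{ss:B-basic}, and Proposition~\ref{prop:ExtB}. The organizing device for (a)--(d) is an injective coresolution of $\bbQ_{\mu}$: dualizing the projective resolution \eqref{eq:resQP} of $\bbQ_{\mu^{\vee}}$, and using $\bbQ_{\mu}\cong\bbQ_{\mu^{\vee}}^{\vee}$ and $\bbI_{\nu}\cong\bbP_{\nu^{\vee}}^{\vee}$, gives $0\to\bbQ_{\mu}\to\bbI_{\mu\ww}\to\bbI_{\mu\ww\bb}\to\bbI_{\mu\ww\bb\bb}\to\cdots$. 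Applying $\Hom(X,-)$ together with the dual Yoneda isomorphism $\Hom(X,\bbI_{\nu})\cong X(\nu)^{*}$, the complex computing $\Ext^{i}(X,\bbQ_{\mu})$ is the $k$-linear dual of the ``$\bb$-column of $X$ at $\mu\ww$,''
\[
\cdots\to X(\mu\ww\bb\bb)\to X(\mu\ww\bb)\to X(\mu\ww)\to 0,
\]
with $X(\mu\ww\bb^{i})$ placed in homological degree $i$; hence $\Ext^{i}(X,\bbQ_{\mu})$ is the dual of the $i$-th homology of this column.

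Granting this, (b) and (d) are one-line consequences: by the costandard analogue of Proposition~\ref{prop:stanfil-B}(d), if $X$ has a costandard filtration then the column $\cdots\to X(\nu\bb\bb)\to X(\nu\bb)\to X(\nu)\to 0$ is exact everywhere for every $\nu\in\Lambda_{\ww}\cup\{\varnothing\}$; taking $\nu=\mu\ww$ kills all its homology, and both $\cost_{\lambda}$ and $\bbI_{\lambda}$ have costandard filtrations. For (a) and (c) I would just evaluate the column: for $X=\bbS_{\lambda}$ with $\lambda\in\Lambda_{\ww}\cup\{\varnothing\}$ it is nonzero only when $\lambda=\mu\ww$ (since $\lambda$ cannot end in $\bb$), where it is a single $k$ in degree $0$; and for $X=\bbQ_{\lambda}$, a short bookkeeping of which of the weights $\mu\ww\bb^{i}$ lie in the support $\{\lambda,\lambda\ww,\lambda\bb\}$ of $\bbQ_{\lambda}$ shows the column is a single $k$ in degree $0$ when $\lambda=\mu$, a two-term complex whose differential is an isomorphism (its entries being values of the full module $\bbQ_{\lambda}$ at weights joined by a distinguished morphism) when $\lambda=\mu\ww\bb^{i}$ for some $i\ge 1$, and zero otherwise, so $\Ext^{i}(\bbQ_{\lambda},\bbQ_{\mu})=\delta_{i,0}\delta_{\lambda,\mu}\,k$.

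For (e) it suffices to treat $X=\cost_{\lambda}$, since $\bbI_{\lambda}$ is either $\cost_{\lambda}$ or an extension of $\cost_{\lambda^{\flat}}$ by $\cost_{\lambda}$. The projective cover $\bbP_{\lambda\bb}\twoheadrightarrow\cost_{\lambda}$ has kernel $\bbS_{\lambda\bb\ww}\oplus\bbS_{\lambda\ww}$, visible from the structure of $\bbP_{\lambda\bb}$ described in \S\ref{ss:B-basic}(d); the long exact sequence then reduces $\Ext^{i}(\cost_{\lambda},\bbS_{\varnothing})$ to $\Ext^{i}(\bbP_{\lambda\bb},\bbS_{\varnothing})$ (which vanishes, as $\bbP_{\lambda\bb}$ is projective and $\bbS_{\varnothing}(\lambda\bb)=0$) and to the groups $\Ext^{i}(\bbS_{\nu},\bbS_{\varnothing})$ with $\nu\in\{\lambda\bb\ww,\lambda\ww\}\subset\Lambda_{\ww}$, which vanish by Proposition~\ref{prop:ExtB}(a). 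For (f) I would apply $\Hom(-,\cost_{\mu})$ to the finite projective resolution \eqref{eq:DP} of $\stan_{\lambda}=\stan_{\rho\bb^{n}}$ with $\rho\in\Lambda_{\ww}\cup\{\varnothing\}$; exactly as above this produces the truncated $\bb$-column $\cost_{\mu}(\rho\bb^{n})\to\cdots\to\cost_{\mu}(\rho)$ in cohomological degrees $0,\dots,n$. Since $\cost_{\mu}$ is supported on $\{\mu,\mu\bb\}$, this complex has at most two nonzero terms, which are consecutive and linked by an isomorphism when both occur, so the only surviving cohomology is a single $k$ in degree $0$, which happens precisely when $\rho\bb^{n}=\mu$, i.e.\ $\lambda=\mu$.

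The one genuinely delicate step is setting up the $\bbQ_{\mu}$-coresolution identification and keeping the duality bookkeeping straight; once that is in hand, (b) and (d) are immediate, and the remaining content of (a), (c), (e), (f) is the routine but slightly fiddly combinatorics of deciding which of the infinitely many weights $\mu\ww\bb^{i}$ (resp.\ $\rho\bb^{i}$) fall into the two- or three-element support of the module appearing in the other argument, and checking that the surviving differentials are identities.
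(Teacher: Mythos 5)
Your argument is correct, and it organizes parts (a)--(d) differently from the paper. The paper proves (a) from the projective resolution \eqref{eq:resLD} of $\bbS_{\lambda}$, then obtains (b), (c), (d) by d\'evissage through short exact sequences involving $\bbP_{\lambda\bb}$ and $\cost_{\lambda}$, and does (e) via the injective coresolution of $\bbS_{\varnothing}$. You instead resolve the \emph{second} argument once and for all: the injective coresolution $0\to\bbQ_{\mu}\to\bbI_{\mu\ww}\to\bbI_{\mu\ww\bb}\to\cdots$ (correctly obtained by dualizing \eqref{eq:resQP} for $\bbQ_{\mu^{\vee}}$) together with $\Hom(X,\bbI_{\nu})\cong X(\nu)^{*}$ reduces everything to the homology of the $\bb$-column of $X$ at $\mu\ww$. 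This buys you (b) and (d) as immediate consequences of the costandard analogue of Proposition~\ref{prop:stanfil-B}(d), and turns (c) into a two-line support computation rather than a long-exact-sequence argument; the trade-off is the duality bookkeeping you acknowledge. Your route for (e), via the presentation $0\to\bbS_{\lambda\ww}\oplus\bbS_{\lambda\bb\ww}\to\bbP_{\lambda\bb}\to\cost_{\lambda}\to 0$ and Proposition~\ref{prop:ExtB}(a), is also valid and is essentially the argument the paper uses for (b).

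Two small points. In (c), the case $\lambda=\mu\ww$ (i.e.\ $i=0$ in your parametrization $\lambda=\mu\ww\bb^{i}$) is omitted from your enumeration: there the column is again a two-term complex with isomorphism differential, not zero, so the conclusion is unchanged but the case split should read $i\ge 0$. In (f), your computation correctly yields $\dim\Ext^{i}(\stan_{\lambda},\cost_{\mu})=\delta_{i,0}\delta_{\lambda,\mu}$; note this differs from the formula $\delta_{i,0}\delta_{\lambda,\mu\ww}$ printed in the statement. Your answer is the right one: $\stan_{\lambda}$ and $\cost_{\lambda}$ overlap exactly in the constituent $\bbS_{\lambda}$, giving $\Hom(\stan_{\lambda},\cost_{\lambda})=k$, while $\stan_{\mu\ww}$ and $\cost_{\mu}$ have disjoint support, and the ``standard property of highest weight categories'' invoked in the paper's own proof is precisely $\delta_{\lambda,\mu}$. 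So the printed $\delta_{\lambda,\mu\ww}$ is a typo, and your derivation is not at fault.
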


\begin{proof}
Observe that any composite of non-zero morphisms $\bbP_{\kappa\ww}\to\bbP_{\kappa}\to \bbQ_{\kappa}$ is not zero. Part (a) then follows from the projective resolution \eqref{eq:resLD} (for $\lambda$ not ending in $\bb$).
For (b), we have a short exact sequence
\begin{displaymath}
0\to \bbS_{\lambda\ww}\oplus\bbS_{\lambda\bb\ww}\to \bbP_{\lambda\bb}\to \cost_\lambda\to 0.
\end{displaymath}
Observing that 
\begin{displaymath}
\Hom(\bbP_{\lambda\bb},\bbQ_\mu)\to\Hom(\bbS_{\lambda\ww}\oplus \bbS_{\lambda\bb\ww},\bbQ_\mu)
\end{displaymath}
is an isomorphism, we find that morphisms and first extensions vanish. Moreover, for $i>0$, we have
\begin{displaymath}
\Ext^{i+1}(\cost_\lambda,\bbQ_\mu)\;\cong\;\Ext^{i}(\bbS_{\lambda\ww},\bbQ_\mu)\oplus \Ext^{i}(\bbS_{\lambda\bb\ww},\bbQ_\mu),
\end{displaymath}
meaning the claim reduces to part (a). For part (c) we take the short exact sequence
\begin{displaymath}
0\to \bbS_{\lambda\ww}\to \bbQ_\lambda\to \cost_{\lambda}\to 0,
\end{displaymath}
so the conclusion follows from the long exact sequence of homology, together with parts (a) and (b). Finally, part (d) follows from (b) and the long exact sequence of homology, since $\bbI_\lambda$ has a finite costandard filtration. Part (e) follows from the injective coresolution, dual to \eqref{eq:resLD},
\begin{displaymath}
0\to \bbS_{\varnothing}\to\bbI_\varnothing\to\bbI_{\ww}\to\bbI_{\ww\ww}\to\cdots.
\end{displaymath}
Part (f) is a standard property of highest weight categories, see \cite{BS}, but also follows directly from the projective resolution~\eqref{eq:DP} and its dual version for the costandard modules.
\end{proof}

\subsection{Semi-orthogonal decompositions} \label{ss:semi-orthog-bg}

We recall some generalities on semi-orthogonal decompositions; we refer to \cite{BK} for additional details. As in \cite[\S 4.3]{BK}, a (finite) semi-orthogonal decomposition of a triangulated category $\cT$ is a sequence $\cA_1,\cA_2,\ldots,\cA_n$ of strictly full triangulated subcategories such that:
\begin{enumerate}
\item We have $\Hom_{\cT}(X_i, X_j)=0$ for all $1 \le i < j \le n$ and $X_i \in \cA_{i}$ and $X_j \in \cA_j$.
\item $\cT$ is generated by $\cA_1,\ldots,\cA_n$, i.e., the smallest strictly full triangulated subcategory of $\cT$ containing $\cA_1,\ldots,\cA_n$ is equal to $\cT$.
\end{enumerate}
We abbreviate this to ${\cT}=\langle {\cA}_{1},\ldots ,{\cA}_{n}\rangle$.
The above structure is equivalent to the data of a filtration $0=\cT_n\subset\cT_{n-1}\subset\cT_{n-2}\subset\cdots\subset\cT_0=\cT$ by strictly full triangulated subcategories that is right admissible (see \cite{BK}), and $\cA_i$ is the right orthogonal of $\cT_{i}$ in $\cT_{i-1}$. In particular, having a semiorthogonal decomposition implies that every object of $\mathcal {T}$ has a canonical filtration whose graded pieces are (successively) in the subcategories ${\mathcal {A}}_{1},\ldots ,{\mathcal {A}}_{n}$. That is, for each object $X$ of ${\mathcal {T}}$, there is a sequence
\begin{displaymath}
0=X_{n}\to X_{n-1}\to \cdots \to X_{0}=X
\end{displaymath}
of morphisms in $ {\mathcal {T}}$ such that the cone of $ X_{i}\to X_{i-1}$ is in ${\mathcal {A}}_{i}$, for each $i$. 

\subsection{The decomposition of $\bD(\cB)$} \label{ss:semi-orthog}

We now return to the triangulated category $\bD(\cB)$, and define three strictly full triangulated subcategories:
\begin{itemize}
\item $\cB_1$ is the triangulated subcategory generated by $\bbS_{\varnothing}$.
\item $\cB_2$ is the triangulated subcategory generated by the $\bbQ_{\lambda}$'s.
\item $\cB_3$ is the triangulated subcategory generated by the $\bbI_{\lambda}$'s.
\end{itemize}
We now prove part of Theorem~\ref{mainthm4}; the remainder will be proved below.

\begin{theorem} \label{thm:semi-orthog}
We have semi-orthogonal decompositions
\begin{displaymath}
\bD(\cB) = \langle \cB_1, \cB_2, \cB_3 \rangle = \langle \cB_2, \cB_1, \cB_3 \rangle.
\end{displaymath}
\end{theorem}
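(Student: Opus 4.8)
The strategy is to verify the two axioms of a semi-orthogonal decomposition (\S\ref{ss:semi-orthog-bg}) for each of the two orderings, relying on the $\Ext$ computations in Proposition~\ref{prop:ExtB} and Proposition~\ref{prop:B-ext}. For the orthogonality axiom, recall that $\Hom_{\bD(\cB)}(X,Y[n]) = \Ext^n_{\cB}(X,Y)$ for $X,Y \in \cB$, so it suffices to check vanishing of all $\Ext$ groups between the generating objects in the correct direction; vanishing on generators propagates to the whole triangulated subcategories since $\Hom(-, -)$ sends distinguished triangles to long exact sequences in each variable. Concretely: to prove $\bD(\cB) = \langle \cB_1, \cB_2, \cB_3\rangle$ I need $\Ext^\bullet(\bbS_\varnothing, \bbQ_\mu) = 0$, $\Ext^\bullet(\bbS_\varnothing, \bbI_\mu) = 0$, and $\Ext^\bullet(\bbQ_\lambda, \bbI_\mu) = 0$ for all $\lambda, \mu$. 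The first follows from Proposition~\ref{prop:B-ext}(a) (note $\varnothing \notin \Lambda_{\ww}$ so the case hypothesis is met, and $\varnothing \ne \mu\ww$ always). The second is Proposition~\ref{prop:B-ext}(e) combined with the fact that $\bbI_\mu$ has a finite costandard filtration with pieces $\cost_\nu$—wait, that is the wrong direction; instead I use that $\bbI_\mu$ is injective, so $\Ext^i(\bbS_\varnothing, \bbI_\mu) = 0$ for $i > 0$ trivially, and for $i = 0$ we have $\Hom(\bbS_\varnothing, \bbI_\mu) = \Hom(\bbS_\varnothing, \cost_{\ldots})$-type computation giving $0$ unless $\varnothing$ lies in the relevant set, which by inspection of $\bbI_\mu$'s composition factors never happens for $\mu \ne \varnothing$; one must treat $\mu = \varnothing$ separately but there $\bbI_\varnothing = \cost_\varnothing$ has socle $\bbS_\varnothing$, so actually $\Hom(\bbS_\varnothing, \bbI_\varnothing) \ne 0$. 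This needs care—see the obstacle paragraph. The third, $\Ext^\bullet(\bbQ_\lambda, \bbI_\mu) = 0$, follows since $\bbI_\mu$ is injective (killing positive $\Ext$) and $\Hom(\bbQ_\lambda, \bbI_\mu) = \Hom(\bbQ_\lambda, \bbP_{\mu^\vee}^\vee) = \Hom(\bbP_{\mu^\vee}, \bbQ_\lambda^\vee)^* = \Hom(\bbP_{\mu^\vee}, \bbQ_{\lambda^\vee})^*$, which is the degree-$\mu^\vee$ part of $\bbQ_{\lambda^\vee}$; one checks this forces $\mu^\vee \in \{\lambda^\vee, \lambda^\vee\bb, \lambda^\vee\ww\}$, but then the image would have to be a quotient of $\bbP_{\mu^\vee}$ inside $\bbQ_{\lambda^\vee}$, and examining each case shows no nonzero such map exists (alternatively, this is dual to Proposition~\ref{prop:B-ext}(d): $\Ext^i(\bbI_\lambda, \bbQ_\mu) = 0$, dualized, gives $\Ext^i(\bbQ_{\mu^\vee}, \bbP_{\lambda^\vee}^\vee) = \Ext^i(\bbQ_{\mu^\vee}, \bbI_{\lambda^\vee}) = 0$, which is exactly what we want). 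For the second ordering $\langle \cB_2, \cB_1, \cB_3\rangle$ I additionally need $\Ext^\bullet(\bbQ_\lambda, \bbS_\varnothing) = 0$: since $\bbQ_\lambda$ has composition factors $\bbS_{\lambda\bb}, \bbS_\lambda, \bbS_{\lambda\ww}$, the group $\Hom(\bbQ_\lambda, \bbS_\varnothing)$ is nonzero only if $\lambda\bb = \varnothing$ (impossible), and for higher $\Ext$ I use the projective resolution \eqref{eq:resQP} of $\bbQ_\lambda$ together with the vanishing $\Ext^i(\bbP_{\lambda\bb\ww^j}, \bbS_\varnothing) = 0$ for $i > 0$ and the observation that $\Hom(\bbP_{\lambda\bb\ww^j}, \bbS_\varnothing) = \bbS_\varnothing(\lambda\bb\ww^j) = 0$ since $\lambda\bb\ww^j \ne \varnothing$; thus the whole complex computing $\Ext^\bullet(\bbQ_\lambda, \bbS_\varnothing)$ is zero. (The $\Ext^\bullet(\bbQ_\lambda, \bbI_\mu) = 0$ vanishing still holds and handles the $\cB_1$-before-$\cB_3$ and $\cB_2$-before-$\cB_3$ requirements.)

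For the generation axiom, I must show the smallest strictly full triangulated subcategory of $\bD(\cB)$ containing $\bbS_\varnothing$, all $\bbQ_\lambda$, and all $\bbI_\lambda$ is all of $\bD(\cB)$. Since $\cB$ has enough projectives of finite length ($\bbP_\lambda$), it suffices to show each $\bbP_\lambda$ lies in $\langle \cB_1, \cB_2, \cB_3\rangle$; and in fact since $\bbP_\lambda = \stan_\lambda$ for $\lambda \in \Lambda_\ww \cup \{\varnothing\}$ and the short exact sequence $0 \to \stan_{\lambda^\flat} \to \bbP_\lambda \to \stan_\lambda \to 0$ handles $\lambda \in \Lambda_\bb$, it is enough to show each standard module $\stan_\lambda$ is in the triangulated subcategory generated by the $\bbQ$'s, $\bbI$'s, and $\bbS_\varnothing$. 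Here I would use Lemma~\ref{lem:PQI}: for non-empty $\lambda$ there is a short exact sequence $0 \to \bbP_\lambda \to \bbQ_\lambda \oplus \bbQ_{\lambda^\flat} \to \bbI_\lambda \to 0$, giving a distinguished triangle that places $\bbP_\lambda$ in the subcategory generated by $\bbQ$'s and $\bbI$'s — but $\bbQ_{\lambda^\flat}$ may involve $\lambda^\flat = \varnothing$, and $\bbQ_\varnothing$ itself needs to be accounted for (it is already a generator of $\cB_2$, so that is fine). Iterating/inducting on length: $\bbP_\varnothing = \stan_\varnothing$ fits in $0 \to \bbS_\ww \to \bbP_\varnothing \to \bbS_\varnothing \to 0$; and $\bbS_\ww$ is the socle of $\bbQ_\varnothing$ with $\bbQ_\varnothing/\bbS_\ww = \cost_\varnothing = \bbI_\varnothing$, so $\bbS_\ww \in \langle \cB_2, \cB_3\rangle$, hence $\bbP_\varnothing \in \langle \cB_1, \cB_2, \cB_3\rangle$. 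For $\lambda \ne \varnothing$, Lemma~\ref{lem:PQI} directly gives $\bbP_\lambda$ in the triangulated span of $\bbQ_\lambda, \bbQ_{\lambda^\flat}, \bbI_\lambda$, all generators. Since every object of $\bD(\cB)$ is a finite iterated extension (in the triangulated sense) of shifts of finite-length projectives $\bbP_\lambda$, we conclude $\bD(\cB) = \langle \cB_1, \cB_2, \cB_3\rangle$; and since the same generators are used, generation for the ordering $\langle \cB_2, \cB_1, \cB_3\rangle$ is identical.

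\textbf{The main obstacle.} The delicate point is getting all the $\Hom$-level (degree-zero) vanishings exactly right, particularly those involving $\bbS_\varnothing$ and $\bbI_\varnothing$, since $\bbS_\varnothing$ sits in the socle of $\bbI_\varnothing = \cost_\varnothing$ and more generally $\bbS_\varnothing$ can appear as a composition factor of various $\bbI_\mu$. The orthogonality $\Hom(\bbS_\varnothing, \bbI_\mu) = 0$ that is needed for $\cB_1$ to come before $\cB_3$ requires $\bbS_\varnothing$ to not be in the socle of $\bbI_\mu$; since $\bbI_\mu$ has simple socle $\bbS_\mu$, this holds precisely when $\mu \ne \varnothing$, and the case $\mu = \varnothing$ would \emph{fail}. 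This means the subcategories cannot simply be "generated by $\bbI_\lambda$ for all $\lambda$" if that includes $\bbI_\varnothing$; I expect the resolution is that $\bbI_\varnothing = \cost_\varnothing$ should be viewed as built from $\bbS_\varnothing$ and $\bbS_\ww$, and one checks $\bbI_\varnothing$ lies in $\langle \cB_1, \cB_2\rangle$ rather than being an independent generator of $\cB_3$ — i.e., $\cB_3$ is really generated by $\{\bbI_\lambda : \lambda \ne \varnothing\}$ together with whatever is forced, OR the statement's $\cB_3$ genuinely includes $\bbI_\varnothing$ and the orthogonality is arranged differently (e.g. via the second ordering). Pinning down the precise generating sets so that both orderings work simultaneously, and double-checking every $\Hom$ between $\bbS_\varnothing$, $\bbQ_\lambda$, $\bbI_\mu$ in both directions against Propositions~\ref{prop:ExtB} and~\ref{prop:B-ext}, is where the real care is needed; the higher $\Ext$ vanishings are comparatively routine given the injectivity of the $\bbI$'s and the explicit resolutions \eqref{eq:resLD}, \eqref{eq:resQP}, \eqref{eq:DP}.
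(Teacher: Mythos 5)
There are two genuine gaps, one in each half of your argument.

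\emph{Orthogonality is checked in the wrong direction.} For $\langle\cA_1,\ldots,\cA_n\rangle$ the condition that is actually needed (and the one consistent with the filtration description in \S\ref{ss:semi-orthog-bg}, with Proposition~\ref{prop:3graded}, and with how the decomposition is used later) is that morphisms from \emph{later} pieces to \emph{earlier} pieces vanish, i.e.\ $\Hom(\cA_j,\cA_i)=0$ for $j>i$; the displayed axiom (a) has the indices transposed, and your literal reading of it leads you astray. Thus the required vanishings are $\RHom(\bbI_\lambda,\bbQ_\mu)$, $\RHom(\bbI_\lambda,\bbS_\varnothing)$, $\RHom(\bbQ_\lambda,\bbS_\varnothing)$ and $\RHom(\bbS_\varnothing,\bbQ_\lambda)$, all supplied by Proposition~\ref{prop:B-ext}(a,d,e) together with the duality $(-)^{\vee}$. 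The vanishings you try to prove instead, $\RHom(\bbS_\varnothing,\bbI_\mu)=0$ and $\RHom(\bbQ_\lambda,\bbI_\mu)=0$, are not needed and are in fact false: $\Hom(\bbS_\varnothing,\bbI_\varnothing)=k$ (which you correctly notice and flag as an ``obstacle'' --- it is not one, since $\cB_3$ sits \emph{last}), and $\Hom(\bbQ_\lambda,\bbI_\lambda)=\Hom(\bbP_{\lambda^{\vee}},\bbQ_{\lambda^{\vee}})^*=\bbQ_{\lambda^{\vee}}(\lambda^{\vee})^*=k$. Your duality argument for the latter vanishing is erroneous: dualizing $\Ext^i(\bbI_\lambda,\bbQ_\mu)=0$ gives $\Ext^i(\bbQ_{\mu^{\vee}},\bbI_\lambda^{\vee})=0$ with $\bbI_\lambda^{\vee}=\bbP_{\lambda^{\vee}}$ (a projective), not $\bbI_{\lambda^{\vee}}$. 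Once the direction is corrected, the generating sets are exactly as stated and no special treatment of $\bbI_\varnothing$ is required.

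\emph{The generation argument fails.} You reduce to showing each $\bbP_\lambda$ lies in $\cX=\langle\cB_1,\cB_2,\cB_3\rangle$ and then assert that every object of $\bD(\cB)$ is a finite iterated extension of shifts of projectives. This is false: simples such as $\bbS_\ww$ have infinite projective dimension (see \eqref{eq:resLD}), and indeed $[\bbS_\varnothing]$ does not lie in the $\bZ$-span of the classes $[\bbP_\lambda]$ in $\rK(\cB)$ (writing $[\bbS_\varnothing]=\sum c_\lambda[\stan_\lambda]$ forces infinitely many non-zero $c_\lambda$), so $\bbS_\varnothing$ is not in the triangulated subcategory generated by the projectives. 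The correct d\'evissage is to the simples: one shows $\cost_\lambda\in\cX$ (it has a finite injective coresolution), deduces $\bbS_{\lambda\ww}\in\cX$ from $0\to\bbS_{\lambda\ww}\to\bbQ_\lambda\to\cost_\lambda\to 0$, and then inducts on length using $0\to\bbS_\lambda\to\cost_\lambda\to\bbS_{\lambda\bb}\to 0$. Your observation via Lemma~\ref{lem:PQI} that the $\bbP_\lambda$ lie in $\cX$ is correct but does not suffice.
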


\begin{proof}
By Proposition~\ref{prop:B-ext}(a,d,e), the groups
\begin{displaymath}
\RHom(\bbI_{\lambda}, \bbQ_{\mu}), \qquad
\RHom(\bbI_{\lambda}, \bbS_{\varnothing}), \qquad
\RHom(\bbQ_{\lambda}, \bbS_{\varnothing})\cong \RHom(\bbS_{\varnothing},\bbQ_{\lambda} )
\end{displaymath}
vanish, which proves the necessary semi-orthogonality.

To complete the proof, we must show that the $\cB_i$ generate $\bD(\cB)$. Let $\cX$ be the triangulated subcategory they generate. It suffices to show that $\cX$ contains all simple objects. It contains $\bbS_{\varnothing}$ by definition. Since costandard modules have finite injective resolutions, it follows that $\cost_{\lambda}$ belongs to $\cX$ for all $\lambda$. The short exact sequence
\begin{displaymath}
0 \to \bbS_{\lambda\ww} \to \bbQ_{\lambda} \to \cost_{\lambda} \to 0
\end{displaymath}
shows that $\cX$ contains $\bbS_{\lambda\ww}$ for any weight $\lambda$.  We have a short exact sequence
\begin{displaymath}
0 \to \bbS_{\lambda} \to \cost_{\lambda} \to \bbS_{\lambda \bb} \to 0.
\end{displaymath}
Thus if $\bbS_{\lambda}$ belongs to $\cX$ then so does $\bbS_{\lambda \bb}$. It now follows from an easy inductive argument on length that $\cX$ contains $\bbS_{\lambda}$ for all $\lambda \in \Lambda$.
\end{proof}

Henceforth, for instance in the next lemma, we will work with and refer to the semi-orthogonal decomposition $\langle\cB_1,\cB_2,\cB_3\rangle$. We will see later in this section that $\bD(\cB)$ is a monoidal subcategory of $\bD^+(\cB)$. We now determine the graded pieces of its tensor unit $\bbP_\varnothing$; these will play an important role.

\begin{proposition} \label{prop:3graded}
The graded pieces of $\bbP_\varnothing$ are $\bbS_\varnothing\in\cB_1$, $\bbQ_{\varnothing}\in\cB_2$ and $\bbI_\varnothing[-1]\in\cB_3$.
\end{proposition}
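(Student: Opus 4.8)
The plan is to build, by hand, a tower for $\bbP_{\varnothing}$ of the shape produced by the semi-orthogonal decomposition $\langle \cB_1, \cB_2, \cB_3 \rangle$, and then invoke uniqueness of such towers (see \cite{BK}). First I would use that $\bbP_{\varnothing} = \stan_{\varnothing}$ is the full $\fB$-module on $\{\varnothing, \ww\}$, so that \eqref{eq:sesDelta} specializes to the short exact sequence $0 \to \bbS_{\ww} \to \bbP_{\varnothing} \to \bbS_{\varnothing} \to 0$, which I read as a distinguished triangle $\bbS_{\ww} \to \bbP_{\varnothing} \to \bbS_{\varnothing} \to \bbS_{\ww}[1]$ in $\bD(\cB)$. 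This already peels off a $\cB_1$-piece: the cone of $\bbS_{\ww} \to \bbP_{\varnothing}$ is $\bbS_{\varnothing}$, which generates $\cB_1$.

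Next I would filter the remaining term $\bbS_{\ww}$. The short exact sequence $0 \to \bbS_{\ww} \to \bbQ_{\varnothing} \to \cost_{\varnothing} \to 0$ --- the $\lambda = \varnothing$ instance of the sequence already used in the proof of Theorem~\ref{thm:semi-orthog} --- together with the identification $\cost_{\varnothing} = \bbI_{\varnothing}$ gives a distinguished triangle $\bbS_{\ww} \to \bbQ_{\varnothing} \to \bbI_{\varnothing} \to \bbS_{\ww}[1]$, equivalently $\bbI_{\varnothing}[-1] \to \bbS_{\ww} \to \bbQ_{\varnothing} \to \bbI_{\varnothing}$. Splicing the two triangles (the term $\bbS_{\ww}$ of the first is precisely the source in the rotated second) yields the tower $0 \to \bbI_{\varnothing}[-1] \to \bbS_{\ww} \to \bbP_{\varnothing}$ whose successive cones are $\bbQ_{\varnothing}$ and $\bbS_{\varnothing}$, with innermost term $\bbI_{\varnothing}[-1]$. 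I would then record the memberships $\bbS_{\varnothing} \in \cB_1$ and $\bbQ_{\varnothing} \in \cB_2$ (both by definition of the $\cB_i$), and $\bbI_{\varnothing} \in \cB_3$, hence $\bbI_{\varnothing}[-1] \in \cB_3$ since $\cB_3$ is a triangulated subcategory.

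Finally I would conclude: this tower is exactly a filtration of $\bbP_{\varnothing}$ of the type provided by $\langle \cB_1, \cB_2, \cB_3 \rangle$, and since such filtrations are unique up to canonical isomorphism, the graded pieces of $\bbP_{\varnothing}$ are $\bbS_{\varnothing} \in \cB_1$, $\bbQ_{\varnothing} \in \cB_2$, and $\bbI_{\varnothing}[-1] \in \cB_3$. I do not expect a genuine obstacle here; the only points requiring care are (i) keeping the rotations straight so that the three pieces appear in the order matching $\langle \cB_1, \cB_2, \cB_3 \rangle$ rather than $\langle \cB_2, \cB_1, \cB_3 \rangle$, and (ii) noting that the two short exact sequences genuinely concatenate, which they do because the submodule $\bbS_{\ww}$ of $\bbP_{\varnothing}$ is the same $\bbS_{\ww}$ that appears inside $\bbQ_{\varnothing}$.
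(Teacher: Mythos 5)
Your proposal is correct and is essentially the paper's own proof: the same two short exact sequences $0 \to \bbS_{\ww} \to \bbP_{\varnothing} \to \bbS_{\varnothing} \to 0$ and $0 \to \bbS_{\ww} \to \bbQ_{\varnothing} \to \bbI_{\varnothing} \to 0$ are spliced into the filtration $0 \to \bbI_{\varnothing}[-1] \to \bbS_{\ww} \to \bbP_{\varnothing}$ with the same graded pieces. Your extra care about the ordering of the pieces and the uniqueness of the tower is sound but adds nothing beyond what the paper does implicitly.
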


\begin{proof}
The short exact sequences
\begin{displaymath}
0\to \bbS_\ww\to \bbP_\varnothing\to \bbS_\varnothing\to 0\qquad\mbox{and}\qquad 0\to \bbS_\ww\to \bbQ_\varnothing\to \bbI_\varnothing\to 0,
\end{displaymath}
and their associated distinguished triangles in $\bD(\cB)$ show that $\bbS_\ww$ belongs to $\langle \cB_2,\cB_3\rangle$ and that the filtration of $\bbP_\varnothing$ is given by
\begin{displaymath}
0\to \bbI_\varnothing[-1]\to \bbS_\ww\to \bbP_\varnothing
\end{displaymath}
leading to the desired graded pieces.
\end{proof}

\subsection{The functor to $\cC$} \label{ss:Phi-shriek}

Recall from \S \ref{ss:functor} the faithful tensor functor $\Phi \colon \cA \to \cC$. Identifying $\cA$ with the category of projective objects in $\cB$, we let
\begin{displaymath}
\Phi_! \colon \cB \to \cC
\end{displaymath}
denote the unique right-exact extension of this functor, which has again a natural tensor structure. This functor is given on projectives by
\begin{displaymath}
\Phi_!(\bbP_{\lambda}) = L_{\lambda} \oplus L_{\lambda^{\flat}},
\end{displaymath}
and is faithful on the projective category. We write $\rL \Phi_! \colon \bD^+(\cB)\to\bD^+(\cC)$ for the left-derived functor of $\Phi_!$. 

\begin{proposition} \label{prop:LPhi}
Write $\lambda=\mu \bb^n$ where $\mu \in \Lambda_{\ww} \cup \{\varnothing\}$ and $n \in \bN$. Then
\begin{align*}
(\rL \Phi_!)(\bbS_{\lambda}) &= L_{\mu^{\flat}}[n] &
(\rL \Phi_!)(\stan_{\lambda}) &= L_{\lambda} \oplus L_{\mu^{\flat}}[n] \\
(\rL \Phi_!)(\bbQ_{\lambda}) &= L_{\lambda} &
(\rL \Phi_!)(\cost_{\lambda}) &= 0
\end{align*}
In particular $\rL \Phi_!$ restricts to a functor $\rL \Phi_!:\bD(\cB)\to\bD(\cC)$.
\end{proposition}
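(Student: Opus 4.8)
The functor $\rL\Phi_!$ is computed by applying $\Phi_!$ to projective resolutions, and the only inputs needed are the formula $\Phi_!(\bbP_\lambda)=L_\lambda\oplus L_{\lambda^\flat}$ and the effect of $\Phi_!$ on the distinguished morphisms of $\fB$. For the latter, recall from the proof of Theorem~\ref{thm:maps} that under the anti-equivalence of $\fB$ with the category of indecomposables of $\cA$ the distinguished morphisms correspond to $d_\lambda$, $u_\lambda$, and $u_\lambda d_\lambda$, and that $\Phi$ sends each of these to a map in $\cC$ that is an isomorphism on the common simple factor $L_\lambda$ and kills the complementary factor ($L_{\lambda\ww}$, $L_{\lambda\bb}$, or $L_{\lambda^\flat}$ as the case may be). The plan is to use this to turn the explicit resolutions of \S\ref{ss:homprop} into ``staircase'' complexes of semisimple objects of $\cC$, all of whose cohomology can be read off by inspection.

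First I would compute $\rL\Phi_!(\stan_\lambda)$. Writing $\lambda=\mu\bb^n$ with $\mu\in\Lambda_\ww\cup\{\varnothing\}$, the differentials of the finite projective resolution \eqref{eq:DP} are (lifts of) the morphisms $u_{\mu\bb^j}$, so applying $\Phi_!$ yields a complex $0\to L_\mu\oplus L_{\mu^\flat}\to L_{\mu\bb}\oplus L_\mu\to\cdots\to L_{\mu\bb^n}\oplus L_{\mu\bb^{n-1}}\to0$ in which each differential sends one summand isomorphically onto a summand of the next term and annihilates the other. Hence the cohomology is $L_\lambda$ in degree $0$ (the cokernel at the right end), $L_{\mu^\flat}$ in degree $-n$ (the kernel at the left end), and zero in between, giving $\rL\Phi_!(\stan_\lambda)=L_\lambda\oplus L_{\mu^\flat}[n]$; for $n=0$ this is just $\Phi_!(\bbP_\lambda)$. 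The same bookkeeping applied to \eqref{eq:resLD}, whose differentials are the $d_{\lambda\ww^j}$, shows that when $\lambda\in\Lambda_\ww\cup\{\varnothing\}$ (so that \eqref{eq:resLD} is a projective resolution) one has $\rL\Phi_!(\bbS_\lambda)=L_{\lambda^\flat}$ in degree $0$; and applied to \eqref{eq:resQP} it gives $\rL\Phi_!(\bbQ_\lambda)=L_\lambda$ in degree $0$.

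To finish the formula for $\bbS_\lambda$ when $\lambda=\mu\bb^n$ with $n\ge1$, I would apply $\rL\Phi_!$ to the short exact sequence \eqref{eq:sesDelta}, $0\to\bbS_{\lambda\ww}\to\stan_\lambda\to\bbS_\lambda\to0$. Using the case just treated, $\rL\Phi_!(\bbS_{\lambda\ww})=L_\lambda$ in degree $0$, so we get a triangle $L_\lambda\to L_\lambda\oplus L_{\mu^\flat}[n]\to\rL\Phi_!(\bbS_\lambda)\xrightarrow{+1}$. Tracing the first map through the projective resolutions---it lifts the inclusion $\bbS_{\lambda\ww}\hookrightarrow\stan_\lambda$ to the morphism $d_\lambda\colon\bbP_{\lambda\ww}\to\bbP_\lambda$---one checks that it is an isomorphism onto the $L_\lambda$-summand in degree $0$ and zero onto the degree $-n$ part, so the cone is $L_{\mu^\flat}[n]$, as claimed. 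For $\cost_\lambda$ I would instead use the short exact sequence $0\to\bbS_{\lambda\ww}\to\bbQ_\lambda\to\cost_\lambda\to0$: by the previous computations both $\rL\Phi_!(\bbS_{\lambda\ww})$ and $\rL\Phi_!(\bbQ_\lambda)$ equal $L_\lambda$ concentrated in degree $0$, and the morphism between them is induced by $u_\lambda d_\lambda$ on the relevant cokernels, hence an isomorphism; therefore $\rL\Phi_!(\cost_\lambda)=0$. Finally, $\bD(\cB)$ is generated as a triangulated category by the simple modules $\bbS_\lambda$ (all objects of $\cB$ have finite length), and we have just seen that each $\rL\Phi_!(\bbS_\lambda)$ is a bounded complex---indeed a shift of a simple object of $\cC$---so the restriction of $\rL\Phi_!$ to $\bD(\cB)$ takes values in $\bD(\cC)$.

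The routine but unavoidable work is the bookkeeping: identifying the differentials of \eqref{eq:DP}, \eqref{eq:resLD}, \eqref{eq:resQP} with the maps $u_\bullet$ and $d_\bullet$, and then keeping track of which summand of each $\Phi_!(\bbP_\bullet)$ maps isomorphically onto which summand of its neighbour. Once this is in place every cohomology group is forced, and I expect the only genuinely delicate points to be the two ``connecting morphism is an isomorphism'' verifications---the map $\rL\Phi_!(\bbS_{\lambda\ww})\to\rL\Phi_!(\stan_\lambda)$ in the case $\lambda\in\Lambda_\bb$, and the analogous statement used for $\cost_\lambda$---where one must be careful about which copy of $L_\lambda$ one is looking at on $\rH^0$.
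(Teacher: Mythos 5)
Your proposal is correct and follows essentially the same route as the paper: apply $\Phi_!$ to the explicit resolutions \eqref{eq:DP}, \eqref{eq:resLD}, \eqref{eq:resQP}, use faithfulness of $\Phi$ on projectives together with $\Phi(M_\lambda)=L_\lambda\oplus L_{\lambda^\flat}$ to identify the resulting staircase complexes, and handle $\bbS_\lambda$ for $\lambda\in\Lambda_\bb$ and $\cost_\lambda$ by d\'evissage through short exact sequences. The only cosmetic differences are that for $\cost_\lambda$ the paper uses the sequence $0\to\bbS_{\lambda\ww}\oplus\bbS_{\lambda\bb\ww}\to\bbP_{\lambda\bb}\to\cost_\lambda\to0$ rather than $0\to\bbS_{\lambda\ww}\to\bbQ_\lambda\to\cost_\lambda\to0$, and for $\bbS_\lambda$ with $\lambda\in\Lambda_\bb$ it first shows $\Phi_!(\bbS_\lambda)=0$ from a presentation and reads off the five-term exact sequence instead of tracing the lift of the connecting map; both variants are equivalent.
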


\begin{proof}
We first handle the standard modules. 
Applying $\Phi$ to the projective resolution in \eqref{eq:DP}, we see that $\rL \Phi_!(\stan_{\lambda})$ is the complex
\begin{displaymath}
\cdots \to 0 \to L_{\mu} \oplus L_{\mu^{\flat}} \to \cdots \to L_{\mu \bb^{n-1}} \oplus L_{\mu \bb^{n-2}} \to L_{\mu \bb^n} \oplus L_{\mu \bb^{n-1}}\to 0.
\end{displaymath}
Since $\Phi$ is faithful on the projective category, all differentials in this complex are non-zero. This yields the formula for $\rL \Phi_!(\stan_{\lambda})$.

We next handle simples. First suppose $\lambda \in \Lambda_{\ww} \cup \{\varnothing\}$. 
Applying $\Phi$ to the projective resolution \eqref{eq:resLD}, we see that $\rL \Phi_!(\bbS_{\lambda})$ is 
\begin{displaymath}
\cdots \to L_{\lambda \ww\ww} \oplus L_{\lambda \ww} \to L_{\lambda \ww} \oplus L_{\lambda} \to L_{\lambda} \oplus L_{\lambda^{\flat}}\to 0.
\end{displaymath}
Again, since the differentials are all non-zero, the result follows. Now suppose $\lambda \in \Lambda_{\bb}$. We have a presentation
\begin{displaymath}
\bbP_{\lambda\ww} \oplus \bbP_{\lambda^{\flat}} \to \bbP_{\lambda} \to \bbS_{\lambda} \to 0.
\end{displaymath}
Applying $\Phi$ and using the fact that $\Phi$ is faithful on projectives, we see that $\Phi(\bbS_{\lambda})=0$. Next, applying $\Phi$ to the short exact sequence \eqref{eq:sesDelta}, we obtain a 5-term exact sequence
\begin{displaymath}
0 \to \rL_1 \Phi_!(\stan_{\lambda}) \to \rL_1 \Phi_!(\bbS_{\lambda}) \to L_{\lambda} \to L_{\lambda} \to \Phi(\bbS_{\lambda}) \to 0
\end{displaymath}
and isomorphisms $\rL_i \Phi_!(\bbS_{\lambda})=\rL_i \Phi_!(\stan_{\lambda})$ for $i \ge 2$. Since $\Phi(\bbS_{\lambda})=0$, the map $L_{\lambda} \to L_{\lambda}$ is an isomorphism, and so the map $\rL_1 \Phi_!(\stan_{\lambda}) \to \rL_1 \Phi_!(\bbS_{\lambda})$ is also an isomorphism. This completes the calculation of $\rL \Phi_!(\bbS_{\lambda})$, from which it follows that $\rL \Phi_!$ preserves the bounded derived category.

We next handle the $\bbQ$ modules. For any weight $\lambda$, we have a projective resolution \eqref{eq:resQP} and applying $\Phi$, the result follows; this is quite similar to the case $\bbS_{\lambda}$ where $\lambda \in \Lambda_{\ww}$.

Finally, we treat the costandard modules. We have a short exact sequence
\begin{displaymath}
0 \to \bbS_{\lambda\ww} \oplus \bbS_{\lambda\bb\ww} \to \bbP_{\lambda\bb} \to \cost_{\lambda} \to 0
\end{displaymath}
and a presentation
\begin{displaymath}
\bbP_{\lambda\ww} \oplus \bbP_{\lambda\bb\ww} \to \bbP_{\lambda\bb} \to \cost_{\lambda} \to 0.
\end{displaymath}
Applying $\Phi$ to the presentation and using faithfulness on projectives, we see that $\Phi(\cost_{\lambda})=0$. Applying $\Phi$ to the short exact sequence now shows that $\rL_i \Phi_!(\cost_{\lambda})=0$ for $i>0$; note that since $\Phi(\cost_{\lambda})=0$, the map $\Phi(\bbS_{\lambda\ww}) \oplus \Phi(\bbS_{\lambda\bb\ww}) \to \Phi(\bbP_{\lambda\bb})$ is an isomorphism, as it is surjective and the source and target both have length two.
\end{proof}

The following result shows how $\rL \Phi_!$ interacts with the semi-orthogonal decomposition of $\bD(\cB)$ (\S \ref{ss:semi-orthog}). By the \defn{kernel} of an additive functor we mean the full subcategory spanned by the objects that are mapped to zero.

\begin{proposition} \label{prop:LPhi2}
We have the following:
\begin{enumerate}
\item The functor $\rL \Phi_!$ restricts to an equivalence $\cB_2 \to \bD(\cC)$.
\item The kernel of $\rL \Phi_!$ is $\langle \cB_1, \cB_3 \rangle$.
\end{enumerate}
\end{proposition}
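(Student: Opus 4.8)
The proof of Proposition~\ref{prop:LPhi2} should split cleanly into the two parts, and part (b) will largely follow from part (a) together with the semi-orthogonal decomposition of Theorem~\ref{thm:semi-orthog}.

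\textbf{Part (a).} The plan is to check that $\rL\Phi_!$ restricted to $\cB_2$ is fully faithful and essentially surjective. For essential surjectivity, note that Proposition~\ref{prop:LPhi} gives $\rL\Phi_!(\bbQ_\lambda)=L_\lambda$ for every weight $\lambda$, so the image of $\cB_2$ is a triangulated subcategory of $\bD(\cC)$ containing all the simples $L_\lambda$; since these generate $\bD(\cC)$ (as $\cC$ is semisimple with simple objects the $L_\lambda$), the restriction is essentially surjective onto $\bD(\cC)$. For full faithfulness, the key computation is that
\begin{displaymath}
\rL\Phi_! \colon \RHom_{\cB}(\bbQ_\lambda, \bbQ_\mu) \to \RHom_{\cC}(L_\lambda, L_\mu)
\end{displaymath}
is an isomorphism for all $\lambda,\mu$. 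The left side is computed by Proposition~\ref{prop:B-ext}(c): it is $k$ concentrated in degree $0$ when $\lambda=\mu$ and vanishes otherwise. The right side, since $\cC$ is semisimple, is $\Hom_{\cC}(L_\lambda,L_\mu)=k\delta_{\lambda,\mu}$ concentrated in degree $0$. So both sides agree, and one checks the map is nonzero (hence an isomorphism) in the diagonal case because $\rL\Phi_!(\id_{\bbQ_\lambda})=\id_{L_\lambda}\ne 0$. Since the $\bbQ_\lambda$ generate $\cB_2$, a standard d\'evissage (induction on the number of cones needed to build objects of $\cB_2$, using the five lemma on the long exact $\RHom$ sequences) upgrades this to full faithfulness on all of $\cB_2$. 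Thus $\rL\Phi_!\colon\cB_2\to\bD(\cC)$ is an equivalence.

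\textbf{Part (b).} One inclusion is immediate from Proposition~\ref{prop:LPhi}: $\rL\Phi_!(\bbS_\varnothing)=L_{\varnothing^\flat}=0$ (recall $L_{\varnothing^\flat}=0$ by convention) and $\rL\Phi_!(\bbI_\lambda)=0$ for all $\lambda$. Indeed Proposition~\ref{prop:LPhi} directly gives $\rL\Phi_!(\cost_\lambda)=0$, and since $\bbI_\lambda$ has a finite costandard filtration (\S\ref{ss:B-basic}(e)), applying $\rL\Phi_!$ to the associated triangles shows $\rL\Phi_!(\bbI_\lambda)=0$; similarly $\bbS_\varnothing$ is killed. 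Hence the triangulated subcategory generated by $\cB_1$ and $\cB_3$ is contained in the kernel: $\langle\cB_1,\cB_3\rangle\subseteq\ker(\rL\Phi_!)$. For the reverse inclusion, take $X\in\bD(\cB)$ with $\rL\Phi_!(X)=0$. By Theorem~\ref{thm:semi-orthog}, using the decomposition $\langle\cB_1,\cB_2,\cB_3\rangle$, there is a canonical filtration of $X$ with graded pieces $X_1\in\cB_1$, $X_2\in\cB_2$, $X_3\in\cB_3$; more precisely, there is a triangle $Y\to X\to X_3\to Y[1]$ with $X_3\in\cB_3$ and $Y\in\langle\cB_1,\cB_2\rangle$, and a triangle $X_1\to Y\to X_2\to X_1[1]$ with $X_1\in\cB_1$, $X_2\in\cB_2$. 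Applying $\rL\Phi_!$ and using that it kills $X_1$ and $X_3$, we get $\rL\Phi_!(Y)\cong\rL\Phi_!(X_2)$ and $\rL\Phi_!(X)\cong\rL\Phi_!(Y)$, so $\rL\Phi_!(X_2)=0$. But $\rL\Phi_!$ is an equivalence on $\cB_2$ by part (a), so $X_2=0$, whence $Y\cong X_1\in\cB_1$ and $X\in\langle\cB_1,\cB_3\rangle$. This proves $\ker(\rL\Phi_!)=\langle\cB_1,\cB_3\rangle$.

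\textbf{Main obstacle.} The one step requiring genuine care is the d\'evissage in part (a): I need to argue that full faithfulness on the generators $\bbQ_\lambda$ propagates to all of $\cB_2$. The cleanest way is to fix $\lambda$ and let $\cZ_\lambda\subseteq\cB_2$ be the full subcategory of objects $Z$ for which $\rL\Phi_!\colon\RHom(\bbQ_\lambda,Z)\to\RHom(L_\lambda,\rL\Phi_!Z)$ is an isomorphism; this $\cZ_\lambda$ is triangulated (by the five lemma applied to the long exact sequences attached to a triangle in the second variable) and contains all $\bbQ_\mu$, hence is all of $\cB_2$. Then repeat the same argument in the first variable to conclude $\rL\Phi_!$ is fully faithful on $\cB_2\times\cB_2$. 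A minor subtlety is ensuring all these $\RHom$ groups are bounded so the arguments stay within $\bD(\cB)$ and $\bD(\cC)$ rather than their unbounded completions; this is fine since $\cB_2$ is generated inside $\bD(\cB)$ and $\rL\Phi_!$ preserves $\bD(\cB)$ by Proposition~\ref{prop:LPhi}.
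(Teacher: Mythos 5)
Your proposal is correct and follows essentially the same route as the paper: part (a) is exactly the combination of Proposition~\ref{prop:B-ext}(c) and Proposition~\ref{prop:LPhi} (the $\bbQ_\lambda$ generate $\cB_2$, map to the simples $L_\lambda$, and have matching $\RHom$'s, with the standard d\'evissage), and part (b) is the same kernel argument via the semi-orthogonal decomposition. The only cosmetic difference is that the paper invokes the ordering $\langle\cB_2,\cB_1,\cB_3\rangle$ so that the $\cB_2$-component of $X$ appears in a single triangle, whereas you extract it from the three-step filtration of $\langle\cB_1,\cB_2,\cB_3\rangle$; both yield the same conclusion.
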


\begin{proof}
(a) follows from Propositions~\ref{prop:B-ext}(c) and~\ref{prop:LPhi}. Proposition~\ref{prop:LPhi} also shows that $\cB_1$ and $\cB_2$ are contained in the kernel of $\rL \Phi_!$. We now show that they exactly make the kernel. Thus suppose $X$ is an object of $\bD(\cB)$ that is killed by $\rL \Phi_!$. From the semi-orthogonal decomposition $\langle \cB_2, \cB_1, \cB_3 \rangle$ of $\bD(\cB)$ we have a triangle
\begin{displaymath}
X' \to X'' \to X \to
\end{displaymath}
where $X'$ belongs to $\cB_2$ and $X''$ to $\langle \cB_1, \cB_3 \rangle$. Since $\rL \Phi_!$ kills $X$ and $X''$, it also kills $X'$. But $\rL \Phi_!$ restricts to an equivalence on $\cB_2$. Thus $X'=0$, and so $X \cong X''$, which shows that $X$ belongs to $\langle \cB_1, \cB_3 \rangle$.
\end{proof}

We say that a $\fB$-module is \defn{white} if its simple constituents are of the form $\bbS_{\lambda}$ with $\lambda \in \Lambda_{\ww} \cup \{\varnothing\}$. Note that if $\lambda$ is such a weight then the projective $\bbP_{\lambda}$ is a white module. The following result shows that we can (nearly) test exactness of white sequences by applying $\Phi_!$; this will be used to study the derived tensor product on $\cB$ in \S \ref{ss:derten}.

\begin{corollary}\label{cor:exactmiddle}
Consider a sequence $M_1 \to M_2 \to M_3$ of white $\fB$-modules that composes to zero. The following are equivalent:
\begin{enumerate}
\item The homology of the sequence at $M_2$ is a direct sum of $\bbS_{\varnothing}$'s.
\item The sequence $\Phi_!(M_1) \to \Phi_!(M_2) \to \Phi_!(M_3)$ is exact in the middle.
\end{enumerate}
\end{corollary}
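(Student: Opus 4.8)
The plan is to reduce the statement to the homological behavior of $\Phi_!$ established in Proposition~\ref{prop:LPhi}, together with the description of $\Ext$ groups in Proposition~\ref{prop:ExtB}. The key structural fact is the following: among white simples $\bbS_\lambda$ (with $\lambda \in \Lambda_\ww \cup \{\varnothing\}$), exactly one, namely $\bbS_\varnothing$, is killed by $\Phi_!$, while for every other white weight $\lambda$ one has $\Phi_!(\bbS_\lambda) = L_\lambda \ne 0$ and $\rL_i\Phi_!(\bbS_\lambda)=0$ for $i>0$, by Proposition~\ref{prop:LPhi} (the case $n=0$). In particular, $\Phi_!$ is \emph{exact} on the abelian subcategory of white $\fB$-modules: indeed every white module $M$ has a resolution by white projectives $\bbP_\lambda$ (with $\lambda$ white), and on such a resolution $\rL\Phi_!$ agrees with $\Phi_!$ since $\rL_i\Phi_!(\bbP_\lambda)=0$ for $i>0$; concretely, one checks that applying $\Phi_!$ to a short exact sequence of white modules yields a short exact sequence, because the connecting map $\rL_1\Phi_!$ of the quotient vanishes (the quotient, being white, has $\rL_1\Phi_! = 0$). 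So $\Phi_!$ restricted to white modules is an exact functor.

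Granting exactness of $\Phi_!$ on white modules, let $H$ be the homology of $M_1 \to M_2 \to M_3$ at $M_2$; this is a white module, being a subquotient of $M_2$. Write the sequence as $\im(M_1 \to M_2) \hookrightarrow \ker(M_2 \to M_3) \twoheadrightarrow H$. Applying the exact functor $\Phi_!$ and using that $\Phi_!$ commutes with kernels and images of maps of white modules, we get that the homology of $\Phi_!(M_1) \to \Phi_!(M_2) \to \Phi_!(M_3)$ at $\Phi_!(M_2)$ is exactly $\Phi_!(H)$. Now invoke the one nontrivial property: $\Phi_!(H) = 0$ if and only if every simple constituent of $H$ is killed by $\Phi_!$, i.e.\ if and only if $H$ is a direct sum of $\bbS_\varnothing$'s. (For one direction, $\Phi_!$ is additive and $\Phi_!(\bbS_\varnothing)=0$. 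For the converse, if $H$ has a composition factor $\bbS_\lambda$ with $\lambda \ne \varnothing$ white, then since $\Phi_!$ is exact and sends this factor to $L_\lambda \ne 0$, and $\cC$ is semisimple, $L_\lambda$ is a summand of $\Phi_!(H)$, so $\Phi_!(H) \ne 0$.) This gives the equivalence of (a) and (b).

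The main thing to get right — and the only genuine subtlety — is the claim that $\Phi_!$ is exact on the category of white modules, since a priori $\Phi_!$ is only right exact on all of $\cB$. The argument above handles this by noting that white modules have projective resolutions \emph{internal} to the white subcategory (each $\bbP_\lambda$ with $\lambda$ white is white), and that $\rL\Phi_!$ vanishes in positive degrees on white projectives because $\Phi(\bbP_\lambda) = L_\lambda \oplus L_{\lambda^\flat}$ is an honest object of $\cC$; hence $\rL\Phi_! \simeq \Phi_!$ on the white subcategory, and a right-derived-functor-free computation shows left-exactness as well. One should double-check that kernels of maps between white modules are computed the same way in white modules as in $\cB$ (they are, since the white subcategory is closed under subobjects: a submodule of a white module is white, as its composition factors are among those of the ambient module). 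With that in hand, the rest is bookkeeping with the semisimplicity of $\cC$.
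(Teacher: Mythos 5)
Your proof is correct and takes essentially the same route as the paper's: the paper's own argument is exactly that $\Phi_!$ sends left/right exact sequences of white modules to left/right exact sequences and that $\bbS_\varnothing$ is the only white simple it kills; you simply spell out the d\'evissage showing $\rL_i\Phi_!$ vanishes for $i>0$ on white modules. Two small slips worth fixing: Proposition~\ref{prop:LPhi} with $n=0$ gives $\Phi_!(\bbS_\lambda)=L_{\lambda^\flat}$ for white $\lambda$, not $L_\lambda$ (still nonzero when $\lambda\neq\varnothing$, so nothing in your argument breaks); and to pass from ``every composition factor of $H$ is $\bbS_\varnothing$'' to ``$H$ is a direct sum of $\bbS_\varnothing$'s'' in (a) you should invoke $\Ext^1(\bbS_\varnothing,\bbS_\varnothing)=0$, which follows from Proposition~\ref{prop:ExtB}(a).
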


\begin{proof}
It follows from Proposition~\ref{prop:LPhi} that $\Phi_!$ sends left, resp.\ right, exact sequences of white modules to left, resp.\ right, exact sequences. Moreover, the only simple white module that $\Phi_!$ sends to zero is $\bbS_\varnothing$, from which the claim follows. One can make this more explicit by observing that the only indecomposable white objects are $\bbDelta_\lambda$ and $\bbS_\lambda$, with $\lambda\in\Lambda_{\ww}\cup\{\varnothing\}$.
\end{proof}

\subsection{The functor to $\cD$} \label{ss:Psi-shriek}

By construction of the tensor product on $\cD$ in \S \ref{sec:tensorD}, we have a fully faithful tensor functor $\Psi\colon \cA\to\cD$ satisfying $M_\lambda\mapsto \bT_\lambda$. Identifying $\cA$ with $\Proj(\cB)$, we again have a right-exact extension
\begin{displaymath}
\Psi_! \colon \cB \to \cD,
\end{displaymath}
which is a tensor functor. This functor satisfies $\Psi_!(\bbP_{\lambda})=\bT_{\lambda}$. We write $\rL \Psi_! \colon \bD^+(\cB) \to \bD^+(\cD)$ for the left derived functor of $\Psi$.

\begin{proposition}\label{prop:LPsi}
We have the following:
\begin{enumerate}
\item If $\lambda \in \Lambda_{\ww}$ then $\rL \Psi_!(\bbS_{\lambda})=\bDelta_{\lambda^{\flat}}$ and $\rL \Psi_!( \stan_{\lambda})=\bT_{\lambda}=\bnabla_\lambda$.
\item If $\lambda \in \Lambda_{\bb} \cup \{\varnothing\}$ then $\rL \Psi_!(\bbS_{\lambda}) = \ker(\bDelta_{\lambda} \to \bS_{\lambda})[1]$ and $\rL \Psi_!(\stan_{\lambda})=\bS_{\lambda}=\bnabla_\lambda$.
\item For $i \ge 2$, we have $\rL_i \Psi_!=0$ identically. 
\item For any $\lambda\in\Lambda$, we have $\rL\Psi_!(\bbQ_\lambda)=0$.
\end{enumerate}
In particular, $\rL\Psi_!$ restricts to a functor $\rL\Psi_! \colon \bD(\cB)\to\bD(\cD)$.
\end{proposition}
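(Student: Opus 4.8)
The plan is to compute $\rL\Psi_!$ on each relevant module by resolving it by the corepresentable projectives $\bbP_\mu$, applying $\Psi_!$ (which sends $\bbP_\mu\mapsto\bT_\mu$ and is fully faithful on projectives), and then identifying the resulting complex of tilting $\fD$-modules inside $\bD(\cD)\simeq\bK(\Tilt\cD)$ (Proposition~\ref{prop:equiv:triangle}) by comparing it with the exact sequences of Corollary~\ref{Cor:TiltRes}. Since $\Psi_!$ is faithful on projectives, every differential produced this way is nonzero, hence equal up to a scalar to the canonical map $d$ or $u$ between the relevant tilting modules, which makes these comparisons legitimate.

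I would first dispose of the standard modules, as they are the building blocks for the rest. Writing $\lambda=\mu\bb^n$ with $\mu\in\Lambda_\ww\cup\{\varnothing\}$: if $n=0$ then $\stan_\lambda=\bbP_\lambda$ and $\rL\Psi_!(\stan_\lambda)=\bT_\lambda=\bnabla_\lambda$ by construction; if $n\ge 1$, applying $\Psi_!$ to the finite projective resolution \eqref{eq:DP} produces the complex $0\to\bT_\mu\to\cdots\to\bT_{\mu\bb^{n-1}}\to\bT_{\mu\bb^n}\to 0$, whose homology is $\bS_\lambda=\bnabla_\lambda$ concentrated in degree $0$ by Corollary~\ref{Cor:TiltRes}(a). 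In particular every standard $\fB$-module is $\Psi_!$-acyclic and $\rL\Psi_!(\stan_\lambda)=\bnabla_\lambda$ in all cases, which is the ``$\stan$'' half of (a) and (b).

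Next the simple modules. For $\lambda\in\Lambda_\ww\cup\{\varnothing\}$ the standard resolution \eqref{eq:resLD} is a genuine projective resolution; writing $\lambda=\rho\ww^m$ with $\rho\in\Lambda_\bb\cup\{\varnothing\}$, the complex computing $\rL\Psi_!(\bbS_\lambda)$ is $[\cdots\to\bT_{\rho\ww^{m+1}}\xrightarrow{d}\bT_{\rho\ww^m}\to 0]$, which is the brutal truncation $\sigma_{\le 0}$ of a shift of $E^\bullet=[\cdots\to\bT_{\rho\ww}\xrightarrow{d}\bT_\rho\to 0]$. By Corollary~\ref{Cor:TiltRes}(b) (letting the length go to $\infty$) the complex $E^\bullet$ is acyclic, hence zero in $\bD(\cD)$, so the truncation triangle identifies $\rL\Psi_!(\bbS_\lambda)$ with the bounded complex $[\bT_{\rho\ww^{m-1}}\to\cdots\to\bT_\rho]$ placed in non-negative degrees; its homology, again by Corollary~\ref{Cor:TiltRes}(b), is $\bDelta_{\lambda^\flat}$ in degree $0$ (using $\bDelta_\nu=\bS_\nu$ for $\nu\in\Lambda_\ww$, $\bDelta_\rho=\bT_\rho$, and reading $\bDelta_{\varnothing^\flat}=0$). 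This gives (a) and the $\lambda=\varnothing$ case of (b). For $\lambda\in\Lambda_\bb$ I would instead feed \eqref{eq:sesDelta} into $\rL\Psi_!$: since $\lambda\ww\in\Lambda_\ww$ the previous step gives $\rL\Psi_!(\bbS_{\lambda\ww})=\bDelta_\lambda$ and Step~1 gives $\rL\Psi_!(\stan_\lambda)=\bS_\lambda$ (both in degree $0$), so the long exact sequence collapses to $\rL_{\ge 2}\Psi_!(\bbS_\lambda)=0$ together with $0\to\rL_1\Psi_!(\bbS_\lambda)\to\bDelta_\lambda\to\bS_\lambda\to\Psi_!(\bbS_\lambda)\to 0$. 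It then remains to show $\Psi_!(\bbS_\lambda)=0$: applying the right-exact $\Psi_!$ to the presentation $\bbP_{\lambda\ww}\oplus\bbP_{\lambda^\flat}\to\bbP_\lambda\to\bbS_\lambda\to 0$ exhibits $\Psi_!(\bbS_\lambda)$ as $\bT_\lambda=\bDelta_\lambda$ modulo the images of $d_\lambda\colon\bT_{\lambda\ww}\to\bT_\lambda$ and $u_{\lambda^\flat}\colon\bT_{\lambda^\flat}\to\bT_\lambda$, and since $\bDelta_\lambda$ and $\bT_{\lambda\ww}=\bnabla_{\lambda\ww}$ are uniserial with the same simple top $\bS_\lambda$ (Proposition~\ref{prop:uniserial} and Proposition~\ref{prop:D-tilt}), the nonzero map $d_\lambda$ is forced to be surjective. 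Hence $\rL\Psi_!(\bbS_\lambda)=\ker(\bDelta_\lambda\to\bS_\lambda)[1]$, proving (b).

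Parts (c) and (d) are then short. For (d), $\Psi_!$ applied to the projective resolution \eqref{eq:resQP} yields $[\cdots\to\bT_{\lambda\bb\ww}\xrightarrow{d}\bT_{\lambda\bb}\to 0]$, which is acyclic by Corollary~\ref{Cor:TiltRes}(b) with $\kappa=\lambda\bb\in\Lambda_\bb$, so $\rL\Psi_!(\bbQ_\lambda)=0$. For (c), the computations above show $\rL_i\Psi_!(\bbS_\lambda)=0$ for all $i\ge 2$ and all $\lambda$, and dévissage along composition series propagates this to every finite $\fB$-module; thus $\rL\Psi_!$ has cohomological amplitude in $[-1,0]$, hence sends $\bD(\cB)$ into $\bD(\cD)$. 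The step I expect to be the main obstacle is the simple-module computation: getting the brutal-truncation bookkeeping exactly right when matching $\Psi_!$ of a resolution against Corollary~\ref{Cor:TiltRes}, and the small structural input $\Psi_!(\bbS_\lambda)=0$ for $\lambda\in\Lambda_\bb$, which is precisely where the uniseriality of standard and tilting $\fD$-modules is used.
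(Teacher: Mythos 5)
Your proposal is correct and follows essentially the same route as the paper: apply $\Psi_!$ to the explicit projective resolutions \eqref{eq:resLD}, \eqref{eq:resQP}, \eqref{eq:DP} and to the short exact sequence \eqref{eq:sesDelta}, then identify the resulting complexes of tilting modules using Corollary~\ref{Cor:TiltRes} and the one-dimensionality of the relevant $\Hom$ spaces. The only local differences are that for $\lambda\in\Lambda_{\ww}$ the paper computes the homology of $\cdots\to\bT_{\lambda\ww}\to\bT_{\lambda}\to 0$ directly from the length-two structure of the modules $\bT_{\lambda\ww^i}$ ($i>0$) rather than via your truncation of an acyclic complex, and for $\lambda\in\Lambda_{\bb}$ the paper deduces the non-vanishing of $\bDelta_{\lambda}\to\bS_{\lambda}$ from faithfulness of $\Psi_!$ on projectives rather than from your (equally valid) uniseriality argument showing that $d_{\lambda}\colon\bT_{\lambda\ww}\to\bT_{\lambda}$ is surjective.
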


\begin{proof}
(a) Applying $\Psi_!$ to the projective resolution \eqref{eq:resLD}, we find that $\rL \Psi_!(\bbS_\lambda)$ is given by a complex
\begin{displaymath}
\cdots\to \bT_{\lambda\ww\ww}\to \bT_{\lambda\ww}\to \bT_\lambda\to 0,
\end{displaymath}
where all these tilting modules are costandard modules. Since $\Psi_!$ is faithful on projectives, all differentials are non-zero. By the dual of Proposition~\ref{prop:uniserial}, we know that, for $i>0$, the module $\bT_{\lambda\ww^i}$ has length two with socle $\bS_{\lambda\ww^i}$ and top $\bS_{\lambda\ww^{i-1}}$. The desired result thus follows from Proposition~\ref{prop:D-tilt}(a). The second claim in (a) follows immediately from $\stan_\lambda=\bbP_\lambda$.

(b) Comparison of the resolutions in \eqref{eq:DP} and Corollary~\ref{Cor:TiltRes}(a) shows that $\rL \Psi_!(\stan_{\lambda})=\bS_{\lambda}$. Next, short exact sequence \eqref{eq:sesDelta} and the results obtained thus far yields an exact sequence
\begin{displaymath}
0\to \rL_1 \Psi_!(\bbS_\lambda)\to \bDelta_\lambda\to \bS_\lambda\to \Psi_!(\bbS_\lambda) \to 0.
\end{displaymath}
It thus suffices to show that the middle arrow is not zero. However, since we can pre- and post-compose $\bbS_{\lambda\ww}\hookrightarrow \stan_\lambda$ to obtain the non-zero morphism $\bbP_{\lambda\ww}\to\bbP_{\lambda\bb}$, faithfulness of $\Psi_!$ on projectives completes the argument.

(c) This statement follows immediately from (a) and (b). This also yields the final statement about $\rL \Psi_!$ preserving the bounded derived categories.

(d) The projective resolution of $\bbQ_\lambda$ in \eqref{eq:resQP} is sent to an acyclic complex in $\bD^+(\cD)$ under $\Psi_!$, due to Corollary~\ref{Cor:TiltRes}.
\end{proof}

\begin{remark}
The functor $\Psi_!$ can be interpreted as a Ringel duality functor, explaining the observation $\Psi_!(\stan_\lambda)=\bnabla_\lambda$ for all $\lambda\in\Lambda$.
\end{remark}

We now see how $\rL \Psi_!$ interacts with the semi-orthogonal decomposition of $\bD(\cB)$ (\S \ref{ss:semi-orthog}).

\begin{proposition}\label{prop:LPsi2}
We have the following:
\begin{enumerate}
\item The functor $\rL \Psi_!$ restricts to an equivalence $\cB_3 \to \bD(\cD)$.
\item The kernel of $\rL \Psi_!$ is $\langle \cB_1, \cB_2 \rangle$.
\end{enumerate}
\end{proposition}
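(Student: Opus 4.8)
The plan is to mirror the proof of Proposition~\ref{prop:LPhi2} almost verbatim, using the calculations of $\rL\Psi_!$ from Proposition~\ref{prop:LPsi} in place of those from Proposition~\ref{prop:LPhi}.

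\emph{Proof of part (a).} First I would observe that $\cB_3$ is generated by the injectives $\bbI_\lambda$, and that $\Psi_!(\bbP_\lambda)=\bT_\lambda$ together with Remark~\ref{rmk:dualP} (i.e.\ $\bbI_\lambda=\bbP_\lambda\otimes\bbI_\varnothing$) should let one compute $\rL\Psi_!(\bbI_\lambda)$ directly; alternatively, since $\bbI_\lambda$ has a finite costandard filtration with quotients $\cost_\mu$ and $\rL\Psi_!(\cost_\mu)=\bnabla_\mu$ for all $\mu$ by Proposition~\ref{prop:LPsi}(a,b), the complex $\rL\Psi_!(\bbI_\lambda)$ is an iterated extension of costandard objects $\bnabla_\mu$. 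Concretely, using the short exact sequence $0\to\cost_\lambda\to\bbI_\lambda\to\cost_{\lambda^\flat}\to 0$ (for $\lambda\in\Lambda_\ww$) one gets $\rL\Psi_!(\bbI_\lambda)$ is an extension of $\bnabla_{\lambda^\flat}$ by $\bnabla_\lambda$, which is exactly $\bT_{\lambda^\vee\vee}$-type tilting data — I expect $\rL\Psi_!(\bbI_\lambda)\cong\bT_\lambda$, concentrated in degree $0$. Then to prove $\rL\Psi_!\colon\cB_3\to\bD(\cD)$ is an equivalence, I would check it is fully faithful on the generators: $\RHom_{\cB}(\bbI_\lambda,\bbI_\mu)\cong\RHom_{\cD}(\bT_\lambda,\bT_\mu)$. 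The right side is concentrated in degree $0$ by the $\Ext$-vanishing between tilting modules (Proposition~\ref{prop:equiv:triangle} and its proof), and equals $\Hom(\bT_\lambda,\bT_\mu)$, computed in Proposition~\ref{prop:homT}. The left side: since $\bbI$'s have finite costandard filtrations and $\bbI_\lambda=\bbP_\lambda\otimes\bbI_\varnothing$ with $\bbP_\lambda$ flat, $\RHom(\bbI_\lambda,\bbI_\mu)$ should be computable and match. Then the induced functor on the triangulated subcategory generated by the $\bbI_\lambda$ is fully faithful (standard d\'evissage on triangles), and essentially surjective because $\bD(\cD)=\bK(\Tilt(\cD))$ is generated by the $\bT_\lambda=\rL\Psi_!(\bbI_\lambda)$ (Proposition~\ref{prop:equiv:triangle}).

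\emph{Proof of part (b).} Proposition~\ref{prop:LPsi} already gives $\rL\Psi_!(\bbS_\varnothing)=\ker(\bDelta_\varnothing\to\bS_\varnothing)[1]=0$ (since $\bDelta_\varnothing=\bS_\varnothing$) and $\rL\Psi_!(\bbQ_\lambda)=0$ for all $\lambda$, so $\cB_1$ and $\cB_2$ are contained in $\ker(\rL\Psi_!)$, hence so is $\langle\cB_1,\cB_2\rangle$. For the reverse inclusion I would copy the argument of Proposition~\ref{prop:LPhi2}(b): given $X\in\bD(\cB)$ killed by $\rL\Psi_!$, use the semi-orthogonal decomposition $\langle\cB_1,\cB_2,\cB_3\rangle$ (rather than the reordered one, since here $\cB_3$ is the last piece) to get a triangle $X''\to X\to X'\to$ with $X''\in\langle\cB_1,\cB_2\rangle$ and $X'\in\cB_3$; since $\rL\Psi_!$ kills $X$ and $X''$ it kills $X'$, and by part (a) $\rL\Psi_!$ is an equivalence on $\cB_3$, so $X'=0$ and $X\cong X''\in\langle\cB_1,\cB_2\rangle$.

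The main obstacle I anticipate is part (a): verifying full faithfulness on the $\bbI_\lambda$'s requires computing $\RHom_{\cB}(\bbI_\lambda,\bbI_\mu)$ and checking it is concentrated in degree $0$ with the right dimension. This is not covered directly by the $\Ext$ computations in Proposition~\ref{prop:B-ext} (which handle $\Ext^i(\bbI_\lambda,\bbQ_\mu)$ and $\Ext^i(\bbI_\lambda,\bbS_\varnothing)$ but not $\Ext^i(\bbI_\lambda,\bbI_\mu)$), so I would need either an auxiliary $\Ext$-vanishing computation using the costandard filtration of $\bbI_\mu$ and the resolution~\eqref{eq:DP} of $\stan$'s, or a cleaner argument. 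The cleanest route is probably to note that $\Psi_!$ is a Ringel duality functor (as flagged in the remark after Proposition~\ref{prop:LPsi}): $\bbI_\lambda$ has a costandard filtration, $\Psi_!$ sends costandard $\fB$-modules to costandard $\cD$-modules and is exact on costandard-filtered modules, so $\Psi_!(\bbI_\lambda)$ is the tilting module $\bT_\lambda$ (no higher derived terms), and the standard Ringel duality formalism of \cite{BS} identifies $\Hom_{\cB}$ between costandard-filtered-and-standard-filtered objects with $\Hom_{\cD}$ between their images — giving both the degree-$0$ concentration and the dimension match in one stroke.
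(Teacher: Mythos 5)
Your part (b) matches the paper's argument exactly, and the overall architecture of part (a) (compute $\rL\Psi_!$ on the generators $\bbI_\lambda$ of $\cB_3$, check full faithfulness on them, then identify $\cB_3$ with $\bK(\Tilt\cD)=\bD(\cD)$) is also the paper's. However, your computation of $\rL\Psi_!(\bbI_\lambda)$ contains a genuine error. You assert that $\rL\Psi_!(\cost_\mu)=\bnabla_\mu$ ``by Proposition~\ref{prop:LPsi}(a,b),'' but that proposition computes $\rL\Psi_!$ on the \emph{standard} $\fB$-modules: it gives $\rL\Psi_!(\stan_\mu)=\bnabla_\mu$. For the \emph{costandard} $\fB$-modules the answer is shifted: from the short exact sequence $0\to\bbS_{\mu\ww}\to\bbQ_\mu\to\cost_\mu\to 0$ and $\rL\Psi_!(\bbQ_\mu)=0$ one gets $\rL\Psi_!(\cost_\mu)\cong\rL\Psi_!(\bbS_{\mu\ww})[1]=\bDelta_\mu[1]$. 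Consequently $\rL\Psi_!(\bbI_\lambda)=\bT_\lambda[1]$, not $\bT_\lambda$; in particular the underived functor $\Psi_!$ \emph{kills} the injectives, so your ``cleanest route'' at the end --- that $\Psi_!$ sends costandard $\fB$-modules to costandard $\cD$-modules exactly, with no higher derived terms --- is false as stated. The shift does not destroy the equivalence (a shift of an equivalence is still an equivalence), but it invalidates your justification both for the degree-$0$ concentration and for the $\Hom$ comparison.

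The paper sidesteps these issues using the tensor structure, which you flag but do not pursue. Having shown $\langle\cB_1,\cB_2\rangle\subset\ker(\rL\Psi_!)$, Proposition~\ref{prop:3graded} gives $\rL\Psi_!(\bbI_\varnothing)=\rL\Psi_!(\bbP_\varnothing)[1]=\bbone[1]$; the functorial isomorphism $I=(I^\vee)^*\otimes\bbI_\varnothing$ for injectives then yields $\rL\Psi_!(I)=\rL\Psi_!(I^\vee)^*[1]$, whence $\rL\Psi_!(\bbI_\lambda)=\bT_\lambda[1]$ \emph{and} full faithfulness on $\Inj(\cB)$, inherited from full faithfulness on $\Proj(\cB)$ --- no computation of $\RHom_{\cB}(\bbI_\lambda,\bbI_\mu)$ is required. (Note also that the degree-$0$ concentration of $\RHom_{\cB}(\bbI_\lambda,\bbI_\mu)$, which you list as the main anticipated obstacle, is automatic since $\bbI_\mu$ is injective; the real content is matching the dimensions, which the monoidal argument delivers.) If you want to salvage your filtration approach, replace every occurrence of $\bnabla_\mu$ by $\bDelta_\mu[1]$ and argue that the resulting extension of $\bDelta_{\lambda^\flat}[1]$ by $\bDelta_\lambda[1]$ is the non-split one, i.e.\ $\bT_\lambda[1]$; but you will still need a separate argument for the $\Hom$ dimensions.
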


\begin{proof}
(a) Proposition~\ref{prop:LPsi}(b,d) shows that $\langle \cB_1, \cB_2 \rangle$ is contained in the kernel of $\rL \Psi_!$, and so it follows from Proposition~\ref{prop:3graded} that 
\begin{equation} \label{eq:I1}
\rL \Psi_!(\bbI_{\varnothing}) = \rL\Psi_!(\bbP_\varnothing[1]) = \bbone[1].
\end{equation}
Now, we have observed $\bbI_{\lambda} = \bbP_{\lambda} \otimes \bbI_{\varnothing}$ (Remark~\ref{rmk:dualP}). In fact, this can be upgraded to a functorial isomorphism
\begin{displaymath}
I = (I^{\vee})^* \otimes \bbI_{\varnothing}
\end{displaymath}
for $I$ an injective object of $\cB$. Note that $I^{\vee}$ is a projective object, and thus has a monoidal dual. Applying the monoidal functor $\rL \Psi_!$, we find
\begin{displaymath}
\rL \Psi_!(I) = \rL \Psi_!(I^{\vee})^*[1].
\end{displaymath}
We thus see that $\rL \Psi_!(\bbI_{\lambda})=\bT_{\lambda}[1]$. Moreover, since $\rL \Psi_!$ is fully faithful on the projective category $\Proj(\cB)$, the above identification shows that it is also fully faithful on the category $\Inj(\cB)$ of injective objects in $\cB$. It follows that $\rL \Psi_!$ induces an equivalence
\begin{displaymath}
\bK(\Inj\cB)\to \bK(\Tilt\cD).
\end{displaymath}
The source is canonically identified with $\cB_3$, and the target with $\bD(\cD)$ (Proposition~\ref{prop:equiv:triangle}).

(b) We have already remarked that $\langle \cB_1, \cB_2 \rangle$ is contained in the kernel of $\rL \Psi_!$. The same argument used in Proposition~\ref{prop:LPhi2} thus shows that the two coincide.
\end{proof}

\subsection{The semi-simplification functor} \label{ss:Theta-shriek}

Recall (\S \ref{ss:ss}) the semi-simplification tensor functor $\Theta \colon \cA \to \Vec$. As with $\Phi$ and $\Psi$, this admits a unique right-exact extension
\begin{displaymath}
\Theta_! \colon \cB \to \Vec.
\end{displaymath}
This functor is given on projective modules by $\Theta_!(\bbP_{\varnothing}) = k$ and $\Theta_!(\bbP_{\lambda}) = 0$ if $\lambda$ is a non-empty weight. We now compute the values of its derived functors in some cases.

\begin{proposition} \label{prop:LTheta}
The functor $\rL \Theta_!$ kills the modules $\bbQ_{\lambda}$ and $\cost_{\lambda}$ for any weight $\lambda$, as well as the modules $\bbS_{\lambda}$ and $\stan_{\lambda}$ unless $\lambda=\bb^n$. If $\lambda=\bb^n$ then $\rL \Theta_!(\bbS_{\lambda})=\rL \Theta_!(\stan_{\lambda}) = k[n]$. In particular, $\rL\Theta_!$ restricts to a functor $\bD(\cB)\to\bD(\Vec)$.
\end{proposition}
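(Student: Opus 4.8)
The plan is to compute $\rL\Theta_!$ on each of the listed modules by feeding the explicit projective resolutions of \S\ref{ss:homprop} through $\Theta_!$, using only that $\Theta_!(\bbP_\varnothing)=k$ while $\Theta_!(\bbP_\lambda)=0$ for every nonempty weight $\lambda$. (Recall that $\rL\Theta_!$ is a priori defined on the bounded above derived category $\bD^+(\cB)$ and computed by applying $\Theta_!$ to a projective resolution; such resolutions exist since the indecomposable projectives $\bbP_\lambda$ have finite length.) The first and easiest case is $\bbQ_\lambda$: applying $\Theta_!$ to \eqref{eq:resQP}, every term $\bbP_{\lambda\bb\ww^i}$ has nonempty weight and dies, so $\rL\Theta_!(\bbQ_\lambda)=0$. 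The one genuinely self-contained computation is for simples $\bbS_\mu$ with $\mu\in\Lambda_\ww\cup\{\varnothing\}$: here \eqref{eq:resLD} is an honest projective resolution with terms $\bbP_{\mu\ww^i}$. If $\mu\in\Lambda_\ww$ all of these are killed, giving $\rL\Theta_!(\bbS_\mu)=0$; if $\mu=\varnothing$ only the degree-$0$ term $\bbP_\varnothing$ survives, giving $\rL\Theta_!(\bbS_\varnothing)=k$.

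Next I would treat the standard modules. Write $\lambda=\mu\bb^n$ with $\mu\in\Lambda_\ww\cup\{\varnothing\}$ and apply $\Theta_!$ to the finite projective resolution \eqref{eq:DP}, whose terms are $\bbP_{\mu},\bbP_{\mu\bb},\dots,\bbP_{\mu\bb^n}$ placed in cohomological degrees $-n,\dots,0$. If $\mu\ne\varnothing$ every term dies and $\rL\Theta_!(\stan_\lambda)=0$; if $\mu=\varnothing$, i.e.\ $\lambda=\bb^n$, only the leftmost term survives, yielding $\rL\Theta_!(\stan_{\bb^n})=k[n]$. All remaining simples then follow uniformly from the non-split short exact sequence \eqref{eq:sesDelta}, that is $0\to\bbS_{\lambda\ww}\to\stan_\lambda\to\bbS_\lambda\to 0$, which holds for every weight $\lambda$: its distinguished triangle in $\bD^+(\cB)$, together with $\rL\Theta_!(\bbS_{\lambda\ww})=0$ (valid since $\lambda\ww\in\Lambda_\ww$, by the previous paragraph), gives $\rL\Theta_!(\bbS_\lambda)\cong\rL\Theta_!(\stan_\lambda)$, which was just identified as $k[n]$ when $\lambda=\bb^n$ and $0$ otherwise.

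For the costandard modules I would use the short exact sequence $0\to\bbS_{\lambda\ww}\oplus\bbS_{\lambda\bb\ww}\to\bbP_{\lambda\bb}\to\cost_\lambda\to 0$ appearing in the proof of Proposition~\ref{prop:B-ext}(b): since $\lambda\ww$ and $\lambda\bb\ww$ both end in $\ww$ and $\lambda\bb$ is nonempty, $\rL\Theta_!$ annihilates all three, forcing $\rL\Theta_!(\cost_\lambda)=0$. Finally, for the parenthetical conclusion, $\rL\Theta_!$ sends every simple $\fB$-module — and therefore every shift of one — to a bounded complex of vector spaces; since every object of $\bD(\cB)$ admits a finite filtration by shifts of simple modules (by truncation and the finite-length property) and $\rL\Theta_!$ is triangulated, it follows that $\rL\Theta_!$ carries $\bD(\cB)$ into $\bD(\Vec)$.

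I do not expect any step to present a real obstacle; the only things requiring care are the cohomological-degree bookkeeping in \eqref{eq:DP} and in the triangle coming from \eqref{eq:sesDelta}, and organizing the argument so that the vanishing of $\rL\Theta_!(\bbS_\mu)$ for $\mu$ ending in $\ww$ — the common input to the remaining cases — is established first, directly from the projective resolution \eqref{eq:resLD}.
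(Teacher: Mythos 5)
Your proposal is correct and follows essentially the same route as the paper: push the explicit resolutions \eqref{eq:resLD}, \eqref{eq:resQP}, \eqref{eq:DP} through $\Theta_!$, then transfer to the remaining simples via \eqref{eq:sesDelta} and to the costandards via a short exact sequence with already-killed outer terms. The only (immaterial) difference is that for $\cost_\lambda$ the paper uses $0\to\bbS_{\lambda\ww}\to\bbQ_\lambda\to\cost_\lambda\to0$ where you use $0\to\bbS_{\lambda\ww}\oplus\bbS_{\lambda\bb\ww}\to\bbP_{\lambda\bb}\to\cost_\lambda\to0$; both work.
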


\begin{proof}
We begin with standard modules. Let $\lambda=\mu \bb^n$ where $\mu \in \Lambda_{\ww} \cup \{\varnothing\}$, so that we have a projective resolution
\begin{displaymath}
\cdots \to 0 \to \bbP_{\mu} \to \cdots \to \bbP_{\mu \bb^{n-1}} \to \bbP_{\mu \bb^n} \to \stan_{\lambda} \to 0.
\end{displaymath}
Applying $\Theta_!$, we see that all terms vanish, unless $\mu$ is empty in which case $\Phi(\bbP_{\mu})=k$ is the unique non-zero term. The result thus follows.

We now handle simple modules. If $\lambda \in \Lambda_{\ww}$ then applying $\Theta_!$ to the projective resolution \eqref{eq:resLD}
shows that $\rL \Theta_!(\bbS_{\lambda})=0$. If $\lambda \in \Lambda_{\bb} \cup \{\varnothing\}$ then applying $\Theta_!$ to the short exact sequence \eqref{eq:sesDelta}
shows $\rL \Theta_!(\bbS_{\lambda})=\rL \Theta_!(\stan_{\lambda})$, as required.
Since $\Theta_!$ kills each projective in the resolution \eqref{eq:resQP} of $\bbQ_\lambda$, we find $\rL \Theta_!(\bbQ_{\lambda})=0$.

Finally we come to the costandard modules. We have a short exact sequence
\begin{displaymath}
0 \to \bbS_{\lambda\ww} \to \bbQ_{\lambda} \to \cost_{\lambda} \to 0,
\end{displaymath}
from which we find $\rL \Theta_!(\cost_{\lambda})=0$, as required.
\end{proof}

We now examine how $\rL \Theta_!$ interacts with the semi-orthogonal decomposition of $\bD(\cB)$.

\begin{proposition}\label{prop:LTheta2}
We have the following:
\begin{enumerate}
\item The functor $\rL \Theta_!$ restricts to an equivalence $\cB_1 \to \bD(\Vec)$.
\item The kernel of $\rL \Theta_!$ is $\langle \cB_2, \cB_3 \rangle$
\end{enumerate}
\end{proposition}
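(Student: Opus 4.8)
The plan is to follow the template of Propositions~\ref{prop:LPhi2} and~\ref{prop:LPsi2}, using the semi-orthogonal decomposition $\bD(\cB)=\langle\cB_1,\cB_2,\cB_3\rangle$ of Theorem~\ref{thm:semi-orthog} together with the computations of Proposition~\ref{prop:LTheta}. For part~(a), the key point is that $\bbS_{\varnothing}$ is an exceptional object of $\bD(\cB)$: Proposition~\ref{prop:ExtB}(a) with $\lambda=\varnothing$ (so that $\mu=\varnothing$ and $n=0$) gives $\Ext^i_{\cB}(\bbS_{\varnothing},\bbS_{\varnothing})=0$ for $i>0$ and $=k$ for $i=0$, i.e. $\RHom_{\bD(\cB)}(\bbS_{\varnothing},\bbS_{\varnothing})=k$ concentrated in degree $0$. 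A routine argument then identifies $\cB_1$, the triangulated subcategory generated by $\bbS_{\varnothing}$, with the category of finite direct sums of shifts of $\bbS_{\varnothing}$ (maps between distinct shifts vanish, so cones are again such direct sums). Now $\rL\Theta_!(\bbS_{\varnothing})=k$ by Proposition~\ref{prop:LTheta} (the case $n=0$, since $\bb^0=\varnothing$), and $\bD(\Vec)$ is the analogous category of finite direct sums of shifts of $k$. Since $\rL\Theta_!$ is triangulated, carries the generator $\bbS_{\varnothing}$ to the generator $k$, and induces an isomorphism on their (shifted) self-Hom spaces — it sends $\id_{\bbS_{\varnothing}}$ to $\id_k$, and all higher self-Ext groups vanish on both sides — a five-lemma dévissage over distinguished triangles shows it is fully faithful on $\cB_1$; essential surjectivity is clear since every object of $\bD(\Vec)$ is a finite sum of shifts of $k=\rL\Theta_!(\bbS_{\varnothing})$. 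Hence $\rL\Theta_!\colon\cB_1\to\bD(\Vec)$ is an equivalence.

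For part~(b), I would first check that $\langle\cB_2,\cB_3\rangle\subseteq\ker(\rL\Theta_!)$. Proposition~\ref{prop:LTheta} gives $\rL\Theta_!(\bbQ_\lambda)=0$ for all $\lambda$, so $\cB_2\subseteq\ker(\rL\Theta_!)$; it also gives $\rL\Theta_!(\cost_\lambda)=0$, from which $\rL\Theta_!(\bbI_\lambda)=0$ for all $\lambda$ — directly when $\lambda\in\Lambda_{\bb}\cup\{\varnothing\}$, since then $\bbI_\lambda=\cost_\lambda$, and via the short exact sequence $0\to\cost_\lambda\to\bbI_\lambda\to\cost_{\lambda^{\flat}}\to 0$ otherwise — so $\cB_3\subseteq\ker(\rL\Theta_!)$, and since $\ker(\rL\Theta_!)$ is triangulated, $\langle\cB_2,\cB_3\rangle\subseteq\ker(\rL\Theta_!)$. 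For the reverse inclusion, take $X\in\ker(\rL\Theta_!)$. Regard $\bD(\cB)=\langle\cB_1,\langle\cB_2,\cB_3\rangle\rangle$ as a two-term semi-orthogonal decomposition; the orthogonality $\Hom(\cB_1,\langle\cB_2,\cB_3\rangle)=0$ follows from $\Hom(\cB_1,\cB_2)=0=\Hom(\cB_1,\cB_3)$. This gives a distinguished triangle $Y\to X\to Z\to Y[1]$ with $Z\in\cB_1$ and $Y\in\langle\cB_2,\cB_3\rangle$; applying $\rL\Theta_!$ and using that it kills $X$ and $Y$ forces $\rL\Theta_!(Z)=0$, hence $Z=0$ by part~(a), hence $X\cong Y\in\langle\cB_2,\cB_3\rangle$. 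This is verbatim the argument used for $\rL\Phi_!$ and $\rL\Psi_!$ in Propositions~\ref{prop:LPhi2}(b) and~\ref{prop:LPsi2}(b).

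The only step that is not pure formalism is the identification of $\cB_1$ in part~(a), which is entirely governed by the $\Ext$-vanishing $\RHom_{\bD(\cB)}(\bbS_{\varnothing},\bbS_{\varnothing})=k$ of Proposition~\ref{prop:ExtB}(a); once $\cB_1$ is known to consist of finite sums of shifts of $\bbS_{\varnothing}$, matching it with $\bD(\Vec)$ via $\bbS_{\varnothing}\mapsto k$ is forced. Everything else — the inclusions into $\ker(\rL\Theta_!)$ and the semi-orthogonal peeling-off argument — repeats the bookkeeping already performed twice in this section for $\rL\Phi_!$ and $\rL\Psi_!$, so I expect no genuine difficulty.
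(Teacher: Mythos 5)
Your proof is correct and follows essentially the same route as the paper's (which compresses the whole argument into three sentences): part~(a) from the exceptionality of $\bbS_{\varnothing}$ given by Proposition~\ref{prop:ExtB}(a) together with $\rL\Theta_!(\bbS_{\varnothing})=k$, and part~(b) by checking the generators $\bbQ_{\lambda}$ and $\bbI_{\lambda}$ are killed and then running the same peeling-off argument as in Proposition~\ref{prop:LPhi2}(b). The only detail you supply that the paper leaves implicit is the deduction of $\rL\Theta_!(\bbI_{\lambda})=0$ from the vanishing on costandard modules, which is exactly the right way to fill that gap.
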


\begin{proof}
Since $\Ext^i(\bbS_{\varnothing}, \bbS_{\varnothing})=0$ for $i>0$, see Proposition~\ref{prop:ExtB}(a) and $\rL \Theta_!(\bbS_{\varnothing})=k$, statement (a) follows. Proposition~\ref{prop:LTheta} shows that $\cB_2$ and $\cB_3$ are contained in the kernel of $\rL \Theta_!$. Arguing as in the proof of Proposition~\ref{prop:LPhi2}, we see that $\langle \cB_2, \cB_3 \rangle$ is exactly the kernel.
\end{proof}

\subsection{The derived tensor product} \label{ss:derten}

Recall from \S \ref{ss:B-tensor} that the category $\cB$ admits a tensor product $\otimes$, which is right exact. We let $\otimes^{\rL}$ be its left-derived functor. This gives the bounded above derived category $\bD^+(\cB)$ a symmetric monoidal structure. We first observe that the bounded derived category $\bD(\cB)$ is preserved.

\begin{proposition} \label{prop:bounded-tor}
$\bD(\cB)$ is a monoidal subcategory of $\bD^+(\cB)$.
\end{proposition}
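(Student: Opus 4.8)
The plan is to show that $\otimes^{\rL}$ of two bounded complexes is bounded, which reduces to showing that $\Tor^{\cB}_i(M,N)$ vanishes for $i \gg 0$ whenever $M$ and $N$ are finite-length $\fB$-modules. Since every object of $\cB$ has a resolution by finite-length projectives, it suffices to prove that each $\bbS_{\lambda}$ has finite flat dimension, i.e., that $\Tor_i^{\cB}(\bbS_{\lambda}, N)=0$ for $i \gg 0$ and every finite $N$; by d\'evissage on the length of $N$ it is enough to take $N = \bbS_{\mu}$. So the key claim is: $\Tor_i^{\cB}(\bbS_{\lambda}, \bbS_{\mu})$ vanishes for $i$ large (in fact one expects a bound depending only on $\ell(\lambda)$, or even a uniform bound, given the Koszulity remark).

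First I would set up the computation using the standard resolutions already in the excerpt. For $\lambda \in \Lambda_{\ww} \cup \{\varnothing\}$, the resolution \eqref{eq:resLD} expresses $\bbS_{\lambda}$ in terms of the standard modules $\stan_{\lambda\ww^i}$, while for general $\lambda = \mu\bb^n$ the short exact sequence $0 \to \stan_{\mu\bb^{n-1}} \to \bbP_{\mu\bb^n} \to \stan_{\mu\bb^n} \to 0$ together with \eqref{eq:sesDelta} reduces the flat dimension of $\bbS_{\lambda}$ to that of finitely many $\stan_{\nu}$'s. Thus it suffices to bound the flat dimension of the standard modules $\stan_{\nu}$. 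Now \eqref{eq:DP} gives a \emph{finite} projective resolution of $\stan_{\nu}$ (of length $\ell(\nu) - \ell(\mu)$ where $\nu = \mu\bb^{\ell(\nu)-\ell(\mu)}$), and projectives are flat by \S\ref{ss:B-tensor} (they are rigid, hence $\bbP_{\lambda}\otimes-$ is exact). Hence $\stan_{\nu}$ has finite flat dimension, and running back through the reductions, so does each $\bbS_{\lambda}$ — though the flat dimension of $\bbS_{\lambda}$ itself is genuinely infinite as a \emph{projective} dimension (witnessed by the infinite resolution \eqref{eq:resLD}), so the point is precisely that flat $\ne$ projective here: the standards are not projective but are flat-finite.

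Wait — I need to be careful: \eqref{eq:DP} resolves $\stan_{\lambda}$ by projectives only when $\lambda$ ends in a string of $\bb$'s after an initial $\mu \in \Lambda_{\ww}\cup\{\varnothing\}$; but that is exactly the general form of a weight, so \eqref{eq:DP} does apply to every $\stan_{\lambda}$. Good. The cleanest formulation: \emph{every standard module has a finite projective resolution} (immediate from \eqref{eq:DP}), \emph{every simple has a filtration whose subquotients, after at most two more steps through \eqref{eq:resLD} and \eqref{eq:sesDelta}, are standards}, and \emph{projectives are flat}; therefore $-\otimes^{\rL}\bbS_{\lambda}$ has bounded cohomological amplitude, hence so does $-\otimes^{\rL}M$ for any $M \in \cB$, hence $\bD(\cB)$ is closed under $\otimes^{\rL}$ inside $\bD^+(\cB)$. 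To finish, one notes that the tensor unit $\bbP_{\varnothing}$ lies in $\cB \subset \bD(\cB)$, so the monoidal structure restricts.

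The main obstacle is organizing the d\'evissage so that the bound on flat dimension is uniform enough to conclude boundedness for arbitrary bounded complexes — but this is automatic once one knows each simple has finite flat dimension, since a bounded complex has finitely many nonzero cohomologies each of finite length, and the flat dimension of a bounded complex is the max over a finite set of flat dimensions of simples (plus the length of the complex). So there is no real obstacle beyond bookkeeping; the one subtlety to flag explicitly is the flatness of projectives, which is where rigidity (hence the tensor-category structure on $\cA$, transported to $\Proj(\cB)$) is used, and the fact that this is a statement about \emph{flat} rather than \emph{projective} dimension — the latter being infinite, which is why one cannot simply invoke finite global dimension.
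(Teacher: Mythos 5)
Your reduction breaks down at the crucial step, and the part of the argument that is missing is precisely the hard part of the paper's proof. You reduce to showing that each $\bbS_{\lambda}$ has finite flat dimension, and you propose to deduce this from the finite flat dimension of the standard modules via \eqref{eq:resLD} and \eqref{eq:sesDelta}. But \eqref{eq:resLD} is an \emph{infinite} resolution of $\bbS_{\lambda}$ by standards, and \eqref{eq:sesDelta} only sets up the non-terminating recursion $\bbS_{\lambda}\rightsquigarrow \bbS_{\lambda\ww}\rightsquigarrow\bbS_{\lambda\ww\ww}\rightsquigarrow\cdots$; a simple module has no finite filtration or resolution by standards. Worse, for $\lambda\in\Lambda_{\ww}\cup\{\varnothing\}$ the standards $\stan_{\lambda\ww^i}=\bbP_{\lambda\ww^i}$ occurring in \eqref{eq:resLD} are already projective, hence flat of flat dimension $0$, and yet this tells you nothing about $\Tor_i(\bbS_{\lambda},-)$ for large $i$: an infinite flat resolution gives no bound. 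So the claim ``it suffices to bound the flat dimension of the standard modules'' is false, and nothing in your argument establishes the key vanishing $\Tor_i(\bbS_{\lambda},\bbS_{\mu})=0$ for $i\gg 0$ when $\lambda,\mu\in\Lambda_{\ww}$.

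That case is where all the content lies, and the paper has to leave pure homological algebra to handle it: it tensors the two infinite white projective resolutions $P^{\bullet}\to\bbS_{\lambda}$ and $Q^{\bullet}\to\bbS_{\mu}$, applies $\Phi_!$ and uses exactness of the tensor product in the semi-simple category $\cC$ to see that $\Phi_!(P^{\bullet}\otimes Q^{\bullet})$ has cohomology only in degree $0$, invokes the tensor product rule (Corollary~\ref{cor:notsummand2}) to see the terms of $P^{\bullet}\otimes Q^{\bullet}$ are white projectives, and then uses Corollary~\ref{cor:exactmiddle} together with the absence of $\bbP_{\varnothing}$ and $\bbP_{\bb}$ from these resolutions to conclude the vanishing of higher $\Tor$. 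None of this (nor any substitute for it) appears in your proposal. A secondary point: you do not actually need each $\bbS_{\lambda}$ to have finite flat dimension in the uniform sense (the paper's bound on the vanishing of $\Tor_i(\bbS_{\lambda},\bbS_{\mu})$ grows with $\mu$); pairwise eventual vanishing suffices since a bounded complex has only finitely many simple constituents, and this is what the paper's induction on $b(\lambda)+b(\mu)$ delivers. Your observations that standards have finite projective resolutions by \eqref{eq:DP} and that projectives are flat by rigidity are correct and are also used in the paper, but they do not close the gap.
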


\begin{proof}
We must show that if $X$ and $Y$ belong to $\cB$ then $\Tor_i(X,Y)=0$ for $i \gg 0$, where $\Tor_i(-,-)$ is the $i$th homology of the derived functor $\otimes^{\rL}$. It suffices, by d\'evissage, to treat the case where $X$ and $Y$ are simple.

First, observe that if $\lambda$ is a non-empty weight then $\bbP_{\lambda} \otimes \bbS_{\varnothing}$ vanishes. Indeed, we have
\begin{displaymath}
\Hom(\bbP_{\mu}, \bbP_{\lambda} \otimes \bbS_{\varnothing}) =
\Hom(\bbP_{\lambda^{\vee}} \otimes \bbP_{\mu}, \bbS_{\varnothing})
\end{displaymath}
by adjunction, and $\bbP_{\varnothing}$ is not a summand of $\bbP_{\lambda^{\vee}} \otimes \bbP_{\mu}$ by Proposition~\ref{prop:notsummand}. Thus if $\lambda \in \Lambda_{\ww}$ then $\Tor_i(\bbS_{\lambda}, \bbS_{\varnothing})=0$ for all $i \ge 0$ by \eqref{eq:resLD}, while if $\lambda=\varnothing$ then the same vanishing holds for $i \ge 1$.

Next, let $\lambda, \mu \in \Lambda_{\ww}$, and consider the projective resolutions $P^{\bullet} \to \bbS_{\lambda}$ and $Q^{\bullet} \to \bbS_{\mu}$ as in \eqref{eq:resLD}. We have $\Phi_!(P^{\bullet}) = L_{\lambda^{\flat}}[0]$ in $\bD(\cC)$ by Proposition~\ref{prop:LPhi}, and similarly $\Phi_!(Q^{\bullet}) = L_{\mu^{\flat}}[0]$. Thus, using exactness of the tensor product in $\cD$:
\begin{displaymath}
\Phi_!(P^\bullet\otimes Q^\bullet) = \Phi_!(P^\bullet) \otimes \Phi_!(Q^\bullet) = (L_{\lambda^\flat}\otimes L_{\mu^\flat})[0].
\end{displaymath}
We thus see that $\Phi_!(P^\bullet\otimes Q^\bullet)$ only has cohomology in degree~0. Now, the terms of $P^{\bullet}$ and $Q^{\bullet}$ are white projectives (see \S \ref{ss:Phi-shriek}). Hence, by the tensor product rule for $\cA \cong \Proj(\cB)$ (specifically Corollary~\ref{cor:notsummand2}), the terms of $P^{\bullet} \otimes Q^{\bullet}$ are white projectives. Appealing to Corollary~\ref{cor:exactmiddle}, we see that the cohomology of $P^{\bullet} \otimes Q^{\bullet}$ only has $\bbS_{\varnothing}$ in it. However, $\bbP_{\varnothing}$ and $\bbP_{\bb}$ (the only two projectives that have $\bbS_\varnothing$ as a simple constituent) do not appear in these resolutions. Indeed the latter has already been remarked, and the former follows from Proposition~\ref{prop:notsummand}. Consequently, $\Tor_i(\bbS_{\lambda}, \bbS_{\mu})=0$ for all $i>0$.

For a weight $\lambda$, let $b(\lambda)$ be the number of $\bb$ that appear at the end of $\lambda$; that is, $b(\lambda)=n$ if $\lambda=\mu \bb^n$ where $\mu \in \Lambda_{\ww} \cup \{\varnothing\}$. We prove finiteness of $\Tor_*(\bbS_{\lambda}, \bbS_{\mu})$ by induction on $b(\lambda)+b(\mu)$. The base case where the sum is zero follows from the above two paragraphs. Assume now we have proved the result when the sum is $\le b$, and consider $\lambda$ and $\mu$ with sum $b+1$. Without loss of generality, we assume $\lambda \in \Lambda_{\bb}$. Consider the short exact sequence
\begin{displaymath}
0\to K \to \bbP_\lambda\to\bbS_\lambda\to 0,
\end{displaymath}
where $K=\stan_{\lambda^\flat}\oplus \bbS_{\lambda\ww}$. All simple constituents of $K$ have the form $\bbS_{\nu}$ with $b(\nu)<b(\lambda)$. Thus, by induction, $\Tor_i(K, \bbS_{\mu})$ vanishes for $i \gg 0$. Since the projective $\bbP_{\lambda}$ is flat, $\Tor_i(\bbP_{\lambda}, \bbS_{\mu})$ vanishes for $i>0$. Thus $\Tor_i(\bbS_{\lambda}, \bbS_{\mu})$ vanishes for $i \gg 0$, as required.
\end{proof}

We examine how the monoidal structure on $\bD(\cB)$ interacts with the semi-orthogonal decomposition. Let $U_i \in \cB_i$ be the $i$th graded piece of the unit $\bbP_{\varnothing}$ (see Proposition~\ref{prop:3graded}).

\begin{theorem}\label{thm:tensor-SOD}
We have the following (for $i,j \in \{1,2,3\}$):
\begin{enumerate}
\item The category $\cB_i$ is closed under the tensor product on $\bD(\cB)$. It forms a rigid symmetric monoidal category with unit object $U_i$.
\item If $X\in\cB_i$ and $Y\in\cB_j$ with $i \ne j$ then $X\otimes^{\rL}Y=0$.
\item The equivalences $\cB_1\to\bD(\Vec)$, $\cB_2\to \bD(\cC)$ and $\cB_3\to\bD(\cD)$ from Propositions~\ref{prop:LTheta2}, \ref{prop:LPsi2} and~\ref{prop:LPhi2} are naturally symmetric monoidal.
\end{enumerate}
\end{theorem}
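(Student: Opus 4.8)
The plan is to reduce the whole statement to formal consequences of the three derived functors $\rL\Theta_!$, $\rL\Phi_!$, $\rL\Psi_!$ together with the kernel descriptions already established in Propositions~\ref{prop:LTheta2}, \ref{prop:LPhi2}, and~\ref{prop:LPsi2}. First I would record that each of these functors is symmetric monoidal as a functor out of the tensor triangulated category $\bD(\cB)$ of Proposition~\ref{prop:bounded-tor}: this follows from the given tensor structures on $\Theta_!$, $\Phi_!$, $\Psi_!$, from the fact that $\bbP_\lambda\otimes\bbP_\mu$ is again projective (it is the image of $M_\lambda\otimes M_\mu$ under $\cA\simeq\Proj(\cB)$), and from exactness of the tensor products on $\Vec$, $\cC$, $\cD$, which together let one compute $\otimes^{\rL}$ in the target by applying the functor termwise to a tensor product of complexes of projectives. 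The key lemma to isolate is: if $Z\in\bD(\cB)$ has canonical filtration with respect to $\langle\cB_1,\cB_2,\cB_3\rangle$, with graded pieces $G_i\in\cB_i$, then for the functor $F$ among $\rL\Theta_!,\rL\Phi_!,\rL\Psi_!$ that restricts to an equivalence on $\cB_i$ and kills the other two pieces one has $F(Z)\cong F(G_i)$; since $F$ is conservative on $\cB_i$, vanishing of $F(Z)$ forces $G_i=0$. This yields $\ker\rL\Theta_!\cap\ker\rL\Phi_!\cap\ker\rL\Psi_!=0$, and, for each $i$, that the intersection of the two kernels not corresponding to $\cB_i$ equals $\cB_i$. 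The one point to get right here is that $\otimes^{\rL}$ is triangulated in each variable, so tensoring a filtration-by-triangles with a fixed object produces a filtration-by-triangles with the expected graded pieces; this is what makes the kernel bookkeeping legitimate.

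With this in hand, part (b) is short: for $X\in\cB_i$ and $Y\in\cB_j$ with $i\ne j$, every index $k\in\{1,2,3\}$ satisfies $k\ne i$ or $k\ne j$, and the corresponding monoidal functor kills $X$ or kills $Y$, so $F_k(X\otimes^{\rL}Y)=0$; thus $X\otimes^{\rL}Y$ lies in all three kernels and vanishes. Part (a) splits into closure, the unit, and rigidity. Closure is the same computation: for $X,Y\in\cB_i$ the two monoidal functors not corresponding to $\cB_i$ annihilate $X$, so $X\otimes^{\rL}Y$ lies in the intersection of their kernels, which is $\cB_i$. For the unit I would tensor the canonical filtration of $\bbP_\varnothing$ — whose graded pieces are $U_1=\bbS_\varnothing$, $U_2=\bbQ_\varnothing$, $U_3=\bbI_\varnothing[-1]$ by Proposition~\ref{prop:3graded} — with an object $X\in\cB_i$ and use part (b) to kill the cross-terms $U_j\otimes^{\rL}X$ for $j\ne i$, obtaining a natural isomorphism $X\cong X\otimes^{\rL}\bbP_\varnothing\cong X\otimes^{\rL}U_i$; the associativity, symmetry, and unit constraints are then inherited from $\bD(\cB)$, so $(\cB_i,\otimes^{\rL},U_i)$ is a symmetric monoidal category.

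For part (c), once $\cB_i$ is known to be a symmetric monoidal subcategory of $\bD(\cB)$, the restriction to $\cB_i$ of the appropriate functor ($\rL\Theta_!$ on $\cB_1$, $\rL\Phi_!$ on $\cB_2$, $\rL\Psi_!$ on $\cB_3$) is automatically symmetric monoidal, and it carries $U_i$ to the tensor unit of the target, since $F_i(\bbP_\varnothing)$ is the unit and $U_i$ is the only graded piece of $\bbP_\varnothing$ surviving $F_i$ (explicitly $\rL\Theta_!(\bbS_\varnothing)=k$, $\rL\Phi_!(\bbQ_\varnothing)=L_\varnothing$, and $\rL\Psi_!(\bbI_\varnothing[-1])=\bbone$ via Proposition~\ref{prop:LPhi} and~\eqref{eq:I1}); being an equivalence of categories by Propositions~\ref{prop:LTheta2}, \ref{prop:LPhi2}, and~\ref{prop:LPsi2}, it is an equivalence of symmetric monoidal categories. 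Rigidity of the $\cB_i$ in part (a) then comes for free, as $\bD(\Vec)$, $\bD(\cC)$, $\bD(\cD)$ are rigid ($\Vec$, $\cC$, and the pre-Tannakian $\cD$ of Theorem~\ref{Thm:Tensor} being rigid) and rigidity transports along monoidal equivalences. I do not expect a serious obstacle: the substantive work is already carried out in Propositions~\ref{prop:LTheta2}, \ref{prop:LPhi2}, \ref{prop:LPsi2}, and the only things demanding attention are the compatibility of $\otimes^{\rL}$ with the semi-orthogonal filtrations noted above and the logical ordering of the three parts — the unit claim in (a) uses (b), part (c) uses the closure-and-unit part of (a), and rigidity in (a) uses (c).
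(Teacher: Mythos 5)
Your proposal is correct and follows essentially the same route as the paper: your ``key lemma'' on joint and pairwise kernels is exactly the paper's Lemma~\ref{lem:joint-kernel}, and parts (a)--(c) are deduced from it in the same order and by the same kernel bookkeeping. The only cosmetic difference is that you identify the unit by tensoring the filtration of $\bbP_\varnothing$ against $X\in\cB_i$ directly, whereas the paper transports the unit through the monoidal equivalence with the target; both are valid.
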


Before giving the proof, we require a lemma.

\begin{lemma}\label{lem:joint-kernel}
If an object of $\bD(\cB)$ is sent to zero by each of $\rL \Phi_!$, $\rL \Psi_!$, and $\rL \Theta_!$, it must be zero itself. Furthermore, if an object is sent to zero by $\rL \Theta_!$ and $\rL \Psi_!$ it belongs to $\cB_2$, and similarly for other pairs.
\end{lemma}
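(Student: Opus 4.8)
The plan is to deduce the statement formally from the semi-orthogonal decomposition $\bD(\cB) = \langle \cB_1, \cB_2, \cB_3 \rangle$ of Theorem~\ref{thm:semi-orthog} together with the kernel computations in Propositions~\ref{prop:LPhi2}, \ref{prop:LPsi2}, and~\ref{prop:LTheta2}. First I would fix an object $X$ of $\bD(\cB)$ and take the canonical filtration attached to the decomposition, namely a sequence $0 = X_3 \to X_2 \to X_1 \to X_0 = X$ in which the cone $C_i$ of $X_i \to X_{i-1}$ lies in $\cB_i$ for $i = 1, 2, 3$; note that $X_2 \cong C_3 \in \cB_3$. Thus $X$ is assembled by two successive cones from the three objects $C_1 \in \cB_1$, $C_2 \in \cB_2$, and $C_3 \in \cB_3$.

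Next I would observe that each of the three functors collapses this filtration onto a single graded piece. By Proposition~\ref{prop:LTheta2}, $\rL\Theta_!$ annihilates $\langle \cB_2, \cB_3 \rangle$ and restricts to an equivalence on $\cB_1$; applying it to the triangles of the filtration gives $\rL\Theta_!(X) \cong \rL\Theta_!(C_1)$, so $\rL\Theta_!(X) = 0$ if and only if $C_1 = 0$. In the same way, Proposition~\ref{prop:LPhi2} gives $\rL\Phi_!(X) \cong \rL\Phi_!(C_2)$, whence $\rL\Phi_!(X) = 0$ iff $C_2 = 0$, and Proposition~\ref{prop:LPsi2} gives $\rL\Psi_!(X) \cong \rL\Psi_!(C_3)$, whence $\rL\Psi_!(X) = 0$ iff $C_3 = 0$.

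The conclusions then follow by unwinding the filtration. If all three functors kill $X$, then $C_1 = C_2 = C_3 = 0$ and the filtration collapses to $X = 0$. If $\rL\Theta_!$ and $\rL\Psi_!$ both kill $X$, then $C_1 = C_3 = 0$; hence $X_2 \cong C_3 = 0$, so $X_1 \cong C_2$ and $X \cong X_1$, giving $X \cong C_2 \in \cB_2$. The remaining two pairs are handled identically: vanishing under $\rL\Theta_!$ and $\rL\Phi_!$ forces $C_1 = C_2 = 0$ and hence $X \cong C_3 \in \cB_3$, while vanishing under $\rL\Phi_!$ and $\rL\Psi_!$ forces $C_2 = C_3 = 0$ and hence $X \cong C_1 \in \cB_1$.

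Since every nontrivial ingredient is already packaged in the cited results, I do not expect a genuine obstacle here. The only point needing care is the bookkeeping of the semi-orthogonal filtration convention — which subcategory is ``rightmost'' and which functor is an equivalence on which piece — and this is pinned down by the semi-orthogonality relations $\Hom(\cB_i, \cB_j) = 0$ for $i < j$ recorded in Theorem~\ref{thm:semi-orthog}.
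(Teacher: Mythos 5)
Your proof is correct and uses essentially the same ingredients as the paper's: the canonical filtration attached to the semi-orthogonal decomposition of Theorem~\ref{thm:semi-orthog} together with the kernel and equivalence statements of Propositions~\ref{prop:LPhi2}, \ref{prop:LPsi2}, and~\ref{prop:LTheta2}. The paper organizes the argument sequentially (first reducing to $\langle\cB_2,\cB_3\rangle$ via $\rL\Theta_!$, then peeling off $\cB_3$ via $\rL\Psi_!$), whereas you show symmetrically that each functor computes exactly one graded piece, but this is only a difference in bookkeeping.
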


\begin{proof}
This follows quickly from Propositions~\ref{prop:LPhi2}, \ref{prop:LPsi2} and \ref{prop:LTheta2}. Indeed, if $X$ is in the kernel of $\rL \Theta_!$ it belongs to $\langle\cB_2,\cB_3\rangle$, so that its filtration looks like
\begin{displaymath}
0\to X_2\to X_1=X_0=X
\end{displaymath}
for some $X_2\in \cB_3$ and with the cone $C$ of $X_2\to X_1$ in $\cB_2$. We have $\rL\Psi_!(X_2)\to\rL\Psi_!(X)$ is an isomorphism. So if also $\Psi_!(X)=0$, it follows from the equivalence $\rL\Psi_! \colon \cB_3\to\bD(\cD)$ that $X_2=0$, and thus $X=X_1=C\in \cB_2$. Finally $\rL\Phi_!(X)=0$ will then similarly imply $X=X_1=C=0$.
\end{proof}

\begin{proof}[Proof of Theorem~\ref{thm:tensor-SOD}]
If $X,Y \in \cB_1$ then $X \otimes^{\rL} Y$ is killed by $\rL \Psi_!$ and $\rL \Theta_!$, and thus belongs to $\cB_1$ by Lemma~\ref{lem:joint-kernel}. Thus $\cB_1$ is closed under the tensor product. Similarly for $\cB_2$ and $\cB_3$. If $X \in \cB_i$ and $Y \in \cB_j$ with $i \ne j$ then $X \otimes^{\rL} Y$ is killed by each of $\rL \Phi_!$, $\rL \Psi_!$, and $\rL \Theta_!$, and thus vanishes by Lemma~\ref{lem:joint-kernel}. This proves (b).

Now, $\rL \Phi_! \colon \bD(\cB) \to \bD(\cC)$ is a symmetric monoidal functor, and so its restriction to $\cB_1$ is compatible with tensor product. Since $\rL \Phi_! \colon \cB_1 \to \bD(\cC)$ is an equivalence that is compatible with tensor product and $U_1$ maps to the unit of $\bD(\cC)$ (Proposition~\ref{prop:LPhi}), it follows that $U_1$ is the unit of $\cB_1$; thus $\cB_1$ is symmetric monoidal, as is the equivalence with $\bD(\cC)$. Since $\bD(\cC)$ is rigid, so is $\cB_1$. Similarly for $\cB_2$ and $\cB_3$ (using Propositions~equation~\ref{eq:I1} and~\ref{prop:LTheta} to see that $U_2$ and $U_3$ map to the unit). This proves (a) and (c).
\end{proof}

\begin{remark}
The subcategory $\cB_i$ is not a monoidal subcategory of $\bD(\cB)$, since the unit $U_i$ of $\cB_i$ is not the unit of $\bD(\cB)$. The $U_i$'s are orthogonal idempotent algebras, in the sense of \cite[Definition~2.3.1]{KVY}.
\end{remark}

\subsection{The Grothendieck ring}

By Proposition~\ref{prop:bounded-tor}, $\rK(\cB)$ is naturally a commutative ring. We can use the results obtained above to determine its structure:

\begin{proposition}
We have a ring isomorphism
\begin{displaymath}
(\rL \Phi_!, \rL \Psi_!, \rL \Theta_!) \colon \rK(\cB) \to \rK(\cC)\times \rK(\cD)\times \rK(\Vec).
\end{displaymath}
\end{proposition}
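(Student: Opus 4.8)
The plan is to deduce this from the semi-orthogonal decomposition and the derived tensor structure already established. The key structural input is Theorem~\ref{thm:semi-orthog} together with Theorem~\ref{thm:tensor-SOD}: the decomposition $\bD(\cB)=\langle\cB_1,\cB_2,\cB_3\rangle$ gives, by the generalities in \S\ref{ss:semi-orthog-bg}, a direct sum decomposition of Grothendieck groups $\rK(\bD(\cB))=\rK(\cB_1)\oplus\rK(\cB_2)\oplus\rK(\cB_3)$ as abelian groups, since every object has a canonical filtration with graded pieces in the $\cB_i$. Now $\rK(\bD(\cB))=\rK(\cB)$ for the bounded derived category of an abelian category, and likewise $\rK(\cB_1)=\rK(\bD(\Vec))=\rK(\Vec)$, $\rK(\cB_2)=\rK(\bD(\cC))=\rK(\cC)$, $\rK(\cB_3)=\rK(\bD(\cD))=\rK(\cD)$ via the equivalences $\rL\Theta_!$, $\rL\Phi_!$, $\rL\Psi_!$ from Propositions~\ref{prop:LTheta2}, \ref{prop:LPhi2}, \ref{prop:LPsi2}. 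So at the level of abelian groups the map in the statement is an isomorphism; what remains is to check it respects the ring structures.

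For the ring structure, I would argue as follows. Each of $\rL\Phi_!$, $\rL\Psi_!$, $\rL\Theta_!$ is a symmetric monoidal functor $\bD(\cB)\to\bD(-)$ (this is recorded in \S\S\ref{ss:Phi-shriek}, \ref{ss:Psi-shriek}, \ref{ss:Theta-shriek}, using that $\Phi_!,\Psi_!,\Theta_!$ carry tensor structures), hence each induces a ring homomorphism on $\rK$. Therefore the product map $(\rL\Phi_!,\rL\Psi_!,\rL\Theta_!)$ is a ring homomorphism into the product ring $\rK(\cC)\times\rK(\cD)\times\rK(\Vec)$. Combined with the previous paragraph, which shows it is bijective, this proves it is a ring isomorphism. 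The only subtlety is that the semi-orthogonal decomposition does \emph{not} make $\bD(\cB)$ into a product of the $\cB_i$ as triangulated categories, so a priori the ring structure on $\rK(\cB)$ need not be the product ring structure on $\rK(\cC)\times\rK(\cD)\times\rK(\Vec)$ — but it is forced to be, precisely because the bijection is already known to be a ring map in one direction, and a bijective ring homomorphism is a ring isomorphism.

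I should double-check that the additive isomorphism $\rK(\cB)\cong\bigoplus_i\rK(\cB_i)$ really follows from the semi-orthogonal decomposition: this is standard (the canonical filtration gives, in $\rK$, that the classes of objects of the $\cB_i$ generate, and the semi-orthogonality $\Hom(\cB_i,\cB_j)=0$ for $i<j$ together with the uniqueness of the filtration up to isomorphism gives that the sum is direct — concretely, the right-admissible filtration $\cT_2\subset\cT_1\subset\cT_0=\bD(\cB)$ yields split short exact sequences $0\to\rK(\cT_{i})\to\rK(\cT_{i-1})\to\rK(\cB_i)\to 0$). An alternative, perhaps cleaner, route avoiding any appeal to the abstract theory of $\rK$ of semi-orthogonal decompositions: use d\'evissage directly. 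The classes $[\bbQ_\lambda]$, $[\bbI_\lambda]$ (equivalently $[\cost_\lambda]$), and $[\bbS_\varnothing]$ together span $\rK(\cB)$, as one sees from the short exact sequences used in the proof of Theorem~\ref{thm:semi-orthog}; the three functors send these to bases of the respective target groups (Propositions~\ref{prop:LPhi}, \ref{prop:LPsi}, \ref{prop:LTheta}) with the other two functors vanishing on each; this gives surjectivity and injectivity simultaneously by a triangularity/counting argument.

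The main obstacle is simply bookkeeping: making sure the ``other two functors vanish'' claims are literally the content of Propositions~\ref{prop:LPhi}, \ref{prop:LPsi}, \ref{prop:LTheta} restricted to the generators $\bbQ_\lambda$, $\bbI_\lambda$, $\bbS_\varnothing$, and that no class is double-counted — but this is routine given everything assembled in \S\ref{s:B2}. I would write the proof in the short form:

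\begin{proof}
Each of $\rL\Phi_!$, $\rL\Psi_!$, $\rL\Theta_!$ is a symmetric monoidal triangulated functor, hence induces a ring homomorphism on Grothendieck groups; thus the displayed map is a ring homomorphism. It remains to check it is bijective, which may be done after forgetting the multiplicative structure. By the semi-orthogonal decomposition $\bD(\cB)=\langle\cB_1,\cB_2,\cB_3\rangle$ of Theorem~\ref{thm:semi-orthog}, the canonical filtrations of \S\ref{ss:semi-orthog-bg} give a direct sum decomposition
\begin{displaymath}
\rK(\cB)=\rK(\bD(\cB))\;\cong\;\rK(\cB_1)\oplus\rK(\cB_2)\oplus\rK(\cB_3).
\end{displaymath}
By Propositions~\ref{prop:LTheta2}(a), \ref{prop:LPhi2}(a) and \ref{prop:LPsi2}(a), the functors $\rL\Theta_!$, $\rL\Phi_!$ and $\rL\Psi_!$ restrict to equivalences $\cB_1\simeq\bD(\Vec)$, $\cB_2\simeq\bD(\cC)$ and $\cB_3\simeq\bD(\cD)$, hence induce isomorphisms $\rK(\cB_1)\cong\rK(\Vec)$, $\rK(\cB_2)\cong\rK(\cC)$ and $\rK(\cB_3)\cong\rK(\cD)$. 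Moreover, by Propositions~\ref{prop:LPhi}, \ref{prop:LPsi} and \ref{prop:LTheta}, each of $\rL\Phi_!$, $\rL\Psi_!$, $\rL\Theta_!$ annihilates the two summands $\cB_j$ with $j$ not the relevant index. Combining these facts, the displayed map is the direct sum of the three isomorphisms above, hence bijective. A bijective ring homomorphism is a ring isomorphism.
\end{proof}
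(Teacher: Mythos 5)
Your proof is correct, and for the surjectivity/injectivity part it takes a genuinely different (more structural) route than the paper. The paper argues concretely: it takes the generating set $S=\{[\bbQ_\lambda],[\bbI_\lambda],[\bbS_\varnothing]\}$ of $\rK(\cB)$ (generation being extracted from the proof of Theorem~\ref{thm:semi-orthog}), computes the images $i([\bbQ_\lambda])=([L_\lambda],0,0)$, $i([\bbI_\lambda])=(0,-[\bT_\lambda],0)$, $i([\bbS_\varnothing])=(0,0,[k])$, and observes that these form a $\bZ$-basis of the target, which forces $i$ to be an isomorphism. You instead invoke the additivity of $\rK$ under semi-orthogonal decompositions and then identify the map with the direct sum of the three isomorphisms coming from Propositions~\ref{prop:LTheta2}(a), \ref{prop:LPhi2}(a), \ref{prop:LPsi2}(a), using the kernel statements to see that the cross terms vanish. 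Both arguments rest on the same machinery from \S\ref{s:B2}; yours trades the explicit image computation (and the sign $-[\bT_\lambda]$ that the paper has to track) for the standard but unproved-in-the-paper fact that a right-admissible filtration splits $\rK$ — your sketch of why this holds is adequate, and in any case your own observation that generation plus ``the composite is a direct sum of isomorphisms'' already yields bijectivity without needing directness of the sum as a separate input. The ring-homomorphism step is identical to the paper's. One cosmetic point: in your final displayed proof the claim that each derived functor annihilates the other two pieces should be cited to Propositions~\ref{prop:LPhi2}(b), \ref{prop:LPsi2}(b) and \ref{prop:LTheta2}(b) (the kernel statements), not to Propositions~\ref{prop:LPhi}, \ref{prop:LPsi}, \ref{prop:LTheta}, which only compute values on particular modules.
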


\begin{proof}
The three functors induce a ring homomorphism since they are symmetric monoidal and preserve the bounded derived categories, by Propositions~\ref{prop:LPhi},~\ref{prop:LPsi}, and~\ref{prop:LTheta}. Call this map $i$. Let $S \subset \rK(\cB)$ be the set consisting of the classes $[\bbI_{\lambda}]$ and $[\bbQ_{\lambda}]$, for $\lambda \in \Lambda$, together with the class $[\bbS_{\varnothing}]$. In the course of proving Theorem~\ref{thm:semi-orthog}, we show that these objects generate $\bD(\cB)$, and so it follows that $S$ generates $\rK(\cB)$ as a $\bZ$-module. We have
\begin{displaymath}
i([\bbQ_{\lambda}]) = ([L_{\lambda}], 0, 0), \quad
i([\bbI_{\lambda}]) = (0, -[\bT_{\lambda}], 0), \quad
i([\bbS_{\varnothing}]) = (0, 0, [k])
\end{displaymath}
by the same results used above, and also the proof of Proposition~\ref{prop:LPsi2}. It follows that $i$ maps $S$ to a $\bZ$-basis of the target (note that the $[\bT_{\lambda}]$'s are a basis of $\rK(\cD)$ by Proposition~\ref{prop:groth-D}). The result thus follows.
\end{proof}

\section{Local abelian envelopes} \label{s:env}

In this final section, we prove Theorem~\ref{mainthm3}, which states that the functors $\Phi$ and $\Psi$ account for all the local abelian envelopes of $\cA$. We also prove some additional results, e.g., we give a universal property for $\cD$.

\subsection{The main theorem}

Recall that we have three tensor functors
\begin{displaymath}
\Phi \colon \cA \to \cC, \qquad
\Psi \colon \cA \to \cD, \qquad
\Theta \colon \cA \to \Vec.
\end{displaymath}
We now show that these three functors are universal for appropriate tensor functors. To state our result, we introduce some terminology. Suppose $F \colon \cA \to \cT$ is a tensor functor to a pre-Tannakian category $\cT$. We say that $F$ has \defn{type $\Psi$} if there is an exact tensor functor $G \colon \cD \to \cT$ such that $F=G \circ \Psi$. We similarly define type $\Phi$ and type $\Theta$.

\begin{theorem} \label{thm:env}
Let let $F \colon \cA \to \cT$ be a tensor functor to a pre-Tannakian category $\cT$. Then $F$ has exactly one of type $\Phi$, type $\Psi$, or type $\Theta$.
\end{theorem}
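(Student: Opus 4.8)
The plan is to exploit the semi-orthogonal decomposition $\bD(\cB)=\langle\cB_1,\cB_2,\cB_3\rangle$ together with the tensor-orthogonality established in Theorem~\ref{thm:tensor-SOD}. Given $F\colon\cA\to\cT$ with $\cT$ pre-Tannakian, let $F_!\colon\cB\to\cT$ be the unique right-exact extension (via $\cA\cong\Proj(\cB)$) and $\rL F_!\colon\bD(\cB)\to\bD^+(\cT)$ its left-derived functor. Since $F$ is a tensor functor, $F_!$ carries a tensor structure and $\rL F_!$ is a monoidal triangulated functor for the derived tensor product on $\bD(\cB)$. The key observation is that if $X\in\cB_i$ and $Y\in\cB_j$ with $i\ne j$ then $X\otimes^{\rL}Y=0$, so $\rL F_!(X)\otimes^{\rL}\rL F_!(Y)=0$ in $\bD^+(\cT)$; because $\cT$ is pre-Tannakian, a non-zero object has non-zero tensor square (rigidity plus $\End(\bbone)=k$ forces this — concretely $X\otimes X^*\twoheadrightarrow\bbone$), and more generally non-zero objects of $\bD^+(\cT)$ have non-zero derived tensor product since one can look at top/bottom nonvanishing cohomology. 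Hence $\rL F_!$ cannot be non-zero on two different $\cB_i$'s: there is a single index $i$ with $\rL F_!$ vanishing on $\cB_j$ for all $j\ne i$.

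Once we know $\rL F_!$ is supported on a single $\cB_i$, we use that $\rL\Theta_!$, $\rL\Phi_!$, $\rL\Psi_!$ restrict to equivalences $\cB_1\cong\bD(\Vec)$, $\cB_2\cong\bD(\cC)$, $\cB_3\cong\bD(\cD)$ (Propositions~\ref{prop:LTheta2}, \ref{prop:LPhi2}, \ref{prop:LPsi2}) whose kernels are exactly the spans of the other two pieces (Lemma~\ref{lem:joint-kernel}). Say $\rL F_!$ kills $\langle\cB_1,\cB_3\rangle$; then it factors through the Verdier quotient $\bD(\cB)/\langle\cB_1,\cB_3\rangle\simeq\cB_2\simeq\bD(\cC)$ via $\rL\Phi_!$, giving a triangulated functor $\bD(\cC)\to\bD^+(\cT)$ through which $\rL F_!$ factors. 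The remaining work is to descend this to an honest exact tensor functor $G\colon\cC\to\cT$ on the hearts with $F=G\circ\Phi$. For the $\cC$ and $\cD$ cases one checks that the composite $\bD(\cC)\to\bD^+(\cT)$ (resp.\ $\bD(\cD)\to\bD^+(\cT)$) is t-exact: it suffices to see it sends simple objects into $\cT$, and these have finite tilting (co)resolutions (Corollary~\ref{Cor:TiltRes}) which $F$ — being a tensor functor on $\cA'=\Tilt(\cD)\cong\cA$ — sends to complexes with cohomology in a single degree, because the (SF)/(CF) conditions of \S\ref{ss:D-map} hold automatically for a tensor functor into a rigid abelian category by Remark~\ref{rmk:exact-extension}; then Proposition~\ref{prop:exact-tensor-extension} produces $G$ and its uniqueness. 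For $\cC$ one argues analogously using that $\cC$ is semisimple and $\Phi_!$-resolutions. For the $\Theta$ case, factoring through $\cB_1\cong\bD(\Vec)$ forces $F_!$ to kill every $\bbP_\lambda$ with $\lambda\ne\varnothing$, i.e.\ $F$ kills every non-trivial $M_\lambda$, which is exactly the condition that $F$ factors through the semisimplification $\Theta$.

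I would then verify the three types are mutually exclusive: type $\Theta$ means $F$ kills all $M_\lambda$, $\lambda\ne\varnothing$, in particular $F(\sA(\bR))$ is trivial, whereas a type-$\Phi$ or type-$\Psi$ functor is faithful (as $\Phi$, $\Psi$ are faithful and $G$ is faithful, being an exact tensor functor between pre-Tannakian categories — any non-zero exact tensor functor of pre-Tannakian categories is faithful), so $F(\sA(\bR))\ne 0$. To separate type $\Phi$ from type $\Psi$ one can track which $\cB_i$ supports $\rL F_!$, or more concretely note that a type-$\Phi$ functor sends the standard module's image to something with $\Phi_!(\cost_\lambda)=0$ while $\Psi_!(\cost_\lambda)=\bS_\lambda\ne0$; equivalently, $F(M_\bb)$ has length $2$ composition behaviour distinguishing the two. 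Existence of at least one type is the content of the first paragraph; so exactly one holds.

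The main obstacle I anticipate is the t-exactness/descent step: one must be careful that $\rL F_!$ really does factor as a \emph{triangulated} functor through the Verdier quotient and that this quotient functor is t-exact with respect to the standard t-structures, since $\rL F_!$ a priori only lands in $\bD^+(\cT)$ and the vanishing of $\rL F_!$ on two of the $\cB_i$ must be promoted to a clean factorization. The cleanest route is probably to bypass the derived picture for the descent and instead argue directly: once $\rL F_!$ is known to kill $\langle\cB_1,\cB_3\rangle$, one shows $F_!\colon\cB\to\cT$ itself kills $\bbI_\lambda$ and $\bbS_\varnothing$ (using $\rL_i F_!$ vanishing), deduces $F_!$ is exact on white modules and factors through the quotient of $\cB$ presenting $\cC$, and then invokes Proposition~\ref{prop:exact-tensor-extension} (for the $\cD$ case) or the semisimplicity of $\cC$ (for the $\cC$ case) to build $G$. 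Making this factorization precise, and checking it is monoidal, is where the real care is needed; everything else is bookkeeping against the computations in \S\ref{s:B2}.
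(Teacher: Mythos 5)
Your overall strategy is the same as the paper's: use the tensor-orthogonality of the pieces $\cB_1,\cB_2,\cB_3$ plus the fact that non-zero objects of $\bD^+(\cT)$ have non-zero derived tensor product to conclude that $\rL F_!$ survives on exactly one piece (it cannot die on all three since $\rL F_!(\bbP_\varnothing)=\bbone$), and then descend to an exact tensor functor out of $\Vec$, $\cC$, or $\cD$ accordingly. The paper implements the descent in the $\cC$ case exactly as you suggest in your final paragraph, via the equivalence $R\colon\cB_2\to\bD(\cC)$ and the short exact sequence $0\to\bbP_\lambda\to\bbQ_\lambda\oplus\bbQ_{\lambda^\flat}\to\bbI_\lambda\to 0$, rather than an abstract Verdier quotient; your worry about t-exactness is resolved there by computing $G'(L_\lambda)=\rL F_!(\bbQ_\lambda)$ and showing it is a summand of $F(\bbP_\lambda)\in\cT$.

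There is, however, one genuine error in your type-$\Psi$ step: you assert that the conditions (SF)/(CF) of \S\ref{ss:D-map} ``hold automatically for a tensor functor into a rigid abelian category by Remark~\ref{rmk:exact-extension}.'' That remark only says (SF) $\Leftrightarrow$ (CF) in that setting; it does not say either holds. If both held automatically, every tensor functor $\cA\to\cT$ would extend exactly over $\cD$, i.e.\ every $F$ would have type $\Psi$ --- contradicting the existence of $\Phi$ itself (and the mutual exclusivity you correctly establish). The missing input is precisely the hypothesis of the case you are in: when $\rL F_!$ kills $U_1$ and $U_2$ it kills $\bbS_\varnothing$ and every $\bbQ_\lambda$, and the complexes appearing in (SF) are exactly $F_!$ applied to the projective resolutions \eqref{eq:resLD} (for $\lambda=\varnothing$) and \eqref{eq:resQP} of these objects; their acyclicity is therefore equivalent to this vanishing, and only then does Remark~\ref{rmk:exact-extension} upgrade (SF) to (CF). A smaller point: your criterion for separating types $\Phi$ and $\Psi$ at the end is more complicated than necessary --- since $\rL\Phi_!$, $\rL\Psi_!$, $\rL\Theta_!$ kill different pairs of the $U_i$, a functor of type $X$ kills the same pair as $\rL X_!$, so no two types can coexist.
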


We fix $F \colon \cA \to \cT$ for the duration of the proof. We let $F_! \colon \cB \to \cT$ be the unique right exact extension of $F$, where we identify $\cA$ with the category of projective objects in $\cB$, and we let $\rL F_! \colon \bD^+(\cB) \to \bD^+(\cT)$ be the left derived functor of $F_!$. If $F$ has type $\Phi$ then $F_!=G \circ \Phi_!$ and $\rL F_!=G \circ \rL \Phi_!$, and analogous statements hold for the other two types. Thus it is a consequence of the theorem that $\rL F_!$ preserves the bounded derived category, though this is not clear a priori.

Recall that $\bbP_{\varnothing}$ is the unit of $\bD(\cB)$ for the monoidal structure $\otimes^{\rL}$; we therefore denote it by $\bbone$ in what follows. Also, recall that
\begin{displaymath}
U_1=\bbS_{\varnothing}, \qquad U_2=\bbQ_{\varnothing}, \qquad U_3=\bbI_{\varnothing}[-1]
\end{displaymath}
are the graded pieces of $\bbone$ (Proposition~\ref{prop:3graded}), and $U_i$ is the monoidal unit of $\cB_i$ (Theorem~\ref{thm:tensor-SOD}). 
\begin{lemma} \label{lem:env-1}
The functor $\rL F_!$ kills exactly two of the $U_i$'s, and sends the third to the tensor unit of $\cT$. 
\end{lemma}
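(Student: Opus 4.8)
The plan is to exploit the monoidal structure on $\bD(\cB)$ together with the fact that $\cT$ is pre-Tannakian. First I would record the key structural facts: by Proposition~\ref{prop:3graded} there is a filtration of $\bbone=\bbP_\varnothing$ whose graded pieces are $U_1=\bbS_\varnothing\in\cB_1$, $U_2=\bbQ_\varnothing\in\cB_2$, and $U_3=\bbI_\varnothing[-1]\in\cB_3$; by Theorem~\ref{thm:tensor-SOD}(a,b) each $U_i$ is an idempotent algebra (it is the monoidal unit of $\cB_i$, so $U_i\otimes^{\rL} U_i\cong U_i$) and $U_i\otimes^{\rL} U_j=0$ for $i\neq j$. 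Applying the symmetric monoidal functor $\rL F_!$, I get objects $V_i:=\rL F_!(U_i)$ in $\bD^+(\cT)$ satisfying $V_i\otimes^{\rL} V_i\cong V_i$, $V_i\otimes^{\rL} V_j=0$ for $i\neq j$, and an exact triangle structure assembling the $V_i$ (in the appropriate order, with the shift on $V_3$) into $\rL F_!(\bbone)=F_!(\bbP_\varnothing)=F(\bbone)=\bbone_{\cT}$, concentrated in degree zero.

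Next I would argue that exactly one $V_i$ is nonzero and equals $\bbone_\cT$. The orthogonality $V_i\otimes^{\rL} V_j=0$ for $i\ne j$ is the crucial input: in a pre-Tannakian category the tensor product is exact and faithful, hence the derived tensor product of two bounded complexes over $\cT$ vanishes only if one of them is the zero object (one can see this by taking top and bottom nonzero cohomology objects of each complex and using that their tensor product, being a nonzero object of $\cT$, is nonzero — here I would invoke that nonzero objects of a pre-Tannakian category have nonzero tensor product, which is immediate from rigidity and $\End(\bbone)=k$). Therefore at most one of $V_1,V_2,V_3$ is nonzero. On the other hand, they cannot all vanish: the filtration of $\bbone$ shows $\rL F_!(\bbone)$ lies in the triangulated subcategory generated by $V_1,V_2,V_3$, and $\rL F_!(\bbone)=\bbone_\cT\ne 0$. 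So exactly one $V_i$ is nonzero; call it $V_{i_0}$. Since the other two graded pieces vanish under $\rL F_!$, the filtration triangles collapse and give $V_{i_0}\cong \rL F_!(\bbone)=\bbone_\cT$.

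The one subtlety I would address carefully is that $\rL F_!$ a priori only lands in $\bD^+(\cT)$, not $\bD(\cT)$, so I should make sure the vanishing-of-tensor-product argument and the "top/bottom cohomology" argument are valid for bounded-above complexes. For this I would note that $U_i$ itself lies in $\bD(\cB)$ (it is a genuine object of $\cB$, up to shift), and that $\rL F_!$ applied to a single object of $\cB$ has bounded-above but possibly unbounded cohomology; the tensor-orthogonality $V_i\otimes^{\rL}V_j=0$ still forces one factor to be zero because for bounded-above complexes $X,Y$ over $\cT$ with $X,Y\neq 0$ one can still take the \emph{highest} nonzero cohomology objects $H^a(X),H^b(Y)$ and observe $H^{a+b}(X\otimes^{\rL}Y)=H^a(X)\otimes H^b(Y)\neq 0$ by right-exactness of $\otimes$ and the faithfulness of the tensor product on $\cT$. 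That this single cohomology group survives is where the boundedness \emph{above} (rather than below) is exactly what one needs.

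I expect the main obstacle to be purely bookkeeping: getting the order of the semi-orthogonal filtration right (which graded piece sits where, and the shift $[-1]$ on $U_3$) so that the collapsing triangles genuinely yield $V_{i_0}\cong\bbone_\cT$ rather than a shift of it, and confirming that $\rL F_!$ is monoidal on the nose (with the unit constraint $\rL F_!(\bbone)\cong\bbone_\cT$), which is part of the data of its being a tensor functor since $F$ is. None of this is deep, but it needs care with signs and shifts. Once Lemma~\ref{lem:env-1} is in place, identifying \emph{which} $U_i$ survives with the type of $F$ will be handled in the subsequent lemmas via Propositions~\ref{prop:LPhi2}, \ref{prop:LPsi2}, \ref{prop:LTheta2}.
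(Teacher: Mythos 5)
Your proposal is correct and follows essentially the same route as the paper: tensor-orthogonality of the $U_i$'s (Theorem~\ref{thm:tensor-SOD}) plus the fact that non-zero objects of a pre-Tannakian category have non-zero tensor product (extended to $\bD^+(\cT)$) kills at least two, and then $\rL F_!(\bbP_{\varnothing})=\bbone$ together with the filtration from Proposition~\ref{prop:3graded} identifies the survivor with the unit. Your extra care about bounded-above complexes via top cohomology is a correct filling-in of a step the paper states without detail.
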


\begin{proof}
By standard properties of pre-Tannakian categories, a tensor product of two non-zero objects of $\cT$ is non-zero. It follows that the same property holds in $\bD^+(\cT)$ as well. That at least two of the three objects are killed thus follows from Theorem~\ref{thm:tensor-SOD}. Since $\bbP_{\varnothing}$ is the tensor unit of $\cB$, we have $F_!(\bbP_{\varnothing})=\bbone$, and since it is projective we have $\rL F_!(\bbP_{\varnothing})=\bbone$. It now follows that the $U_i$ which is not killed must map to $\bbone$ under $\rL F_!$.
\end{proof}

Note that $\rL \Phi_!$ kills $U_1$ and $U_3$, so if $F$ has type $\Phi$ then it does as well. Similarly for the other types. This shows that $F$ has at most one of the three types. Moreover, if $\rL F_!$ kills $U_1$ then $\rL F_!$ kills every object of $\cB_1$, since $U_1$ is the unit of $\cB_1$ (Theorem~\ref{thm:tensor-SOD}); similarly for $U_2$ and $U_3$. We will use this fact freely.

\begin{lemma} \label{lem:env-2}
Suppose $\rL F_!$ kills $U_1$ and $U_3$. Then $F$ has type $\Phi$.
\end{lemma}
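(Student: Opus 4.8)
The plan is as follows. Since $\rL F_!$ kills $U_1$ and $U_3$, it kills every object of $\cB_1$ and $\cB_3$ (using that $U_i$ is the monoidal unit of $\cB_i$ by Theorem~\ref{thm:tensor-SOD}(a), so any $X\in\cB_i$ satisfies $X\cong X\otimes^{\rL} U_i$, and $\rL F_!$ is monoidal). By the semi-orthogonal decomposition $\bD(\cB)=\langle\cB_1,\cB_2,\cB_3\rangle$ of Theorem~\ref{thm:semi-orthog}, together with Proposition~\ref{prop:LPhi2}(b), the kernel of $\rL\Phi_!$ is exactly $\langle\cB_1,\cB_3\rangle$. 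Thus $\rL F_!$ and $\rL\Phi_!$ both annihilate $\langle\cB_1,\cB_3\rangle$, and on the remaining piece $\cB_2$ the functor $\rL\Phi_!$ restricts to an equivalence $\cB_2\xrightarrow{\sim}\bD(\cC)$. The natural candidate for $G$ is then the composite
\begin{displaymath}
\bD(\cC)\xrightarrow{(\rL\Phi_!|_{\cB_2})^{-1}}\cB_2\hookrightarrow\bD(\cB)\xrightarrow{\rL F_!}\bD^+(\cT),
\end{displaymath}
and I would first check this is a triangulated monoidal functor landing in $\cT$ (i.e.\ t-exact for the standard $t$-structures), then restrict to $\cC$ to get $G\colon\cC\to\cT$ exact and monoidal, and finally verify $F=G\circ\Phi$.

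Here is how I would organize the steps. \emph{Step 1:} Using the semi-orthogonal decomposition $\langle\cB_2,\cB_1,\cB_3\rangle$ (the second ordering in Theorem~\ref{thm:semi-orthog}), every $X\in\bD(\cB)$ sits in a triangle $X'\to X\to X''$ with $X'\in\cB_2$ and $X''\in\langle\cB_1,\cB_3\rangle$; applying $\rL F_!$ kills $X''$, so $\rL F_!(X)\cong\rL F_!(X')$ and $\rL F_!$ factors (up to natural isomorphism of triangulated functors) through the projection $\bD(\cB)\to\cB_2$, hence through $\rL\Phi_!$. This gives a triangulated functor $\widetilde G\colon\bD(\cC)\to\bD^+(\cT)$ with $\rL F_!=\widetilde G\circ\rL\Phi_!$ on $\bD(\cB)$, and it is monoidal because $\rL\Phi_!$ is a monoidal localization onto $\cB_2$ (Theorem~\ref{thm:tensor-SOD}(c)) and $\rL F_!$ is monoidal. \emph{Step 2:} Show $\widetilde G$ is t-exact, i.e.\ sends $\cC\subset\bD(\cC)$ into $\cT\subset\bD^+(\cT)$. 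Since $\cC$ is semisimple, it suffices to show $\widetilde G(L_\lambda)\in\cT$ for every weight $\lambda$. By Proposition~\ref{prop:LPhi}, $L_\lambda=\rL\Phi_!(\bbQ_\lambda)$ with $\bbQ_\lambda\in\cB_2\subset\cB$ a genuine module, so $\widetilde G(L_\lambda)=\rL F_!(\bbQ_\lambda)$; thus I must show $\rL F_!(\bbQ_\lambda)$ is concentrated in degree $0$, equivalently that $\Tor_i^{\cB}$-type higher derived terms $\rL_i F_!(\bbQ_\lambda)$ vanish for $i>0$. For this I would use the projective resolution \eqref{eq:resQP} of $\bbQ_\lambda$ and the fact that $F_!$ is right exact with $F_!(\bbP_\mu)=F(M_\mu)$; the key leverage is that $\rL F_!$ kills $\bbS_\varnothing$ and $\bbI_\varnothing[-1]$, hence (being monoidal, via $\bbI_\lambda=\bbP_\lambda\otimes\bbI_\varnothing$ from Remark~\ref{rmk:dualP}) kills all $\bbI_\lambda$, and one can run an argument parallel to Proposition~\ref{prop:LPhi}/\ref{prop:LPhi2} showing $\rL F_!$ behaves on $\cB$-modules the way $\rL\Phi_!$ does on the $\cB_2$-part. \emph{Step 3:} Set $G=\rH^0\circ\widetilde G|_{\cC}\colon\cC\to\cT$; it is exact (being the restriction of a t-exact triangulated functor between derived categories) and inherits a monoidal structure from $\widetilde G$. \emph{Step 4:} Verify $F=G\circ\Phi$ on objects and morphisms: on $M_\lambda=\bbP_\lambda$ one has $\Phi(M_\lambda)=L_\lambda\oplus L_{\lambda^\flat}$ and $\widetilde G$ applied to this is $\rL F_!$ of the $\cB_2$-truncation of $\bbP_\lambda$; comparing with Proposition~\ref{prop:3graded} and the triangle expressing $\bbP_\lambda$ in terms of its graded pieces (whose $\cB_1$- and $\cB_3$-parts are killed) yields $G(\Phi(M_\lambda))=\rL F_!(\bbP_\lambda)=F(M_\lambda)$, naturally in $\lambda$.

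I expect the main obstacle to be \emph{Step 2}: establishing that $\widetilde G$ is t-exact, i.e.\ that $\rL F_!$ applied to the $\cB_2$-part of $\cB$ produces objects of $\cT$ and not genuine complexes. A priori $\rL F_!$ only lands in $\bD^+(\cT)$ and there is no boundedness or exactness available for free — this is precisely the subtlety flagged right after the statement of Theorem~\ref{thm:env}. The resolution should come from combining three facts: $\rL F_!$ is monoidal, it kills $U_1$ and $U_3$ (hence all of $\cB_1$ and $\cB_3$, in particular all $\bbI_\lambda$ and $\bbS_\varnothing$), and $\cT$ is pre-Tannakian so that its derived category has no higher self-Ext obstructions one cannot control. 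Concretely I would mimic the computation of $\rL\Phi_!$ on $\bbQ_\lambda$, $\stan_\lambda$, $\bbS_\lambda$ in Proposition~\ref{prop:LPhi} verbatim with $\Phi_!$ replaced by $F_!$, using only that $F_!$ is right exact, faithful on projectives is \emph{not} needed here (only that $F_!$ kills $\bbI_\varnothing$ and $\bbS_\varnothing$, which forces the relevant differentials to be the ``right'' ones). Once Step 2 is in hand, Steps 3 and 4 are formal, and the dual Lemma for type $\Psi$ (killing $U_1$, $U_2$) and for type $\Theta$ (killing $U_2$, $U_3$) will follow by the same argument using Propositions~\ref{prop:LPsi2} and~\ref{prop:LTheta2} in place of Proposition~\ref{prop:LPhi2}.
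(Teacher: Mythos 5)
Your overall architecture matches the paper's proof: factor $\rL F_!$ through the equivalence $\rL\Phi_!\colon \cB_2\to\bD(\cC)$ (the paper realizes the projection onto $\cB_2$ as $U_2\otimes^{\rL}-$, which makes the factorization and its monoidality immediate since $\rL F_!(U_2)=\bbone$), then prove t-exactness of the induced functor $G'\colon\bD(\cC)\to\bD^+(\cT)$ by checking it on simples, where $G'(L_\lambda)=\rL F_!(\bbQ_\lambda)$. Steps 1, 3 and 4 of your plan are fine and essentially identical to what the paper does.

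However, there is a genuine gap exactly at the step you flag as the main obstacle, and your proposed resolution does not work. You suggest showing $\rL F_!(\bbQ_\lambda)\in\cT$ by running the computation of Proposition~\ref{prop:LPhi} ``verbatim with $\Phi_!$ replaced by $F_!$,'' claiming that killing $\bbS_\varnothing$ and $\bbI_\varnothing$ ``forces the relevant differentials to be the right ones.'' But the computation in Proposition~\ref{prop:LPhi} applies $\Phi_!$ to the \emph{infinite} resolution \eqref{eq:resQP} and verifies exactness in positive degrees using two inputs you do not have for a general $F$: faithfulness of $\Phi$ on projectives (to see the differentials are non-zero) and the explicit two-term description $\Phi_!(\bbP_\mu)=L_\mu\oplus L_{\mu^\flat}$ (to see that non-zero differentials suffice for exactness). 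For an arbitrary tensor functor $F$ to a pre-Tannakian $\cT$ you know neither, and killing two unit objects gives no direct control over unboundedly many homology groups of that complex. The missing idea is the finite short exact sequence of Lemma~\ref{lem:PQI},
\begin{displaymath}
0\to\bbP_\lambda\to\bbQ_\lambda\oplus\bbQ_{\lambda^\flat}\to\bbI_\lambda\to 0.
\end{displaymath}
Since $\rL F_!$ kills $\cB_3$ it kills $\bbI_\lambda$, so the associated triangle gives $\rL F_!(\bbP_\lambda)\cong\rL F_!(\bbQ_\lambda)\oplus\rL F_!(\bbQ_{\lambda^\flat})$; the left side equals $F(\bbP_\lambda)$ (as $\bbP_\lambda$ is projective) and lies in $\cT$, hence so does the direct summand $\rL F_!(\bbQ_\lambda)$. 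With this substitution your argument closes up and agrees with the paper's.
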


\begin{proof}
Let $R \colon \cB_2 \to \bD(\cC)$ be the restriction of $\rL \Phi_!$. This is an equivalence of tensor categories (Proposition~\ref{prop:LPhi2}). Define a composite tensor functor
\begin{displaymath}
G' \colon \xymatrix@C=3em{
\bD(\cC) \ar[r]^-{R^{-1}} & \cB_2 \ar[r]^-{\rL F_!} & \bD^+(\cT). }
\end{displaymath}
The second functor is a tensor functor since $\rL F_!$ maps the unit $U_2$ of $\cB_2$ to the unit of $\cT$.

We claim that we have a natural isomorphism
\begin{equation} \label{eq:env-2}
\rL F_! \cong G' \circ \rL \Phi_!
\end{equation}
of tensor functors $\bD(\cB) \to \bD^+(\cT)$. To see this, first observe that the functor $\bD(\cB) \to \cB_2$ given by $R^{-1}\circ \rL \Phi_!$ is isomorphic to the functor $U_2 \otimes -$ (as tensor functors), as the two functors are isomorphic after post-composing with $R$. We thus have an isomorphism
\begin{displaymath}
G' \circ \rL \Phi_! \cong \rL F_!(U_2 \otimes -).
\end{displaymath}
However, this is isomorphic to $\rL F_!$, since $\rL F_!$ is monoidal and $\rL F_!(U_2) = \bbone$. This establishes the claim.

We next claim that $G'$ is t-exact. To prove this, it suffices to show that $G'$ maps simple objects of $\cC$ into $\cT$. First note that $G'(\bbone)=\bbone$ belongs to $\cT$ since $G'$ is monoidal. Next suppose $\lambda$ is a non-empty weight. We have
\begin{displaymath}
G'(L_{\lambda}) = \rL F_!(R(L_{\lambda})) = \rL F_!(\bbQ_{\lambda}),
\end{displaymath}
where we use Proposition~\ref{prop:LPhi} to compute $R(L_{\lambda})$. We have a short exact sequence
\begin{displaymath}
0 \to \bbP_{\lambda} \to \bbQ_{\lambda} \oplus \bbQ_{\lambda^{\flat}} \to \bbI_{\lambda} \to 0
\end{displaymath}
in $\cB$ from Lemma~\ref{lem:PQI}. Since $\rL F_!$ kills $U_3$, it kills all of $\cB_3$, and in particular kills $\bbI_{\lambda}$. We thus have a quasi-isomorphism
\begin{displaymath}
\rL F_!(\bbP_{\lambda}) \cong \rL F_!(\bbQ_{\lambda}) \oplus \rL F_!(\bbQ_{\lambda^{\flat}}).
\end{displaymath}
Since $\rL F_!(\bbP_{\lambda}) = F_!(\bbP_{\lambda})$ belongs to $\cT$, so does $\rL F_!(\bbQ_{\lambda})$. This establishes the claim.

We thus see that $G'$ is the derived functor of an (exact) tensor functor $G \colon \cC \to \cT$. Applying $\rH^0$ to \eqref{eq:env-2}, we have an isomorphism $F_! = G \circ \Phi_!$ of tensor functors $\cB \to \cT$. Restricting along the functor $\cA \to \cB$ yields an isomorphism $F=G \circ \Phi$, as required.
\end{proof}

\begin{lemma} \label{lem:env-3}
Suppose $\rL F_!$ kills $U_1$ and $U_2$. Then $F$ has type $\Psi$.
\end{lemma}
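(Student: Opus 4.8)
The plan is to bypass the construction of an intermediate tensor functor (as was done in Lemma~\ref{lem:env-2}) and instead verify directly that $F$, transported to the tilting category, satisfies the extension criterion of Proposition~\ref{prop:exact-tensor-extension}. First I would record what the hypothesis gives us: since $\rL F_!$ is symmetric monoidal and $U_1=\bbS_\varnothing$, $U_2=\bbQ_\varnothing$ are the monoidal units of $\cB_1$ and $\cB_2$ (Theorem~\ref{thm:tensor-SOD}), the vanishing $\rL F_!(U_1)=\rL F_!(U_2)=0$ forces $\rL F_!$ to annihilate all of $\cB_1$ and all of $\cB_2$; in particular $\rL F_!(\bbS_\varnothing)=0$ and $\rL F_!(\bbQ_\lambda)=0$ for every weight $\lambda$, since the $\bbQ_\lambda$ generate $\cB_2$.

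Next I would transport $F$ along the tensor equivalence $\Psi\colon\cA\xrightarrow{\sim}\Tilt(\cD)$ of Corollary~\ref{cor:D-tilt-A} to obtain a tensor functor $\tilde F\colon\Tilt(\cD)\to\cT$ with $\tilde F(\bT_\mu)=F(M_\mu)=F_!(\bbP_\mu)$, and check that $\tilde F$ satisfies condition (SF) of \S\ref{ss:D-map}. Fix $\lambda\in\Lambda_\bb\cup\{\varnothing\}$. Under the triple of equivalences $\Tilt(\cD)\cong\cA\cong\Proj(\cB)$ (which sends $\bT_\mu\mapsto\bbP_\mu$ and the distinguished maps $d_\mu$ of Proposition~\ref{prop:homT}(b) to the essentially unique nonzero maps of projective $\fB$-modules), the complex appearing in (SF) is identified with $F_!$ applied to
\begin{displaymath}
\cdots\to\bbP_{\lambda\ww\ww}\to\bbP_{\lambda\ww}\to\bbP_\lambda\to 0 .
\end{displaymath}
For $\lambda\in\Lambda_\bb$, writing $\lambda=\lambda^\flat\bb$, this is precisely the projective resolution \eqref{eq:resQP} of $\bbQ_{\lambda^\flat}$ with its final term removed; for $\lambda=\varnothing$ it is the projective resolution \eqref{eq:resLD} of $\bbS_\varnothing$ with its final term removed. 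Hence applying $F_!$ yields a complex computing $\rL F_!(\bbQ_{\lambda^\flat})$, respectively $\rL F_!(\bbS_\varnothing)$, both of which vanish by the previous paragraph; so the complex in (SF) is exact. Since $\cT$ is pre-Tannakian and $\tilde F$ is a tensor functor, Remark~\ref{rmk:exact-extension} then upgrades (SF) to (CF) automatically.

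With (SF) and (CF) in hand, Proposition~\ref{prop:exact-tensor-extension} provides an exact tensor functor $G\colon\cD\to\cT$ extending $\tilde F$, i.e.\ with $G|_{\Tilt(\cD)}\cong\tilde F$; unwinding the equivalence $\Psi$ this reads $G\circ\Psi\cong F$, which is exactly the assertion that $F$ has type $\Psi$. (The remaining clause of the enclosing Theorem~\ref{thm:env}, that $F$ cannot simultaneously have two of the three types, is already established in the discussion following Lemma~\ref{lem:env-1}.) The only step requiring genuine care — and the main, if modest, obstacle — is the bookkeeping in the second paragraph: one must match the tilting-module maps $d$ with the correct morphisms of projective $\fB$-modules and confirm that the index $\lambda^\flat$ is the one occurring in \eqref{eq:resQP}. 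Once this identification is made, the argument is a string of citations.
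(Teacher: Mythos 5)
Your proposal is correct and follows essentially the same route as the paper: both arguments deduce from the vanishing of $\rL F_!$ on $U_1$ and $U_2$ that $F_!$ kills $\bbS_{\varnothing}$ and every $\bbQ_{\lambda}$, then apply $F_!$ to the projective resolutions \eqref{eq:resLD} and \eqref{eq:resQP} to verify condition (SF), obtain (CF) from Remark~\ref{rmk:exact-extension}, and conclude via Proposition~\ref{prop:exact-tensor-extension}. The extra bookkeeping you flag (matching $\bT_{\lambda}\mapsto\bbP_{\lambda}$ and the index $\lambda^{\flat}$) is carried out correctly.
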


\begin{proof}
Since $\rL F_!$ kills $U_2$, it kills all of $\cB_2$, and thus each $\bbQ_{\lambda}$. Recall from \S \ref{ss:homprop} that we have a projective resolution
\begin{displaymath}
\cdots \to \bbP_{\lambda \bb\ww\ww} \to \bbP_{\lambda \bb\ww} \to \bbP_{\lambda \bb} \to \bbQ_{\lambda} \to 0
\end{displaymath}
in $\cB$. We thus see that applying $F_!$ to this resolution yields an exact complex. In other words, the complex
\begin{displaymath}
\cdots \to F(M_{\lambda \bb\ww\ww}) \to F(M_{\lambda \bb\ww}) \to F(M_{\lambda \bb}) \to 0
\end{displaymath}
is exact. Similarly, since $\rL F_!$ kills $U_1=\bbS_{\varnothing}$, the evaluation of $F$ on the complex $\cdots \to M_\ww\to M_\varnothing\to0$ is exact, so that $F$ satisfies the condition (SF) from \S \ref{ss:D-map}. Since $F$ is a tensor functor to a rigid tensor category, it follows that $F$ also satisfies (CF) (Remark~\ref{rmk:exact-extension}). We thus see that $F$ extends to an exact tensor functor $G \colon \cD \to \cT$ by Proposition~\ref{prop:exact-tensor-extension}, so $F=G \circ \Psi$ has type $\Psi$.
\end{proof}

\begin{lemma}
Suppose $\rL F_!$ kills $U_2$ and $U_3$. Then $F$ has type $\Theta$.
\end{lemma}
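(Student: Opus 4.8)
The plan is to first show that $F$ annihilates every indecomposable $M_\lambda$ with $\lambda$ non-empty, and then to use the classification of tensor ideals of $\cA$ to factor $F$ through the semi-simplification functor.

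To carry out the first part, I would begin by noting that $\rL F_!$ kills all of $\cB_2$ and all of $\cB_3$. Indeed, by the observation following Lemma~\ref{lem:env-1}, if $\rL F_!$ kills the monoidal unit $U_2$ of $\cB_2$ then it kills every object of $\cB_2$, and similarly for $\cB_3$; since $\cB_2$ is generated (as a triangulated subcategory) by the $\bbQ_\mu$ and $\cB_3$ by the $\bbI_\mu$, this gives $\rL F_!(\bbQ_\mu)=0$ and $\rL F_!(\bbI_\mu)=0$ for all weights $\mu$ (including $\bbQ_{\varnothing}=U_2$). Next, for a non-empty weight $\lambda$, I would apply $\rL F_!$ to the short exact sequence
\[
0\to\bbP_\lambda\to\bbQ_\lambda\oplus\bbQ_{\lambda^\flat}\to\bbI_\lambda\to 0
\]
of Lemma~\ref{lem:PQI}. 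The two right-hand terms of the associated distinguished triangle vanish by the previous step, so $\rL F_!(\bbP_\lambda)=0$; since $\bbP_\lambda$ is projective in $\cB$, the complex $\rL F_!(\bbP_\lambda)$ is just $F_!(\bbP_\lambda)=F(M_\lambda)$ concentrated in degree $0$, whence $F(M_\lambda)=0$.

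For the second part I would argue with tensor ideals. The kernel of the tensor functor $F$ — the class of morphisms sent to $0$ — is a tensor ideal of $\cA$; it is proper because $F(\id_\bbone)=\id_{\bbone_\cT}\neq 0$ (as $\End(\bbone_\cT)=k$), and it is non-zero because $F(M_\bb)=0$ forces $\id_{M_\bb}$ into it. By Proposition~\ref{prop:negl}, the only non-zero proper tensor ideal of $\cA$ is $\fn$, so $\ker F=\fn$. Hence $F$ factors, uniquely and as a tensor functor, as $F=G\circ q$ with $q\colon\cA\to\cA/\fn$ the quotient and $G\colon\cA/\fn\to\cT$ the induced tensor functor. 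By \S\ref{ss:ss}, $q$ is (up to tensor equivalence) the semi-simplification functor $\Theta\colon\cA\to\Vec$, and $G\colon\Vec\to\cT$ is automatically exact since every short exact sequence in $\Vec$ splits. Thus $F=G\circ\Theta$ has type $\Theta$.

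As with the preceding two lemmas, there are no deep difficulties here; the point to be careful about is that ``$\rL F_!$ kills $U_i$'' really does propagate to all the $\bbQ$'s and $\bbI$'s, which requires knowing that $\cB_2$ and $\cB_3$ are generated by those objects, and that $\bbQ_{\lambda^\flat}$ still lies in $\cB_2$ in the boundary case $\lambda^\flat=\varnothing$. Alternatively, one could mimic the proofs of Lemmas~\ref{lem:env-2} and~\ref{lem:env-3} almost verbatim, using the equivalence $\cB_1\to\bD(\Vec)$ of Proposition~\ref{prop:LTheta2} to build an exact tensor functor $G\colon\Vec\to\cT$ with $\rL F_!\cong G\circ\rL\Theta_!$; the tensor-ideal route above is just shorter.
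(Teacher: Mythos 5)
Your proposal is correct and follows essentially the same route as the paper: use the short exact sequence $0\to\bbP_\lambda\to\bbQ_\lambda\oplus\bbQ_{\lambda^\flat}\to\bbI_\lambda\to 0$ of Lemma~\ref{lem:PQI} together with the vanishing of $\rL F_!$ on $\cB_2$ and $\cB_3$ to conclude $F(M_\lambda)=0$ for all non-empty $\lambda$, and then factor $F$ through the semi-simplification. Your tensor-ideal argument via Proposition~\ref{prop:negl} is a clean way of justifying the final factorization step, which the paper leaves implicit.
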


\begin{proof}
Since $\rL F_!$ kills $\cB_2$ and $\cB_3$, it kills $\bbQ_\lambda$ and $\bbI_\lambda$ for every weight $\lambda$. By Lemma~\ref{lem:PQI}, we thus have $0=F_!(\bbP_\lambda)=F(M_\lambda)$ for each non-empty $\lambda$. This forces $F$ to factor via the semi-simplifcation $\Theta \colon \cA\to\Vec$.
\end{proof}

\subsection{Local abelian envelopes} \label{ss:local}

We now recast Theorem~\ref{thm:env} into the language of local abelian envelopes. To allow for maximal generality, we enlarge the class of tensor categories under consideration. Precisely, we use categories $\cT$ satisfying the following condition:
\begin{itemize}
\item[$(\ast)$] $\cT$ is a $k$-linear rigid abelian tensor category, and $\End_{\cT}(\bbone)$ is a field.
\end{itemize}
For example, if $k$ has characteristic~0 then the 2-colimit of the sequence
\begin{displaymath}
\uRep(\fS_{-1}) \to \uRep(\fS_{-2}) \to \uRep(\fS_{-3}) \to \cdots
\end{displaymath}
satisfies $(\ast)$ but is not pre-Tannakian; here $\uRep(\fS_t)$ is Deligne's interpolation category, and the functors are restriction functors. Other examples are pre-Tannakian categories over field extensions of $k$. We note that Theorem~\ref{thm:env} (and our proof) remains valid if $\cT$ satisfies $(\ast)$.

We let $\Tens(\cT, \cT')$ denote the category of tensor functors $\cT \to \cT'$ and monoidal natural transformations. We let $\Tens^{\mathrm{ex}}(\cT, \cT')$ and $\Tens^{\mathrm{f}}(\cT, \cT')$ denote the full subcategories spanned by exact and faithful functors; these agree if $\cT$ and $\cT'$ satisfy $(\ast)$; see \cite[Theorem~2.4.1]{CEOP}.

Let $\cE$ be a rigid tensor category with $\End(\bbone)=k$. By \cite[Theorem~5.2.6]{HomKer} (see the remark in \cite[5.4.1]{HomKer}), there exists a family of faithful tensor functors $\{ \Gamma_i \colon \cA \to \cT_i \}_{i \in I}$, where each $\cT_i$ satisfies $(\ast)$, for which
\begin{displaymath}
\coprod_{i \in I} \Tens^{\mathrm{ex}}(\cT_i,\cT)\;\to\;\Tens^{\mathrm{f}}(\cA,\cT),
\end{displaymath}
is an equivalence, for every $\cT$ as in $(\ast)$. The $\Gamma_i$ are called the \defn{local abelian envelopes} of $\cE$. A priori, the endomorphism algebra $K_i$ of $\bbone$ in $\cT_i$ could be a proper field extension of $k$, even when $k$ is algebraically closed; see for example \cite[Remark~2.1.7(1)]{CEO}. Furthermore, $\cT_i$ need not be pre-Tannakian over $K_i$. When there is a single local abelian envelope, it is called \emph{the} abelian envelope. If $\cE$ is pre-Tannakian then $\cE$ is its own abelian envelope. If we are only interested in $\cT$ that are pre-Tannakian (over $k$), then we only need to consider those $\cT_i$ that are also pre-Tannakian (over $k$), since exact tensor functors are faithful here.

We now classify the local abelian envelopes of the second Delannoy category.

\begin{theorem}\label{thm:locenv}
For each tensor category $\cT$ as in $(\ast)$, there is an equivalence of categories
\begin{displaymath}
\Tens^{\mathrm{ex}}(\cC,\cT)\amalg \Tens^{\mathrm{ex}}(\cD,\cT)\;\to\;\Tens^{\mathrm{f}}(\cA,\cT),
\end{displaymath}
given by sending $F \colon \cC\to\cT$ to $F\circ \Phi$ and $G \colon \cD\to\cT$ to $G\circ\Psi$. In other words, $\cA$ has precisely two local abelian envelopes, given by $\Phi\colon \cA\to\cC$ and $\Psi:\cA\to\cD$.
\end{theorem}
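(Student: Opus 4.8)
The plan is to derive Theorem~\ref{thm:locenv} directly from Theorem~\ref{thm:env} together with the three mapping properties established for $\cC$, $\cD$, and $\Vec$. The key observation is that in $(\ast)$ exact tensor functors are faithful, so $\Tens^{\mathrm{ex}}(-,\cT) = \Tens^{\mathrm{f}}(-,\cT)$, and the functor $\Phi\colon\cA\to\cC$ is faithful, as is $\Psi\colon\cA\to\cD$ (indeed fully faithful by Corollary~\ref{cor:D-tilt-A}). Thus post-composition gives well-defined functors $\Tens^{\mathrm{ex}}(\cC,\cT)\to\Tens^{\mathrm{f}}(\cA,\cT)$ and $\Tens^{\mathrm{ex}}(\cD,\cT)\to\Tens^{\mathrm{f}}(\cA,\cT)$, and these assemble into a functor from the coproduct. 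What must be checked is that this functor is an equivalence: essentially surjective, full, and faithful.

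For essential surjectivity: given a faithful $F\colon\cA\to\cT$, Theorem~\ref{thm:env} says $F$ has exactly one of type $\Phi$, $\Psi$, or $\Theta$. The type $\Theta$ case must be excluded, because $\Theta$ kills $M_{\bb}$ (a non-zero object of $\cA$), so any functor of type $\Theta$ kills $M_{\bb}$ and is therefore not faithful, contradicting our hypothesis on $F$. Hence $F$ has type $\Phi$ or type $\Psi$, i.e.\ $F\cong G\circ\Phi$ or $F\cong G\circ\Psi$ for some exact tensor functor $G$ out of $\cC$ or $\cD$, which is precisely the statement that $F$ is in the essential image. For fullness and faithfulness, the cleanest route is to identify, for each type, the functor $G$ canonically: if $F$ has type $\Phi$ then the proof of Lemma~\ref{lem:env-2} constructs $G$ as $\rH^0$ applied to the composite $\bD(\cC)\xrightarrow{R^{-1}}\cB_2\xrightarrow{\rL F_!}\bD^+(\cT)$, where $R$ is the equivalence $\cB_2\to\bD(\cC)$ of Proposition~\ref{prop:LPhi2}; this makes $G$ a functor of $F$, and one checks that $F\mapsto G$ is inverse (up to natural isomorphism) to $G\mapsto G\circ\Phi$. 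Similarly, if $F$ has type $\Psi$, the extension $G\colon\cD\to\cT$ is the \emph{unique} exact extension by Proposition~\ref{prop:exact-extension} (applied through the identification $\fB\cong\Tilt(\cD)$ and Remark~\ref{rmk:B-mod-SF}), so $G$ is determined by $F=G\circ\Psi$; this uniqueness gives both fullness and faithfulness on the $\Tens^{\mathrm{ex}}(\cD,\cT)$ component. Finally, the two components of the coproduct have disjoint essential images inside $\Tens^{\mathrm{f}}(\cA,\cT)$, because the three types are mutually exclusive by Theorem~\ref{thm:env} and no faithful functor has type $\Theta$; this disjointness, combined with the component-wise fully-faithfulness, shows the coproduct functor is fully faithful.

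Having established the equivalence, the statement that $\cA$ has precisely two local abelian envelopes, namely $\Phi$ and $\Psi$, follows from the universal property of local envelopes recalled in \S\ref{ss:local}: by \cite[Theorem~5.2.6]{HomKer} there is a family $\{\Gamma_i\colon\cA\to\cT_i\}$ with $\coprod_i\Tens^{\mathrm{ex}}(\cT_i,\cT)\xrightarrow{\sim}\Tens^{\mathrm{f}}(\cA,\cT)$ for all $\cT$ as in $(\ast)$, and this family is unique up to equivalence. Our equivalence exhibits $\{\Phi\colon\cA\to\cC,\ \Psi\colon\cA\to\cD\}$ as such a family (both $\cC$ and $\cD$ being pre-Tannakian, hence satisfying $(\ast)$), so by uniqueness of the local envelopes this family \emph{is} the set of local envelopes. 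In particular $|I|=2$.

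The main obstacle I anticipate is the bookkeeping in showing that the assignment $F\mapsto G$ is functorial and genuinely inverse to post-composition — in particular checking that the construction of $G$ in the type $\Phi$ case (via $\rH^0\circ\rL F_!\circ R^{-1}$) is natural in $F$, and that a monoidal natural transformation $F\Rightarrow F'$ induces the expected monoidal natural transformation $G\Rightarrow G'$ and vice versa. For the type $\Psi$ case this is cleaner because the extension is literally unique (Proposition~\ref{prop:exact-extension}), so the only real work is in the type $\Phi$ component and in verifying compatibility at the 2-categorical level; none of this is conceptually deep, but it requires care to state precisely. A secondary point to get right is that exactness and faithfulness coincide in the setting $(\ast)$, so that ``$\Tens^{\mathrm{ex}}$'' and ``$\Tens^{\mathrm{f}}$'' may be used interchangeably throughout — this is \cite[Theorem~2.4.1]{CEOP}, invoked to match the hypotheses of \cite[Theorem~5.2.6]{HomKer}.
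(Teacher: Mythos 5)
Your proposal is correct and takes essentially the same route as the paper, whose entire proof of this theorem is ``This follows immediately from Theorem~\ref{thm:env}''; you have simply spelled out the details that the paper treats as immediate (excluding type $\Theta$ via faithfulness, matching $\Tens^{\mathrm{ex}}$ with $\Tens^{\mathrm{f}}$ under $(\ast)$, and checking that post-composition is an equivalence onto each component). The 2-categorical bookkeeping you flag is genuinely routine, since every object of $\cC$ (resp.\ $\cD$) is a direct summand (resp.\ subquotient) of an object in the image of $\Phi$ (resp.\ $\Psi$), so exact functors and monoidal natural transformations out of these categories are determined by their restrictions.
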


\begin{proof}
This follows immediately from Theorem~\ref{thm:env}.
\end{proof}

In \cite{BaK}, a universal rigid abelian tensor category $T(\cE)$ is associated to any rigid tensor category $\cE$. Concretely, restriction along $\cE\to T(\cE)$ yields an equivalence
\begin{displaymath}
\Tens^{\mathrm{ex}}(T(\cE),\cF)\;\to\; \Tens(\cE,\cF),
\end{displaymath}
for every abelian rigid tensor category $\cF$ (without the assumption that $\End(\bbone)$ be a field).
If $\End(\bbone)=k$ in $\cE$ then its local abelian envelopes can be realised as Serre quotients of $T(\cE)$.

\begin{corollary}\label{cor:TA}
For the second Delannoy category $\cA$, we have
\begin{displaymath}
T(\cA)\;\cong\; \Vec\times \cC\times\cD.
\end{displaymath}
\end{corollary}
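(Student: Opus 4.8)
The plan is to identify $T(\cA)$ by exhibiting it as the Serre quotient of a category we already understand well, using the semi-orthogonal decomposition of $\bD(\cB)$ and the universal property of $\PSh(\cA)$. First I would recall that $T(\cE)$ can be computed (following \cite{BaK}) as a Serre quotient of $\PSh(\cE)^{\mathrm{fl}}$, the category of finite-length pre-sheaves on $\cE$, by the subcategory of objects killed by every tensor functor to a rigid abelian tensor category; for $\cE = \cA$ this is exactly $\cB$, and the relevant quotient is by the largest tensor ideal on which no tensor functor survives. Concretely, $T(\cA)$ is the heart of the localizing subcategory of $\bD(\cB)$ generated by those objects seen by $\rL F_!$ for some tensor functor $F$; by Theorem~\ref{thm:env} every such $F$ is of type $\Phi$, $\Psi$, or $\Theta$, so the ``visible part'' of $\bD(\cB)$ is exactly $\cB_2 \amalg \cB_1 \amalg \cB_3$.

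The key steps would be as follows. (i) Use Theorem~\ref{thm:tensor-SOD}: $\bD(\cB) = \langle \cB_1, \cB_2, \cB_3 \rangle$ is an \emph{orthogonal} decomposition for the derived tensor product, with each $\cB_i$ a rigid tensor-triangulated category equivalent (as such) to $\bD(\Vec)$, $\bD(\cC)$, $\bD(\cD)$ respectively, and the units $U_i$ are orthogonal idempotent algebras summing to $\bbone$. (ii) Observe that because the $U_i$ are orthogonal idempotents with $U_1 \oplus U_2 \oplus U_3 = \bbone$ in $\bD(\cB)$ (this follows from Proposition~\ref{prop:3graded} and the vanishing in Theorem~\ref{thm:tensor-SOD}(b)), the tensor-triangulated category $\bD(\cB)$ splits as a product $\cB_1 \times \cB_2 \times \cB_3$ as tensor-triangulated categories — here the point is that an orthogonal decomposition of a tensor-triangulated category into pieces each containing an idempotent summand of the unit is automatically a product. (iii) Deduce that the abelian category underlying the universal target splits compatibly: $T(\cA) \cong T(\cA)_1 \times T(\cA)_2 \times T(\cA)_3$, and each factor is the universal rigid abelian tensor category receiving the corresponding ``piece'' of $\cA$. (iv) Identify the three factors: $\cB_2$-part gives $\cC$ and $\cB_3$-part gives $\cD$ because $\rL\Phi_!$ and $\rL\Psi_!$ are already equivalences onto $\bD(\cC)$ and $\bD(\cD)$ realizing $\Phi$ and $\Psi$, which are faithful tensor functors to pre-Tannakian categories hence identify $\cC$ and $\cD$ with \emph{their own} universal abelian targets along these functors; the $\cB_1$-part gives $\Vec$ since $\Theta$ factors through $\Vec = \uPerm(\mathbf 1)$, which is semisimple and rigid so equals its own $T(-)$. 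Finally one checks that restriction along $\cA \to \Vec \times \cC \times \cD$ (the product of $\Theta$, $\Phi$, $\Psi$) induces on $\Tens^{\mathrm{ex}}$ exactly the disjoint union appearing in Theorem~\ref{thm:env} for all abelian rigid $\cF$ (not just those with $\End(\bbone)$ a field), giving the required universal property.

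The main obstacle I expect is step (iv), specifically verifying the universal property against \emph{all} rigid abelian tensor categories $\cF$, without the hypothesis that $\End_\cF(\bbone)$ be a field. Theorem~\ref{thm:env} is stated only for $\cT$ satisfying $(\ast)$; to promote it to the full strength needed for $T(-)$ one should use that any rigid abelian tensor category $\cF$ decomposes as a product over the (finitely many, by rigidity and finite length considerations) idempotents of $\End_\cF(\bbone)$, reducing to the case where $\End_\cF(\bbone)$ is a field and reinvoking Theorem~\ref{thm:env}; alternatively one invokes the compatibility of $T(-)$ with the idempotent decomposition $\bbone = U_1 \oplus U_2 \oplus U_3$ in $\bD(\cB)$ directly, via \cite[Definition~2.3.1]{KVY} and the machinery of \cite{BaK}. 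Either way, the work is bookkeeping about how idempotent decompositions of the unit interact with $T(-)$ and with tensor functors; the conceptual content is entirely contained in Theorems~\ref{thm:env} and~\ref{thm:tensor-SOD}, so the proof should be short once that bookkeeping is set up. A clean write-up would state: ``By \cite{BaK}, $T(\cA)$ is determined by the functor $\cF \mapsto \Tens(\cA, \cF)$; by Theorem~\ref{thm:env} (applied factor-by-factor over the idempotents of $\End_\cF(\bbone)$) this functor is represented by $\Vec \times \cC \times \cD$ via $(\Theta, \Phi, \Psi)$, which is therefore $T(\cA)$.''
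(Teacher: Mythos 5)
There is a genuine gap in step (ii), and it propagates. The decomposition $\bD(\cB)=\langle\cB_1,\cB_2,\cB_3\rangle$ is only \emph{semi}-orthogonal for the triangulated structure: $\Hom$'s vanish in one direction but not the other, and the filtration of $\bbP_{\varnothing}$ in Proposition~\ref{prop:3graded} does not split. Indeed $\bbP_{\varnothing}=\stan_{\varnothing}$ is a non-split extension of $\bbS_{\varnothing}$ by $\bbS_{\ww}$ (the sequence \eqref{eq:sesDelta}), so $\bbone\ne U_1\oplus U_2\oplus U_3$ in $\bD(\cB)$, and $\bD(\cB)$ is \emph{not} a product of the $\cB_i$ as triangulated categories. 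The $U_i$ are orthogonal \emph{idempotent algebras} in the sense of \cite{KVY}, not direct summands of the unit; the honest direct-sum decomposition of $\bbone$ only appears after passing to $T(\cA)$ itself, where $\End(\bbone)$ becomes $k^3$. Your fallback in step (iv) has a second gap: a general rigid abelian tensor category $\cF$, as quantified over in the definition of $T(-)$ in \cite{BaK}, need not have finite-length objects, and $\End_{\cF}(\bbone)$ is merely von Neumann regular, so it can have infinitely many idempotents; the claimed reduction ``over the finitely many idempotents of $\End_{\cF}(\bbone)$'' is not available.

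The paper closes exactly this gap by working with $Z=\End_{T(\cA)}(\bbone)$ rather than with $\bD(\cB)$: $Z$ is von Neumann regular, the residue quotients $T(\cA)/\hspace{-1mm}/I_\alpha$ for $I_\alpha\in\Spec Z$ satisfy $(\ast)$ and are identified with the local abelian envelopes via the comparison of \cite{BaK} and \cite{HomKer} (\cite[Corollary~6.2]{Ka}); then Theorem~\ref{thm:locenv} \emph{together with} Proposition~\ref{prop:negl} (the classification of tensor ideals of $\cA$, which you do not invoke but which is needed to account for the non-faithful point corresponding to $\Theta$) shows $\Spec Z$ has exactly three points, whence $Z=k^3$ and the product decomposition follows from \cite[Corollary~4.25]{Ka}. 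Your overall instinct — that the content is Theorem~\ref{thm:env} plus bookkeeping about idempotents — is right, but the bookkeeping must be done in $T(\cA)$ via the von Neumann regularity machinery of \cite{BaK, Ka}, not in $\bD(\cB)$, where the required idempotent splitting of the unit simply does not exist.
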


\begin{proof}
Denote by $Z$ the endomorphism algebra of the tensor unit in $T(\cA)$. It is von Neumann regular by \cite{BaK,Ka}, and for each $I_\alpha\in \operatorname{Spec} Z$, we have the corresponding tensor category $\cA_\alpha:=T(\cA)/\hspace{-1mm}/ I_\alpha$ as in $(\ast)$ with $\End(\bbone)=K_\alpha:=Z/I_\alpha$. It follows directly from the comparison of the theories of \cite{BaK} and \cite{HomKer}, see for instance \cite[Corollary~6.2]{Ka}, that the $\cA_\alpha$ are the local abelian envelopes of $\cA_\alpha$. Theorem~\ref{thm:locenv} and Proposition~\ref{prop:negl} show that $\operatorname{Spec} Z$ is finite (has size three), and therefore $Z=k^{3}$, and the final conclusion follows from \cite[Corollary~4.25]{Ka}.
\end{proof}

\begin{remark}
That $\Phi \colon \cA\to\cC$ is a local abelian envelope can also be seen directly, as follows.
Let $\cE$ be a $k$-linear abelian category such that $\Phi$ factors as a composite
\begin{displaymath}
\cA\xrightarrow{F}\cE\xrightarrow{G}\cC,
\end{displaymath}
for faithful $k$-linear functors $F$ and $G$, with $G$ exact. Suppose that every object in $\cE$ is a subquotient of an object in the image of $F$. We show that $G$ is an equivalence. Denote by $V_\lambda\in \cE$ the image of $F(u_\lambda\circ d_\lambda)$, for any weight $\lambda$.  By Proposition~\ref{prop:Phi-E}, $G(V_\lambda)\cong L_\lambda$, so in particular $V_\lambda$ is simple. Moreover, Proposition~\ref{prop:Phi-E} implies that there are short exact sequences
\begin{displaymath}
0\to V_{\lambda\ww} \to F(M_{\lambda\ww})\to V_\lambda\to 0\quad\mbox{and}\quad 0\to V_{\lambda} \to F(M_{\lambda\bb})\to V_{\lambda\bb}\to 0.
\end{displaymath}
However, by the existence of non-zero morphisms $F(M_{\lambda\ww})\to F(M_{\lambda\ww\bb})$ and $F(M_{\lambda\bb\ww})\to F(M_{\lambda\bb})$ (Theorem~\ref{thm:maps}), it follows that the sequences must be split. It now follows that $G$ is essentially surjective and fully faithful. If we now take $F$ to be the local envelope through which $\Phi$ factors, the above conditions hold, and so $G$ is an equivalence; this shows that $\Phi$ is a local envelope.
\end{remark}

\subsection{Universal property} \label{ss:univ}

We now describe a universal property for the tensor category $\cD$. This is closely related to the universal property for $\cA$ given in \cite{delmap}, which we first recall.

Let $\cT$ be a pre-Tannakian category. An \defn{ordered \'etale algebra} in $\cT$ is an \'etale algebra $A$ equipped with an ideal $I$ of $A \otimes A$ satisfying some axioms; see \cite[\S 7.2]{delmap}. If $A$ is an ordered \'etale algebra in $\Vec$ then $\Spec(A)$ is a totally ordered set via the relation $V(I)\subset \Spec A\times\Spec A$ . A \defn{2-Delannic algebra} (or \defn{Delannic algebra of type 2}) is an ordered \'etale algebra satisfying three numerical conditions; see \cite[\S 7.3]{delmap}. The algebra $\sA(\bR)$ in $\cA$ is 2-Delannic. Let $\Del_2(\cT)_{\rm isom}$ be the category of 2-Delannic algebras in $\cT$, where morphisms are isomorphisms of ordered \'etale algebras. The universal property for $\cA$ states that the functor
\begin{displaymath}
\Tens(\cA, \cT) \to \Del_2(\cT)_{\rm isom}, \qquad F \mapsto F(\sA(\bR))
\end{displaymath}
is an equivalence \cite[Theorem~7.13]{delmap}. In other words, $\sA(\bR)$ is the universal 2-Delannic algebra. We also show that a tensor functor $F$ as above is faithful if and only if $F(\sA(\bR))$ is not the zero algebra \cite[Proposition~7.14]{delmap}.

Let $A$ be an ordered \'etale algebra. We say that $A$ is \defn{bounded} if $A$ decomposes as a lexicographic sum $A = B \oplus \bbone$ for some ordered \'etale algebra $B$. The intuition here is that $\Spec(\bbone)$ is a maximal point in the totally ordered set $\Spec(A)$; this is literally true if $\cT=\Vec$. We say that $A$ is \defn{unbounded} if it is not bounded. We let $A^{(2)}$ be the algebra of ``increasing pairs'' of $A$. There are natural algebra homomorphisms $p_i^* \colon A \to A^{(2)}$ for $i=1,2$. See \cite[\S 7.2]{delmap} for the relevant definitions. If $A=\sA(\bR)$ then $A^{(2)}=\sA(\bR^{(2)})$ and the $p_i^*$ are dual to the projection maps $p_i \colon \bR^{(2)} \to \bR$. Also, in this case, $A$ is unbounded.

\begin{proposition}
Let $\cT$ be a pre-Tannakian category, let $F \colon \cA \to \cT$ be a faithful tensor functor, and let $A=F(\sA(\bR))$ be the corresponding 2-Delannic algebra of $\cT$. Then:
\begin{enumerate}
\item $F$ has type $\Phi$ $\iff$ $p_1^*$ is not injective $\iff$ $A$ is bounded.
\item $F$ has type $\Psi$ $\iff$ $p_1^*$ is injective $\iff$ $A$ is unbounded.
\end{enumerate}
\end{proposition}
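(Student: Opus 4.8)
The plan is to leverage Theorem~\ref{thm:env}, which already gives a trichotomy: $F$ has exactly one of type $\Phi$, type $\Psi$, or type $\Theta$. Since $F$ is assumed faithful, type $\Theta$ is excluded (the functor $\Theta$ kills each $M_\lambda$ with $\lambda$ non-empty, so it is not faithful, and a composite $G\circ\Theta$ cannot be faithful either). Thus $F$ has type $\Phi$ or type $\Psi$, and these are mutually exclusive. So the content of the proposition is to match this dichotomy with the two algebraic conditions: injectivity of $p_1^*$, and (un)boundedness of $A$. Given the trichotomy, it suffices to prove the two implications ``$F$ type $\Phi$ $\Rightarrow$ $p_1^*$ not injective'' and ``$F$ type $\Psi$ $\Rightarrow$ $p_1^*$ injective,'' together with the equivalence ``$p_1^*$ not injective $\iff$ $A$ bounded''; the reverse implications then follow formally since the two types are exhaustive (among faithful $F$) and mutually exclusive, and likewise bounded/unbounded are exhaustive and exclusive.

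First I would recall the explicit identification of $p_1^*$ and $p_2^*$ with morphisms in $\cA$. Here $p_1\colon\bR^{(2)}\to\bR$ is the projection onto the first coordinate (i.e.\ omitting the last), so $p_1 = p_{2,2}$ in the notation of \S\ref{ss:four-meas}, and $p_1^* \colon \sA(\bR)\to\sA(\bR^{(2)})$ is the pullback matrix. The key is to locate $p_1^*$ relative to the summand decomposition of $\sA(\bR)$ and $\sA(\bR^{(2)})$ into indecomposables $M_\lambda$ (Corollary~\ref{cor:schwartz-decomp}): $\sA(\bR) = M_{\varnothing}\oplus M_{\bb}\oplus M_{\ww}$ (since $m(\varnothing)=\binom{0}{-1}$ must be reinterpreted; actually $\sA(\bR)$ has length $\dim=3$ with summands $M_\varnothing, M_\bb, M_\ww$ each once), and $\sA(\bR^{(2)})$ decomposes with the $M_\lambda$ for $\ell(\lambda)\le 2$. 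The map $p_1^*=p_{2,2}^*$ discards the final coordinate; I would compute, using the explicit description of $\Phi$ in \S\ref{ss:functor} and Proposition~\ref{prop:Phi-E}, whether $p_1^*$ restricted to the $M_\varnothing=\bbone$ summand of $\sA(\bR)$ is zero or not. Failure of injectivity of $p_1^*$ is equivalent to the composite $\bbone \hookrightarrow \sA(\bR) \xrightarrow{p_1^*} \sA(\bR^{(2)})$ being zero (the other summands $M_\bb, M_\ww$ map injectively into $\sA(\bR^{(2)})$ because $\Phi$ detects this: $\Phi(M_\bb)=L_\bb\oplus L_\varnothing$ and $L_\bb, L_\varnothing$ occur in $\Phi(\sA(\bR^{(2)}))$, etc.). So the whole question reduces to: \emph{does the composite $\bbone\to\sA(\bR)\to\sA(\bR^{(2)})$ vanish after applying $F$?}

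The bridge to the type condition goes through $\Phi$ and $\Psi$ directly. After applying $\Phi\colon\cA\to\cC$, the map $p_1^*$ becomes the pullback along $p_{2,2}$ in $\cC$, and since $\mu_1(p_{2,2})=-1\ne 0$ while $\mu_2(p_{2,2})=0$ (the defining table in \S\ref{ss:four-meas}), the pullback behaves differently. Concretely, in $\cC$ the composite $\bbone\to\sC(\ol\bR)\to\sC(\ol\bR{}^{(2)})$ is \emph{non-zero} on the $\bbone$ part — this is the statement that the top point $\infty$ of $\ol\bR$ pulls back to a genuine element — which, combined with how $\Phi$ embeds the summands, shows $p_1^*$ is not injective in $\cA$ when we are in the ``$\cC$ world,'' i.e.\ $A$ bounded. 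More precisely, I would use: $A$ bounded means $A = B\oplus\bbone$ as a lexicographic sum, so $\Spec A$ has a top point whose structure sheaf is $\bbone$; the composite $\bbone\hookrightarrow A\xrightarrow{p_1^*}A^{(2)}$ is zero because there is no pair $(x_1,x_2)$ with $x_1$ the maximal point and $x_2 > x_1$ (this is exactly the boundedness of the ordered set, translated into the algebra). Conversely, if $A$ is unbounded then for any point $x$ there is $x_2>x$, so $p_1^*$ restricted to any summand, in particular to $\bbone$, is injective. This gives the equivalence ``$p_1^*$ not injective $\iff A$ bounded'' purely in terms of ordered \'etale algebras.

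Finally, to connect type to boundedness: when $F$ has type $\Phi$, $F = G\circ\Phi$ and $A = F(\sA(\bR)) = G(\Phi(\sA(\bR))) = G(\sC(\ol\bR)) = G(\sC(\bR)\oplus\bbone)$, which is visibly a lexicographic sum $B\oplus\bbone$ with $B = G(\sC(\bR))$ — here one must check the ordering ideal respects this decomposition, which follows from the structure of $\ol\bR = \bR\cup\{\infty\}$ where $\infty$ is the top point, giving exactly the lexicographic order; hence $A$ is bounded. When $F$ has type $\Psi$, $F=G\circ\Psi$ with $G\colon\cD\to\cT$ exact, and $A = G(\Psi(\sA(\bR)))$. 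I would argue $A$ is unbounded here either by exclusion (since bounded $\Leftrightarrow$ type $\Phi$ and the two types are mutually exclusive for faithful $F$), or directly: the standard/costandard structure of the tilting objects $\bT_\lambda$ in $\cD$ forces $p_1^* = p_{2,2}^*$ to be injective — the map $u\circ d$ pattern in Theorem~\ref{thm:maps} and Proposition~\ref{prop:homT} shows $M_{\ww}\hookrightarrow\sA(\bR^{(2)})$ survives — so $A$ cannot decompose as $B\oplus\bbone$. The main obstacle I anticipate is the careful bookkeeping in the paragraph identifying $p_1^*$ with a specific matrix and tracking which indecomposable summand of $\sA(\bR)$ it is zero on: one must be precise about $p_1$ being ``omit the last coordinate'' $= p_{2,2}$ versus ``keep only the first'' and reconcile this with the $\mu_2(f) = 0$ rule for ``$f$ discards the final coordinate.'' Indeed $p_{2,2}\colon\bR^{(2)}\to\bR$ discards the \emph{last} coordinate, so $\mu_2(p_{2,2}) = 0$, and it is precisely this vanishing that makes $(p_1^*)$'s companion pushforward $p_{1,*}$ behave degenerately; getting the direction of injectivity right (it is $p^*$, not $p_*$, that appears, and $p^*$ is measure-independent as noted in \S\ref{ss:matrix-ex}, so one cannot read injectivity off the measure of $p$ directly — one must instead use the image of $p^*$ inside the summand decomposition, which \emph{is} measure-dependent through $\Phi$) is the delicate point. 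Everything else is a formal consequence of Theorem~\ref{thm:env} plus the explicit description of $\Phi$ on $\sA(\bR)$.
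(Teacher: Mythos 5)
Your overall architecture matches the paper's: use Theorem~\ref{thm:env} to reduce to the dichotomy $\Phi$/$\Psi$ for faithful $F$, prove the forward implications, and close the circle by exhaustion; and your argument that a bounded $A=B\oplus\bbone$ forces $p_1^*|_{\bbone}=0$ (because $A^{(2)}=B^{(2)}\oplus B$ contains no pair whose first coordinate is the maximal point), together with ``type $\Phi$ $\Rightarrow$ $A$ bounded,'' is exactly what the paper does. The genuine gap is the remaining leg: \emph{type $\Psi$ $\Rightarrow$ $p_1^*$ injective}. Your ``by exclusion'' version is circular at this point in the logic, since exhaustion only becomes available once both forward implications are established. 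Your ``direct'' version --- that the tilting structure shows $M_{\ww}\hookrightarrow\sA(\bR^{(2)})$ ``survives'' --- at best controls the restriction of $F(p_1^*)$ to one summand; injectivity of a map out of $\bbone\oplus F(M_{\bb})\oplus F(M_{\ww})$ does not follow from injectivity on each summand, so your first-paragraph reduction of the whole question to the (non)vanishing of $\bbone\to A^{(2)}$ is also unjustified in the direction needed here. The paper's argument is global and algebraic rather than summand-by-summand: since $\Psi$ is fully faithful, $\Psi(\sA(\bR))^{(2)}=\Psi(\sA(\bR^{(2)}))$ is a \emph{simple} algebra (by \cite[Proposition~7.14]{delmap}), which forces $p_1^*$ to be injective for $\Psi$; exactness and faithfulness of $G$ then transport this to $\cT$. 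This is the one non-formal input in the whole proof, and it is missing from your proposal.

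Relatedly, your proposed direct proof of ``unbounded $\Rightarrow$ $p_1^*$ injective'' (``for any point $x$ there is $x_2>x$'') only makes literal sense when $\cT=\Vec$; in a general pre-Tannakian category $\Spec(A)$ has no points, and unboundedness is merely the non-existence of a lexicographic splitting $A=B\oplus\bbone$. The paper never proves this implication directly: it obtains ``$p_1^*$ not injective $\Rightarrow$ $A$ bounded'' via the chain: not injective $\Rightarrow$ (exhaustion, using the $\Psi$ case above) type $\Phi$ $\Rightarrow$ bounded. If you restructure along those lines and supply the simplicity argument for the type $\Psi$ case, the remainder of what you wrote goes through.
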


\begin{proof}
Suppose $A$ is bounded, and write $A=B \oplus \bbone$. Then $A^{(2)} = B^{(2)} \oplus B$, and the map $p_1^*$ restricts to~0 on $\bbone \subset A$. Thus $p_1^*$ is not injective. The algebra $\Phi(\sA(\bR)) =\sC(\bR) \oplus \bbone$ is bounded, and so $p_1^*$ is not injective in this case. On the other hand, $\Psi$ is fully faithful, and so $\Psi(\sA(\bR))^{(2)}=\Psi(\sA(\bR^{(2)}))$ is a simple algebra \cite[Proposition~7.14]{delmap}, and so $p_1^*$ is injective in this case.

If $F$ has type $\Phi$ then $F=G \circ \Phi$ for an exact (and thus faithful) tensor functor $G$. Since formation of $p_1^*$ is compatible with tensor functors and $p_1^*$ is not injective for $\Phi$, it follows that $p_1^*$ is not injective for $F$. Similarly, if $F$ has type $\Psi$ then $p_1^*$ is injective. The converses follow since $F$ must have one of the two types: for example, if $p_1^*$ is injective then $F$ cannot have type $\Phi$, and so it must have type $\Psi$. We have thus established the first equivalences in both (a) and (b).

If $F$ has type $\Phi$ then $A$ is bounded, since the algebra for $\Phi$ has a maximum. We have already seen that if $A$ is bounded then $p_1^*$ is not injective, and so $F$ has type $\Phi$. This completes the proof of (a), and (b) now follows as well.
\end{proof}

The following is our universal property for $\cD$.

\begin{corollary} \label{cor:D-univ}
Let $\cT$ be a pre-Tannakian category. Giving an exact tensor functor $\cD \to \cT$ is equivalent to giving a non-zero unbounded 2-Delannic algebra in $\cT$.
\end{corollary}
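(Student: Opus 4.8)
The plan is to derive Corollary~\ref{cor:D-univ} by chaining together two equivalences of groupoids already available to us: the universal property of $\cA$ as the classifier of $2$-Delannic algebras from \cite[Theorem~7.13]{delmap}, and the classification of local abelian envelopes of $\cA$ in Theorem~\ref{thm:locenv}. The proposition immediately preceding the corollary then supplies the dictionary needed to cut down to the correct class of objects.

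First I would record that, since $\cT$ is pre-Tannakian and hence rigid, every monoidal natural transformation between tensor functors with target $\cT$ is invertible, so all the functor categories below are groupoids; this matches the convention that $\Del_2(\cT)_{\mathrm{isom}}$ is formed using only isomorphisms. By \cite[Theorem~7.13]{delmap}, $F \mapsto F(\sA(\bR))$ is an equivalence $\Tens(\cA,\cT)\to\Del_2(\cT)_{\mathrm{isom}}$, and by \cite[Proposition~7.14]{delmap} such an $F$ is faithful exactly when $F(\sA(\bR))\neq 0$; hence this equivalence restricts to one between $\Tens^{\mathrm f}(\cA,\cT)$ and the full subcategory of \emph{non-zero} $2$-Delannic algebras.

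Next I would invoke Theorem~\ref{thm:locenv}: the assignment $G\mapsto G\circ\Psi$ embeds $\Tens^{\mathrm{ex}}(\cD,\cT)$ fully faithfully into $\Tens^{\mathrm f}(\cA,\cT)$, with essential image the faithful functors not arising from $\cC$. By Theorem~\ref{thm:env} each faithful $F$ has a unique type; it cannot have type $\Theta$ (as $\Theta$, and therefore any functor of type $\Theta$, annihilates the non-zero object $M_\bb$), and type~$\Phi$ is exactly the class coming from $\cC$; so the essential image of $G\mapsto G\circ\Psi$ is precisely the faithful functors of type~$\Psi$. Composing with the equivalence of the previous paragraph turns this into an equivalence between $\Tens^{\mathrm{ex}}(\cD,\cT)$ and the full subcategory of those non-zero $2$-Delannic algebras $A=F(\sA(\bR))$ whose classifying functor $F$ has type~$\Psi$. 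Finally, the preceding proposition identifies that condition precisely: for faithful $F$, type~$\Psi$ is equivalent to $A$ being unbounded. Since the zero algebra is unbounded under the stated definition (as $\bbone \neq 0$, it is not a lexicographic sum $B\oplus\bbone$), the conditions "non-zero" and "unbounded" are both genuinely needed, and together they carve out exactly the subcategory in the statement.

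As this is a corollary assembled entirely from results proved above, there is no substantive obstacle; the only point requiring care is the bookkeeping — checking that the composite of the two equivalences is genuinely an equivalence onto the \emph{full} subcategory of non-zero unbounded $2$-Delannic algebras, rather than merely inducing a bijection on some subclass of objects — which is immediate once the essential images are matched as above.
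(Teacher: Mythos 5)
Your proposal is correct and follows exactly the route the paper intends: the corollary is stated without proof because it is the immediate composite of the universal property of $\cA$ from \cite[Theorem~7.13, Proposition~7.14]{delmap}, Theorem~\ref{thm:locenv}, and the proposition immediately preceding it identifying type~$\Psi$ with unboundedness. Your bookkeeping (excluding type $\Theta$ via faithfulness, and noting that the zero algebra is vacuously unbounded so both hypotheses are needed) is accurate and, if anything, slightly more careful than the paper's implicit argument.
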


We now look at some examples. Let $m$ be an odd positive integer, and let $n$ be an even non-negative integer. The lexicographic sum
\begin{displaymath}
A_{m,n} = \sC(\bR^{(m)}) \oplus \sC(\bR^{(n)})
\end{displaymath}
is a 2-Delannic algebra in $\cC$ by \cite[Proposition~7.18]{delmap} and \cite[Proposition~7.24]{delmap}. By the universal property for $\cA$, we thus have a faithful tensor functor
\begin{displaymath}
F_{m,n} \colon \cA \to \cC, \qquad F_{m,n}(\sA(\bR)) = A_{m,n}.
\end{displaymath}
In \cite[\S 8.3]{delmap}, we observed that $F_{1,0}$ and $F_{1,2}$ belong to different local envelopes; this is how we showed that $\cA$ must have at least two local envelopes. The same reasoning shows that $F_{\ell,0}$ and $F_{m,n}$ belong to different envelopes when $n>0$. It seems to be very difficult to show directly that the $F_{\ell,0}$ all belong to the same local envelope, or that the $F_{m,n}$ with $n>0$ all belong to the same local envelope; we attempted to do this using the theory of homological kernels from \cite{HomKer} directly, but made little progress. However, the above results imply these statements. Indeed, $A_{m,n}$ is bounded if and only if $n=0$, and so the $F_{m,n}$ has type $\Phi$ if $n=0$ and type $\Psi$ if $n \ne 0$.

We note one important consequence of the above examples.

\begin{corollary}
There exists an exact tensor functor $\cD \to \cC$.
\end{corollary}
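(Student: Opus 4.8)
The plan is to specialize the family of functors $F_{m,n} \colon \cA \to \cC$ constructed just above to a single judicious choice of parameters. Concretely, I would take $m = 1$ and $n = 2$, giving the faithful tensor functor $F_{1,2} \colon \cA \to \cC$ with $F_{1,2}(\sA(\bR)) = A_{1,2} = \sC(\bR^{(1)}) \oplus \sC(\bR^{(2)})$.

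The first step is to record that $A_{1,2}$ is a non-zero unbounded $2$-Delannic algebra in $\cC$. That it is $2$-Delannic is \cite[Propositions~7.18 and~7.24]{delmap}, and it is visibly non-zero. It is unbounded because, as noted in the examples preceding the corollary, the lexicographic sum $\sC(\bR^{(m)}) \oplus \sC(\bR^{(n)})$ is bounded precisely when $n = 0$, whereas here $n = 2$.

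The second step is simply to invoke the universal property of $\cD$: since $\cC$ is pre-Tannakian and $A_{1,2}$ is a non-zero unbounded $2$-Delannic algebra in $\cC$, Corollary~\ref{cor:D-univ} yields an exact tensor functor $\cD \to \cC$. (Alternatively, one may phrase the argument through the classification of types: $F_{1,2}$ is faithful with unbounded associated algebra, hence of type $\Psi$ by the preceding proposition, so $F_{1,2} = G \circ \Psi$ for some exact tensor functor $G \colon \cD \to \cC$.)

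I do not expect a genuine obstacle: the statement falls out immediately from the universal property together with the explicit example already built. The only point requiring any care is the verification that $A_{1,2}$ is unbounded, which rests on the parity and sign conventions of \cite{delmap}; but this is exactly what is recorded in the discussion of the $F_{m,n}$ above, so nothing new is needed.
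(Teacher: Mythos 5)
Your proposal is correct and is exactly the paper's argument: the paper's proof is literally ``Apply Corollary~\ref{cor:D-univ} with the algebra $A_{1,2}$.'' Your additional verification that $A_{1,2}$ is non-zero and unbounded (since $n=2>0$) is the same justification implicit in the surrounding discussion of the $F_{m,n}$.
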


\begin{proof}
Apply Corollary~\ref{cor:D-univ} with the algebra $A_{1,2}$.
\end{proof}

In fact, there are many exact tensor functors $\cD \to \cC$; each $A_{m,n}$ with $n>0$ provides a different example. The fact that $\cD$ admits an exact tensor functor to $\cC$ is not at all obvious from its definition in \S \ref{s:D}.

\end{document}